\newtheorem{theorem}{Theorem}[section]
\newtheorem{lemma}[theorem]{Lemma}
\newtheorem{proposition}[theorem]{Proposition}
\newtheorem{corollary}[theorem]{Corollary}
\theoremstyle{definition}
\newtheorem{definition}[theorem]{Definition}
\newtheorem{example}[theorem]{Example}
\newtheorem{conjecture}[theorem]{Conjecture}
\newtheorem{remark}[theorem]{Remark}
\newcommand{\Ext}{\text{Ext}}
\newcommand{\Tr}{\text{Tr}}%{\text{Tr}\,}
\newcommand{\Ker}{\text{Ker\,}}
\newcommand{\End}{\text{End}}
\newcommand{\Hom}{{\rm Hom}}
\newcommand{\Rep}{\text{Rep}}
\newcommand{\I}{\mathcal{I}}
\newcommand{\C}{\mathcal{C}}
\newcommand{\A}{\mathcal{A}}
\newcommand{\ot}{\otimes}
\newcommand{\mO}{{\mathcal O}}
\newcommand{\QQ}{\mathbb{Q}}
\newcommand{\ZZ}{{\mathbb{Z}}}
\newcommand{\one}{\mathds{1}}
\newcommand{\lk}{\ar@{-}}%for links in xymatrix
\newcommand{\D}{\mathcal{D}}
\renewcommand{\Vec}{\mathrm{Vec}}
\newcommand{\sVec}{\mathrm{sVec}}
\renewcommand{\k}{\mathbf{k}}
\newcommand{\FPdim}{\mathop{\mathrm{FPdim}}\nolimits}
\newcommand{\ccup}{\text{\small $\bigcup$}}
\newcommand{\coplus}{\text{\small $\bigoplus$}}
\newcommand{\cotimes}{\text{\small $\bigotimes$}}
\newcommand\T{{\mathcal{T}}}
\renewcommand\P{{\mathcal{P}}}
\newcommand\E{{\mathcal{E}}}
\renewcommand\S{{\mathcal{S}}}
\newcommand{\BZ}{\Bbb Z}
\newcommand{\St}{{\rm St}}
\newcommand{\be}{\one}
\newcommand{\Coker}{{\rm Coker}}
\newcommand{\Ver}{{\rm Ver}}
\newcommand{\rvline}{\hspace*{-\arraycolsep}\vline\hspace*{-\arraycolsep}}
\numberwithin{equation}{section}
\begin{document}

\title{New incompressible symmetric tensor categories in positive characteristic}

\author{Dave Benson} 
\address{Institute of Mathematics,
                              University of Aberdeen,
                              Fraser Noble Building,
                              King's College,
                              Aberdeen AB24 3UE,
                              Scotland, UK.}
\email{d.j.benson@abdn.ac.uk}                              
\author{Pavel Etingof}
\address{Department of Mathematics, Massachusetts Institute of Technology,
Cambridge, MA 02139, USA}
\email{etingof@math.mit.edu}
\author{Victor Ostrik}
\address{Department of Mathematics,
University of Oregon, Eugene, OR 97403, USA}
\address{Laboratory of Algebraic Geometry,
National Research University Higher School of Economics, Moscow, Russia}
\email{vostrik@uoregon.edu}

\begin{abstract} We propose a method of constructing abelian envelopes 
of symmetric rigid monoidal Karoubian categories over an algebraically closed field $\k$. 
If ${\rm char}(\k)=p>0$, we use this method to construct generalizations 
$\Ver_{p^n}$, $\Ver_{p^n}^+$ of the incompressible abelian symmetric
tensor categories defined in \cite{BE} for $p=2$ and in \cite{GK,GM} for $n=1$. Namely, $\Ver_{p^n}$ is the abelian envelope of the quotient of the category of tilting modules for $SL_2(\k)$ by the $n$-th Steinberg module, and $\Ver_{p^n}^+$ is its subcategory generated by $PGL_2(\k)$-modules. We show that $\Ver_{p^n}$ are reductions 
to characteristic $p$ of Verlinde braided tensor categories in characteristic zero, which explains the notation. 
We study the structure of these categories in detail, and in particular show that 
they categorify the real cyclotomic rings $\ZZ[2\cos(2\pi/p^n)]$, and that $\Ver_{p^n}$ embeds 
into $\Ver_{p^{n+1}}$. We conjecture that every symmetric tensor category of moderate growth
over $\k$ admits a fiber functor to the union $\Ver_{p^\infty}$ of
the nested sequence $\Ver_{p}\subset \Ver_{p^2}\subset\cdots$.
This would provide an analog of Deligne's theorem in characteristic zero and a generalization of 
the results of \cite{CEO} which shows that this conjecture holds for Frobenius exact (in particular, semisimple) categories, and moreover the fiber functor lands in $\Ver_p$ (in the case of fusion categories, this was shown earlier in \cite{O}). Finally, we classify symmetric tensor categories generated by an object with invertible exterior square; this class contains the categories $\Ver_{p^n}$. 
\end{abstract}

\maketitle

\tableofcontents

\section{Introduction} 
A tensor category $\C$ is said to have {\it moderate growth} if for any object $X\in \C$
the length of $X^{\otimes n}$ grows at most exponentially with $n$. In characteristic zero, a remarkable theorem of Deligne (\cite{D1}) states that every symmetric tensor category of moderate growth is {\it super-Tannakian}, i.e., fibers over the category of supervector spaces. In other words, it is the category of representations of an affine supergroup scheme. However, in any characteristic $p>0$ this theorem fails. Namely, counterexamples in characteristic $p\ge 5$ are given in \cite{GK,GM}, in characteristic $2$ in \cite{Ven,BE}, and in characteristic $3$ for the first time in the present paper. 
In fact, in \cite{BE}, the first two authors constructed a nested sequence of finite symmetric tensor categories in characteristic two,
\[ \Vec=\C_0 \subset \C_1 \subset \C_2 \subset \cdots \]
which are {\it incompressible}, i.e., do not fiber over anything smaller. The main purpose of this
paper is to construct similar incompressible symmetric tensor categories in characteristic $p>2$. Unfortunately, the approach of \cite{BE} does not work in odd characteristic, so we propose a completely different construction. This construction also works in characteristic two, and 
recovers the categories defined in~\cite{BE},
even though there are some differences in the behavior at the prime two.

Our categories are closely connected with the non-symmetric
Verlinde tensor categories in characteristic zero which categorify the Verlinde fusion rules for $SL_2$ (\cite{EGNO}, 8.18), and because
of this connection, we name them $\Ver_{p^n}$ ($n\ge 1$). When $p>2$, the category
$\Ver_{p^n}$ decomposes as a tensor product 
$\Ver^+_{p^n}\boxtimes \sVec$ of the ``even part''  and the category
of supervector spaces. 

The underlying abelian category of $\Ver_{p^n}$ is easy to construct.
We start with the natural two dimensional module $V$ for the algebraic group $SL_2(\k)$,
and take its tensor powers. 
Each $V^{\otimes m}$ has exactly one isomorphism class of indecomposable
direct summand that
does not appear in any previous tensor power,
and this is the tilting module $T_m$. For $m<p$, $T_m$ is
isomorphic to the symmetric power $S^mV$, and for
$m=p^k-1$ it is the Steinberg module $\St_k=S^{p^k-1}V$. We form the direct sum 
$\coplus_{m=p^{n-1}-1}^{p^n-2}T_m$ and form its endomorphism
ring $A$. Then $\Ver_{p^n}$ is the category $A$-mod of finite dimensional $A$-modules.

Constructing the symmetric tensor structure on $\Ver_{p^n}$ is
more subtle. The essential point is to prove a splitting property
of tilting modules.

\begin{proposition} (Proposition \ref{splitmor})
Every morphism between tilting modules $T_m$ with \linebreak $m<p^k-1$
is split by tensoring with $\St_{k-1}$.
\end{proposition}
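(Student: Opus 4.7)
The plan is to show that the image of $f \otimes \id_{\St_{k-1}}$ is a direct summand of its target by an Ext-vanishing argument in the tilting theory of $G = SL_2(\k)$. Since $-\otimes \St_{k-1}$ is exact, $\Image(f \otimes \id_{\St_{k-1}}) = \Image(f) \otimes \St_{k-1}$; setting $K := \Image(f) \subseteq T_{m'}$, the proposition reduces to showing that the short exact sequence
\[
0 \to K \otimes \St_{k-1} \to T_{m'} \otimes \St_{k-1} \to (T_{m'}/K) \otimes \St_{k-1} \to 0
\]
splits in the category of rational $G$-modules.

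The central input is the following claim: for every simple $L(\lambda)$ with $\lambda < p^k$, the tensor product $L(\lambda) \otimes \St_{k-1}$ is a tilting $G$-module. Writing $\lambda = \sum_{i=0}^{k-1}\lambda_i p^i$ in base $p$ with $0 \le \lambda_i < p$, the Steinberg tensor product theorem gives $L(\lambda) = L(\lambda_0) \otimes L(\lambda_1)^{(1)} \otimes \cdots \otimes L(\lambda_{k-1})^{(k-1)}$ and $\St_{k-1} = L(p-1) \otimes L(p-1)^{(1)} \otimes \cdots \otimes L(p-1)^{(k-2)}$. Pairing factors at each Frobenius level yields
\[
L(\lambda) \otimes \St_{k-1} \;=\; \bigl(L(\lambda_0)\otimes L(p-1)\bigr) \otimes \bigl(L(\lambda_1) \otimes L(p-1)\bigr)^{(1)} \otimes \cdots \otimes \bigl(L(\lambda_{k-2})\otimes L(p-1)\bigr)^{(k-2)} \otimes L(\lambda_{k-1})^{(k-1)}.
\]
Each $L(\lambda_i)$ and $L(p-1)$ is a $p$-restricted simple module of $SL_2$, hence tilting; since tensor products of tilting modules are tilting and Frobenius twists preserve the tilting property, $L(\lambda) \otimes \St_{k-1}$ is tilting.

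Now all composition factors of $K$ are of the form $L(\lambda)$ with $\lambda \le m' < p^k - 1 < p^k$, so tensoring a composition series of $K$ with $\St_{k-1}$ produces a filtration of $K \otimes \St_{k-1}$ whose successive layers are tilting. Since iterated extensions of $\Delta$-filtered modules are $\Delta$-filtered (and analogously for $\nabla$-filtered), $K \otimes \St_{k-1}$ carries both a $\Delta$- and a $\nabla$-filtration, so it is itself tilting; the same reasoning applies to $(T_{m'}/K) \otimes \St_{k-1}$. The displayed short exact sequence thus consists of three tilting modules and splits by $\Ext^1_G(T, T') = 0$ between tiltings (immediate from $\Ext^1_G(\Delta(\mu), \nabla(\nu)) = 0$ combined with the filtrations), giving the desired direct-summand decomposition.

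The principal technical point is the tilting property of $L(\lambda) \otimes \St_{k-1}$ for $\lambda < p^k$; the hypothesis $m < p^k - 1$ in the proposition is exactly what places all relevant composition factors into this range, so the Steinberg factorization applies cleanly. Once this is in hand, the Ext-vanishing machinery for quasi-hereditary categories finishes the argument.
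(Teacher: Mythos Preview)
Your overall strategy matches the paper's: reduce to showing that $L(\lambda)\otimes\St_{k-1}$ is tilting for every $\lambda<p^k$, then deduce that $\Ker(f)\otimes\St_{k-1}$, $\Image(f)\otimes\St_{k-1}$, and $\Coker(f)\otimes\St_{k-1}$ are all tilting, so the relevant short exact sequences split by $\Ext^1$-vanishing between tilting modules. (You only wrote down the image--cokernel sequence; to conclude that $f\otimes 1$ itself is split you also need the kernel--image sequence $0\to\Ker(f)\otimes\St_{k-1}\to T_m\otimes\St_{k-1}\to\Image(f)\otimes\St_{k-1}\to 0$ to split, but this follows by the identical argument.)

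The genuine gap is in your proof of the key lemma. The assertion ``Frobenius twists preserve the tilting property'' is false: for instance $T_1=L(1)$ is tilting, but $T_1^{(1)}=L(p)$ is not (the indecomposable tilting module $T_p$ has length $3$ for $p>2$, and length $3$ for $p=2$ as well). So from your factored expression
\[
\bigl(L(\lambda_0)\otimes L(p-1)\bigr)\otimes\bigl(L(\lambda_1)\otimes L(p-1)\bigr)^{(1)}\otimes\cdots\otimes\bigl(L(\lambda_{k-2})\otimes L(p-1)\bigr)^{(k-2)}\otimes L(\lambda_{k-1})^{(k-1)}
\]
one cannot conclude tiltingness merely because each untwisted factor $L(\lambda_i)\otimes L(p-1)$ is tilting. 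What actually makes the product tilting is Donkin's formula $T_{a+pb}=T_a\otimes T_b^{(1)}$ for $p-1\le a\le 2p-2$: in particular $\St\otimes M^{(1)}$ is tilting whenever $M$ is. The paper proves the lemma by induction on $k$, writing $L_m\otimes\St_k=T_a\otimes\St\otimes(L_b\otimes\St_{k-1})^{(1)}$ with $m=a+pb$, $0\le a\le p-1$, applying the inductive hypothesis to $L_b\otimes\St_{k-1}$, and then Donkin's formula to $\St\otimes(\,\cdot\,)^{(1)}$. Your decomposition can be repaired along the same lines, building the product from the innermost factor outward and absorbing each Frobenius twist into the adjacent Steinberg factor via Donkin; but as written the argument does not go through.
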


In order to use this fact for putting an appropriate symmetric tensor structure
on $\Ver_{p^n}$, we set up a general categorical machinery of splitting ideals and abelian envelopes, which is of independent interest and likely to have other applications. 
Namely, given a Karoubian rigid monoidal category 
$\T$, we say that $Q\in \T$ is a {\it splitting object} if tensoring with $Q$ splits every morphism in $\T$; such objects form a thick ideal $\mathcal{S}\subset \T$. We say that $\T$ is {\it separated} if a morphism in $\T$ annihilated by tensoring with every splitting object is zero. Then we prove the following result. 

\begin{theorem}\label{maintt} (Theorems \ref{main0},\ref{main1}) If $\T$ is separated and $\mathcal{S}$ is finitely generated as a left ideal, then there is a multitensor (in particular, abelian) category $\C(\T)$ together with a faithful monoidal functor $F: \T\to \C(\T)$ which identifies $\mathcal{S}$ with the ideal of projectives in $\C(\T)$. Moreover, if this functor is full then $\C(\T)$ is the {\it abelian envelope} of $\T$ (i.e., faithful monoidal functors from $\T$ to any multitensor category $\D$ correspond to tensor functors $\C(\T)\to \D$). 
\end{theorem}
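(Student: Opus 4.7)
The plan is to realize $\C(\T)$ as modules over a canonical finite-dimensional algebra associated with a generator of $\S$. Fix a finite set $P_1,\dots,P_r\in\S$ generating $\S$ as a left ideal, let $P:=\bigoplus_i P_i$ and $A:=\End_{\T}(P)^{\op}$, and take $\C(\T)$ to be the category of finite-dimensional left $A$-modules. Define $F:\T\to\C(\T)$ by $F(X):=\Hom_{\T}(P,X)$ with $A$ acting by pre-composition. Since $F(P)=A$, each $F(P_i)$ is a direct summand of the free module and thus projective; more generally, any $Q\in\S$ is a summand of some $X\otimes P$, and rigidity gives $F(X\otimes P)\cong \Hom_{\T}(P\otimes P^\vee,X)$, which is a summand of $A^{\oplus N}$ because $P\otimes P^\vee\in\S$. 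This establishes $F(\S)\subseteq \mathrm{Proj}(\C(\T))$; equality follows because $F(P)=A$ is a progenerator.

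For faithfulness, suppose $F(f)=0$ for $f:X\to Y$. Separatedness together with symmetry (which makes the left ideal $\S$ two-sided) and the generation property reduce $f=0$ to $f\otimes\id_P=0$. For the latter, use that $P$ is a splitting object: $f\otimes\id_P$ factors as $X\otimes P\twoheadrightarrow I\hookrightarrow Y\otimes P$ with $I\in\S$. A section $s:I\to X\otimes P$ transposes under rigidity to $\hat s:I\otimes P^\vee\to X$ with $I\otimes P^\vee\in\S$. A diagram chase using the ideal structure of $\S$ and $F(f)=0$ (bootstrapped, via rigidity, to the vanishing of $\Hom_{\T}(R,f)$ for $R$ ranging over enough objects of $\S$) then yields $f\circ\hat s=0$, hence $I=0$ and $f\otimes\id_P=0$.

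The main obstacle is equipping $\C(\T)$ with a monoidal structure for which $F$ is monoidal. Exploiting that $\S$ is closed under tensoring with all of $\T$ and that $F$ sends $\S$ to projectives, I would define, for $M,N\in\C(\T)$ with projective presentations $F(R_1)\to F(R_0)\to M\to 0$ and $F(S_1)\to F(S_0)\to N\to 0$ with $R_i,S_j\in\S$,
\begin{equation*}
M\otimes N:=\mathrm{coker}\bigl(F(R_1\otimes S_0)\oplus F(R_0\otimes S_1)\longrightarrow F(R_0\otimes S_0)\bigr).
\end{equation*}
Verifying independence of the presentation, the associator, symmetric braiding, and rigidity (duals arise from rigidity in $\T$ applied to presenting objects, using stability of $\S$ under duals-tensored-with-$\T$-objects) is delicate but follows from universal-property and right-exactness arguments, giving $\C(\T)$ the structure of a symmetric multitensor category with $F$ monoidal.

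For the universal property when $F$ is fully faithful, let $G:\T\to\D$ be a faithful monoidal functor into a multitensor category. Splitting of tensoring by $G(P)$ on morphisms in $G(\T)$, combined with faithfulness of $G$ and abelianness of $\D$, forces $G(\S)\subseteq\mathrm{Proj}(\D)$. This yields a $\k$-algebra homomorphism $A\to B:=\End_{\D}(G(P))^{\op}$ and a tensor functor $\bar G:\C(\T)\to\D$ defined by $M\mapsto M\otimes_A G(P)$; this satisfies $\bar G\circ F\simeq G$ on $\S$, extends to all of $\T$ by right-exactness and the presentation formula, and is unique up to tensor isomorphism because $F$ is fully faithful and $F(\S)$ generates $\C(\T)$ under cokernels.
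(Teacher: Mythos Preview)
Your construction of $\C(\T)$ as $A$-modules with $A=\End_\T(P)^{\op}$ and $F(X)=\Hom_\T(P,X)$, together with the presentation-based tensor product, is exactly the paper's. Two of your steps, however, do not go through as written.

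\textbf{Faithfulness.} You need $\Hom_\T(P,f)=0\Rightarrow f\otimes 1_P=0$. Since $F$ is fully faithful on $\S$, the latter is equivalent to $\Hom_\T(P,f\otimes 1_P)=0$, which via the adjunction $\Hom(P,X\otimes P)\cong\Hom(P\otimes{}^*P,X)$ becomes $\Hom_\T(P\otimes{}^*P,f)=0$. From $\Hom_\T(P,f)=0$ you can deduce $\Hom_\T(R,f)=0$ only when $R$ is a summand of some $P^{\oplus N}$; but $P\otimes{}^*P$ is in general only a summand of $Z\otimes P$, and passing through the adjunction again merely reduces to $\Hom_\T(P,1_{{}^*Z}\otimes f)=0$, which you do not know. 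Your $\hat s$ argument is therefore circular: unwinding the adjunction, $f\circ\hat s=0$ is literally equivalent to $(f\otimes 1_P)\circ s=\iota=0$, which is what you are trying to prove. The paper resolves this by reversing the order: it first constructs the monoidal structure on $\C(\T)$ and proves that $F$ is monoidal (via an explicit bar-complex construction of the unit $\one$). Then $F(f\otimes 1_P)=F(f)\otimes 1_{F(P)}=0$, and since $f\otimes 1_P$ lies in $\S$ where $F$ is fully faithful, $f\otimes 1_P=0$; separatedness then gives $f=0$. (Aside: $\S$ is automatically a two-sided thick ideal in any rigid $\T$; symmetry is neither assumed nor needed.)

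\textbf{Abelian envelope.} Your claim that $G(\S)\subseteq\mathrm{Proj}(\D)$ is unjustified. Tensoring by $G(P)$ splits morphisms \emph{in the image $G(\T)$}, but that says nothing about arbitrary epimorphisms in $\D$, and a general multitensor $\D$ need not have any nonzero projectives at all. Even when it does, the functor $M\mapsto M\otimes_A G(P)$ must still be shown to be exact and monoidal, neither of which is automatic. The paper does not attempt a direct construction of the extension $\bar G$; once $F$ is shown to be fully faithful it simply observes that $\T\hookrightarrow\C(\T)$ satisfies the hypotheses of \cite[Theorem~9.2.1]{EHS}, which supplies the universal property.
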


We then apply this construction to the category $\T=\T_{n,p}$ of tilting modules for $SL_2(\k)$ modulo the tensor ideal generated by $\St_n$. This is enabled by Proposition \ref{splitmor}, which implies that the category $\T_{n,p}$ is separated. This yields an abelian equivalence $\C(\T_{n,p})\cong \Ver_{p^n}$, which equips $\Ver_{p^n}$ with a structure of a symmetric tensor category.   

Our main theorem is as follows.

\begin{theorem}\label{maintheorem}
Let $\k$ be an algebraically closed field of prime characteristic $p$. 
Then there are nested sequences of 
symmetric tensor categories $\Ver_{p^n}^+=\Ver_{p^n}^+(\k)\subset \Ver_{p^n}=\Ver_{p^n}(\k)$ over $\k$,
\[ \Ver^+_p \subset \Ver^+_{p^2}\subset 
\Ver^+_{p^3} \subset \cdots \]
and
\[ \Ver_p \subset \Ver_{p^2}\subset 
\Ver_{p^3} \subset \cdots \]
(Theorem \ref{inclusion}) with the following properties.\footnote{We do not define the category 
$\Ver^+_2$.}
\begin{enumerate}
\item The category $\Ver_p$ is the semisimplification of the
module category for $\ZZ/p$. For $n\ge 2$, the categories
$\Ver_{p^n},\Ver_{p^n}^+$ are not semisimple (Theorem \ref{cyclot}(i),(ii)).
\item For $p>2$, the category $\Ver_{p^n}$ decomposes as a tensor product
$\Ver^+_{p^n}\boxtimes\sVec$ (Corollary \ref{Supervec}). 
\item We have $\Ver_{2^n}=\C_{2n-2}$ and $\Ver^+_{2^n}=\C_{2n-3}$,
the symmetric tensor categories contructed in~\cite{BE} (Theorem \ref{cyclot}(iii)).
\item If $p^n>2$, the Grothendieck ring of $\Ver^+_{p^n}$ is isomorphic to the ring of integers in the real part of the cyclotomic field of $p^n$-th roots of unity,
namely $\ZZ[2\cos(2\pi/p^n)]$. In other words, 
$\Ver^+_{p^n}$ gives an abelian categorification of the ring 
$\ZZ[2\cos(2\pi/p^n)]$. The Grothendieck ring of $\Ver_{p^n}$ is 
isomorphic to the group ring of $\ZZ/2$ over $\ZZ[2\cos(2\pi/p^n)]$ if $p>2$ and to 
$\ZZ[2\cos(2\pi/2^{n+1})]$ if $p=2$ (Theorem \ref{cyclot}(iv)). 
\item The Frobenius-Perron dimension of $\Ver_{p^n}$ is $\frac{p^n}{2\sin^2(\pi/p^n)}$ (Proposition \ref{CartanFP}). 
\item The categories $\Ver_{p^n}^+$, $p>2$ and $\Ver_{2^n}$ are incompressible, i.e., any (not necessarily braided) tensor functor out of them is a fully faithful embedding (as a tensor subcategory). In particular, these categories do not admit a tensor functor to a category of smaller Frobenius-Perron dimension. Likewise, the categories $\Ver_{p^n}$  and $\Ver_{2^n}^+$ are incompressible as symmetric tensor categories (Theorem \ref{incompressible}). 
\item The category $\Ver_{p^n}$ admits a semisimple braided lift to characteristic zero, producing the semisimplification $\Ver_{p^n}(K)$ of the category of tilting modules for quantum $SL_2$ at the root of unity of order $p^n$ (Subsection \ref{lifttochar0}). 
\item (The Steinberg tensor product theorem for $\Ver_{p^n}$) There exist simple objects $\Bbb T_j^{[k]}$ of $\Ver_{p^n}$, $j=0,...,p-1$, $k=1,...,n$, and $j\ne p-1$ if $k=1$, such that any simple object of $\Ver_{p^n}$ can be uniquely written in the form 
$L_i=\Bbb T_{i_1}^{[1]}\otimes...\otimes \Bbb T_{i_n}^{[n]}$, where $0\le i=\overline{i_1...i_n}\le p^{n-1}(p-1)-1$ is an integer written in base $p$. The simple objects of $\Ver_{p^n}^+$ are the $L_i$ with even $i$ (Theorem \ref{tpt}). 
\item The category $\Ver_{p^n}$ has $p-1$ blocks of sizes $1,p-1,p(p-1),...,p^{n-2}(p-1)$, so a total of $n(p-1)$ blocks. Moreover, all blocks of the same size are equivalent, even for different $n$. For instance, the blocks of size $1$ are semisimple (i.e., generated by a simple projective object), and the blocks of size $p-1$ are equivalent to the unique non-semisimple block of representations of the symmetric group $S_p$ over $\k$ (Proposition \ref{blockstr}, Proposition \ref{blocksp2}).  
\item The Grothendieck ring of the stable category of $\Ver_{p^n}^+$ is isomorphic to 
$\Bbb F_p[z]/z^{\frac{p^{n-1}-1}{2}}$ for $p>2$ and $\Bbb F_2[z]/z^{2^{n-2}}$ for $p=2$. The Grothendieck ring of the stable category of $\Ver_{2^n}$ is $\Bbb F_p[z]/z^{2^{n-1}-1}$ (Proposition \ref{stabrings}). 
\item The determinant of the Cartan matrix of a block of size $p^r(p-1)$ 
in $\Ver_{p^n}$ is $p^{p^r}$ for $r\ge 0$, and its entries are $0$ or powers of $2$ (Proposition \ref{detcar}, Corollary \ref{powersof2}). 
\item If $p>2$ then $\Ver_{p^{n}}$ is a Serre subcategory in $\Ver_{p^{n+k}}$ (Proposition \ref{serre}).   
\end{enumerate}
\end{theorem}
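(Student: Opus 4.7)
The plan is to argue by induction on $k$, reducing to the case $k=1$: we prove $\Ver_{p^n}$ is a Serre subcategory of $\Ver_{p^{n+1}}$. The strategy is to show that $\Ver_{p^n}$, identified via the embedding of Theorem \ref{inclusion}, is a union of blocks of $\Ver_{p^{n+1}}$; since every block is a Serre subcategory and a finite union of Serre subcategories is Serre, this suffices.

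Since the embedding $\Ver_{p^n}\hookrightarrow\Ver_{p^{n+1}}$ is a fully faithful exact functor, it induces injections $\Ext^1_{\Ver_{p^n}}(L,L')\hookrightarrow\Ext^1_{\Ver_{p^{n+1}}}(L,L')$ for simples $L,L'$ of $\Ver_{p^n}$. Thus simples linked in $\Ver_{p^n}$ remain linked in $\Ver_{p^{n+1}}$, and each block $B$ of $\Ver_{p^n}$ sits inside a single block $B'$ of $\Ver_{p^{n+1}}$. The task is to show $B=B'$ for every $B$. By item (9), $\Ver_{p^n}$ has $p-1$ blocks of each size $1,\,p-1,\,p(p-1),\ldots,p^{n-2}(p-1)$, while $\Ver_{p^{n+1}}$ has $p-1$ blocks of each size $1,\,p-1,\ldots,p^{n-1}(p-1)$. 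The number of simples of $\Ver_{p^{n+1}}$ lying outside the image of $\Ver_{p^n}$ equals $(p-1)p^n-(p-1)p^{n-1}=(p-1)^2 p^{n-1}$, exactly matching the total size $(p-1)\cdot p^{n-1}(p-1)$ of the $p-1$ new blocks of size $p^{n-1}(p-1)$ in $\Ver_{p^{n+1}}$. So if blocks of $\Ver_{p^n}$ map to blocks of the same size in $\Ver_{p^{n+1}}$, then $\Ver_{p^n}$ is forced to be exactly the union of blocks of $\Ver_{p^{n+1}}$ of size $\le p^{n-2}(p-1)$, which establishes the Serre property.

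The crux is therefore to verify that blocks do not enlarge: each block $B\subset\Ver_{p^n}$ of size $s$ lies inside a block $B'\subset\Ver_{p^{n+1}}$ of the same size $s$. I would combine two ingredients. First, item (9) asserts that blocks of the same size in $\Ver_{p^n}$ and $\Ver_{p^{n+1}}$ are equivalent as abelian categories, indicating that each $B$ is already ``saturated'' among blocks of size $s$. Second, the Steinberg tensor product theorem (item 8) provides a precise description $L_i=\Bbb T_{i_1}^{[1]}\otimes\cdots\otimes\Bbb T_{i_n}^{[n]}$, which I would use to pin down the projective cover of each such $L_i$ via the tilting-module description of Theorem \ref{maintt}. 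The projective covers in $\Ver_{p^n}$ and in $\Ver_{p^{n+1}}$ come from the splitting ideals in $\T_{n,p}$ and $\T_{n+1,p}$, generated essentially by $\St_{n-1}$ and $\St_n$ respectively; so the comparison reduces to a statement about how these splitting ideals relate under the natural quotient between the two tilting categories. The main obstacle is exactly this comparison of splitting ideals and the induced matching of projective covers; the hypothesis $p>2$ enters because in characteristic $2$ the block decomposition (items 2--3) is structured differently and would require a separate argument.
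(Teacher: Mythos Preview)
Your reduction to $k=1$ is correct, but the core strategy---showing that $\Ver_{p^n}$ is a union of blocks of $\Ver_{p^{n+1}}$---is false, so the argument cannot be completed. Concretely, the unit object $\one=L_0$ lies in the principal block of $\Ver_{p^{n+1}}$, which is a block of \emph{maximal} size $p^{n-1}(p-1)$. Using the $\Ext^1$ description (Proposition~\ref{ext1formula}), one has $\Ext^1_{\Ver_{p^{n+1}}}(L_0,L_{2p-2})\neq 0$: the base-$p$ expansions $0=\overline{0\cdots 00}$ and $2p-2=\overline{0\cdots 01(p-2)}$ differ only in the last two digits, the penultimate ones differ by $1$, and the last ones sum to $p-2$. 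But by the Steinberg tensor product theorem (Theorem~\ref{tpt}) the simples of $\Ver_{p^n}$ inside $\Ver_{p^{n+1}}$ are exactly the $L_i$ with last digit $i_{n+1}=0$, so $L_{2p-2}\notin\Ver_{p^n}$. Hence the block of $\one$ in $\Ver_{p^{n+1}}$ strictly contains simples outside $\Ver_{p^n}$, and your claim that ``blocks do not enlarge'' fails already for the principal block. In particular, $\Ver_{p^n}$ is \emph{not} the union of the blocks of $\Ver_{p^{n+1}}$ of size $\le p^{n-2}(p-1)$.

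A Serre subcategory of an artinian category need not be a union of blocks; it is determined by a set of simples, together with the condition that all extensions in the ambient category between objects supported on that set remain in the subcategory. The paper's proof of Proposition~\ref{serre} does not use blocks at all. Instead it proves a stronger, tensor-categorical dichotomy: any tensor subcategory $\C$ with $\Ver_{p^n}\subset\C\subset\Ver_{p^{n+1}}$ equals one of the two ends. The key input is that the simple projective $X=\Bbb T_{p^{n-1}-1}\in\Ver_{p^n}$ has projective cover $P_X=\Bbb T_{p^n+p-2}$ in $\Ver_{p^{n+1}}$ with $[P_X:X]=2$ (Lemma~\ref{mult2}); self-duality then forces the projective cover of $X$ in $\C$ to be either $X$ or $P_X$, and tensoring propagates this to all of $\C$. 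Part (ii) follows because the Serre closure of $\Ver_{p^n}$ inside $\Ver_{p^{n+1}}$ is a tensor subcategory with the same simple objects as $\Ver_{p^n}$, hence by (i) coincides with $\Ver_{p^n}$.
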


Theorem \ref{maintheorem} implies that we can define tensor categories $\Ver_{p^\infty}=\ccup_{n\ge 1}\Ver_{p^n}$. Motivated by the main results of \cite{O},\cite{CEO}, we make the following (perhaps imprudently) bold conjecture: 

\begin{conjecture}\label{bold} Any symmetric tensor category of moderate growth over an algebraically closed field $\k$ of characteristic $p>0$ admits a fiber functor to $\Ver_{p^\infty}$. 
\end{conjecture}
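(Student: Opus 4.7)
\emph{Approach.} The conjecture is explicitly flagged as bold and left unproved by the authors, so what follows is a strategic sketch rather than a plausible road to a full proof. The model is the proof of \cite{CEO}, which handles the Frobenius-exact case by reducing to the fusion situation of \cite{O}. A preliminary reduction: any symmetric tensor functor from a finitely tensor-generated $\C$ to $\ccup_{n\ge 1}\Ver_{p^n}$ factors through some $\Ver_{p^n}$, because the image is a finitely tensor-generated subcategory and the chain $\Ver_p\subset\Ver_{p^2}\subset\cdots$ from Theorem~\ref{maintheorem}(12) is strictly increasing with finitely many blocks at each level. Moreover an arbitrary $\C$ of moderate growth is the filtered union of its finitely tensor-generated subcategories, and fiber functors on a cofinal subsystem glue. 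Hence it suffices, for each finitely tensor-generated $\C$, to produce a symmetric tensor functor $\C\to\Ver_{p^n}$ for some $n=n(\C)$.

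The natural next move is induction on an invariant of $\C$ measuring how far it is from being Frobenius exact. The base case is exactly \cite{CEO}: if $\C$ is Frobenius exact then there is already a fiber functor to $\Ver_p\subset\Ver_{p^\infty}$. For the inductive step one would form the semisimplification $\overline\C$, apply \cite{CEO} to obtain $\overline F\colon\overline\C\to\Ver_p$, and attempt to lift $\overline F$ through the non-semisimple extensions of $\C$ to a functor $\C\to\Ver_{p^n}$ for some larger $n$. The technical mechanism for the lift would be the abelian envelope formalism of Theorem~\ref{maintt}: identify a Karoubian rigid monoidal subcategory $\T\subset\C$ of tilting-type generators and an appropriate splitting ideal $\S\subset\T$, so that Theorem~\ref{main1} produces a tensor functor to the envelope $\C(\T)$, and then match the block data and Steinberg-type decomposition of $\C(\T)$ with the structural items (8) and (9) of Theorem~\ref{maintheorem} to identify it with a subcategory of (an ind-completion of) some $\Ver_{p^n}$. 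Morally, the integer $n$ should count how many levels of non-semisimplicity one needs to absorb before matching into the $\Ver_{p^\infty}$-tower.

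The decisive obstruction is the lift itself. In positive characteristic there is at present no obstruction theory for fiber functors comparable to Deligne's existence theorem in characteristic zero, and passing from the compatible functor $\overline F$ on $\overline\C$ to an actual functor on $\C$ across non-semisimple extensions is exactly where all known techniques stop. A potentially productive but very ambitious route is to combine the characteristic-zero lift of $\Ver_{p^n}$ provided in Subsection~\ref{lifttochar0} with a (conjectural) mixed-characteristic lift of a finite piece of $\C$, apply Deligne's theorem in characteristic zero to the resulting symmetric tensor category, and then specialise the fiber functor back to $\k$ while using the match of Grothendieck rings from Theorem~\ref{maintheorem}(4) to land in $\Ver_{p^n}$ rather than in $\sVec$. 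The existence of such a lift of $\C$ is itself a deep open problem, and I expect that Conjecture~\ref{bold} will require a genuinely new ingredient of this flavour --- or else a direct characteristic-$p$ construction of fiber functors that circumvents Frobenius exactness entirely.
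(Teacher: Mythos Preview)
The statement is a conjecture that the paper explicitly does not prove; immediately after stating it the authors remark that it ``is open even for finite tensor categories.'' There is therefore no proof in the paper to compare your proposal against, and you are right to present a strategic sketch rather than a proof.

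Two remarks on the sketch itself. First, the reduction step ``fiber functors on a cofinal subsystem glue'' is not automatic: existence of a fiber functor on each finitely tensor-generated subcategory does not by itself give a coherent system compatible with the inclusions. The triviality of the braided autoequivalence group of $\Ver_{p^n}$ (Corollary~\ref{autoeq}) is helpful here, but one still has to arrange the isomorphisms coherently, and the target $\Ver_{p^{n(\C_i)}}$ may jump with $i$. Second, item~(12) of Theorem~\ref{maintheorem} (the Serre subcategory property) is stated only for $p>2$, so your appeal to it in the preliminary reduction needs a separate argument in characteristic~$2$ (or a reference to \cite{BE}). Your identification of the lift across non-semisimple extensions as the decisive obstruction is on target and matches where the authors locate the difficulty.
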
 

Namely, the main result of \cite{CEO} implies that 
Conjecture \ref{bold} holds for Frobenius exact, in particular, for semisimple categories (in the special case of fusion categories it was established earlier in \cite{O}). However, it is open even for finite tensor categories. In particular, this conjecture would imply that Deligne's theorem holds 
for {\it integral} finite symmetric tensor categories for $p>2$ (for example, for representation categories of triangular Hopf algebras); in other words, that such categories 
are representation categories of finite supergroup schemes. However, even this specialization of the conjecture is currently wide open. 

In any case, the construction of the category $\Ver_{p^\infty}$ opens the door for defining and studying many new interesting symmetric tensor categories, which may be constructed as representation categories of affine group schemes in $\Ver_{p^\infty}$. Examples of such group schemes are the general linear group $GL(X)$ for $X\in \Ver_{p^\infty}$, as well as the orthogonal group $O(X)$ and the symplectic group $Sp(X)$ when $X$ is equipped with a symmetric, respectively skew-symmetric isomorphism $X\to X^*$. In particular, we see that in characteristic $p$ there are infinitely many versions of general linear groups of a given ``rank'' $r$, parametrized by various objects $X$ of length $r$. 

The results of this paper have applications to modular representation theory which are discussed in \cite{BE}, Subsection 4.4. Namely, let $V$ be the tautological 2-dimensional $\k$-representation of the group $SL_2(\Bbb F_{p^n})$, and let $\D_{n,p}$ be the quotient of the Karoubian tensor category generated by $V$ (i.e., with indecomposables being direct summands of tensor powers of $V$) by the ideal of projectives, i.e., the corresponding stable category. Then it is shown in \cite{BE}, Proposition 4.5 that $\D_{n,p}$ is equivalent to $\T_{n,p}$. This implies that $\Ver_{p^n}$ is the abelian envelope of $\D_{n,p}$, which greatly clarifies the structure of $\D_{n,p}$.  

The organization of the paper is as follows. In Section \ref{s2} we develop the technology of splitting ideals and abelian envelopes. In Section \ref{s3} we recall and prove a number of auxiliary results about tilting modules for $SL_2(\k)$. In Section \ref{s4} we apply the technology of Sections \ref{s2}, \ref{s3} to constructing and studying the categories $\Ver_{p^n}$. In particular, we prove the properties of $\Ver_{p^n}$ listed in Theorem \ref{maintheorem}, and also compute the Cartan and decomposition matrices of their blocks, the fusion rules, and $\Ext^1$ between simple objects. At the end of Section \ref{s4} we use our results to classify symmetric tensor categories generated by an object with invertible exterior square. Finally, in Section \ref{s5} we compute explicitly the examples $\Ver_{p^2}$ and $\Ver_{3^3}$. 

\begin{remark} The categories $\Ver_{p^n}$ were also constructed simultaneously and independently by Kevin Coulembier in \cite{C3}. This is an application of his general theory of abelian envelopes alternative to the one we propose in Section \ref{s2}. 
\end{remark} 

{\bf Acknowledgements.} We are grateful to Kevin Coulembier, Thorsten Heidersdorf and Rapha\"el Rouquier for useful discussions and to anonymous referees for careful reading of the paper. This material is based on work supported by
the National Science Foundation under Grant No.~DMS-1440140 while the authors were in residence at the Mathematical Sciences Research
Institute in Berkeley, California (D.~B. in Spring 2018, P. E. in Spring 2018 and Spring 2020, V.~O. in Spring 2020). The work of P. E. was partially supported by the NSF grant DMS-1502244. The work of V. O. was partially supported by the HSE University Basic Research Program, Russian Academic Excellence Project '5-100' and by the NSF grant DMS-1702251.
The work of D. B. was partly supported by the Isaac Newton Institute for Mathematical Sciences during the Spring 2020 semester, under EPSRC
grant EP/R014604/1. Many of the results in Section 5 were obtained by a computer calculation using MAGMA (\cite{BCP}). 

\section{Splitting ideals and abelian envelopes} \label{construction}\label{s2}

In this section we propose a general construction of abelian envelopes $\C(\T)$ of a certain class of Karoubian rigid monoidal categories $\T$ which we call {\it separated and complete}. 
Under a certain finiteness assumption on $\T$, the resulting category $\C(\T)$ is a multitensor category with enough projectives. This construction is designed to build the categories $\Ver_{p^n}$ but is interesting in its own right, and we expect it to have applications beyond the main goals of this paper. Therefore we developed this theory in a greater generality than actually needed here. In particular, a number of statements in this section, specifically Lemma \ref{splile}, Propositions \ref{classif}, \ref{finprop}, Corollary \ref{fini}, Propositions \ref{sepa}, \ref{uniqmin}, Corollary \ref{semis}, Theorem \ref{main2}, are not used in the subsequent sections of this paper. 

\subsection{Conventions} 
Throughout the paper, $\k$ will denote an algebraically closed field. 
We will freely use the theory of tensor categories and refer the reader to \cite{EGNO} for the basics of this theory. In particular, we will use the conventions of this book. 

Namely, by an {\it artinian category} over $\k$ we mean a $\k$-linear abelian category 
in which objects have finite length and morphism spaces are finite dimensional, and 
call such a category {\it finite} if it has finitely many (isomorphism classes of) simple objects and enough projectives. By a  {\it (multi)tensor category} we will mean 
the notion defined in \cite{EGNO}, Definition 4.1.1; in particular, such categories are artinian (hence abelian). We will denote the category of finite dimensional vector spaces by $\Vec_\k$ or shortly by $\Vec$ and for ${\rm char}(\k)\ne 2$ the category of finite dimensional supervector spaces by $\sVec_\k$ or shortly $\sVec$. If $X$ is an object of a tensor category with finitely many simple objects, then we denote by ${\rm FPdim}(X)$ the Frobenius-Perron dimension of $X$  (\cite{EGNO}, Subsections 3.3, 4.5). We will also consider categories with tensor product which are not abelian but only Karoubian; in this case we will use the term {\it Karoubian monoidal category}. Similarly, by a {\it tensor functor} we will mean an exact monoidal functor between tensor categories, see \cite{EGNO}, Definition 4.2.5 (note that such a functor is automatically faithful, see \cite{EGNO}, Remark 4.3.10). In more general situations we will use the term {\it monoidal functor}; all monoidal functors we consider will be additive. Also, a symmetric tensor functor out of a symmetric tensor category into another, usually quite concrete one (such as $\Vec_\k$, $\sVec_\k$ or ${\rm Ver}_{p^n}$ defined below) 
will be called a {\it fiber functor}. 

Finally, we will need the notions of {\it surjective and injective tensor functors}, \cite{EGNO}, Subsection 6.3. Namely, a tensor functor $F: \C\to \D$ is surjective if every object $Y\in \D$ is a subquotient of $F(X)$ for some $X\in \C$, and 
$F$ is injective if it is fully faithful (i.e., embedding of a tensor subcategory).   

\subsection{Splitting objects and splitting ideals} Let $\k$ be an algebraically closed field. Throughout the paper, $\T$ will be a $\k$-linear rigid monoidal Karoubian category with bilinear tensor product and finite dimensional Hom spaces (in particular, it is Krull-Schmidt). 

Recall that a  {\it (thick) ideal} $\P\subset \T$ is a full Karoubian subcategory closed under tensoring on both sides with $\T$. In this case $\P$ is also closed under left and right duals:   indeed, the definition of duals implies that $Q^*$ is a direct summand in $Q^*\otimes Q\otimes Q^*$ and ${}^*Q$ is a direct summand in 
${^*}Q\otimes Q\otimes {^*}Q$.    Thus, $\P$ is determined 
by its indecomposable objects, which form a subset of indecomposable objects of $\T$.
For a thick ideal $\P\subset \T$, the {\it tensor ideal $\langle \P\rangle$ generated by $\P$} is the collection of morphisms which factor through objects of $\P$. We can form the quotient monoidal category $\T/\langle \P\rangle$, which we will sometimes also denote by $\T/\P$. 

Recall also that an ideal $\P$ is said to be {\it finitely generated} (as a left ideal) if there exists an object $P\in \P$ such that every indecomposable object $Q\in \P$ is a direct summand in $X\otimes P$ for some $X\in \T$ (note that this is a stronger condition than finite generation as a two-sided ideal, see Example \ref{exaa}(2)). In this case by the rigidity of $\T$ we may  (and will) choose $P$ so that every indecomposable object $Q\in \P$ is also a direct summand in $P\otimes Y$ for some $Y\in \T$. Such an object $P$ will be called {\it a generator} of $\P$. 
For example, if $\P$ has finitely many isomorphism classes of indecomposables, then the direct sum $P$ 
of representatives of these classes is a generator. 

Recall that a morphism $f: X\to Y$ in $\T$ is called {\it split} if it is a direct sum of a zero morphism and an isomorphism. 
In other words, $f$ is split if it is a composition $\iota\circ \pi$ where $\pi$ is a split epimorphism (projection to a direct summand) and  $\iota$ is a split monomorphism (inclusion of a direct summand). It is easy to see that $f$ is split if and only if there exists $g: Y\to X$ such that 
\begin{equation}\label{splieq}
f\circ g\circ f=f,\ g\circ f\circ g=g.
\end{equation}
In fact, for this it suffices that there exist $h: Y\to X$ such that $f\circ h\circ f=f$; then $g:=h\circ f\circ h$ satisfies \eqref{splieq}. 

The following lemma is well known but we provide a proof for reader's convenience. 

\begin{lemma}\label{dirsum} The direct sum of two morphisms is split if and only if each summand is split. 
\end{lemma}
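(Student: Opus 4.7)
The plan is to use the characterization given in the paragraph before the lemma: a morphism $f$ is split if and only if there exists $h$ going the other way with $f\circ h\circ f=f$. Once this reformulation is in hand, both directions of the lemma are a direct matrix calculation, so I would not invoke the decomposition $f=\iota\circ\pi$ explicitly.

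For the easy direction, suppose $f_i\colon X_i\to Y_i$ are split for $i=1,2$, and pick $h_i\colon Y_i\to X_i$ with $f_i\circ h_i\circ f_i=f_i$. Put $f:=f_1\oplus f_2$ and $h:=h_1\oplus h_2$. Then $f\circ h\circ f=(f_1\circ h_1\circ f_1)\oplus(f_2\circ h_2\circ f_2)=f_1\oplus f_2=f$, so $f$ is split.

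For the converse, suppose $f=f_1\oplus f_2\colon X_1\oplus X_2\to Y_1\oplus Y_2$ is split, and pick $h\colon Y_1\oplus Y_2\to X_1\oplus X_2$ with $f\circ h\circ f=f$. Writing $h$ as a matrix $(h_{ij})$ with $h_{ij}\colon Y_j\to X_i$ and using that $f=\mathrm{diag}(f_1,f_2)$, the product $f\circ h\circ f$ has $(i,j)$-entry $f_i\circ h_{ij}\circ f_j$. Comparing with $f=\mathrm{diag}(f_1,f_2)$, the diagonal entries give $f_i\circ h_{ii}\circ f_i=f_i$ for $i=1,2$, which by the characterization above shows that each $f_i$ is split.

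There is really no main obstacle: the statement is essentially formal once one has the equational characterization of splitness, and the only thing one could worry about, namely that the ``splitter'' for $f$ might not be block-diagonal, is harmless because only the diagonal blocks $h_{11}$ and $h_{22}$ are needed to witness splitness of $f_1$ and $f_2$ separately.
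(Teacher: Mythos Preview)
Your proof is correct and follows essentially the same approach as the paper: both use the characterization $f\circ h\circ f=f$, write the splitter $h$ for $f_1\oplus f_2$ as a matrix, and read off $f_i\circ h_{ii}\circ f_i=f_i$ from the diagonal entries. The paper simply declares the ``if'' direction obvious and only treats $f_1$ explicitly, but the argument is the same.
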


\begin{proof} The ``if" direction is obvious, so let us prove the ``only if" direction. 
If $f_i: X_i\to Y_i$, $i=1,2$, and $f=f_1\oplus f_2$ is split then there is $h: Y_1\oplus Y_2\to X_1\oplus X_2$ satisfying $f\circ h\circ f=f$. 
Let $h_{ij}$ be the components of $h$. Then we have $f_1\circ h_{11}\circ f_1=f_1$, i.e., $f_1$ is split. 
\end{proof} 

\begin{definition} 
We will say that an ideal $\P\subset \T$ is a {\it splitting ideal} if for any $Q_1,Q_2,R\in \P$ and any morphism $f: Q_1\to Q_2$ the morphism $1_R\otimes f: R\otimes Q_1\to R\otimes Q_2$ is split.
\end{definition} 

Note that in this case it follows by taking duals that the morphism $f\otimes 1_R: Q_1\otimes R\to Q_2\otimes R$ 
is also split. 

  In other words, a splitting ideal is an ideal that 
splits its own morphisms.   

\begin{definition}\label{spliobj} We will say that an object $R\in \T$ is a {\it splitting object} if 
for any $Q_1,Q_2\in \T$ and a morphism $f: Q_1\to Q_2$ the morphisms $1_R\otimes f: R\otimes Q_1\to R\otimes Q_2$ and  $f\otimes 1_R: Q_1\otimes R\to Q_2\otimes R$ are split.
\end{definition}

In other words, a splitting object is an object that splits all morphisms in the category.

It is clear that splitting objects form a splitting ideal, which we denote by $\S=\S(\T)$.   It turns out that the converse also holds, i.e., an object of a splitting ideal splits not just all morphisms in this ideal but, in fact, all morphisms in the category.

\begin{lemma}\label{splile} Every splitting ideal consists of splitting objects. 
 \end{lemma}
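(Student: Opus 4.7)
The plan is to fix an arbitrary object $R\in\P$ and an arbitrary morphism $f:Q_1\to Q_2$ in $\T$, and to show that $1_R\otimes f$ is split; the symmetric argument for $f\otimes 1_R$ then follows using that $\P$ is closed under left duals as well. The splitting ideal hypothesis only directly controls morphisms whose source and target already lie in $\P$, so the essential task is to bootstrap this property to arbitrary morphisms in $\T$ whose endpoints need not belong to $\P$.

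The key move is to first enlarge the situation so that the morphism in question lives inside $\P$. Since $\P$ is a thick ideal it contains $R^*$, and hence it contains $R^*\otimes R\otimes Q_i$ for $i=1,2$. Thus $g:=1_{R^*\otimes R}\otimes f$ is a morphism between objects of $\P$, and applying the splitting ideal property to $g$ with tensoring object $R\in\P$ yields that
\[
1_R\otimes g \;=\; 1_{R\otimes R^*\otimes R}\otimes f
\]
is split in $\T$.

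Next I would use rigidity to retract back to $R$. The zigzag identity $(1_R\otimes \mathrm{ev}_R)\circ(\mathrm{coev}_R\otimes 1_R)=1_R$ exhibits $R$ as a direct summand of $R\otimes R^*\otimes R$, say $R\otimes R^*\otimes R\cong R\oplus M$. Tensoring this decomposition on the right with $Q_i$, the split morphism $1_{R\otimes R^*\otimes R}\otimes f$ decomposes as the direct sum $(1_R\otimes f)\oplus(1_M\otimes f)$, and Lemma~\ref{dirsum} forces each summand, in particular $1_R\otimes f$, to be split.

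The main conceptual obstacle is spotting this two-step trick: first fatten $R$ on the left by $R^*\otimes R$ to land inside $\P$, then contract back to $R$ using the rigidity direct summand. Once this is in hand, the remaining verifications — that the direct sum decomposition is compatible with tensoring by $f$, and that Lemma~\ref{dirsum} applies — are routine, and the symmetric splitness of $f\otimes 1_R$ follows by the mirror argument with $\mathrm{coev}$ and $\mathrm{ev}$ on the opposite side, using the left dual ${}^{*}R\in\P$.
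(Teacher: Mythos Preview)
Your argument is correct and is essentially the same as the paper's: both show that $1_{R\otimes R^*\otimes R}\otimes f$ is split and then use that $R$ is a direct summand of $R\otimes R^*\otimes R$ together with Lemma~\ref{dirsum}. The only cosmetic difference is the order of grouping---the paper first forms $g=1_R\otimes f$ (a morphism in $\P$) and then tensors with $R\otimes R^*\in\P$, whereas you first form $1_{R^*\otimes R}\otimes f$ and then tensor with $R$---but this is the same trick.
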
 
 
 \begin{proof} Let $\P\subset \T$ be a splitting ideal, $R\in \P$ and $f: Q_1\to Q_2$ be a morphism in $\T$. 
 Then $g:=1_R\otimes f: R\otimes Q_1\to R\otimes Q_2$ is a morphism in $\P$ (as $\P$ is an ideal). Thus 
 the morphism 
 $$
 1_{R\otimes R^*}\otimes g: R\otimes R^*\otimes R\otimes Q_1\to  R\otimes R^*\otimes R\otimes Q_2
 $$
 is split. But $R$ is a direct summand in $R\otimes R^*\otimes R$ since $R$ is rigid. This implies that 
 $1_{R\otimes R^*}\otimes g=g\oplus h$, so $g$ itself is split by Lemma \ref{dirsum}. Similarly, $f\otimes 1_R$ is split, as claimed. 
 \end{proof} 
 
 Lemma \ref{splile} implies that $\S$ is the unique maximal splitting ideal in $\T$, containing all other splitting ideals. 
 
 A trivial example of an ideal is $\P=0$ (it is clearly splitting and finitely generated with $P=0$). Here are some more interesting examples. 

\begin{example}\label{multiten}  Suppose that $\T$ is a multitensor category (\cite{EGNO}, Definition 4.1.1) with enough projectives. Then the ideal $\P={\rm Pr}(\T)$ of projectives in $\T$ coincides with $\S(\T)$ and is a finitely generated splitting ideal. A generator $P$ is the projective cover of $\one$. Indeed, it is clear that every projective object 
is splitting. Conversely, if $Q$ splits every morphism between projective objects 
then in particular it splits a morphism $\xi: P_1\to P_0$ whose cokernel is $\one$, 
which implies that $Q$ is projective, since $Q$ is the cokernel of the split morphism $\xi\otimes 1_Q$. 
\end{example}

\begin{example}\label{nonspli} Let $\T$ be the rigid monoidal category 
of rigid finite dimensional bimodules over a finite dimensional non-semisimple Frobenius algebra $B$ (i.e., those projective as a left and as a right module); note that the functor of double dual in this category is given by the Nakayama automorphism. Then the subcategory $\P$ of projective bimodules is a finitely generated ideal (with generator $P=B^e:=B\otimes B^{\rm op}$), but is {\it not} a splitting ideal. Indeed, there are non-split endomorphisms of the bimodule $B^e$ (for example, nonzero elements of the radical of $B^e$), and they are not split by tensoring with any rigid bimodule. In fact, in this case there are no nonzero splitting ideals at all, since for any bimodule $Q$ the bimodule $B^e\otimes_B Q\otimes_B B^e=B\otimes_\k Q\otimes_\k B$ is free. 
\end{example} 

Let us now describe all splitting ideals in $\T$. 
 
\begin{lemma}\label{spliob} (a) Let $R\in \T$ be an indecomposable splitting object. Then the following conditions on an object $Q\in \T$ are equivalent: 

(i) there exists $X\in \T$ such that $\Hom(X\otimes Q,R)\ne 0$; 

(ii) there exists $X\in \T$ such that $\Hom(R,X\otimes Q)\ne 0$; 

(iii) there exists $X\in \T$ such that $R$ is a direct summand of $X\otimes Q$. 

\noindent (b) Let $Q,R\in \T$ be indecomposable splitting objects. 
Then there exists $X\in \T$ such that $R$ is a direct summand in $X\otimes Q$ 
if and only if there exists $X\in \T$ such that $Q$ is a direct summand of $X\otimes R$. 
\end{lemma}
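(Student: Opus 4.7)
For part (a), the directions (iii)$\Rightarrow$(i) and (iii)$\Rightarrow$(ii) are immediate, since a direct-sum decomposition provides both a nonzero inclusion and a nonzero projection; the real task is to prove (i)$\Rightarrow$(iii) and (ii)$\Rightarrow$(iii), from which (i)$\Leftrightarrow$(ii) follows as a byproduct. For (i)$\Rightarrow$(iii), I would first use the rigidity adjunction $\Hom(X\otimes Q,R)\cong \Hom(X,R\otimes Q^*)$ to reformulate (i) as the single condition $R\otimes Q^*\ne 0$, and then examine the counit of that adjunction,
\[
\phi := 1_R\otimes \mathrm{ev}_Q \colon (R\otimes Q^*)\otimes Q \longrightarrow R.
\]
Under the adjunction, $\phi$ corresponds to $1_{R\otimes Q^*}$, so it is nonzero exactly when (i) holds. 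Because $R$ is a splitting object, $\phi = 1_R\otimes g$ (with $g=\mathrm{ev}_Q$) is automatically split. Since a nonzero split morphism into an indecomposable object is a split epimorphism (its Karoubian image is a nonzero direct summand of $R$, hence all of $R$), this yields $R$ as a direct summand of $(R\otimes Q^*)\otimes Q$, giving (iii) with $X=R\otimes Q^*$.

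The implication (ii)$\Rightarrow$(iii) is formally dual. Using the adjunction $\Hom(R,X\otimes Q)\cong \Hom(R\otimes{}^*Q,X)$, (ii) is equivalent to $R\otimes{}^*Q\ne 0$, and the unit
\[
\psi := 1_R\otimes \mathrm{coev}'_Q \colon R \longrightarrow R\otimes{}^*Q\otimes Q
\]
corresponds to $1_{R\otimes{}^*Q}$, hence is nonzero iff (ii) holds, and is split because $R$ is a splitting object. A nonzero split morphism out of an indecomposable is a split monomorphism, so $R$ is a direct summand of $(R\otimes{}^*Q)\otimes Q$. Combined with the trivial implications, this gives (i)$\Leftrightarrow$(ii)$\Leftrightarrow$(iii).

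For part (b), I would apply part (a) separately to the indecomposable splitting objects $R$ and $Q$ and reduce to a short nonvanishing argument. Suppose $R$ is a direct summand of $X\otimes Q$ for some $X$, witnessed by a nonzero split monomorphism $i\colon R\hookrightarrow X\otimes Q$. Under the adjunction $\Hom(R,X\otimes Q)\cong \Hom(\one, X\otimes Q\otimes R^*)$ (from $-\otimes R\dashv -\otimes R^*$ applied with source $\one$), $i$ corresponds to a nonzero map $\one\to X\otimes(Q\otimes R^*)$, so in particular $X\otimes(Q\otimes R^*)\ne 0$. Since in any additive monoidal category $A\otimes B=0$ whenever either factor vanishes, this forces $Q\otimes R^*\ne 0$, and part (a) applied to the indecomposable splitting $Q$ then produces $Y$ with $Q$ a direct summand of $Y\otimes R$. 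The converse follows by interchanging the roles of $R$ and $Q$.

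The main bookkeeping to watch is the distinction between left duals $V^*$ and right duals ${}^*V$, since $\T$ is only assumed rigid monoidal; however, all the adjunctions I invoke are the standard ones $-\otimes V\dashv -\otimes V^*$ and $-\otimes{}^*V\dashv -\otimes V$, and in every place where splitness is needed the relevant morphism is manifestly of the form $1_R\otimes g$, which splits by the defining property of $R$, so no pivotal or braided hypothesis is needed.
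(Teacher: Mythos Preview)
Your proof is correct and follows essentially the same approach as the paper. For part (a) you use exactly the same adjunction-plus-splitting argument (examining $1_R\otimes\mathrm{ev}_Q$ and $1_R\otimes\mathrm{coev}_Q$), and for part (b) the paper goes directly from $\Hom(R,X\otimes Q)\ne 0$ to $\Hom(X^*\otimes R,Q)\ne 0$ and then invokes (a), which is the same content as your $Q\otimes R^*\ne 0$ step.
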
 

\begin{proof} (a) It is clear that (iii) implies (i) and (ii).

Suppose $\Hom(X\otimes Q,R)\ne 0$. Then $\Hom(X,R\otimes Q^*)\ne 0$. Thus $R\otimes Q^*\ne 0$. 
Hence the natural map $1\otimes {\rm ev}: R\otimes Q^*\otimes Q\to R$ 
is not zero (as it corresponds to the identity morphism of $R\otimes Q^*$). 
Also this morphism is split since $R$ is a splitting object. Since $R$ is indecomposable, 
$1\otimes {\rm ev}$ is a split epimorphism. Hence $R$ is a direct summand of $Y\otimes Q$, where $Y=R\otimes Q^*$. 
Thus (i) implies (iii). 

Suppose $\Hom(R,X\otimes Q)\ne 0$. Then $\Hom(R\otimes {^*}Q,X)\ne 0$. Thus $R\otimes {^*}Q\ne 0$. 
Hence the natural map $1\otimes {\rm coev}: R\to R\otimes {^*}Q\otimes Q$ 
is not zero (as it corresponds to the identity morphism of $R\otimes {^*}Q$). 
Also this morphism is split since $R$ is a splitting object. Since $R$ is indecomposable, 
$1\otimes {\rm coev}$ is a split monomorphism. Thus $R$ is a direct summand of $Y\otimes Q$, where $Y=R\otimes {^*}Q$. 
Thus (ii) implies (iii). 

(b) Suppose $R$ is a direct summand of $X\otimes Q$. Then $\Hom(R,X\otimes Q)\ne 0$. 
It follows that $\Hom(X^*\otimes R,Q)\ne 0$. So by (a) there is $Y\in \T$ such that $Q$ is a direct summand of $Y\otimes R$. 
\end{proof} 
  
 Now consider the set $S=S(\T)$ of isomorphism classes of indecomposable splitting objects in $\T$. 
 Then all splitting objects are direct sums of objects from $S$. 
 Define a relation on $S$ by the condition 
 that $Q\sim R$ if there exist $X,Y\in \T$ 
 such that $R$ is a direct summand in $X\otimes Q\otimes Y$. 
 Clearly, this relation is reflexive and transitive, and it is also symmetric by Lemma \ref{spliob}(b), so it is an equivalence relation. Note that by Lemma \ref{spliob}(a) $Q\sim R$ iff there exist $X,Y\in \T$ such that 
 $\Hom(Q,X\otimes R\otimes Y)\ne 0$ iff there exist $X,Y\in \T$ such that 
 $\Hom(X\otimes R\otimes Y,Q)\ne 0$. Note also that $Q^*\sim Q$ since $Q$ is a direct summand in $Q\otimes Q^*\otimes Q$.  

Let $\overline{S}:=S/\sim$. Every class $c\in \overline{S}$ generates a minimal splitting ideal $\P(c)$. 
It is easy to see that if $c_1\ne c_2$ then $\P(c_1)\otimes \P(c_2)=0$ (as these ideals are disjoint).

Thus we have the following proposition. 

\begin{proposition}\label{classif} Splitting ideals 
of $\T$ are in 1-1 correspondence with subsets 
$K\subset \overline{S}$ defined by the formula 
$\P_K:=\coplus_{c\in K}\P(c)$. Among them, finitely generated splitting ideals correspond to such sums that are finite 
with $\P(c)$ finitely generated for each $c\in K$. In particular, $\S=\P_{\overline{S}}$. 
\end{proposition}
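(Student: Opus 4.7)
My plan is to establish each of the four assertions (every splitting ideal has the form $\P_K$; the map $K\mapsto \P_K$ is injective; the finite-generation description; and $\S=\P_{\overline S}$) in order, leaning on Lemma~\ref{splile} to reduce splitting ideals to their sets of indecomposables in $S$, and on the equivalence relation $\sim$ to organize these indecomposables into classes.

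First, I would check that every $\P_K$ is itself a splitting ideal. By construction each $\P(c)$ consists of splitting objects (summands of $X\otimes Q\otimes Y$ for a splitting representative $Q$), hence so does $\P_K$; but an ideal whose objects are all splitting is automatically a splitting ideal, since its objects split every morphism in $\T$, a fortiori morphisms inside the ideal. Now given an arbitrary splitting ideal $\P$, Lemma~\ref{splile} forces every indecomposable of $\P$ to lie in $S$. Let $K(\P)\subset \overline S$ collect the equivalence classes represented in $\P$. The inclusion $\P_{K(\P)}\subset \P$ follows from the minimality of each $\P(c)$ among splitting ideals containing a chosen representative of $c$; the reverse inclusion is immediate since any indecomposable $R\in \P$ lies in the class $[R]\in K(\P)$, hence in $\P([R])\subset \P_{K(\P)}$. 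This proves surjectivity of $K\mapsto \P_K$ and the formula $\P=\P_{K(\P)}$.

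For injectivity, I would show that an indecomposable splitting object belongs to a unique $\P(c)$: if $R\in \P(c_1)\cap \P(c_2)$ with representatives $Q_i$ of $c_i$, then $R$ is a summand of $X_i\otimes Q_i\otimes Y_i$, which by definition of $\sim$ means $R\sim Q_1$ and $R\sim Q_2$, hence $c_1=c_2$. This gives the decomposition $\P_K=\coplus_{c\in K}\P(c)$ literally, recovers $K$ from $\P_K$ as the set of classes of its indecomposables, and en passant proves that $\P(c_1)\otimes \P(c_2)=0$ for $c_1\ne c_2$ (any indecomposable summand of $Q_1\otimes Q_2$ would be a splitting object lying in both $\P(c_i)$).

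For the finite-generation claim, suppose $\P=\P_K$ is finitely generated by $P$. Decomposing $P=\bigoplus_i P_i$ into indecomposables (each $P_i$ splitting, hence in some class $c_i\in K$), every indecomposable $Q\in \P(c)$ is a summand of $X\otimes P$, thus of $X\otimes P_i$ for some $i$, which by the uniqueness step above forces $c=c_i$. Hence $K=\{c_i\}$ is finite, and the sum of the $P_i$ in a given class $c$ generates $\P(c)$, so each $\P(c)$ is finitely generated. Conversely, if $K$ is finite and each $\P(c)$ is finitely generated by $P_c$, then $\bigoplus_{c\in K}P_c$ generates $\P_K$. Finally, $\S$ is a splitting ideal by Lemma~\ref{splile}, so it is of the form $\P_{K(\S)}$; since every class of $\overline S$ has a representative in $\S$, we get $K(\S)=\overline S$ and $\S=\P_{\overline S}$.

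The main point to be careful about is the ``disjointness'' of the $\P(c)$: both the injectivity of $K\mapsto \P_K$ and the finite-generation analysis rely on the fact that an indecomposable splitting object belongs to a unique equivalence class, and hence to a unique $\P(c)$. Once this is extracted cleanly from the definition of $\sim$ (using Lemma~\ref{spliob}(b) for symmetry), the rest of the argument is essentially bookkeeping.
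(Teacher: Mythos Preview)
Your argument is correct and is essentially a detailed unpacking of the paper's one-line proof, which simply says ``This follows from Lemma~\ref{splile}.'' You use Lemma~\ref{splile} exactly as intended---to show that any splitting ideal has its indecomposables in $S$---and then organize them by the equivalence relation $\sim$; the disjointness of the $\P(c)$ and the observation $\P(c_1)\otimes\P(c_2)=0$ for $c_1\ne c_2$ were already noted in the paper just before the proposition, so your injectivity step simply reproves what the paper takes for granted.
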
 

\begin{proof} This follows from Lemma \ref{splile}. 
\end{proof} 

We can also define the equivalence relations $\sim_\ell,\sim_r$ as follows: $Q\sim_\ell R$ if there is 
$X\in \T$ such that $Q$ is a direct summand in $X\otimes R$, and $Q\sim_r R$ if there is 
$X\in \T$ such that $Q$ is a direct summand in $R\otimes X$ (the symmetry of these relations follows 
from Lemma \ref{spliob}(b)). Note that by Lemma \ref{spliob}(a) $Q\sim_\ell R$ iff there is $X\in \T$ such that $\Hom(Q,X\otimes R)\ne 0$ and $Q\sim_r R$ iff there is $X\in \T$ such that $\Hom(Q,R\otimes X)\ne 0$. The relation 
$\sim_\ell$ splits an equivalence class $c\in \overline{S}$ into the disjoint union of classes 
$c_{\bullet j}$, $j\in I_c$, and the relation $\sim_\ell$ splits $c$ into the disjoint union of classes $c_{i\bullet}$, $i\in I_c$. We can define $c_{ij}:=c_{i\bullet}\cap c_{\bullet j}$, with $c_{ij}^*=c_{ji}$, then $c$ is a disjoint union of $c_{ij}$. 
Let $\P(c)_{i\bullet},\P(c)_{\bullet j},\P(c)_{ij}$ be the subcategories of $\P$ additively generated by $c_{i\bullet}, c_{\bullet j}$, $c_{ij}$, respectively; clearly, $\P_{\bullet j}$ is a left ideal and $\P_{i\bullet}$ is a right ideal. Thus we have decompositions 
$$
\P(c)=\coplus_{i\in I_c}\P_{i\bullet}=\coplus_{j\in I_c}\P_{\bullet j}=\coplus_{i,j\in I_c}\P(c)_{ij}.
$$ 

\subsection{A finiteness property of splitting objects}

\begin{proposition}\label{finprop} 
Let $Q,R$ be indecomposable splitting objects in $\T$. If $\Hom(R,Q)\ne 0$ then $R$ is a direct summand 
in $Q\otimes Q^*\otimes Q$. 
\end{proposition}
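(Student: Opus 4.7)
The plan is to exhibit $R$ as a direct summand of $Q\otimes Q^*\otimes Q$ by constructing a split monomorphism $R\hookrightarrow Q\otimes Q^*\otimes Q$; since $R$ is indecomposable, it suffices to produce a nonzero split morphism. Given a nonzero $f\colon R\to Q$, my candidate is
\[
\alpha:=(\text{coev}_Q\otimes 1_Q)\circ f\colon R\xrightarrow{f} Q\xrightarrow{\text{coev}_Q\otimes 1_Q} Q\otimes Q^*\otimes Q,
\]
where the second arrow is the canonical split inclusion making $Q$ a direct summand via the snake identity. Post-composing with the snake retraction $1_Q\otimes\text{ev}_Q$ recovers $f$, so $\alpha\ne 0$.

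To prove $\alpha$ is split, I would rewrite it as $\alpha = (1_{Q\otimes Q^*}\otimes f)\circ(\text{coev}_Q\otimes 1_R)$ and analyze the two factors. The first factor $\text{coev}_Q\otimes 1_R\colon R\to Q\otimes Q^*\otimes R$ has the form $g\otimes 1_R$ with $g=\text{coev}_Q$, so it is split by the splitting property of $R$ (as a splitting object); being nonzero with indecomposable source, it is a split monomorphism, showing that $R$ is a direct summand of $Q\otimes Q^*\otimes R$. The second factor $1_{Q\otimes Q^*}\otimes f$ has the form $1_X\otimes f$ with $X=Q\otimes Q^*\in\S$, hence is split by the splitting ideal property (cf.\ Lemma \ref{splile}).

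The main obstacle is that the composition of two split morphisms is not in general split, so splitness of $\alpha$ does not follow formally from the splitness of its factors. My plan for bridging this gap is to apply the Lemma \ref{splile}-style ``doubling'' technique: form $1_{R\otimes R^*}\otimes\alpha\colon R\otimes R^*\otimes R\to R\otimes R^*\otimes Q\otimes Q^*\otimes Q$, which is split because $R\otimes R^*\in\S$, and then use the snake decomposition $R\otimes R^*\otimes R = R\oplus Y$ to identify $\alpha$ as a direct summand of this split morphism, concluding via Lemma \ref{dirsum} that $\alpha$ itself is split. The hardest step will be arranging the target $R\otimes R^*\otimes Q\otimes Q^*\otimes Q$ to decompose compatibly with the source decomposition $R\oplus Y$; since the target lacks a matching snake summand for the outer $R\otimes R^*$, the direct-sum decomposition needed for Lemma \ref{dirsum} is not immediate. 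Bridging this asymmetry, for instance via an additional snake-identity manipulation that uses $f$ together with the canonical inclusion $Q\hookrightarrow Q\otimes Q^*\otimes Q$ to pull out a matching summand from the target, is where I expect the main technical difficulty to lie.
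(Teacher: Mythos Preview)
Your setup is sound: the factorization $\alpha=(1_{Q\otimes Q^*}\otimes f)\circ(\text{coev}_Q\otimes 1_R)$ is correct, and both factors are indeed split for the reasons you give. You also correctly identify the obstacle—compositions of split morphisms need not be split—and you are honest that your proposed ``doubling'' fix does not obviously work because the target $R\otimes R^*\otimes Q\otimes Q^*\otimes Q$ carries no canonical summand matching the copy of $R$ inside $R\otimes R^*\otimes R$. This is exactly where your argument stops being a proof: the Lemma~\ref{dirsum} reduction requires $1_{R\otimes R^*}\otimes\alpha$ to decompose as $\alpha\oplus h$, and no snake identity on the $Q$-side produces such a decomposition. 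Your closing sentence is a hope, not a construction.

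The paper's proof does not attempt to show that a specific morphism $R\to Q\otimes Q^*\otimes Q$ is split. Instead it works in the opposite direction: it builds a ``universal non-split-epimorphism'' $\xi\colon U\to R$ (where $U=U(Q\otimes Q^*\otimes Q\oplus R\otimes Q^*\otimes Q,R)$ collects all morphisms into $R$ that are not split epimorphisms), tensors with $Q^*$ to obtain split decompositions, and then through a careful comparison of summands (Steps 4--6) shows that $\Hom(Q\otimes Q^*\otimes Q,R)$ cannot be entirely absorbed by~$\xi$. Hence some $\pi\colon Q\otimes Q^*\otimes Q\to R$ is a split epimorphism. The argument is substantially more intricate than what you propose precisely because the ``composition of splits is split'' shortcut is unavailable; compare Remark~\ref{directpf}, which explains that the easy argument you are reaching for becomes available only \emph{after} one has the abelian envelope $\C(\T)$, whose construction in turn relies on Proposition~\ref{finprop}.
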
 

\begin{proof} 
{\bf Step 1.} Let $Y\in \T$ be indecomposable.   Then $\End(Y)$ is a local algebra 
(otherwise it would have a nontrivial idempotent which would split $Y$). 
Let $\mathfrak{m}_Y$ be the radical of $\End(Y)$. For each indecomposable object $X\in \T$ set $U(X,Y):=X\otimes \Hom(X,Y)$
if $X$ is not isomorphic to $Y$, and $U(Y,Y):=Y\otimes \mathfrak{m}_Y$, 
and extend this definition by additivity to all objects $X\in \T$. (Here we use that any linear category over a field ${\bf k}$ is a module over the category of vector spaces over ${\bf k}$.)
Note that $U(X,Y)$ is equipped with a canonical morphism $\xi: U(X,Y)\to Y$, namely
the one corresponding to the identity operator on $\Hom(X,Y)$ under the natural isomorphism  
$$
\Hom(X\otimes \Hom(X,Y),Y)\cong \Hom_{\bold k}(\Hom(X,Y),\Hom(X,Y)).
$$
Moreover, $\xi$ is not a split epimorphism, and any morphism $g: X\to Y$ which is not a split epimorphism factors through $\xi$.   

{\bf Step 2.} Now let $R,Q$ be indecomposable splitting objects of $\T$. 
Let 
$$
U:=U(Q\otimes Q^*\otimes Q\oplus R\otimes Q^*\otimes Q,R).
$$ 
Then the map $\xi\otimes 1_{Q^*}: U\otimes Q^*\to R\otimes Q^*$ is split. Thus we can write $U\otimes Q^*\cong N\oplus K$, $R\otimes Q^*\cong N\oplus L$,
so that $\xi\otimes 1=1_N\oplus 0_{K\to L}$.

{\bf Step 3.} Now assume that $f: R\to Q$ is a morphism. 
Then the map 
$$
f\otimes 1_{Q^*}: R\otimes Q^*\to Q\otimes Q^*
$$ 
is split. So we can write $Q\otimes Q^*\cong M\oplus C$,
and we have a split epimorphism 
$$
g: R\otimes Q^*=N\oplus L\to M.
$$ 

{\bf Step 4.} Also, the map 
$$
f\circ \xi\otimes 1_{Q^*}: U\otimes Q^*\cong N\oplus K\to Q\otimes Q^*\cong M\oplus C
$$ 
is split. It is a direct sum of a map $\eta: N\to M$ and $0_{K\to C}$, so 
the map $\eta$ is split. So we can write $N\cong W\oplus V$, $M\cong W\oplus Y$, 
so that $\eta=1_W\oplus 0_{V\to Y}$. 
Thus the map $g: N\oplus L\to M$ can be written as $g: W\oplus V\oplus L\to W\oplus Y$. 
Since $g$ is a split epimorphism and $g|_V=0$, we obtain a split epimorphism
$\bar g: W\oplus L\to W\oplus Y$, such that $\bar g|_W=1_W$. Let $h: L\to W$ be the corresponding component of $\bar g$. 
Then composing $\bar g$ with the automorphism $\left(\begin{matrix} 1 & -h\\ 0 & 1\end{matrix}\right)$ of $W\oplus L$, we 
obtain a split epimorphism $\bar g'=1_W\oplus \widetilde g$, where $\widetilde g: L\to Y$ is a split epimorphism. 

{\bf Step 5.} Now assume additionally that $f\ne 0$. 
Then we claim that 
the map $\xi\otimes 1_{Q^*}$ is not a split epimorphism, i.e., $L\ne 0$. Indeed, the morphism 
$$
\circ \xi: \Hom(\bold 1,U\otimes R^*)\cong \Hom(R,U)\to \Hom(\bold 1,R\otimes R^*)\cong \Hom(R,R)
$$ 
is not surjective, as $\xi$ is not split (so $1_R$ can't be in the image). Hence the morphism 
$\xi\otimes 1_{R^*}: U\otimes R^*\to R\otimes R^*$ is not a split epimorphism. But 
$R,Q$ belong to the same $\P(c)_{ij}$, so there is $X\in \T$ such that 
$R^*$ is a direct summand of $Q^*\otimes X$. Thus $\xi\otimes 1_{Q^*\otimes X}=\xi\otimes 1_{Q^*}\otimes 1_X$ 
is not a split epimorphism either (indeed, the direct sum of two morphisms can only be a split epimorphism 
if both summands are). Hence $\xi\otimes 1_{Q^*}$ is not a split epimorphism, as claimed. 

{\bf Step 6.} Moreover, we claim that $Y\ne 0$. 
Assume that $Y=0$. Consider the map $\theta: L\to R\otimes Q^*=W\oplus V\oplus L$ 
given by $\theta=(\theta_1,\theta_2,\theta_3)$, where $\theta_1=-h$, $\theta_2=0$, $\theta_3=1_L$. 
It is easy to see that the map $(f\otimes 1_{Q^*})\circ \theta: L\to Q\otimes Q^*$ is zero. 
Let $\zeta$ be the element of $\Hom(L\otimes Q,R)$ corresponding to $\theta$. 
The composition $f\circ \zeta\in \Hom(L\otimes Q,Q)$ is zero. Thus the map $\zeta$ is not a split epimorphism, as $f\ne 0$. 
Also $L\otimes Q$ is a direct summand in $R\otimes Q^*\otimes Q$, since $L$ is a direct summand 
in $R\otimes Q^*$. Thus by the definition of $U,\xi$, there exists an element $\widetilde\zeta\in \Hom(L\otimes Q,U)$ which maps to $\zeta$. Let $\widetilde\theta\in \Hom(L,U\otimes Q^*)$ correspond to $\widetilde\zeta$. Then 
$\widetilde\theta$ maps to $\theta$. However, the image of any element of $\Hom(L,U\otimes Q^*)$ factors through $N$, so cannot equal $\theta$ (as $L\ne 0$), which contradicts the assumption that $Y=0$. Thus, $Y\ne 0$. 

{\bf Step 7.} Since $Y$ is a direct summand in both $L$ and $Q\otimes Q^*$, this implies that $$\Hom(Q\otimes Q^*,L)\ne 0.$$ 
Thus the map 
$$
\circ(\xi\otimes 1_{Q^*}): \Hom(Q\otimes Q^*,U\otimes Q^*)\to \Hom(Q\otimes Q^*,R\otimes Q^*)
$$
is not surjective. Hence 
the map 
$$
\circ\xi: \Hom(Q\otimes Q^*\otimes Q,U)\to \Hom(Q\otimes Q^*\otimes Q,R)
$$
is not surjective either. So there exists a morphism $\pi: Q\otimes Q^*\otimes Q\to R$ that does not factor through $\xi$. 
By the definition of $U,\xi$, this implies that $\pi$ is a split epimorphism, hence $R$ is a direct summand in $Q\otimes Q^*\otimes Q$, as claimed. 
\end{proof} 

\begin{corollary}\label{fini} For every indecomposable splitting object $Q$ there are finitely many indecomposable splitting objects $R$ such that $\Hom(R,Q)\ne 0$; namely, at most the number of distinct indecomposable direct summands in 
$Q\otimes Q^*\otimes Q$. 
\end{corollary}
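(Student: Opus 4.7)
The plan is to deduce this corollary immediately from Proposition \ref{finprop}. That proposition already establishes the crucial structural fact: if $R$ and $Q$ are indecomposable splitting objects with $\Hom(R,Q)\ne 0$, then $R$ is isomorphic to a direct summand of $Q\otimes Q^*\otimes Q$. So I only need to package this as a finiteness statement.

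The argument proceeds as follows. Fix an indecomposable splitting object $Q$, and let $R$ range over indecomposable splitting objects with $\Hom(R,Q)\ne 0$. By Proposition \ref{finprop}, each such $R$ appears (up to isomorphism) as an indecomposable direct summand of $Q\otimes Q^*\otimes Q$. Since $\T$ is a Krull--Schmidt category (hypothesis: $\T$ is Karoubian with finite-dimensional Hom spaces, hence idempotents split and the Krull--Schmidt theorem applies), the object $Q\otimes Q^*\otimes Q$ admits a decomposition into finitely many indecomposable summands, uniquely determined up to isomorphism and permutation. The number of distinct isomorphism classes among these summands is therefore finite, and it bounds the number of admissible $R$, which gives the claim together with the quantitative estimate.

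There is essentially no obstacle here: all the content is in Proposition \ref{finprop}, and the corollary is a one-line consequence of Krull--Schmidt. The only subtlety worth noting in writing it up is to emphasize that we really count \emph{distinct} isomorphism classes of indecomposable summands (not summands with multiplicity), since each isomorphism class of $R$ yields one such summand.
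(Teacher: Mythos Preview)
Your proof is correct and follows exactly the same approach as the paper, which simply says the corollary follows immediately from Proposition \ref{finprop}. Your added remark about Krull--Schmidt just makes explicit the routine finiteness step that the paper leaves implicit.
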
 

\begin{proof} This follows immediately from Proposition \ref{finprop}. 
\end{proof} 

\begin{remark}\label{directpf} Suppose that in Proposition \ref{finprop}, there is a monoidal functor $F: \T\to \C$ from $\T$ to a multitensor category $\C$ with enough projectives which is fully faithful on projectives, and the objects $Q,R$ are projective in $\C$ (we will show below that this is so under mild assumptions, but the proof uses Proposition \ref{finprop}). Then there is a much more direct proof of Proposition \ref{finprop}. Namely, suppose that $R$ is the projective cover of a simple object $X\in \C$. Then, $\Hom(R,Q)\ne 0$ implies that $X$ is a composition factor in $Q$. Thus $X\otimes Q^*\ne 0$ (as $X\otimes X^*$ is a subquotient of $X\otimes Q^*$), and $X\otimes Q^*\otimes Q$ is a direct summand in $Q\otimes Q^*\otimes Q$. But we have a morphism $1\otimes {\rm ev}: X\otimes Q^*\otimes Q\to X$, which is nonzero since it corresponds to the identity endomorphism of the nonzero object $X\otimes Q^*$. Since $X\otimes Q^*\otimes Q$ is projective, it follows that it contains $R$ as a direct summand. 

In fact, the proof of Proposition \ref{finprop} is just an adaptation of this argument which does not refer to the category $\C$. 
\end{remark} 

\subsection{The abelian category attached to an ideal and its derived category}

Let \linebreak $\P\subset \T$ be an ideal. Let $\bold I$ be the set of isomorphism classes of indecomposables in $\P$ with representatives $\bold P_i$, $i\in \bold I$. Let us say that $\P$ has {\it the finiteness property} if 
for any $Q\in \P$ there are finitely many $i\in \bold I$ such that $\Hom(\bold P_i,Q)\ne 0$. 
For example, by Corollary \ref{fini}, the ideal $\mathcal{S}$ (and, in fact, any splitting ideal) 
automatically has the finiteness property. 

From now on we will assume that $\P$ has the finiteness property. Let $C:=\coplus_{i,j\in \bold I} \Hom(\bold P_i,\bold P_j)^*$. 
This space is naturally a coalgebra whose dual is the algebra $A:=C^*=\prod_{i,j\in \bold I}\Hom(\bold P_i,\bold P_j)$ with operation $ab=b\circ a$ (note that this product is well defined and does not involve infinite summations because of the finiteness property of $\P$). For example, if $\bold I$ is finite then $A=\End(P)^{\rm op}$, where $P:=\coplus_{i\in \bold I} \bold P_i$. 

Let $\C=\C(\T,\P):=C$-comod be the category of finite dimensional $C$-comodules (or, equivalently, $C^*$-modules when $\bold I$ is finite). Equivalently, $\C$ is the category of additive functors $\P^{\rm op}\to \Vec$ which are of finite length, i.e., vanish on almost all $\bold P_i$. 
Then $\C$ is an artinian category with enough projectives and simple objects $\bold L_i,i\in \bold I$. Namely, we have a natural inclusion $\iota: \P\hookrightarrow \C$ as a full subcategory given by $\iota(Q):=\coplus_{i\in \bold I}\Hom(\bold P_i,Q)$, whose image is the ideal ${\rm Pr}(\C)$ of projective objects in $\C$ 
(note that $\dim \iota(Q)<\infty$ since $\Hom(\bold P_i,Q)=0$ for almost all $i$). 
We will identify $\P$ with ${\rm Pr}(\C)$ using $\iota$ and, abusing notation, 
will often denote the image of $Q\in \bold P$ under $\iota$ also by $Q$. Then $\bold P_i$ 
are projective covers of $\bold L_i$. 

Consider now the bounded above derived category $D^-(\C)$. By a standard theorem of homological algebra (\cite{GeM}, III.5.21), we can realize $D^-(\C)$ as the bounded above homotopy category of projectives $K^-({\rm Pr}(\C))\cong K^-(\P)$. This allows us to realize the objects of $D^-(\C)$ explicitly as complexes 
$$
...\to P_{n+2}\to P_{n+1}\to P_n,
$$
where $P_j\in {\rm Pr}(\C)$. Moreover, the subcategory $\C\subset D^-(\C)$ (the heart of the tautological t-structure) is realized as the subcategory of such complexes which are exact in nonzero degrees, i.e., the subcategory of resolutions
$$
...\to P_2\to P_1\to P_0,
$$
where $P_i$ sits in cohomological degree $-i$. 

\subsection{The semigroup category structure on $\C$}

Note that $\P$ is a semigroup category (\cite{EGNO}, p.25; i.e., it has a tensor product with associativity isomorphism satisfying the pentagon axiom, but has no unit in general). This implies 

\begin{proposition}\label{p1a} The category $K^-(\P)\cong D^-(\C)$ has a natural structure of a semigroup category, given by tensoring bounded above complexes of objects from $\P$. Therefore, the category $\C$ is equipped with the induced semigroup category structure (given by taking the zeroth cohomology of the tensor product), which extends the tensor product on $\P$.  
\end{proposition}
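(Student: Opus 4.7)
The plan is to transport the semigroup structure from $\P$ up to $K^-(\P)\cong D^-(\C)$ at the chain/derived level, then push it down to $\C$ via $H^0$. First I would define the tensor product on $K^-(\P)$ as the standard totalization: for bounded-above complexes $A^\bullet,B^\bullet$ of objects of $\P$, set $(A\otimes B)^n:=\coplus_{i+j=n}A^i\otimes B^j$ with the Koszul-signed differential. The bounded-above condition guarantees that each degree is a finite direct sum (no convergence issues), and $\P$ being closed under $\otimes$ keeps every summand in $\P$. The associator on $\P$ extends term-wise and the pentagon axiom descends immediately from that on $\P$. Since null-homotopies propagate to tensor products via the standard formula $h\otimes 1$, this tensor product is well-defined on the homotopy category $K^-(\P)$, giving it a semigroup category structure. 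Via the equivalence $K^-(\P)\cong D^-(\C)$, we transport this to a semigroup structure $\otimes^L$ on $D^-(\C)$.

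For the induced structure on $\C\subset D^-(\C)$, I would set $X\otimes_\C Y:=H^0(X\otimes^L Y)$. This extends the tensor product on $\P$: indeed, for $P,Q\in\P={\rm Pr}(\C)$ the projective resolutions are $P,Q$ themselves (concentrated in degree $0$), so $P\otimes^L Q=P\otimes Q$ sits in degree $0$ and $H^0(P\otimes^L Q)=P\otimes Q$.

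The main step is associativity. I claim both $(X\otimes_\C Y)\otimes_\C Z$ and $X\otimes_\C(Y\otimes_\C Z)$ are canonically isomorphic to $H^0(X\otimes^L Y\otimes^L Z)$; associativity on $\C$ then follows from the (chain-level) associator on $D^-(\C)$, and the pentagon axiom on $\C$ is inherited from that on $D^-(\C)$. For the first identification, consider the canonical truncation map $X\otimes^L Y\to \tau^{\geq 0}(X\otimes^L Y)=X\otimes_\C Y$ in $D^-(\C)$; it fits in a distinguished triangle whose third term is $\tau^{\leq -1}(X\otimes^L Y)[1]$. Since $\tau^{\leq -1}(X\otimes^L Y)$ is concentrated (cohomologically) in degrees $\leq -1$, its shift by $[1]$ has cohomology concentrated in degrees $\leq -2$. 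Derived-tensoring with $Z\in\C$ (represented by a projective resolution in degrees $\leq 0$) preserves this upper bound on cohomology: one sees directly, or by the hypercohomology spectral sequence with $E_2^{p,q}$ assembled from $H^q$ of the cone and Tor-groups with $Z$, that $H^{-1}$ and $H^0$ of the tensored cone both vanish. The long exact sequence in cohomology then gives $H^0((X\otimes^L Y)\otimes^L Z)\cong H^0((X\otimes_\C Y)\otimes^L Z)=(X\otimes_\C Y)\otimes_\C Z$, and symmetrically for the other bracketing.

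The main obstacle is the t-structure bookkeeping in the last paragraph: showing that the cone of the canonical truncation, after derived-tensoring with an object of the heart, still has cohomology vanishing in degrees $-1$ and $0$. Everything else is a routine transport of structure along equivalences, plus the pentagon inheritance, which uses only that $\P$ is itself a semigroup category.
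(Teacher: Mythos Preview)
Your proposal is correct and follows the same approach as the paper: both define the derived tensor product on $D^-(\C)\cong K^-(\P)$ via totalization of complexes of projectives, then set $X\otimes_\C Y:=H^0(X\otimes^L Y)$. The paper's proof is terser---it focuses on well-definedness of the bifunctor (independence of the choice of projective resolutions via homotopy equivalences) and declares the rest ``standard,'' whereas you spell out the associativity argument explicitly via the truncation triangle and the observation that $(-)\otimes^L Z$ preserves $D^{\leq n}$. Your extra detail is sound and is precisely what the paper is taking for granted; the pentagon step works by the same truncation bookkeeping applied one level deeper.
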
 

\begin{proof} The proof is standard. Namely, given $X,Y\in \C$, fix projective resolutions $P_\bullet$ of $X$ and $Q_\bullet$ of $Y$. We define the derived tensor product $X\otimes^L Y$ to be the object of $D^-(\C)$ represented by the complex $P_\bullet\otimes Q_\bullet$. In particular, we set $X\otimes Y:=H^0(X\otimes^L Y)\in \C$.  

Let us see why $X,Y\mapsto X\otimes^L Y$ is a well defined bifunctor. Let $P_\bullet',Q_\bullet'$ be other resolutions of $X,Y$. Then there exist morphisms of resolutions $f: P_\bullet\to P_\bullet'$, $g: Q_\bullet\to Q_\bullet'$ which extend the identity morphisms $1_X$, $1_Y$, and they are defined uniquely up to homotopy. Similarly, we have $f': P_\bullet'\to P_\bullet$, $g': Q_\bullet'\to Q_\bullet$, and $f\circ f', f'\circ f, g\circ g', g'\circ g$ are homotopic to the identity morphisms. Now consider the morphism $f\otimes g: P_\bullet\otimes Q_\bullet\to P_\bullet'\otimes Q_\bullet'$. This is well defined up to homotopy, and the inverse is $f'\otimes g'$ up to homotopy. Thus $f\otimes g$ defines an isomorphism $X\otimes^L Y\to (X\otimes^L Y)'$ of the derived tensor products defined using these two pairs of resolutions, and it is easy to show that this isomorphism is independent of the choice of $f,g$. Moreover, if we have three resolutions for $X$ and for $Y$ then it is shown in a standard way that the composition around the corresponding triangle is the identity. Thus $X\otimes^L Y$ and hence $X\otimes Y$ is well defined. 
\end{proof} 

\begin{remark}
In fact, to define $X\otimes Y$, one does not need the full resolutions of $X$ and $Y$ by objects of 
$\P$. All one needs is {\it presentations} of $X,Y$, i.e., their representations as cokernels 
of morphisms $f: P_1\to P_0$ and $g: Q_1\to Q_0$ between objects from $\P$. 
In this case we have a morphism 
$$
f\otimes 1+1\otimes g: P_1\otimes Q_0\oplus P_0\otimes Q_1\to P_0\otimes Q_0,
$$
and $X\otimes Y$ is just the cokernel of this morphism. 
We refer the reader to \cite[Section 3]{MMMT} for more details.
\end{remark} 

\begin{example} 1. If $\P=0$ then $\C=0$. 

2. In the setting of Example \ref{multiten} we have $\C=\T$, and the semigroup structure on $\C$ constructed above recovers the original monoidal 
structure on $\T$. 

3. Similarly, in Example \ref{nonspli}, $\C\supset \T$ is the category of all 
finite dimensional $B$-bimodules with tensor product over $B$, and $X\otimes^L Y$ 
is the usual derived tensor product over $B$. 
\end{example} 

\subsection{Properties of the tensor product on $\C$}

Let us now study the properties of the tensor product on $\C$ defined in Proposition \ref{p1a}. 

\begin{lemma}\label{l2a} (i) Let $Q_\bullet$ be a projective resolution of an object $Y\in \C$. 
Then for each $T\in \T$ the complexes 
$T\otimes Q_\bullet$ and $Q_\bullet\otimes T$ in $K^-(\P)$ 
are acyclic in strictly negative degrees. 

(ii) Let $X\in \P$. Then for any $Y\in \C$ we have $H^{-i}(X\otimes^L Y)=H^{-i}(Y\otimes^L X)=0$ for all $i>0$. 
\end{lemma}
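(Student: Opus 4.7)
The plan is to prove (i) by showing that the functors $T\otimes(-)\colon\C\to\C$ and $(-)\otimes T\colon\C\to\C$, defined on $\C$ via presentations exactly as in the proof of Proposition \ref{p1a}, are in fact \emph{exact}, not merely right exact. Applied to the exact complex $\cdots\to Q_1\to Q_0\to Y\to 0$ in $\C$, exactness immediately yields the exact complex $\cdots\to T\otimes Q_1\to T\otimes Q_0\to T\otimes Y\to 0$, which is precisely the statement that $T\otimes Q_\bullet$ is acyclic in strictly negative degrees, and similarly for $Q_\bullet\otimes T$. Part (ii) is then the specialization $T=X\in\P$ of (i), since by definition $X\otimes^L Y$ (resp.\ $Y\otimes^L X$) is computed by tensoring a projective resolution of $Y$ on the left (resp.\ right) by $X$.

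To show exactness of $T\otimes(-)$, I would establish that it has both a left adjoint $({}^*T)\otimes(-)$ and a right adjoint $T^*\otimes(-)$ on $\C$, coming from the standard rigidity adjunctions in $\T$. The key step is to verify, for $P\in\P$ and $Z\in\C$ with presentation $Q_1\to Q_0\to Z\to 0$, the identification
$$\Hom_\C(T\otimes P,Z)=\Hom_\C(P,T^*\otimes Z).$$
Projectivity of $P$ makes $\Hom_\C(P,-)$ exact, so $\Hom_\C(P,T^*\otimes Z)$ is the cokernel of $\Hom_\C(P,T^*\otimes Q_1)\to\Hom_\C(P,T^*\otimes Q_0)$. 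Each such term equals $\Hom_\T(P,T^*\otimes Q_i)$ (both objects lie in $\P$ and $\iota\colon\P\hookrightarrow\C$ is fully faithful), and by rigidity in $\T$ this is $\Hom_\T(T\otimes P,Q_i)=\Hom_\C(T\otimes P,Q_i)$, where I use that $T\otimes P\in\P$ because $\P$ is an ideal. Projectivity of $T\otimes P$ then identifies the resulting cokernel with $\Hom_\C(T\otimes P,Z)$. Extending from $P\in\P$ to arbitrary $X\in\C$ via a presentation $P_1\to P_0\to X\to 0$ and using left exactness of $\Hom_\C(-,W)$ delivers the adjunction in full generality. The parallel argument with ${}^*T$ in place of $T^*$ furnishes the other adjunction, and the analogous pair of arguments handles $(-)\otimes T$. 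Existence of both adjoints forces $T\otimes(-)$ to preserve all limits and colimits, so it is exact.

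The main subtlety I expect is the careful ordering in this extension of the rigidity adjunction from $\P$ to $\C$: one must invoke projectivity (to commute $\Hom(P,-)$ with the defining cokernel of $T^*\otimes Z$) and use that $\P$ is a two-sided ideal (so that tensor products of objects of $\P$ by $T$, $T^*$, or ${}^*T$ remain in $\P$) in just the right sequence, so that every step stays within the full subcategory where the tensor-Hom adjunction from $\T$ is directly available. Once this groundwork is in place, both (i) and (ii) follow immediately.
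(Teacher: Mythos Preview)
Your proof is correct and rests on the same key observation as the paper's: for $Q,P\in\P$ one has $\Hom(Q,T\otimes P)\cong\Hom(T^*\otimes Q,P)$ by rigidity in $\T$, and $T^*\otimes Q\in\P$ because $\P$ is an ideal. The paper simply applies this directly: given an exact sequence $P_1\to P_2\to P_3$ of projectives, one checks exactness of $T\otimes P_1\to T\otimes P_2\to T\otimes P_3$ by applying $\Hom(Q,-)$ for each $Q\in\P$ and invoking the adjunction; this immediately gives (i), and (ii) is the special case $T=X\in\P$.

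You instead package the same computation into the statement that $T\otimes(-)$ admits both adjoints on $\C$, hence is exact. This is a perfectly valid route, and in fact it anticipates Corollary~\ref{rex1a}, where the paper later records exactly this exactness (deriving it \emph{from} Lemma~\ref{l2a}). The tradeoff is that your argument does more work up front---you have to verify well-definedness of $T\otimes(-)$ on $\C$ and extend the adjunction from $\P$ to all of $\C$---whereas the paper's three-line proof stays entirely inside $\P$ and never needs to speak of the functor on $\C$ at all. Both approaches are sound; the paper's is more economical for the lemma as stated, while yours yields a bit more structure along the way.
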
 

\begin{proof} (i) Let $P_1\to P_2\to P_3$ be an exact sequence of projective objects of $\C$. 
This means that the complex 
$$
\Hom(Q,P_1)\to \Hom(Q,P_2)\to \Hom(Q,P_3)
$$
is acyclic for any $Q\in \P$. Now consider the complex 
\begin{equation}\label{com1}
\Hom(Q,T\otimes P_1)\to \Hom(Q,T\otimes P_2)\to \Hom(Q,T\otimes P_3).
\end{equation} 
It can be written as 
$$
\Hom(T^*\otimes Q,P_1)\to \Hom(T^*\otimes Q,P_2)\to \Hom(T^*\otimes Q,P_3).
$$
But $T^*\otimes Q\in \P$. Thus the complex \eqref{com1} is acyclic. This implies that the complex 
$$
T\otimes P_1\to T\otimes P_2\to T\otimes P_3
$$
is acyclic, which implies the claim for $T\otimes Q_\bullet$. The claim for $Q_\bullet\otimes T$
is proved similarly.   

(ii) follows from (i). 
\end{proof} 

\begin{corollary}\label{rexa} The bifunctor $(X,Y)\mapsto X\otimes Y$ on $\C$ is right exact in both arguments, and 
$$
X\otimes^{L_i} Y=H^{-i}(X\otimes^L Y), 
$$
where $\otimes^{L_i}$ is the $i$-th left derived functor of the tensor product. 
Moreover, if $X$ is projective then the functors $X\otimes ?, ?\otimes X$ on $\C$ are exact. 
\end{corollary}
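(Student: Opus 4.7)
The plan is to work in the derived setting. Since $\P$ is closed under tensoring with $\T$ on both sides, tensoring with a fixed bounded-above complex of objects from $\P$ preserves $K^-(\P)$ and respects mapping cones, so $X \otimes^L -$ and $- \otimes X$ are triangulated endofunctors of $D^-(\C) \cong K^-(\P)$ for any $X \in \C$.

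Given a short exact sequence $0 \to Y_1 \to Y_2 \to Y_3 \to 0$ in $\C$, the associated distinguished triangle in $D^-(\C)$ yields, under $X \otimes^L -$, a distinguished triangle whose long exact cohomology sequence, combined with the vanishing $H^i(X \otimes^L Y) = 0$ for $i > 0$ (since $X \otimes^L Y$ is represented by a complex concentrated in non-positive degrees), gives
\[ \cdots \to H^{-1}(X \otimes^L Y_3) \to X \otimes Y_1 \to X \otimes Y_2 \to X \otimes Y_3 \to 0. \]
This establishes right exactness of $X \otimes -$ on $\C$; right exactness of $- \otimes X$ is symmetric. The identification $X \otimes^{L_i} Y = H^{-i}(X \otimes^L Y)$ then follows from the universal property of left derived functors: the connecting maps in the long exact sequence make $\{H^{-i}(X \otimes^L -)\}_{i \ge 0}$ a universal $\delta$-functor whose zeroth term is $X \otimes -$ and whose values on projective $Y$ recover the tensor product on $\P$, matching the standard computation of $\otimes^{L_i}$ via projective resolutions.

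For the final claim, when $X \in \P$ is projective, Lemma \ref{l2a}(ii) supplies $H^{-i}(X \otimes^L Y) = 0$ for all $i > 0$ and all $Y \in \C$, so the long exact sequence above collapses to
\[ 0 \to X \otimes Y_1 \to X \otimes Y_2 \to X \otimes Y_3 \to 0, \]
proving that $X \otimes -$ is exact; the argument for $- \otimes X$ is identical, using the second vanishing in Lemma \ref{l2a}(ii). I expect the only subtlety to be the rigorous construction of the distinguished triangle $X \otimes^L Y_1 \to X \otimes^L Y_2 \to X \otimes^L Y_3 \to X \otimes^L Y_1[1]$; this can be handled either by the horseshoe lemma to build a termwise split short exact sequence of projective resolutions in $\C$ and then tensoring, or by directly quoting the triangulated-functor property noted at the outset.
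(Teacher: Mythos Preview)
Your argument is correct and is essentially a fleshed-out version of the paper's one-line proof (``This follows immediately from Lemma~\ref{l2a}(ii)''); both rest on the same ingredient. One small point: when you assert that $\{H^{-i}(X\otimes^L -)\}_{i\ge 0}$ is a \emph{universal} $\delta$-functor, the effaceability you need is precisely that $H^{-i}(X\otimes^L Q)=0$ for $i>0$ whenever $Q$ is projective, and this is exactly Lemma~\ref{l2a}(ii) with the roles of the two arguments swapped---it would be cleaner to cite it there as well, rather than only for the final exactness claim.
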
 

\begin{proof} This follows immediately from Lemma \ref{l2a}(ii). 
\end{proof} 

\begin{corollary}\label{rex1a} The monoidal category $\T$ acts on $\C$ on the left 
by exact functors \linebreak $T_l(X):= T\otimes X$, 
$T\in \T,X\in \C$, commuting with the action of $\C$ on itself by right multiplication. 
There is also a similar action on the right given by $T_r(X):=X\otimes T$ which commutes with the action of $\C$ on itself by left multiplication. Also, these actions map projective objects to projective ones. 
\end{corollary}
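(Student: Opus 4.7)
The plan is to deduce each part of the corollary from Lemma \ref{l2a} and the homological machinery already assembled in the proof of Proposition \ref{p1a}. First I would define the functor by $T_l(X) := H^0(T \otimes Q_\bullet)$, where $Q_\bullet$ is any projective resolution of $X \in \C$; each $T \otimes Q_i$ lies in $\P$ because $\P$ is a $\T$-ideal, so this is a complex in $\P$. Independence of the resolution and functoriality in $X$ follow from the same homotopy argument as in Proposition \ref{p1a}, since tensoring with $T$ on the left is an additive endofunctor of $K^-(\P)$ and therefore preserves chain homotopies.

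Exactness of $T_l$ is a direct consequence of Lemma \ref{l2a}(i): that lemma says $T \otimes Q_\bullet$ is acyclic in strictly negative degrees, so it is itself a projective resolution of $T_l(X)$. Given a short exact sequence $0 \to X' \to X \to X'' \to 0$ in $\C$, the horseshoe lemma produces a termwise split short exact sequence of projective resolutions. Tensoring with $T$ preserves termwise splitness, yielding a short exact sequence of projective resolutions of $T_l(X')$, $T_l(X)$, $T_l(X'')$; since each has no higher cohomology, the long exact sequence in cohomology collapses to the required short exact sequence in $\C$.

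For the commutation with right multiplication by $Y \in \C$ with resolution $P_\bullet$, my strategy is to show that both $T_l(X) \otimes Y$ and $T_l(X \otimes Y)$ compute to $H^0$ of the same triple complex $T \otimes Q_\bullet \otimes P_\bullet$. On the one hand, by the exactness of $T_l$ just established, $T \otimes Q_\bullet$ is a projective resolution of $T_l(X)$, so the definition of the tensor product on $\C$ gives $T_l(X) \otimes Y = H^0((T \otimes Q_\bullet) \otimes P_\bullet)$. On the other hand, since $T_l$ is an exact endofunctor of $\C$, it commutes with taking cohomology of complexes in $\P$, so $T_l(X \otimes Y) = T_l\bigl(H^0(Q_\bullet \otimes P_\bullet)\bigr) = H^0\bigl(T \otimes (Q_\bullet \otimes P_\bullet)\bigr)$. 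The associator of $\otimes$ in $\T$ identifies the two triple complexes termwise and induces the desired natural isomorphism. The right action $T_r$ is treated symmetrically, and if $X \in \P$, then $X$ resolves itself, so $T_l(X) = T \otimes X \in \P$, giving the preservation of projectives.

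The only point requiring some care is the coherence of these natural isomorphisms --- that they are compatible with the associators of $\T$ and $\C$ in the expected way, so that one really obtains a monoidal action of $\T$ on $\C$ commuting with its right $\C$-action --- but this is standard bookkeeping of the same flavor as in Proposition \ref{p1a}, and I do not anticipate any substantive obstacle.
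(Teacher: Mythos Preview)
Your proof is correct and follows essentially the same approach as the paper: both define $T_l(X) = H^0(T\otimes Q_\bullet)$ and deduce exactness from Lemma \ref{l2a}(i). The paper phrases this as ``$T_l$ is right exact and $L_jT_l(X)=H^{-j}(T\otimes Q_\bullet)=0$ for $j>0$,'' whereas you unfold the same argument via the horseshoe lemma; you also spell out the commutation with right multiplication and the preservation of projectives, which the paper leaves implicit.
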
 

\begin{proof} This follows from Lemma \ref{l2a}(i): the left action is given by 
$T_l(X)=H^0(T\otimes Q_\bullet)$, where $Q_\bullet$ is a projective resolution of $X$. 
It is clear that the functors $T_l$ are well defined and right exact, and 
$L_jT_l(X)=H^{-j}(T\otimes Q_\bullet)=0$ for $j>0$. 
Thus the functors $T_l$ are exact. The same applies to the functors $T_r$. 
\end{proof} 

\subsection{The unit object of $\C$} Now assume that $\P$ is a finitely generated ideal, and $P\in \P$ a generator. Let $\bold 1$ be the unit object of $\T$. Since $P$ is rigid, we have the evaluation morphism ${\rm ev}: P^*\otimes P\to \bold 1$. Consider the morphism $\tau: P^*\otimes P\otimes P^*\otimes P\to P^*\otimes P$ given by the formula 
$$
\tau={\rm ev}\otimes 1\otimes 1-1\otimes 1\otimes {\rm ev}. 
$$
We define the object $\one$ in the category $\C$ to be the cokernel of $\tau$ (note that we use a different font to distinguish it from $\bold 1$). 

\begin{proposition} \label{one} For $Q\in \P\subset \T$ there are isomorphisms 
$$
Q\cong Q\otimes \one,\ Q\cong\one\otimes Q
$$ 
which are functorial in $Q$. 
\end{proposition}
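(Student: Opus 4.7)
The plan is to establish $Q\otimes\one\cong Q$ in detail; the statement $\one\otimes Q\cong Q$ follows by the symmetric argument, noting that $P^*$ lies in $\P$ (since $\P$ is closed under duals) and is also a generator (applying left duals to the defining properties of $P$ shows that any $Q\in\P$ is a direct summand of $X\otimes P^*$ and of $P^*\otimes Y$), so the symmetric section comes from $1_{P^*}\otimes\mathrm{coev}\otimes 1_Y$. To begin, I would verify that $\alpha:=1_Q\otimes\mathrm{ev}:Q\otimes P^*\otimes P\to Q$ is a split epimorphism in $\T$: since $P$ generates $\P$, the object $Q$ is a direct summand of some $X\otimes P$ with $X\in\T$, and the snake identity $(1_P\otimes\mathrm{ev})\circ(\mathrm{coev}\otimes 1_P)=1_P$ produces the section $1_X\otimes\mathrm{coev}\otimes 1_P$ of $1_{X\otimes P}\otimes\mathrm{ev}$, which Lemma \ref{dirsum} restricts to a section $\sigma:Q\to Q\otimes P^*\otimes P$ of $\alpha$.

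Because $Q$ is projective in $\C$, the functor $Q\otimes(-)$ is exact (Corollary \ref{rexa}), so applying it to the presentation $P^*PP^*P\xrightarrow{\tau}P^*P\to\one\to 0$ yields the presentation
$$QP^*PP^*P\xrightarrow{1_Q\otimes\tau}QP^*P\to Q\otimes\one\to 0$$
in $\C$. A direct computation gives $\mathrm{ev}\circ\tau=0$ (both terms of $\tau$ compose with $\mathrm{ev}$ to $\mathrm{ev}\otimes\mathrm{ev}$), hence $\alpha\circ(1_Q\otimes\tau)=0$, and $\alpha$ descends to a morphism $\bar\alpha:Q\otimes\one\to Q$ in $\C$ which is split epimorphism via the composite of $\sigma$ with the quotient $QP^*P\to Q\otimes\one$.

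The crux is to show $\bar\alpha$ is also monic, equivalently that $\ker(\alpha)\subseteq\mathrm{im}(1_Q\otimes\tau)$. Writing $C:=P^*\otimes P$ and $\epsilon:=\mathrm{ev}$, expand
$$1_Q\otimes\tau=(\alpha\otimes 1_C)-(1_{Q\otimes C}\otimes\epsilon):Q\otimes C\otimes C\to Q\otimes C,$$
and set $h:=-(\sigma\otimes 1_C):Q\otimes C\to Q\otimes C\otimes C$. Using $\alpha\sigma=1_Q$, a direct check gives $(\alpha\otimes 1_C)\circ(\sigma\otimes 1_C)=1_{Q\otimes C}$ and $(1_{Q\otimes C}\otimes\epsilon)\circ(\sigma\otimes 1_C)=\sigma\alpha$, so $(1_Q\otimes\tau)\circ h=\sigma\alpha-1_{Q\otimes C}$. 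Since $\sigma\alpha$ is the idempotent projector onto the direct summand $\sigma(Q)\cong Q$ of $Q\otimes C=Q\oplus\ker(\alpha)$, the image of $\sigma\alpha-1$ is exactly $\ker(\alpha)$, which gives $\ker(\alpha)\subseteq\mathrm{im}(1_Q\otimes\tau)$. Combined with the opposite inclusion from $\alpha\circ(1_Q\otimes\tau)=0$, we obtain $\mathrm{im}(1_Q\otimes\tau)=\ker(\alpha)$, whence $Q\otimes\one=(Q\otimes C)/\ker(\alpha)\cong Q$ via $\bar\alpha$; naturality in $Q$ is immediate from the naturality of $\alpha=1_Q\otimes\mathrm{ev}$. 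The main technical obstacle is arriving at the factorization identity $(1_Q\otimes\tau)\circ h=\sigma\alpha-1$, after which the conclusion is routine bookkeeping using the decomposition $Q\otimes C=Q\oplus\ker(\alpha)$.
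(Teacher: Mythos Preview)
Your proof is correct and follows essentially the same approach as the paper. Both arguments establish the exactness of the bar complex $Q\otimes P^*P\otimes P^*P\to Q\otimes P^*P\to Q\to 0$ via the homotopy identity $(1_Q\otimes\tau)\circ(\sigma\otimes 1_C)=1-\sigma\alpha$, where $\sigma$ is a section of $\alpha=1_Q\otimes\mathrm{ev}$; the paper first reduces to $Q=P$ (where $\sigma=\mathrm{coev}\otimes 1_P$ is explicit) and then passes to direct summands, while you carry out the same computation directly for arbitrary $Q\in\P$ using a section obtained from the summand $Q\hookrightarrow X\otimes P$.
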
 

\begin{proof} By Corollary \ref{rexa}, $Q\otimes \one$ is the cokernel of the morphism 
$$
d_2:=1_Q\otimes \tau: Q\otimes P^*\otimes P\otimes P^*\otimes P\to Q\otimes P^*\otimes P.
$$
Let us extend this morphism to the ``bar complex"
\begin{equation} \label{barcom}
Q\otimes P^*\otimes P\otimes P^*\otimes P\to Q\otimes P^*\otimes P\to Q\to 0,  
\end{equation} 
where the last map is $d_1:=1_Q\otimes {\rm ev}$ (it is easy to see that $d_1d_2=0$). 
To construct a functorial isomorphism $Q\otimes\one\cong Q$, it suffices to show that this complex is exact. 
Since $P$ is a generator, every $Q$ is a direct summand in $X\otimes P$ for some 
$X\in \C$, so it suffices to show that \eqref{barcom} is exact for $Q=X\otimes P$. 
Hence, it suffices to show that 
\eqref{barcom} is split for $Q=P$. 

Since $P$ is rigid, \eqref{barcom} for $Q=P$ is split in the third term, with splitting $h_1: P\to P\otimes P^*\otimes P$ given by $h_1:={\rm coev}\otimes 1_P$, where ${\rm coev}: \bold 1\to P\otimes P^*$ is the coevaluation morphism. So it remains to show that \eqref{barcom} is split in the second term as well. To this end, we define a homotopy 
$$
h_2: P\otimes P^*\otimes P\to P\otimes P^*\otimes P\otimes P^*\otimes P
$$
by 
$$
h_2:={\rm coev}\otimes 1^{\otimes 3}=h_1\otimes 1\otimes 1.
$$
It is easy to check that $h_1d_1+d_2h_2=1$, which implies the statement. 

This proves that there is a canonical isomorphism $Q\to Q\otimes \one$. 
The isomorphism $Q\to \one\otimes Q$ is constructed similarly. 
\end{proof} 

\begin{corollary}\label{monoa} 
The semigroup category $\C$ is actually a monoidal category with unit $\one$. 
Moreover, if $\T$ is braided or symmetric then so is 
$\C$. 
\end{corollary}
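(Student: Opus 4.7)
The plan is to promote the functorial unit isomorphisms of Proposition \ref{one} from $\P$ to all of $\C$ via presentations, and then reduce each coherence axiom in $\C$ to the corresponding identity in $\T$.

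For the unit isomorphism, given $X \in \C$, choose a presentation $P_1 \xrightarrow{f} P_0 \to X \to 0$ with $P_i \in \P$. By Corollary \ref{rexa} (right exactness of $? \otimes \one$), the object $X \otimes \one$ is the cokernel of $f \otimes 1_\one$. The functorial isomorphisms $\lambda_{P_i}: P_i \otimes \one \xrightarrow{\sim} P_i$ from Proposition \ref{one} assemble into a commutative square with $f \otimes 1_\one$ and $f$, so passing to cokernels defines a canonical morphism $\lambda_X: X \otimes \one \xrightarrow{\sim} X$. Independence of presentation and naturality in $X$ both follow from the standard lifting argument: any morphism $g: X \to X'$ lifts to a morphism of chosen presentations, and naturality of $\lambda$ on $\P$ makes the relevant squares commute so that $\lambda_X$ is intertwined with $\lambda_{X'}$ by $g \otimes 1_\one$ and $g$. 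The left unit $\rho_X: \one \otimes X \xrightarrow{\sim} X$ is handled symmetrically.

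With associator supplied by Proposition \ref{p1a} and the unit data now in place, the triangle and pentagon axioms for $(\C, \otimes, \one)$ follow by choosing presentations for all objects involved. As noted in the remark following Proposition \ref{p1a}, the tensor product in $\C$ of two (or, iteratively, finitely many) presentations is itself computed as a cokernel of an explicit morphism between objects of $\P \subset \T$. Each of the coherence diagrams in $\C$ is therefore the cokernel of the corresponding diagram of morphisms in $\T$ between objects of $\P$; since the latter commutes in $\T$, it descends to commutativity in $\C$.

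Finally, if $\T$ is braided with braiding $c$, then $c$ restricts to $\P$ since $\P$ is closed under tensoring, and hence by the same presentation argument extends to a braiding $c_{X,Y}: X \otimes Y \xrightarrow{\sim} Y \otimes X$ on $\C$, compatible with $\lambda, \rho, a$. Naturality, the two hexagon axioms, and in the symmetric case the identity $c_{Y,X} \circ c_{X,Y} = 1_{X \otimes Y}$ all descend from $\T$ by passing to cokernels. The main technical obstacle throughout is verifying that these constructions are independent of the chosen presentations; this is handled uniformly by the fact that any two presentations of an object of $\C$ are connected by a lift which is unique on cokernels, combined with the functoriality of $\otimes$ on $\P$ already guaranteed by the monoidal structure of $\T$.
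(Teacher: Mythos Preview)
Your proof is correct and follows essentially the same route as the paper: both extend the unit isomorphisms of Proposition~\ref{one} from $\P$ to all of $\C$ by realizing objects of $\C$ via presentations (equivalently, resolutions) in $\P$, and then let coherence descend from $\T$. The paper's argument is much terser---it simply invokes that ``$\C$ is the category of resolutions in $\P$'' and declares the braided/symmetric statement ``clear''---while you have unpacked the cokernel computation and the independence-of-presentation check explicitly; but the underlying idea is the same.
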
 

\begin{proof} Since $\C$ is the category of resolutions in $\P$, the functorial isomorphism $\one\otimes Q\cong Q$ for $Q\in \P$ from Proposition \ref{one} gives rise to an isomorphism $\one\otimes{} \cong {\rm Id}_\C$. Similarly, we have an isomorphism ${}\otimes\one\cong {\rm Id}_\C$. Thus, $\one$ is a unit in $\C$, hence $\C$ is monoidal. The second statement is clear. 
\end{proof} 

\begin{example}\label{notfin} The following example shows that the assumption that $\P$ is finitely generated cannot be dropped.
Let ${\rm char}(\k)=2$ and $Q=\k\Bbb Z/2$ be the regular representation of $\Bbb Z/2$ over $\k$. For a set $M$ let $(\Bbb Z/2)^M$ 
be the group of maps $M\to \Bbb Z/2$, and for $m\in M$ let $Q_m$ be the representation of $(\Bbb Z/2)^M$ on the vector space $Q$ given by 
$\rho_{Q_m}(g)=\rho_Q(g(m))$, $g\in (\Bbb Z/2)^M$, where $\rho_Q$ defines the action of $\Bbb Z/2$ on $Q$. Let $\T'=\T'(M)$ be the full subcategory of $\Rep_\k((\Bbb Z/2)^M)$ spanned by the tensor products $Q_N:=\cotimes_{m\in N}Q_m$ for finite subsets 
$N\subset M$. Let $\T=\T(M)$ be the quotient of $\T'$ by the tensor ideal generated by $Q_{ij}=Q_i\otimes Q_j$, where $i,j\in M$ are distinct (i.e., we set all morphisms factoring through a direct sum of $Q_{ij}$ to zero). Then $\T$ has indecomposables $\bold 1$ and $Q_m,m\in M$, with $\Hom(\bold 1,\bold 1)=\Hom(\bold 1,Q_i)=\Hom(Q_i,\bold 1)=\k$, $\Hom(Q_i,Q_i)=\k^2$, but $\Hom(Q_i,Q_j)=0$ for $i\ne j$ (since any morphism of representations $Q_i\to Q_j$ factors through $Q_{ij}$). It is easy to show that for any subset $N\subset M$ the ideal $\P_N=\langle Q_i,i\in N\rangle$ is a splitting ideal in $\T$, which is finitely generated if and only if $N$ is finite. The abelian semigroup category $\C(\T,\P_N)$ 
is $\C_N:=\coplus_{m\in N}\Rep_\k(\Bbb Z/2)$, which is monoidal (i.e., has a unit object) if and only if $N$ is finite. 
\end{example} 

\subsection{The monoidal functor $F$} We continue to assume that $\P\subset \T$ is a finitely generated ideal, and $\C=\C(\T,\P)$. 

\begin{corollary}\label{mono1a}
There is a monoidal functor 
$F: \T\to \C$ such that $F(T)=T_l(\one)=T_r(\one)$ (so $F(\bold 1)=\one$). 
Moreover, if $\T$ is braided or symmetric then so is $F$. 
\end{corollary}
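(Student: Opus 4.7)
The plan is to define $F(T) := T_l(\one)$ using the left action of $\T$ on $\C$ from Corollary \ref{rex1a}. This assignment is manifestly functorial in $T$, and $F(\bold 1) = \bold 1_l(\one) = \one$ since the monoidal unit of $\T$ acts as the identity functor on $\C$.

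The key step for monoidality is a natural isomorphism $T_l(X) \cong F(T) \otimes X$ in $\C$ for all $X \in \C$. This follows from Corollary \ref{rex1a} --- which asserts that the left $\T$-action commutes with right multiplication by $\C$ --- applied to the identity $T_l(\one \otimes X) \cong T_l(\one) \otimes X$, combined with the unit isomorphism $\one \otimes X \cong X$ from Corollary \ref{monoa}. Granted this, monoidality is automatic:
$$F(T_1 \otimes T_2) = (T_1 \otimes T_2)_l(\one) \cong T_{1,l}(T_{2,l}(\one)) = T_{1,l}(F(T_2)) \cong F(T_1) \otimes F(T_2),$$
and the coherence conditions follow from associativity of the $\T$-action on $\C$.

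To establish $T_l(\one) \cong T_r(\one)$, I would first prove the mirror of Proposition \ref{one}: the left bar complex
$$P^*\otimes P\otimes P^*\otimes P\otimes Q\to P^*\otimes P\otimes Q\to Q\to 0$$
is split exact in $\T$ for $Q\in\P$. The splitting for $Q = P^*$ is provided by $h_1 = 1_{P^*}\otimes{\rm coev}\colon P^*\to P^*\otimes P\otimes P^*$ via the mirror snake identity $({\rm ev}\otimes 1_{P^*})\circ(1_{P^*}\otimes{\rm coev})=1_{P^*}$; since $P^*\sim P$ by Lemma \ref{spliob}, every $Q\in\P$ is a direct summand of $P^*\otimes X$ for some $X\in\T$, and the splitting propagates. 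This gives $T_r(\one) \otimes Q \cong T\otimes Q$ naturally for $Q\in\P$; together with the analogous statement $F(T)\otimes Q \cong T\otimes Q$ coming from the key step above, both $F(T)$ and $T_r(\one)$ admit the same description on $\P$. Using rigidity of $Q$ in $\C$ (inherited from $\T$ via $\iota$), both $\Hom_\C(Q, F(T))$ and $\Hom_\C(Q, T_r(\one))$ are then naturally isomorphic to $\Hom_\C(\one, T\otimes Q^*)$, so $F(T)\cong T_r(\one)$ by the Yoneda embedding of $\C$ into functors $\P^{\op}\to\Vec$. The braided/symmetric refinement is immediate, since the braiding on $\C$ from Corollary \ref{monoa} extends the braiding on $\T$ through $\iota$ and $F$ is built directly from the action/presentation data.

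The main obstacle I expect is the mirror splitting and the subsequent Yoneda identification of $T_l(\one)$ with $T_r(\one)$; one must set up the left bar complex carefully and confirm that the two cokernel presentations represent the same functor on projectives. Everything else is a formal consequence of the $\T$-bimodule structure on $\C$ provided by Corollary \ref{rex1a}.
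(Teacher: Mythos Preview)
Your approach is correct and follows the same overall strategy as the paper: define $F$ via the left action, obtain monoidality from Corollary~\ref{rex1a}, and then identify $T_l(\one)$ with $T_r(\one)$. The monoidality portion is essentially identical to the paper's.

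The difference lies in how $T_l(\one)\cong T_r(\one)$ is established. The paper observes that both $F_l$ and $F_r$ restrict to $\iota$ on $\P$ (this is Proposition~\ref{one}, which already contains your ``mirror bar complex'' --- see the last line of its proof), and then argues that \emph{both} $F_l(T)$ and $F_r(T)$ are cokernels of the \emph{same} morphism $\iota(1_T\otimes\tau)$ in $\C$. For $F_l(T)$ this follows by applying the exact functor $T_l$ to the defining presentation of $\one$. For $F_r(T)$ it follows from monoidality of $F_r$: one has $F_r(T\otimes P^*\otimes P)\cong F_r(T)\otimes\iota(P^*\otimes P)$, and tensoring the presentation of $\one$ on the left by $F_r(T)$ (right exact in $\C$) exhibits $F_r(T)$ as the required cokernel. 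Since both functors yield the cokernel of the same arrow, $F_l\cong F_r$.

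Your Yoneda route --- computing $\Hom_\C(Q,F_l(T))$ and $\Hom_\C(Q,T_r(\one))$ and matching both with $\Hom_\C(\one,\iota(T\otimes Q^*))$ --- is valid but more circuitous; it implicitly re-derives the same cokernel identification through representability. The paper's direct cokernel comparison is shorter and avoids the Yoneda step entirely. Your re-proof of the left bar complex is also unnecessary, as Proposition~\ref{one} already supplies both isomorphisms $Q\cong Q\otimes\one$ and $Q\cong\one\otimes Q$.
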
 

\begin{proof} This follows from Proposition \ref{one} and Corollaries \ref{rex1a}, \ref{monoa}. Namely, we have 
a monoidal functor $F_l: \T\to \End(\C_\C)\cong \C$ given by $F_l(T):=T_l(\one)$ 
and a monoidal functor $F_r: \T\to \End({}_\C\C)^{\rm op}\cong \C$ given by $F_r(T):=T_r(\one)$. 
Moreover, if $Q\in \P$ then $F_l(Q)$ and $F_r(Q)$ are canonically isomorphic to $Q$. 
However, for any $T\in \T$ we have that $F_l(T)$ is the cokernel of 
the natural morphism 
$$
F_l(1\otimes \tau): F_l(T\otimes P^*\otimes P\otimes P^*\otimes P)\to F_l(T\otimes P^*\otimes P), 
$$
while $F_r(T)$ is the cokernel of the natural morphism 
$$
F_r(1\otimes \tau): F_r(T\otimes P^*\otimes P\otimes P^*\otimes P)\to F_r(T\otimes P^*\otimes P). 
$$
This implies that $F_l\cong F_r$, which gives a construction of the functor $F\cong F_l\cong F_r$ with the claimed properties. 
\end{proof} 

\begin{corollary}\label{rig1} Every projective object in $\C$ is rigid. 
\end{corollary}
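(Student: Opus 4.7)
\emph{Plan.} The essential observation is that the ideal $\P$, when identified with ${\rm Pr}(\C)$ via $\iota$, is precisely the image of the projective summands of the monoidal functor $F\colon\T\to\C$ constructed in Corollary \ref{mono1a}. Since $\T$ is rigid by hypothesis, I would simply transport the rigid structure on $Q\in\P\subset\T$ through $F$.

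Concretely, let $Q\in\C$ be projective. By the identification of $\P$ with ${\rm Pr}(\C)$, we may assume $Q=\iota(Q_0)$ for some $Q_0\in\P\subset\T$. First I would check that the duals $Q_0^*$ and ${}^*Q_0$ (computed in $\T$) again lie in $\P$: this is the closure of thick ideals under duals that was recorded at the beginning of Subsection 2.2 (for instance, $Q_0^*$ is a direct summand in $Q_0^*\otimes Q_0\otimes Q_0^*\in\P$ since $\P$ is a two-sided ideal). Next, I would apply $F$ to the evaluation ${\rm ev}\colon Q_0^*\otimes Q_0\to\bold 1$ and coevaluation ${\rm coev}\colon\bold 1\to Q_0\otimes Q_0^*$ in $\T$ and then use the monoidal structure isomorphisms of $F$, together with the canonical identifications $F(Q_0)\cong Q_0$, $F(Q_0^*)\cong Q_0^*$ (Corollary \ref{mono1a}) and $F(\bold 1)=\one$, to produce morphisms
\[
\widetilde{{\rm ev}}\colon Q_0^*\otimes Q_0\longrightarrow\one,\qquad
\widetilde{{\rm coev}}\colon\one\longrightarrow Q_0\otimes Q_0^*
\]
in $\C$. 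Because $F$ is monoidal, it preserves the triangle identities, so $\widetilde{{\rm ev}}$ and $\widetilde{{\rm coev}}$ satisfy the rigidity axioms, exhibiting $Q_0^*$ as a right dual of $Q$ in $\C$. The same argument with ${}^*Q_0$ provides a left dual, finishing the proof.

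The only potential subtlety is to confirm that the tensor product computed in $\C$ on projectives genuinely agrees with the one in $\T$, so that the images of ${\rm ev}$ and ${\rm coev}$ land in the correct hom-spaces; but this is built into the construction, since Proposition \ref{p1a} defines the monoidal structure on $\C$ so as to extend the semigroup structure on $\P\subset\T$, and Corollary \ref{rexa} ensures there are no higher $\mathrm{Tor}$ terms when both factors are projective. Thus rigidity is transported from $\T$ to $\C$ verbatim on projective objects, and no real obstacle arises beyond the bookkeeping of the monoidal coherence.
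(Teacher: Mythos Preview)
Your proposal is correct and follows essentially the same approach as the paper. The paper's proof is simply the two-line observation that every projective in $\C$ has the form $F(Q)$ for some $Q\in\P$ (by Corollary~\ref{mono1a}), and that monoidal functors send rigid objects to rigid objects; you have unpacked this second fact explicitly by transporting the (co)evaluation morphisms and triangle identities through $F$, and added the observation that $Q_0^*\in\P$, which is not needed for rigidity per se but is implicit in the setup.
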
 

\begin{proof} Indeed, by Corollary \ref{mono1a}, every projective object in $\C$ is of the form $F(Q)$, where $Q\in \P$, hence rigid. But a monoidal functor maps rigid objects to rigid ones. 
\end{proof} 

\begin{corollary}\label{faith} The functors $P\otimes ?$ and $?\otimes P$ on $\C$ are faithful. 
\end{corollary}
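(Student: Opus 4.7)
The plan is to use the rigidity of $P$ in $\C$ (Corollary~\ref{rig1}) together with the two-sided generator property of $P$. Since $P \in \P$ is projective, the functor $P \otimes ?$ on $\C$ is exact by Corollary~\ref{rex1a}, so faithfulness of $P \otimes ?$ is equivalent to the statement that $P \otimes M = 0$ implies $M = 0$ for all $M \in \C$.

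As a preliminary step, I would show that the right dual $P^*$ in $\T$ is itself a two-sided generator of $\P$. Dualizing the relation ``$\bold P_j$ is a direct summand of $P \otimes Y_j$'' yields ``$\bold P_j^*$ is a direct summand of $Y_j^* \otimes P^*$'', and likewise for the left-ideal relation one gets ``$\bold P_j^*$ is a direct summand of $P^* \otimes X_j^*$''. Since $\P$ is closed under duals and duality permutes its indecomposable objects, one concludes that every indecomposable projective $\bold P_k$ is a direct summand of $P^* \otimes Y_k$ for some $Y_k \in \T$, and also of $X_k \otimes P^*$ for some $X_k \in \T$.

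The main step invokes the rigidity of $P$ in $\C$: the functor $P^* \otimes ?$ is left adjoint to $P \otimes ?$, giving a natural isomorphism $\Hom_\C(P^* \otimes Y, M) \cong \Hom_\C(Y, P \otimes M)$ for all $Y, M \in \C$. If $P \otimes M = 0$, the right-hand side vanishes for every $Y$, and specializing to $Y = Y_k$ with $\bold P_k$ a direct summand of $P^* \otimes Y_k$ shows that $\Hom_\C(\bold P_k, M)$ is a direct summand of zero, hence is itself zero, for every indecomposable projective $\bold P_k$. Since $\C$ is artinian and these projective covers detect all simple composition factors, this forces $M = 0$. The faithfulness of $? \otimes P$ follows by the symmetric argument using the left dual ${}^*P$ (which is also a two-sided generator of $\P$ by the dual reasoning) and the adjunction $? \otimes {}^*P \dashv ? \otimes P$. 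No single step is particularly hard, but the main care lies in correctly pairing each adjunction with the generator property on the appropriate side, since $\T$ and $\C$ are not assumed to be pivotal.
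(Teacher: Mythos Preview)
Your proof is correct and essentially the same as the paper's. The paper also reduces to showing $P\otimes{\rm Im}(f)=0\Rightarrow{\rm Im}(f)=0$, uses the adjunction $\Hom(X,P\otimes{\rm Im}(f))\cong\Hom(P^*\otimes X,{\rm Im}(f))$ coming from rigidity of $P$ in $\C$, and then invokes that $P^*$ is a generator of $\P$ so that projectives detect the vanishing of ${\rm Im}(f)$; the only difference is that the paper asserts ``$P^*$ is a generator of $\P$'' in one line, whereas you spell out the dualization argument explicitly.
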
 

\begin{proof} Let $f: X\to Y$ be a morphism in $\C$. Then the image of $1_P\otimes f$ is $P\otimes {\rm Im}(f)$. 
Now, $\Hom(X,P\otimes {\rm Im}(f))=\Hom(P^*\otimes X,{\rm Im}(f))$. But $P^*$ is a generator of $\P$. 
Thus if $P\otimes {\rm Im}(f)=0$ then $\Hom(Q,{\rm Im}(f))=0$ for all projectives $Q$, hence ${\rm Im}(f)=0$ and $f=0$, as claimed.  
\end{proof} 

\begin{proposition}\label{fullness} The map $F_{X,Y}: \Hom(X,Y)\to \Hom(F(X),F(Y))$ 
is surjective if either $X$ or $Y$ is in $\P$. Thus, the tensor ideal 
$\langle{\rm Pr}\rangle$ in $\C$ of morphisms factoring through projectives is contained 
in the image of $F$. 
\end{proposition}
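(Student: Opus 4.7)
My plan is to prove the two cases of the first assertion separately—the case $X\in\P$ directly, and the case $Y\in\P$ by reducing to the first via rigidity—and then deduce the statement about $\langle{\rm Pr}\rangle$ by splitting any morphism through a projective into two pieces, each covered by one of the cases.

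The case $X\in\P$ is the cleaner one because $F(X)=X$ is projective in $\C$ under the identification $\iota$. By Corollary~\ref{rexa}, $F(Y)=Y\otimes\one$ is the cokernel of $1_Y\otimes\tau\colon Y\otimes P^*\otimes P\otimes P^*\otimes P\to Y\otimes P^*\otimes P$ in $\C$, and the cokernel epimorphism $Y\otimes P^*\otimes P\twoheadrightarrow F(Y)$ agrees (under the monoidality isomorphism $F(Y\otimes P^*\otimes P)\cong F(Y)\otimes P^*\otimes P$ together with $F=\iota$ on $\P$) with $F(1_Y\otimes{\rm ev})$. Given $g\in\Hom_\C(X,F(Y))$, projectivity of $X$ lifts $g$ through this epimorphism to some $\tilde g\colon X\to Y\otimes P^*\otimes P$ in $\C$; since both source and target lie in $\P$ and $\iota$ is fully faithful there, $\tilde g$ is actually a morphism in $\T$. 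I then set $\alpha:=(1_Y\otimes{\rm ev})\circ\tilde g\in\Hom_\T(X,Y)$, and a short diagram chase using $F(\tilde g)=\tilde g$ and the identification of the cokernel map with $F(1_Y\otimes{\rm ev})$ gives $F(\alpha)=g$.

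For the case $Y\in\P$ I reduce to the previous case using rigidity. Since $P$ generates $\P$ as a left ideal, every $Y\in\P$ is a direct summand of $Z\otimes P$ for some $Z\in\T$, so by a routine summand argument it suffices to treat $Y=Z\otimes P$. The rigid adjunction in $\T$ gives $\Hom_\T(X,Z\otimes P)\cong\Hom_\T(X\otimes{}^*P,Z)$, and, correspondingly in $\C$, using monoidality of $F$ together with $X\otimes{}^*P\in\P$ (so that $F(X)\otimes{}^*P\cong F(X\otimes{}^*P)=X\otimes{}^*P$), one has $\Hom_\C(F(X),F(Z)\otimes P)\cong\Hom_\C(X\otimes{}^*P,F(Z))$. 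These adjunctions are intertwined by $F$ because all the relevant evaluation and coevaluation morphisms for $P$ involve only objects of $\P$, where $F$ coincides with $\iota$; hence $F_{X,Z\otimes P}$ is identified with $F_{X\otimes{}^*P,Z}$, whose first argument lies in $\P$ and which is therefore handled by the previous case. The final assertion then follows: any morphism in $\langle{\rm Pr}\rangle$ factors as $g_2\circ g_1$ through some $Q\in\P$, and applying the first case to $g_2$ (source in $\P$) and the second case to $g_1$ (target in $\P$) produces lifts in $\T$ whose composition is a preimage of the original morphism under $F$. The main subtlety in the whole argument is the intertwining of the rigid adjunctions in the second case, but it is forced by the fact that $F$ restricts to $\iota$ on $\P$ and that $P$, $P^*$, and ${}^*P$ all lie in $\P$.
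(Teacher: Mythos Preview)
Your proof is correct and follows essentially the same approach as the paper: the first case is handled identically (lift through the projective $Y\otimes P^*\otimes P$ using projectivity of $X$ and full faithfulness of $F$ on $\P$), and the second case is reduced to the first via rigidity. The only difference is that the paper's reduction in the second case is more direct: rather than first writing $Y$ as a summand of $Z\otimes P$ and then moving $P$ across, the paper simply uses $\Hom(X,Y)\cong\Hom(Y^*\otimes X,\mathbf{1})$ with $Y^*\otimes X\in\P$, which avoids the extra summand step; also note that your justification for the intertwining of adjunctions should really be that $F$ is monoidal (hence preserves duals and (co)evaluations), not that the relevant objects lie in $\P$, since $\mathbf{1}$ need not be in $\P$.
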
 

\begin{proof} By definition for $Q,R\in \P$ the map 
$F_{Q,R}:\Hom(Q,R)\to \Hom(F(Q),F(R))$ is an isomorphism. 
Now let $X\in \T$ and $f: F(Q)\to F(X)$ be a morphism.
We would like to show that $f=F(g)$ for some morphism $g: Q\to X$. 

Recall that $F(X)$ is a quotient of $F(X\otimes P^*\otimes P)$. 
Since $F(Q)$ is projective, the morphism $f$ admits a lift $f': F(Q)\to F(X\otimes P^*\otimes P)$.
Since $F(X\otimes P^*\otimes P)$ also is projective, there exists $g': Q\to X\otimes P^*\otimes P$ such that $F(g')=f'$. Let $g$ be the composition of $g'$ with the morphism 
$1_X\otimes {\rm ev}: X\otimes P^*\otimes P\to X$. Then $F(g)=f$, as claimed. 

Thus, the map $\Hom(Q,X)\to \Hom(F(Q),F(X))$ is surjective. 
Now, to show that the map $\Hom(X,Q)\to \Hom(F(X),F(Q))$ is also surjective, 
write it as the map \linebreak $\Hom(Q^*\otimes X,\bold 1)\to \Hom(F(Q^*\otimes X),F(\bold 1))$, 
which is surjective since $Q^*\otimes X\in \P$. This proves the proposition. 
\end{proof} 

Let $J_\P$ be the kernel of $F$, i.e., the collection of morphisms $f$ in $\T$ such that $F(f)=0$. 
Clearly, $J_\P$ is a tensor ideal. The following proposition describes $J_\P$ explicitly. 

\begin{proposition}\label{ker} Let $f: X\to Y$ be a morphism in $\T$. The following conditions are equivalent: 

(a) $f\in J_\P$;

(b1) $f\otimes 1_P=0$ in $\T$;

(b2) $1_P\otimes f=0$ in $\T$; 

(c1) For every object $Q\in \P$ we have $f\otimes 1_Q=0$ in $\T$;

(c2) For every object $Q\in \P$ we have $1_Q\otimes f=0$ in $\T$.
\end{proposition}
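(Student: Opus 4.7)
The plan is to prove the equivalences via the chain (b1) $\Leftrightarrow$ (c1) $\Leftrightarrow$ (a) $\Leftrightarrow$ (c2) $\Leftrightarrow$ (b2). The split into a left and a right version is forced by the lack of a braiding in $\T$, but both halves detect the same kernel because $F_l \cong F_r$ as established in the proof of Corollary~\ref{mono1a}.

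The equivalences (b1) $\Leftrightarrow$ (c1) and (b2) $\Leftrightarrow$ (c2) are the easy part. The direction (c) $\Rightarrow$ (b) is trivial (take $Q = P$). For (b1) $\Rightarrow$ (c1), I would use the right-generation property of $P$: any indecomposable $Q \in \P$ admits maps $i \colon Q \to P \otimes Y$ and $r \colon P \otimes Y \to Q$ with $r \circ i = 1_Q$, and naturality of $\otimes$ then gives
\[ f \otimes 1_Q = (1_Y \otimes r) \circ (f \otimes 1_P \otimes 1_Y) \circ (1_X \otimes i), \]
so $f \otimes 1_P = 0$ forces $f \otimes 1_Q = 0$; additivity extends the conclusion to all of $\P$. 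The equivalence (b2) $\Leftrightarrow$ (c2) is symmetric, using left-generation.

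For (c1) $\Rightarrow$ (a): since $P^* \otimes P \in \P$ (the ideal is closed under duals and under tensoring with $\T$), the hypothesis specializes to $f \otimes 1_{P^* \otimes P} = 0$ in $\T$. By Corollary~\ref{rex1a} the functor $X \otimes (-)$ on $\C$ is exact, so $F(X) = X \otimes \one$ is the cokernel in $\C$ of $1_X \otimes \tau$, and $F(f)$ is the map induced on these cokernels by $f \otimes 1_{P^* \otimes P}$, which is zero.

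The decisive converse (a) $\Rightarrow$ (c1) is the main content and rests on the construction of $\C$. For any $Q \in \P$, thickness of the ideal gives $X \otimes Q,\, Y \otimes Q \in \P$, so $f \otimes 1_Q$ is a morphism between objects of $\P$. By construction of $\C$ as comodules over $C = \bigoplus_{i,j} \Hom(\bold P_i, \bold P_j)^*$, the functor $F$ restricts to a fully faithful embedding $\P \hookrightarrow {\rm Pr}(\C)$ (Proposition~\ref{one} identifies $F|_\P$ with $\iota$). Monoidality of $F$ then gives $F(f \otimes 1_Q) = F(f) \otimes 1_{F(Q)} = 0$, and faithfulness of $F$ on $\P$ forces $f \otimes 1_Q = 0$ back in $\T$. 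The symmetric argument handles (a) $\Rightarrow$ (c2). The main subtlety is this last step: one must keep careful track that the semigroup tensor product on $\C$ agrees with the one on $\T$ after restriction to $\P$ (automatic via trivial projective resolutions), so that vanishing in $\C$ does indeed pull back to vanishing in $\T$.
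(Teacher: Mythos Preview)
Your proof is correct and follows essentially the same approach as the paper: faithfulness of $F$ on $\P$ for (a)$\Rightarrow$(c), the direct-summand argument for (b)$\Leftrightarrow$(c), and the presentation of $\one$ for (c)$\Rightarrow$(a). Two minor remarks: in your (b1)$\Rightarrow$(c1) display you use $Y$ both for the target of $f$ and for the auxiliary object with $Q\mid P\otimes Y$, which is confusing; and for (c1)$\Rightarrow$(a) the paper argues more succinctly that $f\otimes 1_Q=0$ for all $Q\in\P$ forces $F(f)\otimes 1_Z=0$ for every $Z\in\C$ (by resolving $Z$), hence $F(f)=F(f)\otimes 1_\one=0$, avoiding the need to unwind the cokernel description of $F$.
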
 

\begin{proof} If $f\in J_\P$ then $F(f)=0$, so $F(f)\otimes 1_P=F(f\otimes 1_P)=0$. But 
$X\otimes P,Y\otimes P$ are projective and $F$ is fully faithful on projectives. Thus $f\otimes 1_P=0$. Similarly $1_P\otimes f=0$. Thus, (a) implies (b1) and (b2). 

If $f\otimes 1_P=0$ then $f\otimes 1_{P\otimes X}=f\otimes 1_P\otimes 1_X=0$ for all $X\in \C$, so for every object $Q\in \P$ we have $f\otimes 1_Q=0$. 
Thus, (b1) implies (c1). Similarly, (b2) implies (c2). 

If   for any $Q\in \P$    we have $f\otimes 1_Q=0$ then $F(f)\otimes 1_Z=0$ for any $Z\in \C$, i.e., $F(f)=0$, hence $f\in J_\P$. Thus (c1) implies (a). Similarly, (c2) implies (a).   
\end{proof} 

\begin{definition} If $J_\P=0$, i.e., $F$ is faithful, we will say that the ideal $\P$ is {\it faithful}. 
\end{definition} 

\begin{example} The ideals $\P$ in Examples \ref{multiten} and Example \ref{nonspli} are faithful. On the other hand, if $\T=\T_1\oplus \T_2$ with $\T_1\ne 0$ and $\P\subset \T_2$ 
then $\P$ is not faithful, as $\langle\T_1\rangle\subset J_\P$. Also, in Example \ref{notfin} with $M$ finite, 
$\P_N$ is faithful if and only if $N=M$, i.e., $\P_N=\mathcal{S}$ is the ideal of all splitting objects. 
In general, if $\P=\P_N$ then $J_\P=\langle\P_{M\setminus N}\rangle$. 
\end{example} 

\begin{definition}
Let us say that $\T$ is {\it of finite type} if the ideal $\mathcal{S}=\mathcal{S}(\T)$ is finitely generated. 
If so, denote the ideal $J_\S$ by $J=J(\T)$. Let us say that $\T$ is {\it separated} if $J=0$, i.e., $\mathcal{S}$ is faithful. 
\end{definition} 

In other words, $\T$ is separated if it has ``enough splitting objects", i.e., splitting objects separate morphisms (Proposition \ref{ker}(c1,c2)), which motivates the terminology. 

\begin{remark} It is clear that $\T$ is of finite type if and only if the sets $\overline{S}$ and $I_c,c\in \overline{S}$ are finite. 
\end{remark} 

\begin{example} A multitensor category with enough projectives is separated. On the other hand, the category 
of projective bimodules from Example \ref{nonspli} is not separated, since it has no nonzero splitting objects (so $J$ is the collection of all morphisms in $\T$). Similarly, if $G$ is a semisimple algebraic group in characteristic $p$   (of positive dimension)    then $\Rep(G)$ is not separated, for the same reason. 
\end{example} 

Note that if $X\in \S$ and $1_X\in J$ then $X\otimes X^*=0$, hence $X=0$ (as $X$ is a direct summand in $X\otimes X^*\otimes X$). Thus, faithfulness is not a very restrictive condition: if $\T$ is of finite type then the category $\T/J$ is separated (with the same indecomposable objects as $\T$), and $\C(\T,\S)=\C(\T/J,\S)$.  
Also we obtain the following proposition (which is not used below). 

\begin{proposition}\label{sepa} Let $\T$ be of finite type. Let $\P=\P_K$ for a subset $K\subset \overline{S}$ (as in Proposition \ref{classif}). Then $J_\P$ contains $\langle\P_{S\setminus K}\rangle$. In particular, 
$\P$ is faithful if and only if $\P=\S$ and $\T$ is separated. 
\end{proposition}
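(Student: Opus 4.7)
The plan is to establish the inclusion $\langle\P_{S\setminus K}\rangle \subseteq J_\P$ first, and then deduce the ``if and only if'' statement essentially for free. The main tool is the orthogonality relation $\P(c_1)\otimes\P(c_2)=0$ for distinct $c_1,c_2\in\overline{S}$, noted right before Proposition \ref{classif}, combined with the characterization of $J_\P$ supplied by Proposition \ref{ker}.

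For the inclusion, since $\T$ is of finite type, $\P=\P_K$ is finitely generated, so we may pick a generator $P\in \P$. By construction $P$ decomposes as a direct sum of indecomposables lying in $\coplus_{c\in K}\P(c)$, whereas any $R\in \P_{S\setminus K}$ is a direct sum of indecomposables lying in $\coplus_{c\notin K}\P(c)$; orthogonality then forces $P\otimes R=0$. Suppose $f:X\to Y$ factors through $R\in\P_{S\setminus K}$, say $f=h\circ g$ with $g:X\to R$ and $h:R\to Y$. Then $1_P\otimes g:P\otimes X\to P\otimes R=0$ is zero, hence $1_P\otimes f=0$. By Proposition \ref{ker}, implication (b2)$\Rightarrow$(a), this gives $f\in J_\P$, as desired.

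The ``in particular'' claim follows at once. If $\P=\S$ and $\T$ is separated then $J_\P=J_\S=J=0$ by the definition of separated, so $\P$ is faithful. Conversely, if $\P$ is faithful then $\langle\P_{S\setminus K}\rangle=0$ by the inclusion, so for every indecomposable $R\in \P_{S\setminus K}$ the identity $1_R$ lies in $\langle\P_{S\setminus K}\rangle$ and therefore vanishes, forcing $R=0$. Since indecomposables are nonzero, $\overline{S}\setminus K$ is empty, so $K=\overline{S}$ and $\P=\S$. Then $J=J_\S=J_\P=0$, so $\T$ is separated.

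I do not anticipate any serious obstacle: the proof is essentially a bookkeeping consequence of the orthogonality $\P(c_1)\otimes\P(c_2)=0$ together with Proposition \ref{ker}. The only mild point to keep straight is that $\P_{S\setminus K}$ denotes the splitting ideal attached to the complementary set of classes in $\overline{S}$ (consistent with $K\subset\overline{S}$), and that choosing $P$ as a \emph{left} generator of $\P$ suffices because Proposition \ref{ker} offers the symmetric characterizations (b1) and (b2) of $J_\P$.
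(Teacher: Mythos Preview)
Your proof is correct and follows essentially the same approach as the paper: both arguments rest on the orthogonality $\P_K\otimes\P_{\overline S\setminus K}=0$ and the characterization of $J_\P$ via Proposition \ref{ker}. The paper's proof is a one-line invocation of this orthogonality, whereas you spell out the factoring argument and the ``in particular'' deduction explicitly; nothing is genuinely different.
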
 

\begin{proof} This follows immediately since $\P_K\otimes \P_{S\setminus K}=0$. 
\end{proof} 

Let us say that $\S$ is {\it indecomposable} if $\S\ne 0$ and $Q\sim_\ell R$ for all indecomposable $Q,R\in \S$. 
In this case, we get 

\begin{proposition}\label{uniqmin} If $\T$ is separated and $\S$ is indecomposable then it is the unique minimal (nonzero) thick ideal in $\T$.  
\end{proposition}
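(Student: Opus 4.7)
The plan is to prove that every nonzero thick ideal $\I \subset \T$ contains $\S$, which directly implies that $\S$ is the unique minimal nonzero thick ideal. So fix an arbitrary nonzero thick ideal $\I \subset \T$, and pick an indecomposable object $0 \neq Q \in \I$ (which exists by Krull-Schmidt).

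The first key step is to use the separation hypothesis to produce a splitting object that does not annihilate $Q$. Specifically, since $\T$ is separated we have $J = 0$, so by Proposition \ref{ker} applied to the nonzero morphism $1_Q$, there exists some $R \in \S$ with $1_Q \otimes 1_R \neq 0$; equivalently, $Q \otimes R \neq 0$. Since $\I$ is a thick ideal and $\S$ is a thick ideal, the object $Q \otimes R$ lies in the intersection $\I \cap \S$, and any indecomposable direct summand $R' \in Q \otimes R$ therefore belongs to both $\I$ and $\S$.

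The second step is to leverage the indecomposability hypothesis. By definition, for any indecomposable splitting object $R'' \in \S$ we have $R'' \sim_\ell R'$, meaning $R''$ is a direct summand of $X \otimes R'$ for some $X \in \T$. Since $R' \in \I$ and $\I$ is an ideal, $X \otimes R' \in \I$; being thick, $\I$ then contains the summand $R''$. Thus every indecomposable splitting object lies in $\I$, which yields $\S \subset \I$.

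I do not expect a real obstacle: the statement is a short consequence of the tools already assembled. The only point to be mindful of is the correct invocation of Proposition \ref{ker}(c1)/(c2), namely that separation gives an $R \in \S$ (rather than merely in some arbitrary ideal) detecting a given nonzero morphism, and the careful bookkeeping that an indecomposable summand of an object in a thick ideal remains in that ideal, so that the final appeal to $\sim_\ell$ transports membership in $\I$ across all of $\S$.
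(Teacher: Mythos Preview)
Your proof is correct and follows essentially the same route as the paper: use separatedness to find a nonzero object of $\S$ inside the given ideal $\I$, then use indecomposability of $\S$ (the single $\sim_\ell$-class) to conclude $\S\subset\I$. The paper phrases the first part by invoking Lemma~\ref{spliob} to say that the resulting object generates $\S$, but the content is the same as your explicit argument with indecomposable summands and $\sim_\ell$.
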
 

\begin{proof} Lemma \ref{spliob} implies that $\S$ is minimal. Let $\mathcal{I}\ne 0$ be any thick ideal in $\T$ and $0\ne X\in \mathcal{I}$. 
Since $\T$ is separated, there is $Q\in \S$ such that $Q\otimes X\ne 0$. Then $Q\otimes X$ generates $\S$, so $\S\subset \mathcal{I}$. 
\end{proof} 

Let $\P\subset \T$ be a finitely generated ideal and $\widehat{\T}_\P$ be the full Karoubian completion of $F(\T)=\T/J$ inside $\C=\C(\T,\P)$ (i.e., the Karoubian completion of the full subcategory with the same objects as $F(\T)$). Then $\widehat{\T}_\P$ is a full Karoubian monoidal subcategory of $\C$, and the monoidal abelian category $\C(\widehat{\T}_\P,\P)$ coincides with $\C$. We will call $\widehat{\T}_\P$ the {\it completion} of $\T$ with respect to $\P$. 

\begin{definition} We will say that $\T$ is {\it complete}
with respect to $\P$ if the natural inclusion $\T/J\hookrightarrow \widehat{\T_\P}$ is an equivalence. 
\end{definition} 

It is clear that $\T$ is complete with respect to $\P$ if and only if for any $T\in \T$, the natural map 
$\Hom(\bold 1,T)\to \Hom(\one,F(T))$ is surjective. 

If $\T$ is of finite type and $\P=\S$, we will denote $\widehat{\T}_\P$ simply by $\widehat{\T}$, and say that $\T$ is {\it complete} if it is complete with respect to $\S$. In this case we will also write $\C(\T)$ for $\C(\T,\S)$. 
Thus, $\T$ is separated iff $F: \T\to \C(\T)$ is faithful and complete iff $F$ is full. In particular, 
the category $\widehat{\T}$ is separated and complete, and $\T$ is separated and complete iff the natural functor 
$\T\to \widehat{\T}$ is an equivalence. In this case the functor $F$ is a fully faithful embedding.  

\subsection{Properties of the tensor product on $\C$ in the case of a splitting ideal}

Now assume that $\P\subset \T$ is a finitely generated {\it splitting} ideal, and as before let $\C=\C(\T,\P)$. 

\begin{proposition}\label{spli} (i) For any $Y\in \C$, $Q\in \P$ the tensor products $Q\otimes Y$ and $Y\otimes Q$ are projective. 

(ii) Tensoring with every object $Q\in \P$ on either side splits every morphism in $\C$. 
\end{proposition}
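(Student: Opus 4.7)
The plan is to deduce (i) directly from the splitting-ideal hypothesis by choosing a presentation of an arbitrary $Y\in\C$ by objects of $\P$, and then to obtain (ii) from (i) together with the exactness of tensoring with a projective (Corollary \ref{rexa}).

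For (i), I would choose a presentation $Q_1\xrightarrow{\partial}Q_0\to Y\to 0$ in $\C$ with $Q_0,Q_1\in\P$ (possible because $\P$ is the ideal of projectives in $\C$ and $\C$ has enough projectives). By Corollary \ref{rexa}, the functor $Q\otimes -$ is exact on $\C$, so $Q\otimes Y$ is the cokernel of $1_Q\otimes\partial\colon Q\otimes Q_1\to Q\otimes Q_0$. Because $Q,Q_0,Q_1$ all lie in the splitting ideal $\P$, the morphism $1_Q\otimes\partial$ is already split in $\T$; its cokernel is therefore a direct summand of $Q\otimes Q_0$ in $\T$, which lies in $\P$. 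Since the fully faithful inclusion $\iota\colon\P\hookrightarrow\C$ preserves this split cokernel, $Q\otimes Y$ is projective in $\C$. The argument for $Y\otimes Q$ is symmetric.

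For (ii), let $f\colon X\to Y$ be any morphism in $\C$. Exactness of $Q\otimes -$ yields $\Ker(1_Q\otimes f)=Q\otimes\Ker(f)$, $\Image(1_Q\otimes f)=Q\otimes\Image(f)$, and $\Coker(1_Q\otimes f)=Q\otimes\Coker(f)$, and all three are projective by part (i). Hence the two short exact sequences
\[
0\to Q\otimes\Ker(f)\to Q\otimes X\to Q\otimes\Image(f)\to 0
\]
and
\[
0\to Q\otimes\Image(f)\to Q\otimes Y\to Q\otimes\Coker(f)\to 0
\]
each have projective cokernel and therefore split. This exhibits $\Ker(1_Q\otimes f)$ as a direct summand of $Q\otimes X$ and $\Image(1_Q\otimes f)$ as a direct summand of $Q\otimes Y$, which is precisely the condition (cf.\ \eqref{splieq}) for $1_Q\otimes f$ to be split. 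The argument for $f\otimes 1_Q$ is symmetric.

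The only genuine subtlety is in part (i): one must verify that the cokernel of $1_Q\otimes\partial$ formed in $\T$ (as the complementary summand to the split-off piece) agrees with the cokernel computed in the abelian category $\C$. This is automatic, since a split morphism has its cokernel preserved by any additive functor, in particular by $\iota$ composed with the tensor product; once this identification is made, both parts follow by formal abelian-category arguments.
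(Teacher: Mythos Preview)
Your proof is correct and follows essentially the same approach as the paper: for (i) you pick a presentation of $Y$ by objects of $\P$, use exactness of $Q\otimes-$ (Corollary \ref{rexa}) to identify $Q\otimes Y$ as the cokernel, and then invoke the splitting-ideal property to see this cokernel is a summand of a projective; for (ii) you use exactness again together with (i) to see that the relevant short exact sequences split. The only cosmetic difference is that you make the image explicit in (ii), whereas the paper simply notes that kernel and cokernel of $1_Q\otimes f$ are projective and concludes directly.
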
 

\begin{proof} (i) Let $f: Q_1\to Q_0$ be a presentation of $Y$. By Corollary \ref{rexa}, 
the cokernel of the morphism $1_Q\otimes f: Q\otimes Q_1\to Q\otimes Q_0$ is $Q\otimes Y$. 
But since $\P$ is a splitting ideal, this morphism is split. Hence $Q\otimes Y$ is a direct summand 
in $Q\otimes Q_0$, hence projective. 

(ii) Let $f: X\to Y$ be a morphism in $\C$, and $K,K'$ be the kernel and cokernel of $f$, respectively. Then by Corollary \ref{rexa}, $Q\otimes K$, $Q\otimes K'$ are the kernel and cokernel of $1_Q\otimes f$, respectively. But by (i) 
these objects are projective. This implies that the morphism $1_Q\otimes f$ is split. Similarly, the morphism $f\otimes 1_Q$ is split, as claimed. 
\end{proof} 

Recall that a {\it multiring category} over a field $\k$ is a $\k$-linear artinian monoidal category with biexact tensor product, \cite{EGNO}, Definition 4.2.3. 

\begin{proposition}\label{mainpro1a} The bifunctor $(X,Y)\mapsto X\otimes Y$ on $\C$ is exact in both arguments. Thus $\C$ is a multiring category.  
\end{proposition}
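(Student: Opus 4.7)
The plan is to bootstrap from three earlier facts already in place: (a) tensoring with the generator $P$ is exact, since $P$ is projective in $\C$ (Corollary \ref{rexa}); (b) tensoring with $P$ on either side is faithful (Corollary \ref{faith}); and (c) for any $X \in \C$ the object $P \otimes X$ (and similarly $X \otimes P$) is projective, which is precisely where the splitting hypothesis on $\P$ enters, via Proposition \ref{spli}(i). Since Corollary \ref{rexa} already gives right exactness of $\otimes$ in each variable, it suffices to prove left exactness, and by left-right symmetry I only need to treat $X \otimes ?$.

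So I would fix $X \in \C$ and a monomorphism $g\colon Y_1 \to Y_2$ in $\C$, and set $K := \ker(1_X \otimes g)$. Applying the exact functor $P \otimes ?$ from (a) to the exact sequence $0 \to K \to X \otimes Y_1 \to X \otimes Y_2$ (last map $1_X \otimes g$) identifies $P \otimes K$ with $\ker(1_{P \otimes X} \otimes g)$, after the evident associativity rewriting $1_P \otimes 1_X \otimes g = 1_{P \otimes X} \otimes g$. By (c), $P \otimes X$ is projective, so by (a) again the functor $(P \otimes X) \otimes ?$ is exact, hence preserves the monomorphism $g$; thus $1_{P \otimes X} \otimes g$ is mono and $P \otimes K = 0$. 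Now (b) applied to $1_K$ yields $1_K = 0$, so $K = 0$ and $1_X \otimes g$ is mono. The analogous argument with $? \otimes P$ and $? \otimes X$ handles left exactness of $? \otimes X$. Combined with the right exactness from Corollary \ref{rexa} and the fact that $\C$ is a $\k$-linear artinian monoidal category (by construction as finite-dimensional $C$-comodules, together with Corollary \ref{monoa}), this gives that $\C$ is a multiring category.

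I do not anticipate any serious obstacle: once one has assembled (a), (b), (c), the argument is a short formal bootstrap, and the only truly minor technical point is the associativity identification used when transporting the kernel through the exact functor $P \otimes ?$. All the non-trivial content — in particular the projectivity of $P \otimes X$ for arbitrary $X \in \C$ — is already packaged into Proposition \ref{spli}, which is itself the place where the splitting hypothesis on $\P$ is essentially used.
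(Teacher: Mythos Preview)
Your argument is correct, and it follows a genuinely different route from the paper's own proof. The paper proves biexactness by showing directly that $H^{-i}(X\otimes^L Y)=0$ for $i>0$: it takes a projective resolution $Q_\bullet$ of $Y$, and to see that $X\otimes Q_\bullet$ is acyclic it tests with $\Hom(R,-)$ for projective $R$, rewriting $\Hom(R,Q\otimes X\otimes P_j)=\Hom(Q^*\otimes R,X\otimes P_j)$ and then using the rigidity trick that $R$ is a direct summand of $R\otimes R^*\otimes R$. Your approach instead stays entirely at the level of short exact sequences: you reduce left exactness of $X\otimes ?$ to the projective case via the two facts that $P\otimes ?$ is exact and faithful (Corollaries \ref{rexa} and \ref{faith}) together with $P\otimes X$ being projective (Proposition \ref{spli}(i)). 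This is shorter and more transparent; the rigidity input is still there, but it is hidden inside the proof of Corollary \ref{faith} rather than reproved. The paper's route has the minor conceptual advantage of making explicit that $X\otimes^L Y$ is concentrated in degree zero, but as you note this is equivalent to exactness once right exactness is known.
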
 

\begin{proof} Let $Q_\bullet$ be a projective resolution of $Y\in \C$. Then by Corollary \ref{rexa} $H^{-i}(X\otimes^L Y)$ is the $-i$-th cohomology of the complex $X\otimes Q_\bullet$. So it suffices to show that if $P_1,P_2,P_3$ are projective and the complex 
$P_1\to P_2\to P_3$ is acyclic then the complex 
$$
X\otimes P_1\to X\otimes P_2\to X\otimes P_3
$$
is also acyclic. That is, we need to show that the complex 
$$
\Hom(R,X\otimes P_1)\to \Hom(R,X\otimes P_2)\to \Hom(R,X\otimes P_3)
$$
is acyclic for every projective $R$. 

Let $Q$ be projective. Then by Proposition \ref{spli}, $Q\otimes X$ is projective, so by Corollary \ref{rexa} the complex 
$$
\Hom(R,Q\otimes X\otimes P_1)\to \Hom(R,Q\otimes X\otimes P_2)\to \Hom(R,Q\otimes X\otimes P_3)
$$
is acyclic. Since $X\otimes P_i$ are projective (again by Proposition \ref{spli}), this can be written as 
$$
\Hom(Q^*\otimes R,X\otimes P_1)\to \Hom(Q^*\otimes R,X\otimes P_2)\to \Hom(Q^*\otimes R,X\otimes P_3).
$$
Taking $Q=R\otimes {^*}R$, we have $Q^*\otimes R=R\otimes R^*\otimes R$. Thus, using that $R$ 
is a direct summand in $R\otimes R^*\otimes R$ (as $R$ is rigid), we get that the complex 
$$
\Hom(R,X\otimes P_1)\to \Hom(R,X\otimes P_2)\to \Hom(R,X\otimes P_3)
$$
is acyclic, implying that the functor $X\otimes ?$ is exact. Similarly, the functor $?\otimes X$ is exact, as claimed. 
\end{proof} 

\begin{proposition}\label{rig2} Let $\D$ be a multiring category, and $X\in \D$ be the cokernel of a morphism 
$f: Q_1\to Q_0$ such that the objects $Q_0$, $Q_1$ are rigid. Then $X$ is also rigid. 
Namely, $X^*$ is the kernel  of $f^*: Q_0^*\to Q_1^*$ 
and ${^*}X$ is  the kernel of ${^*}f: {^*}Q_0\to {^*}Q_1$.
\end{proposition}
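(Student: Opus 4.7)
The plan is to show that $Y := \Ker(f^* \colon Q_0^* \to Q_1^*)$ is a right dual of $X$ by establishing the defining Hom-adjunction via biexactness of the tensor product together with the rigidity of $Q_0, Q_1$; the left dual arises from a symmetric argument applied to ${}^*f$.

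First, I would propagate the defining left-exact sequence $0 \to Y \to Q_0^* \xrightarrow{f^*} Q_1^*$ under tensoring. By biexactness in the multiring category $\D$, left-tensoring with an arbitrary $W \in \D$ preserves left-exactness, giving
\begin{equation*}
0 \to W \otimes Y \to W \otimes Q_0^* \xrightarrow{1 \otimes f^*} W \otimes Q_1^*.
\end{equation*}
Applying $\Hom(Z, -)$ and invoking the rigidity adjunction $\Hom(Z, W \otimes Q_i^*) \cong \Hom(Z \otimes Q_i, W)$, under which $1 \otimes f^*$ becomes precomposition with $1_Z \otimes f$, transforms this into
\begin{equation*}
0 \to \Hom(Z, W \otimes Y) \to \Hom(Z \otimes Q_0, W) \xrightarrow{-\circ(1 \otimes f)} \Hom(Z \otimes Q_1, W).
\end{equation*}

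Next, right-exactness of $Z \otimes -$ makes $1_Z \otimes p \colon Z \otimes Q_0 \to Z \otimes X$ (with $p \colon Q_0 \to X$ the canonical projection) a cokernel of $1_Z \otimes f$, so the kernel of precomposition with $1_Z \otimes f$ is precisely $\Hom(Z \otimes X, W)$. Combining these identifications produces a natural isomorphism
\begin{equation*}
\Hom(Z \otimes X, W) \cong \Hom(Z, W \otimes Y),
\end{equation*}
natural in $Z$ and $W$. This is exactly the adjunction characterizing $Y$ as a right dual of $X$: the evaluation ${\rm ev}_X \colon Y \otimes X \to \one$ is the preimage of $1_Y$ under the isomorphism with $(Z, W) = (Y, \one)$, and the coevaluation ${\rm coev}_X \colon \one \to X \otimes Y$ is the image of $1_X$ under the isomorphism with $(Z, W) = (\one, X)$.

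The left dual is obtained by the symmetric argument applied to ${}^*f \colon {}^*Q_0 \to {}^*Q_1$. The one mildly subtle step is verifying the triangle identities from the abstract adjunction, but because the isomorphism above is built explicitly from the rigidity of $Q_0, Q_1$ together with the biexactness of $\D$ (rather than purely abstractly), compatibility with the tensor structure is automatic, and the triangle identities reduce to a routine Yoneda computation.
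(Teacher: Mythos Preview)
Your argument is correct, and it takes a genuinely different route from the paper. The paper proceeds by explicit construction: it defines $X^* := \Ker(f^*)$, then shows by hand that $\mathrm{ev}_{Q_0}\circ(i\otimes 1)$ kills $X^*\otimes f(Q_1)$ and hence descends to a map $\mathrm{ev}_X\colon X^*\otimes X\to\one$, and dually that $(p\otimes 1)\circ\mathrm{coev}_{Q_0}$ factors through $X\otimes X^*\subset X\otimes Q_0^*$ to produce $\mathrm{coev}_X$. It then verifies one of the triangle identities by an explicit element-style chase through $Q_0\to Q_0\otimes Q_0^*\otimes Q_0\to X\otimes Q_0^*\otimes Q_0\to X\otimes X^*\otimes X$, reducing it to the triangle identity for $Q_0$.

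Your approach instead packages everything into a single natural isomorphism $\Hom(Z\otimes X,W)\cong\Hom(Z,W\otimes Y)$, obtained by sandwiching the rigidity adjunctions for $Q_0,Q_1$ between two exactness statements. This is cleaner and more conceptual; it also makes transparent \emph{why} the kernel of $f^*$ is the right candidate. The cost is that the last paragraph, where you pass from the natural Hom-adjunction to the triangle identities, is doing real work: one must check that the unit and counit of the adjunction $(-\otimes X)\dashv(-\otimes Y)$ you have built are of the form $1_Z\otimes\mathrm{coev}$ and $1_W\otimes\mathrm{ev}$ (i.e., that the adjunction is tensorial). This holds precisely because the middle step of your isomorphism is the rigidity adjunction for $Q_0$, which is itself tensorial, while the outer steps are induced by tensoring exact sequences and hence commute with $1_{Z_0}\otimes-$. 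So your ``routine Yoneda computation'' is honest, but it would be worth saying explicitly that tensoriality is inherited from that of the $Q_0$-adjunction. The paper's hands-on verification avoids needing this abstraction at the price of a longer diagram chase.
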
 

\begin{proof} 
Define $X^*$ to be the kernel of the map $f^*: Q_0^*\to Q_1^*$. Let $i: X^*\to Q_0^*$, $j: f(Q_1)\to Q_0$ be the natural inclusions, and $\bar f: Q_1\to f(Q_1)$ the natural projection. We claim that the map ${\rm ev}_{Q_0}\circ (i\otimes j): X^*\otimes f(Q_1)\to \one$ is zero. To show this, it suffices to show that the map ${\rm ev}_{Q_0}\circ (i\otimes f): X^*\otimes Q_1\to \one$ is zero (indeed, tensoring with 
$X^*$ is right exact, so the map $1\otimes \bar f: X^*\otimes Q_1\to X^*\otimes f(Q_1)$ is surjective, and $f=j\circ \bar f$). But we have ${\rm ev}_{Q_0}\circ (i\otimes f)={\rm ev}_{Q_1}\circ (f^*\circ i\otimes 1)$, and 
 by definition $f^*\circ i=0$, which implies the claim. 

This means that the map ${\rm ev}_{Q_0}\circ (i\otimes 1): X^*\otimes Q_0\to \one$ gives rise to a map ${\rm ev}_X: X^*\otimes X\to \one$. 

Similarly, let $p: Q_0\to X$ be the natural projection and consider the coevaluation map ${\rm coev}_{Q_0}: \one\to Q_0\otimes Q_0^*$. It gives rise to a map $(p\otimes 1)\circ {\rm coev}_{Q_0}: \one\to X\otimes  Q_0^*$. Note that since the tensor product by $X$ is left exact, $X\otimes X^*\subset X\otimes Q_0^*$. 

We claim that $(p\otimes 1)\circ {\rm coev}_{Q_0}$ lands in $X\otimes X^*$, i.e., that the composition  
$$
(p\otimes f^*)\circ {\rm coev}_{Q_0}: \one\to X\otimes Q_1^*
$$ 
is zero. We have $=(p\otimes f^*)\circ {\rm coev}_{Q_0}=(p\circ f\otimes 1)\circ {\rm coev}_{Q_1}$. But by definition $p\circ f=0$, as claimed.  

Thus, the map $(p\otimes 1)\circ {\rm coev}_{Q_0}$ gives rise to a map ${\rm coev}_X: \one \to X\otimes X^*$. 

Now the axioms of the left dual for $X^*$ (\cite{EGNO}, Subsection 2.10) are easily deduced from those for $Q_0^*$. 
For example, let us check that the composition $X\to X\otimes X^*\otimes X\to X$ is the identity (the other axiom is proved similarly). Indeed, let $x\in X$ and let $y\in X$ be the image of $x$ under this composition. 
Fix a lift $x_1\in Q_0$ of $x$ and let $x_2$ be the image of $x_1$ in $Q_0\otimes Q_0^*\otimes Q_0$, and $x_3$ the projection of $x_2$ to $X\otimes Q_0^*\otimes Q_0$. Then $x_3$ has a unique preimage $x_4$ 
in $X\otimes X^*\otimes Q_0$. Let $x_5$ be the image of $x_4$ in $X\otimes X^*\otimes X$. By the definition of ${\rm coev}_X$ we have $(1\otimes {\rm ev}_X)(x_5)=y$. By the definition of ${\rm ev}_X$, it follows 
that $(1\otimes {\rm ev}_{Q_0})(x_3)=y$. Thus it follows from the rigidity of $Q_0$ that $y$ is the image of 
$x_1$ in $X$, i.e., $y=x$, as desired. 

Similarly, $X$ admits a right dual ${^*}X$, which is the kernel of the map ${^*}f: {^*}Q_0\to {^*}Q_1$. 
Thus, $X$ is rigid and the proposition is proved. 
\end{proof} 

\begin{proposition} \label{rigimono} 
(i) The monoidal category $\C=\C(\T,\P)$ is rigid, i.e., it is a multitensor category. 

(ii) Let $\P=\S$, so that $\C=\C(\T)$. Then we have $\C=\coplus_{c\in \overline{S}}\C(c)$, 
where $\C(c)$ are indecomposable multitensor subcategories. Namely, $\C(c)$ is the 
subcategory of objects generated by $c$. In particular, $\one=\coplus_{c\in \overline{S}}\one_c$. 

(iii) We have $\C(c)=\coplus_{i,j\in I_c}\C(c)_{ij}$, where $\C(c)_{ij}$ are component categories of $\C(c)$ (\cite{EGNO}, Definition 4.3.5). In particular, $\one_c=\coplus_{i\in I_c}\one_{c,i}$ where $\one_{c,i}$ 
are simple, and $\C(c)_{ij}=\one_{c,i}\C(c)\one_{c,j}$. 

(iv) $\S$ is indecomposable if and only if $\one\in \C$ is indecomposable, i.e., $\C$ is a tensor category. 
\end{proposition}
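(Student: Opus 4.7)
For part (i), the plan is to combine the already-established multiring structure of Proposition~\ref{mainpro1a} with the rigidity of cokernels from Proposition~\ref{rig2}. By construction of $\C=\C(\T,\P)$, every object $X\in\C$ admits a presentation $Q_1\xrightarrow{f} Q_0\to X\to 0$ with $Q_0,Q_1\in\P$. By Corollary~\ref{rig1}, $Q_0$ and $Q_1$ are rigid in $\C$; hence Proposition~\ref{rig2} implies $X$ is rigid. Combining rigidity with the biexactness of the tensor product from Proposition~\ref{mainpro1a} yields that $\C$ is a multitensor category.

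For part (ii), I would use the classification from Proposition~\ref{classif} to write $\S=\coplus_{c\in\overline{S}}\P(c)$ with $\P(c_1)\otimes\P(c_2)=0$ for $c_1\ne c_2$. The indecomposable projectives of $\C=\C(\T,\S)$ are indexed by the indecomposables of $\S$ and thus split according to the partition $\overline{S}$; correspondingly the simples $\bold L_i$ and their projective covers $\bold P_i$ group into disjoint families. Let $\C(c)$ be the full subcategory of $\C$ of objects all of whose composition factors lie in the class $c$. The vanishing $\P(c_1)\otimes\P(c_2)=0$, together with right-exactness of the tensor product, gives $\C(c_1)\otimes\C(c_2)=0$ for $c_1\ne c_2$, so $\C=\coplus_{c}\C(c)$ as an abelian category and as a monoidal category. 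Each $\C(c)$ is identified with $\C(\T,\P(c))$ by Proposition~\ref{p1a} (applied to $\P(c)$), so by part (i) it is itself a multitensor category, with unit $\one_c$. Decomposing $\one$ according to this splitting gives $\one=\coplus_c\one_c$.

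For part (iii), within a fixed class $c\in\overline{S}$ I would use the finer equivalence relations $\sim_\ell$ and $\sim_r$ to decompose $\P(c)$ into the additive pieces $\P(c)_{i\bullet}$, $\P(c)_{\bullet j}$, and $\P(c)_{ij}$ as set up before Proposition~\ref{finprop}'s discussion. The relation $\sim_\ell$ corresponds to existence of nonzero morphisms from $X\otimes R$ (Lemma~\ref{spliob}(a)), which is precisely the criterion for two indecomposable projectives to lie in a common left-module component of the multitensor category $\C(c)$; symmetrically for $\sim_r$ and right components. This shows the block decomposition $\one_c=\coplus_{i\in I_c}\one_{c,i}$ into simple summands indexed by $I_c$, and identifies $\C(c)_{ij}=\one_{c,i}\C(c)\one_{c,j}$ with the component categories in the sense of \cite[Definition 4.3.5]{EGNO}. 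The main technical point—and the step I expect to require the most care—is matching the abstractly defined components of the multitensor category $\C(c)$ with the equivalence-class description coming from $\sim_\ell,\sim_r$; this requires checking that two indecomposable projectives $Q,R\in\P(c)$ satisfy $Q\sim_\ell R$ exactly when their associated simple tops lie in the same left component of $\C(c)$, which one obtains by unwinding Lemma~\ref{spliob}(a) and using that every object of $\C(c)$ is a cokernel of projectives.

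Part (iv) is then immediate from (ii) and (iii): $\S$ is indecomposable precisely when $|\overline{S}|=1$ and $|I_c|=1$ for the unique class $c$, which by the decompositions above is equivalent to $\one$ being simple, i.e.\ $\C$ being a tensor category rather than merely a multitensor category.
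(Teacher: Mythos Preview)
Your proof of (i) is exactly the paper's argument: present $X$ as a cokernel of a map between projectives, invoke Corollary~\ref{rig1} for rigidity of the $Q_i$, and apply Proposition~\ref{rig2} together with the multiring structure of Proposition~\ref{mainpro1a}. For (ii)--(iv) the paper simply declares the proof ``straightforward''; your expanded outline is correct and fills in the details the paper omits, correctly matching the $\sim_\ell$-classes $c_{\bullet j}$ with the column index $j$ of the component decomposition and reading off (iv) from the definition of indecomposability of $\S$ (a single $\sim_\ell$-class).
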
 

\begin{proof} (i) Any $X\in \C$ can be represented as a cokernel of a morphism 
$f: Q_1\to Q_0$ between projective objects. By Corollary \ref{rig1}, the objects $Q_0$ and $Q_1$ are rigid. Thus
by Proposition \ref{rig2} and Proposition \ref{mainpro1a}, $X$ is rigid, as claimed. 

The proof of (ii), (iii) and (iv) is straightforward. 
\end{proof} 

\begin{remark} Example \ref{nonspli} shows that the splitting assumption for $\P$ cannot be dropped, as the category of finite dimensional bimodules over a finite dimensional non-semisimple Frobenius algebra is not rigid. 
\end{remark} 

As a by-product we obtain the following corollary (which is not used below). 

\begin{corollary}\label{semis} If $\T$ is separated then the algebra ${\rm End}(\bold 1)$ is semisimple. 
\end{corollary}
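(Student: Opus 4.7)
The plan is to use the faithful monoidal functor $F: \T \to \C(\T)$ produced by the separation hypothesis to transfer the question about $\End_\T(\bold 1)$ into the multitensor category $\C(\T)$, where the unit object has an explicit decomposition into simples.

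First, I would note that since $\T$ is separated it is in particular of finite type, so $\C(\T) = \C(\T,\S)$ and the functor $F: \T \to \C(\T)$ is defined and faithful (by the definition of separation, together with Corollary \ref{mono1a}). Because $F$ is monoidal, $F(\bold 1) = \one$, and faithfulness yields an injection of finite-dimensional $\k$-algebras
\[
\End_\T(\bold 1) \hookrightarrow \End_{\C(\T)}(\one).
\]
It therefore suffices to show that $\End_{\C(\T)}(\one)$ is semisimple and that semisimplicity descends to this subalgebra.

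Next, I would invoke Proposition \ref{rigimono}(ii)--(iii), which gives the decomposition $\one = \bigoplus_{c \in \overline{S}} \bigoplus_{i \in I_c} \one_{c,i}$ into finitely many simple objects, each lying in a distinct component subcategory $\C(c)_{ii}$ of $\C(\T)$. The distinctness of components forces $\Hom(\one_{c,i}, \one_{c',i'}) = 0$ whenever $(c,i) \neq (c',i')$, while Schur's lemma (using algebraic closure of $\k$ and finiteness of Hom spaces) gives $\End(\one_{c,i}) = \k$. Hence $\End_{\C(\T)}(\one) \cong \k^N$ for some finite $N$, a commutative semisimple $\k$-algebra.

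Finally, any subalgebra of a finite product of copies of $\k$ is finite-dimensional, commutative, and reduced (no nonzero nilpotents), and such an algebra is automatically semisimple. Thus $\End_\T(\bold 1)$ is semisimple. The only nontrivial ingredient is the simple-summand decomposition of $\one$ in Proposition \ref{rigimono}(iii); the rest is formal. The main potential obstacle would have been showing that semisimplicity of $\End_{\C(\T)}(\one)$ passes to its subalgebra $\End_\T(\bold 1)$, but the fact that the ambient algebra is commutative and reduced makes this immediate.
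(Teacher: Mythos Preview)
Your proof is correct and follows essentially the same approach as the paper: the paper's argument is simply that $\End(\one)$ is semisimple and commutative and that $F_{\bold 1,\bold 1}\colon \End(\bold 1)\to \End(\one)$ is injective, which is exactly what you do. You supply more detail than the paper by explicitly invoking Proposition~\ref{rigimono}(iii) and Schur's lemma to identify $\End(\one)\cong \k^N$, but the underlying idea is identical.
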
 

\begin{proof} This follows since $\End(\one)$ is semisimple and commutative and $F_{\bold 1,\bold 1}: \End(\bold 1)\to \End(\one)$ is injective (for $\P=\S$). 
\end{proof} 

\begin{example} Let ${\rm char}(\k)=2$ and $\T_0:=\Rep_\k(\Bbb Z/2)$, with the usual monoidal structure. Then 
$\T_0$ has two indecomposable objects, $\bold 1$ and $P$ (the regular representation). 
Let $\T$ be the following modification of $\T_0$: we set $\End(\bold 1):=\k[\varepsilon]/\varepsilon^2$ 
instead of $\k$. The monoidal structure is modified in an obvious way (namely, 
the tensor product of $\varepsilon$ with any morphism $X\to Y$ where $X=P$ or $Y=P$ is zero).  
Then $\S$ is generated by $P$, so $J=\k \varepsilon$, and 
$\T/J=\C(\T/J)=\Rep_\k(\Bbb Z/2)$ (i.e., $\T$ is not separated but complete). This example 
shows that $J$ does not have to be a thick ideal, and that the separatedness assumption 
in Corollary \ref{semis} cannot be dropped. 
\end{example} 

\subsection{The main theorems} 

\begin{theorem}\label{main0} $\T$ admits a faithful monoidal functor $E$ into a multitensor category $\D$ with enough projectives which is full on projectives (i.e., any morphism between projectives is in the image) if and only if 
$\T$ is separated. Moreover, if such a functor exists, it is unique up to an isomorphism, with $\D\cong \C(\T)$ and $E\cong F$. 
The same holds for braided and symmetric categories. 
\end{theorem}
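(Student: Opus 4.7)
The plan is to prove the two implications separately and then establish uniqueness. For the forward direction, assume $\T$ is separated (and of finite type, as standing in this subsection). I take $\D:=\C(\T)$ and $E:=F\colon\T\to\C(\T)$ constructed in the preceding subsections. All required properties have already been verified: Proposition \ref{rigimono} shows $\C(\T)$ is a multitensor category, while the description of projectives as $\iota(\S(\T))$ shows it has enough projectives; faithfulness of $F$ is equivalent to $J=0$ by Proposition \ref{ker}, i.e., to separatedness; and fullness on projectives is Proposition \ref{fullness}. The braided and symmetric variants are covered by Corollaries \ref{mono1a} and \ref{monoa}.

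For the reverse direction, suppose $E\colon\T\to\D$ is a faithful monoidal functor with $\D$ a multitensor category having enough projectives, and $E$ full on projectives. The central lifting principle is: if $T\in\T$ satisfies $E(T)\in{\rm Pr}(\D)$, then $T\in\S(\T)$. Indeed, for any $f\colon X\to Y$ in $\T$, the morphism $1_{E(T)}\otimes E(f)=E(1_T\otimes f)$ has projective source and target (tensoring with a projective in a multitensor category preserves projectivity), and it is split in $\D$ since ${\rm Pr}(\D)=\S(\D)$ by Example \ref{multiten}. A choice of splitting is a morphism between projectives, which lifts via fullness on projectives to some $g\colon T\otimes Y\to T\otimes X$ in $\T$; faithfulness of $E$ then forces $g$ to split $1_T\otimes f$, and the right-sided argument is symmetric. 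Now let $P_0$ be the projective cover of $\one_\D$ in $\D$, a generator of ${\rm Pr}(\D)$ as a left ideal. Interpreting ``full on projectives'' to include that ${\rm Pr}(\D)$ lies in the essential image of $E$ up to direct summands, choose $Q_0\in\T$ with $P_0$ a direct summand of $E(Q_0)$; by the previous step $Q_0\in\S(\T)$. For any $f\in J$ we have $1_{Q_0}\otimes f=0$ in $\T$, so $P_0\otimes E(f)=0$ in $\D$; since tensoring with the projective generator $P_0$ is faithful on $\D$ (the standard analog of Corollary \ref{faith}), $E(f)=0$, and then $f=0$ by faithfulness of $E$. A parallel summand argument using $P_0\otimes R\twoheadrightarrow R$ shows that $Q_0$ generates $\S(\T)$ as a left ideal, so $\T$ is of finite type.

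For uniqueness, $\T$ being separated and of finite type guarantees $F\colon\T\to\C(\T)$ exists. The arguments above realize both $F$ and $E$ as sending $\S(\T)$ fully faithfully onto ${\rm Pr}(\C(\T))$ and ${\rm Pr}(\D)$ respectively, yielding a canonical equivalence $G_0\colon{\rm Pr}(\C(\T))\xrightarrow{\sim}{\rm Pr}(\D)$ intertwining $F|_{\S(\T)}$ and $E|_{\S(\T)}$. I extend $G_0$ to a functor $G\colon\C(\T)\to\D$ by setting $G(X):=\Coker(G_0(P_1)\to G_0(P_0))$ for a presentation $P_1\to P_0\to X\to 0$ by projectives; the standard homotopy argument shows this is well-defined, exact, and compatible with the tensor product via the presentation-based formula for $\otimes$ in $\C(\T)$. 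The resulting $G$ is a tensor equivalence with $G\circ F\cong E$, preserving braided or symmetric structure when present. The main obstacle I anticipate is the essential-surjectivity component of the ``full on projectives'' hypothesis needed to extract $Q_0$ with $P_0$ a summand of $E(Q_0)$, together with the verification that tensoring by $P_0$ is faithful in $\D$; once these are settled, the rest of the argument is routine homological manipulation governed by the presentation-by-projectives description of $\C(\T)$.
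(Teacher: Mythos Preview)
Your proposal is correct and follows essentially the same strategy as the paper. The forward direction is identical; for the reverse direction, the paper argues exactly as you do that if $E(Q)$ is projective then $Q\in\S(\T)$ (the paper phrases this tersely as ``since $U$ splits every morphism in $\D$ and $E$ is faithful, we have $Q\in\S$'', but unpacking this is precisely your lifting-the-splitting argument), then kills $J$ by tensoring with projectives, and deduces uniqueness from the identification ${\rm Pr}(\D)\simeq\S(\T)$.

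Two minor remarks. First, the ``obstacle'' you flag about essential surjectivity onto projectives is not an issue: the parenthetical in the theorem statement defines ``full on projectives'' as ``any morphism between projectives is in the image'', and applying this to identity morphisms already forces every projective of $\D$ to be (isomorphic to) some $E(Q)$. Second, your uniqueness argument via extending $G_0$ along projective presentations is a valid expansion of what the paper compresses into one line: once $E$ identifies $\S(\T)$ with ${\rm Pr}(\D)$ as additive (indeed monoidal) categories, the abelian category $\D$ is recovered from its projectives exactly as $\C(\T)$ is, so $\D\cong\C(\T)$ and $E\cong F$ follow immediately from the construction of $\C(\T)$.
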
 

\begin{proof} Suppose $\T$ is separated. Then by Proposition \ref{rigimono}, 
there is a monoidal functor $F: \T\to \C(\T)$ which satisfies the conditions of the theorem. 

Conversely, let $E: \T\to \D$ be a faithful monoidal functor into a multitensor category with enough projectives which is full on projectives. Then, since $E$ is full on projectives, for every projective $U\in \D$ there exists $Q\in \T$ such that $E(Q)=U$. Moreover, since $U$ splits every morphism in $\D$ and $E$ is faithful, we have $Q\in \S$. 

Now let $f: X\to Y$ be a morphism in $J$. Then $1_Q\otimes f=0$, hence $1_{E(Q)}\otimes E(f)=0$, i.e., $1_U\otimes E(f)=0$. 
Thus $U\otimes {\rm Im}(E(f))={\rm Im}(1_U\otimes E(f))=0$ in $\D$. This holds for every projective $U$, which implies ${\rm Im}(E(f))=0$, hence $E(f)=0$. Thus $f=0$ by the faithfulness of $E$. Thus, $\T$ is separated (the ideal of projectives is generated as a left ideal by the projective cover of $\one$, hence is finitely generated).  

Also, in this case $\S$ coincides with the ideal of projectives ${\rm Pr}\subset \D$, 
so we get that $\D\cong \C(\T)$ and $E\cong F$. 
\end{proof} 

Recall (\cite{EHS}, Section 9) that a multitensor category $\C\supset \T$ is called 
the {\it abelian envelope} of $\T$ if it has the following universal property: 
for any multitensor category $\D$ there is an equivalence between the category of tensor functors $E: \C\to \D$ and the category of faithful monoidal functors $\overline{E}: \T\to \D$, defined by the formula $\overline{E}=E|_\T$. The same definition is made for braided and symmetric categories (in which case the functors have to be braided). 

\begin{theorem}\label{main1} (i) $\T$ admits a fully faithful monoidal functor $E: \T\to \D$ into a multitensor category $\D$ with enough projectives if and only if it is separated and complete. Moreover, in this case there exists 
a tensor embedding $\widetilde{E}: \C(\T)\hookrightarrow \D$ such that $E\cong \widetilde{E}\circ F$. 
The same applies to braided and symmetric categories. 

(ii) In this case $\C(\T)$ is the abelian envelope of $\T$. 
\end{theorem}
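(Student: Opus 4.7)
Plan:

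For the easy direction of (i): if $\T$ is separated and complete, the discussion just before the theorem already shows $F:\T\to\C(\T)$ is fully faithful, and Proposition \ref{rigimono} together with the construction of $\C(\T)$ in Section~2.4 shows $\C(\T)$ is multitensor with enough projectives. So one takes $E:=F$ and $\widetilde E:=\mathrm{id}_{\C(\T)}$.

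For the converse and the moreover, the plan is to build $\widetilde E:\C(\T)\to\D$ directly via projective presentations, then read off separatedness and completeness of $\T$ from properties of $\widetilde E$ combined with those of $E$. For $X\in\C(\T)$ choose a projective presentation $F(R_1)\xrightarrow{F(a)} F(R_0)\to X\to 0$ with $R_i\in\S(\T)$ and $a:R_1\to R_0$ lifted to $\T$ using Proposition \ref{fullness}; set $\widetilde E(X):=\Coker_\D(E(a))$. Extend to morphisms by lifting along projective covers in $\C(\T)$ and passing to cokernels in $\D$; well-definedness (independence of the presentation and of the lift) is the standard homological comparison of presentations. The natural isomorphism $\widetilde E\circ F\cong E$ comes from feeding the bar presentation $F(T\otimes P^*\otimes P\otimes P^*\otimes P)\to F(T\otimes P^*\otimes P)\to F(T)\to 0$ of Proposition \ref{one} into the construction and using that $E$ is monoidal. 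Monoidality of $\widetilde E$ follows from the cokernel description of $\otimes$ in $\C(\T)$ (the remark after Proposition \ref{p1a}) combined with monoidality of $E$; exactness is then automatic for a monoidal functor between rigid multitensor categories.

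The main obstacle is showing $\widetilde E$ is fully faithful, i.e.\ a tensor embedding. Faithfulness is manageable: if $\widetilde E(f)=0$ for $f:X\to Y$, lift $f$ to a morphism of projective presentations, use $E$ full to re-express the resulting image-side factorization through the presenting morphism of $Y$, and then use $E$ faithful to pull that factorization back to $\T$, forcing $f=0$ in $\C(\T)$. Fullness is harder: to lift a morphism $\psi:\widetilde E(X)\to\widetilde E(Y)$ in $\D$ to some $f:X\to Y$ in $\C(\T)$ one precomposes $\psi$ with the projection $E(R_0)\twoheadrightarrow\widetilde E(X)$ and must then lift further along the projection $E(S_0)\twoheadrightarrow\widetilde E(Y)$, which demands that $E(R_0)$ be projective in $\D$. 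I expect the central technical step therefore to be the claim that $E(Q)\in{\rm Pr}(\D)$ for every $Q\in\S(\T)$, equivalently (by Example \ref{multiten}) that $E(Q)$ is a splitting object in $\D$; the argument ought to combine fully faithfulness of $E$ (which transfers splitness of morphisms in both directions between $\T$ and its image in $\D$) with rigidity of $E(Q)$ and the abundance of projectives in $\D$.

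Once $\widetilde E$ is a fully faithful tensor functor with $\widetilde E\circ F\cong E$, separatedness and completeness of $\T$ follow at once: $F$ is faithful because $E=\widetilde E\circ F$ is, and $F$ is full because $E$ is full while $\widetilde E$ is faithful. For (ii), essentially the same presentation-based formula produces, from any faithful monoidal $E:\T\to\D'$ with $\D'$ multitensor, a tensor functor $\widetilde E:\C(\T)\to\D'$ with $\widetilde E\circ F\cong E$ (this half uses only faithfulness of $E$, not fully faithfulness); conversely any tensor functor $G:\C(\T)\to\D'$ restricts along $F$ to a faithful monoidal $G\circ F:\T\to\D'$, using $F$ fully faithful by separated+complete together with $G$ automatically faithful by \cite{EGNO}, Remark~4.3.10. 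A direct comparison via presentations shows these two assignments are mutually inverse up to natural isomorphism, establishing that $\C(\T)$ is the abelian envelope of $\T$.
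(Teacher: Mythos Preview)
Your approach diverges from the paper's and contains a genuine gap in the fullness argument for $\widetilde E$. You identify as the central step that $E(Q)\in{\rm Pr}(\D)$ for every $Q\in\S(\T)$, but this is false in general. Take $\D=\C(\T)\boxtimes\A$ for any non-semisimple finite tensor category $\A$, with $E(T):=F(T)\boxtimes\one$; then $E$ is fully faithful (since $\End_\A(\one)=\k$) and $\D$ has enough projectives, yet $E(Q)=F(Q)\boxtimes\one$ is projective in $\D$ only if $\one$ is projective in $\A$. Your proposed argument (``transfer splitness via fully faithfulness of $E$'') can at best control morphisms in the \emph{image} of $E$, not arbitrary morphisms in $\D$, so it cannot show $E(Q)$ is a splitting object of $\D$. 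Consequently your lifting argument for fullness of $\widetilde E$ breaks down. There is also a circularity: you invoke $\C(\T)$ with its unit object before establishing that $\S$ is finitely generated, i.e.\ before knowing $\T$ is of finite type, which is part of what ``separated'' means.

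The paper avoids both problems by a different reduction. It first replaces $\D$ by the tensor subcategory $\E\subset\D$ generated by $E(\T)$, so that $E$ becomes surjective. Then it argues in the \emph{opposite} direction: any projective $U\in\E$ is a subquotient, hence (being projective--injective) a direct summand, of some $E(T)$, so by fullness of $E$ one has $U=E(Q)$ for some $Q\in\T$. Thus $E$ is full on projectives, and Theorem~\ref{main0} applies verbatim to give $\T$ separated and $\E\cong\C(\T)$, $E\cong F$; the embedding $\widetilde E$ is then simply the inclusion $\C(\T)\cong\E\hookrightarrow\D$, and completeness of $\T$ follows because $F\cong E$ is full. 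For (ii) the paper does not reprove the universal property but cites \cite{EHS}, Theorem~9.2.1.
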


\begin{proof} (i) If $\T$ is separated and complete then by Proposition \ref{rigimono}, 
we have a fully faithful monoidal functor $F: \T\to \C(\T)$ satisfying the conditions in the theorem.  

Conversely, let $E: \T\to \D$ be a fully faithful monoidal functor into a multitensor category $\D$ with enough projectives. 
Let $\E\subset \D$ be the tensor subcategory generated by $E(\T)$, \cite{EGNO}, Definition 4.11.1 (full and closed under subquotients). 
Then $\E$ is a multitensor category with enough projectives, and we have a fully faithful monoidal functor 
$E: \T\to \E$. Thus we may assume without loss of generality that $\E=\D$ (i.e., $E$ is surjective). 
In this case, let $U$ be a projective object of $\D$. Since $E$ is surjective, 
there exists $T\in \T$ such that $U$ is a subquotient of $E(T)$. Since $U$ is both projective and injective, 
it is in fact a direct summand of $E(T)$. Since $F$ is full, 
this means that $U=E(Q)$ for some direct summand $Q$ 
of $T$. Thus, $F$ is full on projectives. By Theorem \ref{main0} 
this implies that $\T$ is separated and $\D\cong \C(\T)$, $E\cong F$. 
Finally, since $F$ is full, $\T$ is complete. 

(ii) This follows from \cite{EHS}, Theorem 9.2.1. More precisely, this theorem applies to symmetric categories, but the proof extends without significant changes to the case of braided categories and categories without a fixed braided structure. 
\end{proof} 

Let $\D$ be a multitensor category with enough projectives. Recall that the {\it stable category} ${\rm Stab}(\D)$ is the quotient $\D/{\rm Pr}$, where ${\rm Pr}$ is the ideal of projectives. The indecomposables of ${\rm Stab}(\D)$ are the non-projective indecomposables of $\D$, so an object of ${\rm Stab}(\D)$ may be realized as an object in $\D$ with no nonzero projective direct summands. There is a natural monoidal functor ${\rm Stab}: \D\to {\rm Stab}(\D)$. 

Suppose $\E$ is a rigid monoidal Karoubian category and $E: \E\to {\rm Stab}(\D)$ 
a faithful monoidal functor. Consider the fiber product $\A:=\E\times_{{\rm Stab}(\D)}\D$, whose objects are 
triples $(X,Y,f)$, where $X\in \E$, $Y\in \D$, and $f: E(X)\to {\rm Stab}(Y)$ is an isomorphism, and morphisms 
$(X_1,Y_1,f_1)\to (X_2,Y_2,f_2)$ are pairs $(g,h)$, $g: X_1\to X_2$, $h: Y_1\to Y_2$ 
such that $f\circ E(g)={\rm Stab}(h)\circ f$. This is naturally a rigid monoidal Karoubian category. 
If $(X,Y,f)$ and $(X,Y,f')$ are objects of $\A$ then there exists an automorphism
$g: Y\to Y$ such that ${\rm Stab}(g)=f'\circ f^{-1}$ and it defines an isomorphism $(X,Y,f)\cong (X,Y,f')$. Thus 
isomorphism classes of objects in $\A$ are isomorphism classes of pairs $(X,Y)$ 
such that $E(X)\cong {\rm Stab}(Y)$, i.e., isomorphism classes of pairs $(X,Q)$ where 
$Q$ is a projective object of $\D$; namely, $(X,Q)=X\oplus Q$. Thus the indecomposables 
of $\A$ are the indecomposables of $\E$ and the indecomposable projectives 
of $\D$. Moreover, for $X,X'\in \E$ and projective $Q,Q'\in \D$ we have 
$$
\Hom_\A(X,X')=\Hom_\E(X,X'),\ \Hom_\A(Q,Q')=\Hom_\D(Q,Q'),
$$
$$
\Hom_\A(Q,X)=\Hom_\D(Q,E(X)), \Hom_\A(X,Q)=\Hom_\D(E(X),Q). 
$$
The category $\A$ is equipped with a natural faithful monoidal functor $\widetilde E: \A\to \D$. 

Note that if $E$ is pseudomonic, i.e., full on isomorphisms (in particular, if it is fully faithful) then 
$\A$ is naturally equivalent to a rigid monoidal isomorphism-closed subcategory of $\D$ containing 
the tensor ideal generated by projective objects. 

\begin{theorem}\label{main2} There is a 1-1 correspondence between (equivalence classes of): 

(1) Separated categories $\T$ such that $\C(\T)\cong \D$; 

(2) Rigid monoidal Karoubian categories $\overline{\T}$ together with a faithful monoidal functor 
\linebreak $\overline{F}: \overline{\T}\to {\rm Stab}(\D)$; 

\noindent which sends complete categories $\T$ to full subcategories $\overline{\T}\subset {\rm Stab}(\D)$, 
and vice versa. Namely, this correspondence is defined by the formulas 
$$
\overline{\T}=\T/{\rm Pr},\ \T=\overline \T\times_{{\rm Stab}(\D)}\D.
$$
\end{theorem}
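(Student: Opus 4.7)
The plan is to establish the bijection via the two explicit constructions $\T\mapsto\T/{\rm Pr}$ and $\overline{\T}\mapsto\overline{\T}\times_{{\rm Stab}(\D)}\D$ described in the statement, and then match the completeness condition on (1) with fullness of $\overline{F}$ in (2).

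First, starting from a separated $\T$ with $\C(\T)\cong\D$, Theorem~\ref{main0} gives a faithful monoidal $F:\T\to\D$ which is full on projectives and, by Example~\ref{multiten}, identifies $\S$ with ${\rm Pr}(\D)$. I would set $\overline{\T}:=\T/{\rm Pr}$ and check that the induced monoidal functor $\overline{F}:\overline{\T}\to{\rm Stab}(\D)=\D/{\rm Pr}$ is faithful: if $F(f)$ factors through a projective $Q\in{\rm Pr}(\D)$, then fullness of $F$ on projectives lifts that factorization to one through the corresponding object of $\S\subset\T$, so $f$ is zero in $\overline{\T}$. Rigidity of $\overline{\T}$ descends from $\T$ (duality data survives the quotient), and the Karoubian (in fact Krull--Schmidt) structure comes from the fact that indecomposables of $\T$ split cleanly into projective and non-projective classes.

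Second, for the reverse construction, given $(\overline{\T},\overline{F})$ form $\T:=\overline{\T}\times_{{\rm Stab}(\D)}\D$. The description of indecomposables and $\Hom$-spaces in $\A=\overline{\T}\times_{{\rm Stab}(\D)}\D$ given just above the theorem shows $\T$ is rigid monoidal Karoubian, and the canonical monoidal functor $\widetilde{E}:\T\to\D$ is faithful (by its definition on $\Hom$-spaces, using faithfulness of $\overline{F}$) and full on projectives (since $\Hom_\A(Q,Q')=\Hom_\D(Q,Q')$ for projective $Q,Q'\in\D\subset\T$). Theorem~\ref{main0} then yields that $\T$ is separated with $\C(\T)\cong\D$ and $\widetilde{E}\cong F$.

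That the two constructions are mutually inverse is essentially a bookkeeping check using those explicit $\Hom$-space formulas: applying (2) to the output of (1) recovers the indecomposables of $\T$ (non-projective ones come from $\overline{\T}$; projective ones, identified via $F$ with indecomposable projectives of $\D$), and $\Hom$-spaces match because $F$ is fully faithful on projectives and faithful overall; applying (1) to the output of (2) manifestly returns $\overline{\T}$ with its functor $\overline{F}$. Finally, $\T$ is complete iff $F$ is full (as $\T$ is separated), which is equivalent to $\overline{F}:\overline{\T}\to{\rm Stab}(\D)$ being full: the forward implication is immediate, and the converse follows because any morphism in $\D$ whose class vanishes in ${\rm Stab}(\D)$ factors through a projective and hence, by fullness of $F$ on projectives, lifts along $F$. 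The main obstacle I anticipate is simply the careful verification of these $\Hom$-space identifications and the lifting of factorizations through projectives; everything else is formal manipulation of the universal property supplied by Theorem~\ref{main0}.
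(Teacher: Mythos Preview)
Your approach is essentially the paper's: build the two maps $\T\mapsto\T/{\rm Pr}$ and $\overline{\T}\mapsto\overline{\T}\times_{{\rm Stab}(\D)}\D$, invoke Theorem~\ref{main0} for separatedness of the fiber product, and then verify the round-trips. One small but genuine correction: in your faithfulness argument for $\overline{F}$ and again in the completeness step, you lift a factorization $F(X)\to Q\to F(Y)$ through a projective $Q$ by citing ``fullness of $F$ on projectives.'' That phrase (as in Theorem~\ref{main0}) only means $F$ is full on maps \emph{between} projectives; to lift $F(X)\to Q$ and $Q\to F(Y)$ when $X,Y$ are arbitrary you need the stronger Proposition~\ref{fullness}, which says $F_{X,Y}$ is surjective whenever \emph{either} object is in $\P$. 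The paper's proof invokes exactly this proposition for the identity $\widetilde{\overline{\T}}\cong\T$; once you replace your citation accordingly, your argument goes through.
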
 

\begin{proof} Given $\T\subset \D$, define $\overline{\T}:=\T/{\rm Pr}$. Then $\overline{\T}$ is a rigid monoidal Karoubian category with a faithful monoidal functor $\overline{F}:\overline{\T}\to {\rm Stab}(\D)$. Conversely, suppose $\E$ is a rigid monoidal Karoubian category with a faithful monoidal functor $E: \E\to {\rm Stab}(\D)$ and set \linebreak $\widetilde{\E}:=\E\times_{{\rm Stab}(\D)} \D$. Then by Theorem \ref{main0} $\widetilde{\E}$ is a separated category and $\C(\widetilde{\E})\cong \D$.

Now, it is clear that $\widetilde{\overline{\E}}=\E$, and the equality $\overline{\widetilde{\T}}=\T$ follows from Proposition \ref{fullness}. 
\end{proof} 

\begin{example}\label{exaa} 1. Suppose $\D$ is semisimple. Then ${\rm Stab}(\D)=0$, so Theorem \ref{main2} says that 
a separable category $\T$ such that $\C(\T)=\D$ must be complete and coincide with $\D$ (which is easy). 

2. Let $\C$ be a finite multitensor category and $\T={\rm Stab}^{-1}(\Vec)$ the preimage of $\Vec$ under the projection
${\rm Stab}: \C\to {\rm Stab}(\C)$. The indecomposable objects of $\T$ are $\one$ and the indecomposable projectives 
of $\C$ (note that $\one$ may be decomposable in $\C$ but not in $\T$). 
Then the ideal $\P\subset \T$ of projectives  coincides with $\S$, $\T$ is separated and $\C(\T)=\C$. 
But if $\one$ is decomposable and has length $m\ge 2$ then the map $\k=\Hom_\T(\bold 1,\bold 1)\to \Hom_\C(\one,\one)=\k^m$ is not surjective, hence $F$ is not full and $\T$ is not complete. 

A simple example of this is the case when $\C={\rm Mat}_N(\Rep_\k(\Bbb Z/2))$, ${\rm char}(\k)=2$, where $N$ is a finite set (i.e., the objects of $\C$ are matrices of objects of $\Rep_\k(\Bbb Z/2)$ with rows and columns labeled by $N$ and tensor product given by matrix multiplication using the usual tensor product of $\Rep_\k(\Bbb Z/2)$). In this case the indecomposable objects of  $\T$ are $\bold 1$ and also $R_{ij}$, $i,j\in N$, where $R_{ij}$ is the regular representation of $\Bbb Z/2$ standing in the $(i,j)$-th position. Note that this example also makes sense when $N$ is an infinite set, in which case the ideal $\mathcal{S}$ (generated by all the $R_{ij}$) is not finitely generated as a left ideal, but is finitely generated as a two-sided ideal (in fact, by any one of the $R_{ij}$). 

3. Let $\C=\Rep_\k(\Bbb Z/p)$ where ${\rm char}(\k)=p>2$. Recall that the {\it Heller shift} $\Omega(\bold 1)$ is the kernel of the surjection $P_{\bold 1}\to \bold 1$, where $P_{\bold 1}$ is the projectiuve cover of $\bold 1$. We have a faithful symmetric monoidal functor $E:\sVec\hookrightarrow {\rm Stab}(\C)$ sending the non-trivial simple object to the indecomposable module $L=\Omega(\bold 1)$ of size $p-1$.
 Let $\T={\rm Stab}^{-1}({\rm Im}(E))$. Then $\T$ has three indecomposables: $\bold 1,L,P$, with $L\otimes P=P^{p-1}$, $P\otimes P=P^p$ and $L\otimes L=P^{p-2}\oplus \bold 1$, and $\Hom(\bold 1,P)=\Hom(P,\bold 1)=\k$, $\Hom(P,P)=\k^p$, 
$\Hom(P,L)=\Hom(L,P)=\k^{p-1}$, $\Hom(L,L)=\k[z]/z^{p-1}$, 
$\Hom(\bold 1,L)=\Hom(L,\bold 1)=0$. 
In this case $\S:=\langle P\rangle$ is a finitely generated ideal and $\T$ is separated, but 
the functor $F$ is not full, since the morphism \linebreak $0=\Hom_\T(\bold 1,L)\to \Hom(\one,F(L))=\k$ is not surjective. Thus $\T$ is not complete.  

On the other hand, in this example we may replace $\sVec$ by the {\it full} monoidal subcategory of ${\rm Stab}(\C)$ generated by $\bold 1$ and $L$ (i.e., $\Hom(\bold 1,L)=\Hom(L,\bold 1)=\k$ rather than zero), and the functor $E$ with the tautological embedding. In this case 
$\mathcal \T={\rm Stab}^{-1}({\rm Im}(E))$ is both separated and complete; it is the full monoidal subcategory of $\C$ spanned by $\bold 1,L,P$. We have $\T=\C$ for $p=3$, but for $p>3$ it is smaller than $\C$ (and not abelian). But for any $p>2$ the category $\C$ is the abelian envelope of $\T$. 

4. Let ${\rm char}(\k)=2$ and $\C=\Rep_\k(\Bbb Z/2)$. Then ${\rm Stab}(\C)=\Vec$ is the category of finite dimensional vector spaces. Let $H$ be a finite dimensional Hopf algebra over $\k$ and 
$\E:=\Rep(H)$. Let $E: \E\to \Vec={\rm Stab}(\C)$ be the forgetful functor. Let $\T=\E\times_{{\rm Stab}(\C)}\C$. 
Then $\T$ is separated and $\C(\T)=\C$. Note that $E$ is not full on isomorphisms in general, so $\T$ is not equivalent to an isomorphism-closed subcategory of $\C$, but it admits a faithful monoidal functor 
$F: \T\to \C$. 
\end{example} 

\begin{example}\label{mainex} Here is a prototypical example of a separated and complete category $\T$. 
Let $\C$ be a multitensor category with enough projectives which is tensor generated by an object $X\in \C$. 
This means that any object of $\C$ is a subquotient of a direct sum $Y$ of tensor products of objects $X^{(n)}$, where 
$X^{(n)}$ is the $n$-th dual of $X$. Let $\T$ be the full rigid monoidal {\it Karoubian} subcategory of $\C$ generated by $X$, 
i.e., its objects are {\it direct summands} (rather than subquotients) of direct sums $Y$ as above. Then by Theorem \ref{main1}, $\S\subset \T$ is the ideal of projective objects in $\C$, and $\C=\C(\T)$.  
 
This example becomes interesting when the category $\T$ has an independent (often diagrammatic) description not involving $\C$, and the corresponding universal property. This happens in a number of concrete examples, notably the following. 

1. Let $\C=\Rep^{\rm ab}(S_t)$ is the abelian envelope of the Deligne category $\Rep(S_t)$ for $t\in \Bbb Z_+$, ${\rm char}(\k)=0$ (\cite{D2,CO}). Let $X$ be the ``tautological" object (the interpolation of the permutation representation of $S_n$). Then $\T$ is the Karoubian Deligne category $\Rep(S_t)$ (\cite{D2}). The indecomposable objects of $\T$ are $X_\lambda$ 
labeled by partitions $\lambda$, with $X_0=\bold 1$. The blocks in this category are described in \cite{CO}; there are semisimple blocks corresponding to a single isolated partition and also semi-infinite chains of partitions, whose structure depends on $t$. The ideal $\S$ is then spanned by $X_\lambda$ for the isolated $\lambda$ and also all other $\lambda$ except the first element of each chain; these exceptions are exactly the partitions for which $t\ge \lambda_1+|\lambda|$ (they are in bijection with partitions of $t$ defined by $\lambda\mapsto (t-\lambda_1,\lambda_1,...,\lambda_n)$). Indeed, it is precisely these $X_\lambda$ that become projective in $\C$ (\cite{EA}, 4.4). 

2. Let $\C=\Rep (GL(m|n))$, ${\rm char}(\k)=0$, and assume for simplicity that $m\ge n$. Let $X=\k^{m|n}$ be the defining representation. Then the category $\T$ is the category of modules that occur as direct summands in $X^{\otimes i}\otimes X^{*\otimes j}$. Let $t=m-n$ and $\Rep(GL_t)$ be the Karoubian Deligne category for parameter $t$. It is shown in \cite{C1,Co} that the only nontrivial tensor ideals in $\Rep(GL_t)$ are $J_0\supset J_1\supset J_2...$, such that $J_i$ is the kernel of the natural functor $\Rep(GL_t)\to \Rep(GL(t+i|i))$. Then $\T=\Rep(GL_t)/J_n$ (see \cite{Co,CW,EHS}). 

3. Let $G$ be a simply connected simple algebraic group and $q$ a root of unity of sufficiently large order. 
Let $\C=\Rep(G_q)$ be the category of finite dimensional representations of the corresponding quantum  group (${\rm char}(\k)=0$), and $\T={\rm Tilt}(G_q)\subset \Rep(G_q)$ be the category of tilting modules; it is obtained by the above construction from the direct sum of the fundamental representations $X=\coplus_i L_{\omega_i}$ (note that all projectives in $\C$ are tiltings, so $\T$ is separated and complete and $\C=\C(\T)$). If $G=SL_n$, the category $\T$ is the category of modules occurring 
as direct summands in tensor powers of the tautological representation $V$, and it is expected to have a nice diagrammatic description as a braided category using webs (a root of unity analog of \cite{CKM}). In particular, we see that 
$\C$ is the abelian envelope of $\T$, which is a quantum analog of \cite{CEH}, Theorem 3.3.1.

4. Let $G$ be as above, $p$ a prime, $\k$ a field of characteristic $p$, $\C=\Rep(G)$ be the category 
of finite dimensional representations of $G$ over $\k$, and $\T\subset \C$ be the category of tilting modules. Then $\C$ is the abelian envelope of $\T$ (\cite{CEH}, Theorem 3.3.1). Since the category $\C$ has no nonzero projective objects, this is not quite an instance of the above theory. However, it is a limit of such instances. 
More precisely, let $G_n$ be the $n$-th Frobenius kernel of $G$. Then we have the restriction functor 
$\Rep(G)\to \Rep(G_n)$. Denote by $\T(n)$ the image of $\T$ in $\Rep(G_n)$. Then 
$\Rep(G_n)=\C(\T(n))$. This follows from the fact that the $n$-th Steinberg representation ${\rm St}_n$ of $G$ is projective when restricted to $G_n$.

5. Let $\T_p$ be the category of tilting modules for the group $SL_2(\k)$ (${\rm char}(\k)=p$), with indecomposable tilting objects $T_i$, $i\ge 0$, with highest weight $i$. Then $\T_p$ has a thick ideal $\mathcal{I}_n$ generated by the $n$-th Steinberg representation ${\rm St}_n:=T_{p^n-1}$, which is spanned by the objects $T_i$ with $i\ge p^n-1$. 
Let $\T_{n,p}:=\T_p/\mathcal{I}_n$ (\cite{BE}). It is shown in \cite{BE}, Theorem 4.14 that for $n\ge 0$ the category $\T_{n+1,2}$ has an abelian envelope $\C_{2n}$, a finite symmetric tensor category in which it sits as a full monoidal generating subcategory. Thus, $\T_{n+1,2}$ is separated and complete, and $\C_{2n}=\C(\T_{n+1,2})$.  
\end{example} 

However, while these are nice examples, the true power of the above theory is seen when the category $\C(\T)$ is not known a priori, and this theory provides its first construction. This is what happens for $\T_{n,p}$  in the case $p>2$, and the main goal of this paper is to define and study the corresponding categories $\C(\T_{n,p})$.  

\subsection{Splitting objects in monoidal categories over discrete valuation rings}\label{DVR}

For applications, we will need to generalize the above theory of splitting ideals and associated abelian envelopes to the deformation-theoretic setting, when the ground ring is not a field but rather a complete discrete valuation ring (DVR), which in applications will be of mixed characteristic. 

Let $R$ be a complete DVR with maximal ideal $\mathfrak{p}$ and uniformizer $\varepsilon$.
Let $\k:=R/\mathfrak{p}$ be its residue field, and $K$ be the fraction field of $R$. 

\begin{definition}  By a {\it flat monoidal Karoubian category} over $R$ we will mean an $R$-linear monoidal Karoubian category $\T$ for which $\Hom(X,Y)$ is a free finite rank $R$-module for any $X,Y\in \T$. 
\end{definition} 

If $\T$ is a flat monoidal Karoubian category over $R$ then we have the monoidal Karoubian category $\T_\k:=\T/\mathfrak{p}$ over the residue field $\k$. We also have the category $\T_K$, the Karoubian closure of $K\otimes_R\T$ (which may not be Karoubian by itself), a monoidal Karoubian category over $K$.\footnote{Note that the field $K$ is not algebraically closed, but this is not important for our considerations unless specified otherwise. For background on multitensor categories over arbitrary fields see \cite{EGNO}, Subsection 4.16.} Moreover, it is easy to show that if $\T_\k$ is rigid then so are $\T$, $\T_K$.   
 
We have a natural bijection between objects of $\T$ and $\T_\k$ given by $X\in \T\mapsto X_\k\in \T_\k$,  $Y\in \T_\k\mapsto \widetilde{Y}\in \T$, such that $\Hom(X_\k,Z_\k)=\k\otimes_R\Hom(X,Z)$.  Also, any object $X\in \T$ defines an object $X_K\in \T_K$, with $\Hom(X_K,Z_K)=K\otimes_R\Hom(X,Z)$. Thus any object  $Y\in \T_\k$ uniquely lifts to an object of $\widetilde{Y}_K\in \T_K$ (although the lift of an indecomposable in $\T$ may be decomposable in $\T_K$). 
 
 Note that the assignments $X\mapsto X_\k$ and $X\mapsto X_K$ 
 are symmetric monoidal functors $\T\to \T_\k$ and $\T\to \T_K$. Moreover, 
 if $f: X\to Y$ is a split surjection or injection in $\T$ then so are $f_\k: X_\k\to Y_\k$ and 
 $f_K: X_K\to Y_K$.  

A flat monoidal category $\T$ over $R$ cannot have nonzero splitting objects in the sense of Definition \ref{spliobj}, since any element 
$x\in \mathfrak{p}$, $x\ne 0$ defines a morphism $x_{\bf 1}: {\bf 1}\to {\bf 1}$ which is not split by tensoring with any nonzero object $Q\in \T$. However we can make a less restrictive definition. 

\begin{definition} 1. A morphism $f: Q_1\to Q_0$ in $\T$ is {\it $R$-split} if for every 
$X\in \T$ the induced morphism of free $R$-modules $f_X: \Hom(X,Q_1)\to \Hom(X,Q_0)$ is split (i.e., its cokernel is a free $R$-module). 

2. An object $Q\in \T$ is {\it a splitting object} if tensoring with $Q$ on either side splits every $R$-split morphism. 
\end{definition} 

\begin{example} Any split morphism in $\T$ is automatically $R$-split. Indeed, 
let $f: X\to Y$ be a split morphism in $\T$. Then $f=0\oplus g$, where $g$ 
is an isomorphism. Since $0$ and $g$ are $R$-split, so is $f$.   
\end{example} 

\begin{lemma}\label{splisa} Let $f: X\to Y$ be an $R$-split morphism in $\T$. 

(i)  If $f$ is nonzero then
the corresponding morphism $f_\k: X_\k\to Y_\k$ is nonzero. 

(ii) If $f_\k$ is split then $f$ is split. 

(iii) If $Q\in \T$ then the morphisms $1_Q\otimes f$ and $f\otimes 1_Q$ are $R$-split. 

(iv) If $Q_\k$ is a splitting object of $\T_\k$ then $Q$ is a splitting object of $\T$. 
\end{lemma}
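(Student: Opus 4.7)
The plan is to establish the four parts in order, using (i) and rigidity as inputs to the later parts.

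For (i), I would write $f = \varepsilon^n g$ with $n \ge 0$ maximal so that $g \notin \mathfrak{p}\Hom(X,Y)$; the goal is to show $f \ne 0$ forces $n=0$. Taking $Z=X$ in the $R$-split condition, $\Image(f_X) \subseteq \Hom(X,Y)$ is a direct summand and hence saturated. Since $\varepsilon^n g = f_X(1_X) \in \Image(f_X)$, saturation forces $g \in \Image(f_X)$, so $g = f\circ \phi$ for some $\phi \in \End(X)$. This yields $g\circ(1_X - \varepsilon^n\phi) = 0$. Since $\End(X)$ is a finite free $R$-module and $R$ is complete, $1_X - \varepsilon^n\phi$ is a unit (its inverse is the $\mathfrak{p}$-adically convergent geometric series). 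Hence $g=0$ and so $f=0$, contradicting $f\ne 0$ unless $n=0$.

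Part (ii) is the main obstacle. Given a splitting $g_\k \colon Y_\k \to X_\k$ of $f_\k$, lift it to $g\colon Y\to X$. Then $fg \in \End(Y)$ and $gf \in \End(X)$ reduce to the idempotents $f_\k g_\k$ and $g_\k f_\k$. Because $\End(X)$ and $\End(Y)$ are $\mathfrak{p}$-adically complete, standard idempotent lifting produces idempotents $e \in \End(Y)$ and $q \in \End(X)$ reducing to these, and Karoubianness of $\T$ yields decompositions $X = X_1 \oplus X_2$, $Y = Y_1 \oplus Y_2$ with $X_1 = q(X)$, $Y_1 = e(Y)$. Writing $f$ in block form $f_{ij}$ with $f_{11}=efq$ etc., a direct computation using $f_\k g_\k f_\k = f_\k$ shows $f_{11,\k} = f_\k$ as a map $X_\k \to Y_\k$ while $f_{12,\k}, f_{21,\k}, f_{22,\k}$ vanish; restricted to the summands, $f_{11,\k}$ is an isomorphism with two-sided inverse induced by $g_\k$. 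A Nakayama argument (using completeness of the endomorphism rings) then makes $f_{11}\colon X_1 \to Y_1$ an isomorphism, while $f_{12}, f_{21}, f_{22}$ lie in $\mathfrak{p}$. Upper- and lower-triangular automorphisms of $X$ and $Y$ built from $f_{11}^{-1}f_{12}$ and $f_{21}f_{11}^{-1}$ transform $f$ into the block-diagonal form $f_{11} \oplus f'_{22}$ with $f'_{22} = f_{22} - f_{21}f_{11}^{-1}f_{12}$. Since conjugation by isomorphisms preserves $R$-splitness, and $R$-splitness of a direct sum implies $R$-splitness of each summand (proved exactly as in Lemma \ref{dirsum}), $f'_{22}$ is $R$-split; but $(f'_{22})_\k = 0$, so part (i) forces $f'_{22}=0$, leaving $f \cong f_{11} \oplus 0$, which is split.

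For (iii), rigidity supplies a natural isomorphism $\Hom(Z, Q \otimes X) \cong \Hom(Q^* \otimes Z, X)$ under which $(1_Q\otimes f)_Z$ becomes $f_{Q^*\otimes Z}$; its cokernel is free by the $R$-splitness of $f$. The argument for $f\otimes 1_Q$ is symmetric using the right dual $^*Q$. For (iv), given any $R$-split $f$ in $\T$, part (iii) makes $1_Q \otimes f$ (and $f \otimes 1_Q$) $R$-split, while its reduction $1_{Q_\k} \otimes f_\k$ is split in $\T_\k$ because $Q_\k$ splits every morphism there; part (ii) then lifts the splitting back to $\T$, so $Q$ is a splitting object.

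The crux is (ii): the lifted idempotents $e, q$ need not commute with $f$ in $\T$ --- only modulo $\mathfrak{p}$ --- so the block-matrix decomposition of $f$ is only block-diagonal to first order, and it is the triangular change of variables that produces a genuine diagonalization, after which (i) eliminates the remaining $\mathfrak{p}$-adic error term.
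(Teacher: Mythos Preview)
Your proof is correct and follows essentially the same strategy as the paper's. For (i) the paper argues more directly: assuming $f_\k=0$, the map $f\circ\colon\Hom(Z,X)\to\Hom(Z,Y)$ lands in $\varepsilon\Hom(Z,Y)$, but its image is a direct summand, hence zero. Your valuation argument reaches the same conclusion via saturation. For (ii) the paper proceeds exactly as you do---lift the block decomposition of $f_\k$, write $f$ as a $2\times 2$ block matrix with the off-diagonal and one diagonal entry in $\mathfrak p$, conjugate by triangular automorphisms to make $f$ block-diagonal, then invoke (i) to kill the remaining $\mathfrak p$-small block---only it is terser and does not explicitly pass through idempotent lifting. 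Parts (iii) and (iv) match the paper verbatim.
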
 

\begin{proof} (i) Suppose $f_\k=0$. For an object $Z\in \T$ consider the map 
$f\circ{}: \Hom(Z,X)\to \Hom(Z,Y)$. The map $f_\k\circ{}: \Hom(Z_\k,X_\k)\to \Hom(Z_\k,Y_\k)$ 
is zero. Thus $f\circ{}=\varepsilon g$ for some map $g: \Hom(Z,X)\to \Hom(Z,Y)$. 
But $f\circ$ is $R$-split, which implies that $g=0$, i.e., $f\circ=0$. Thus $f=0$. 

(ii) Since $f_\k$ is split, up to composing with isomorphisms on both sides it can be written as a direct sum of the zero morphism $X'\to Y'$ and the identity morphism $Z\to Z$, i.e., as a block matrix
$$
f_\k\sim \left(\begin{matrix} 0& 0\\ 0& 1\end{matrix}\right).
$$
This means that $f$ can be similarly written as a block matrix
$$
f\sim \left(\begin{matrix} \varepsilon a& \varepsilon b\\ \varepsilon c& 1+\varepsilon d\end{matrix}\right).
$$
Thus up to composing with isomorphisms
$$
f\sim \left(\begin{matrix} \varepsilon (a-\varepsilon b(1+\varepsilon d)^{-1}c) & 0\\ 0& 1+\varepsilon d\end{matrix}\right).
$$
Since $f$ is $R$-split, $\varepsilon (a-\varepsilon b(1+\varepsilon d)^{-1}c)$ must be $R$-split, hence it is zero by (i). Thus 
$$
f\sim \left(\begin{matrix} 0& 0\\ 0& 1+\varepsilon d\end{matrix}\right),
$$
i.e., $f$ is a direct sum of zero and an isomorphism. Thus, $f$ is split. 

(iii) The morphism $1_Q\otimes f$ is $R$-split since the corresponding map
$$
(1_Q\otimes f)\circ: \Hom(Z,Q\otimes X)\to \Hom(Z,Q\otimes Y)
$$ 
coincides with the map $f\circ: \Hom(Q^*\otimes Z,X)\to \Hom(Q^*\otimes Z,Y)$, which is 
split by definition. Similarly, $f\otimes 1_Q$ is $R$-split. 

(iv) Assume that $Q_\k$ is splitting. Then the morphism 
$1_{Q_\k}\otimes f_\k=(1_Q\otimes f)_\k$ is split. Also the morphism 
$1_Q\otimes f$ is $R$-split by (iii). Thus by (ii) the morphism $1_Q\otimes f$ is split. Similarly, the morphism $f\otimes 1_Q$ is split. Thus, $Q$ is a splitting object. 
\end{proof} 

It follows from Lemma \ref{splisa}(iii) that splitting objects form a thick ideal $\S\subset \T$. 
Thus, if $Q$ is a splitting object then so is $Q^*$. Hence, if $f$ is an $R$-split morphism 
and $Q$ is a splitting object then $Q$ splits the morphism $f^*$ (as $Q^*$ splits $f$). 

From now on assume that the category $\T_\k$ is separated and its ideal of splitting objects $\S(\T_\k)$ is finitely generated as a left ideal. Thus we can define the multitensor category $\C(\T_\k)$ with enough projectives and a faithful monoidal functor $F: \T_\k\to \C(\T_\k)$.

\begin{lemma}\label{splisa1} (i) Let $P$ be a generator of $\S(\T_\k)$. 
Then the morphism ${\rm coev}_{\widetilde{P}}: \bold 1\to \widetilde{P}\otimes \widetilde{P}^*$ 
is $R$-split. 

(ii) If $Q$ is a splitting object of $\T$ then $Q_\k$ is a splitting object of $\T_\k$. In other words, $\S(\T_\k)=\S_\k$. 
\end{lemma}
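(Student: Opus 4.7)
The plan is to deduce (i) from the separatedness of $\T_\k$ by rewriting post-composition with ${\rm coev}_{\widetilde P}$ via the rigidity adjunction as $-\otimes 1_P$, and then to use (i) together with the splitting-object definition in $\T$ to prove (ii).

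For (i), since all Hom spaces in $\T$ are free $R$-modules of finite rank, a Tor computation over the DVR $R$ shows that a morphism $f\colon Y_1\to Y_0$ in $\T$ is $R$-split iff for every $X\in\T$ the reduction mod $\mathfrak p$ map
\[
\Hom(X_\k,Y_{1,\k})\longrightarrow \Hom(X_\k,Y_{0,\k})
\]
is injective. Specializing to $f={\rm coev}_{\widetilde P}$, one needs injectivity of $h\mapsto {\rm coev}_P\circ h$ as a map $\Hom(X_\k,\bold 1)\to\Hom(X_\k,P\otimes P^{*})$. Under the rigidity adjunction $\Hom(X_\k,P\otimes P^{*})\cong\Hom(X_\k\otimes P,P)$, the snake identity identifies this map with $h\mapsto h\otimes 1_P$. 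Since $P$ is a generator of $\S(\T_\k)$ and $\T_\k$ is separated, so that $J_{\S(\T_\k)}=0$, Proposition \ref{ker} then guarantees that $h\otimes 1_P=0$ forces $h=0$, yielding the required injectivity.

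For (ii), let $Q$ be a splitting object of $\T$. By (i) and Lemma \ref{splisa}(iii), the morphism $1_Q\otimes {\rm coev}_{\widetilde P}\colon Q\to Q\otimes\widetilde P\otimes\widetilde P^{*}$ is $R$-split, and since $Q$ splits $R$-split morphisms it is therefore split in $\T$. Reducing mod $\mathfrak p$, the morphism $1_{Q_\k}\otimes {\rm coev}_P$ is split in $\T_\k$, so I may decompose $Q_\k=A\oplus Z$ and $Q_\k\otimes P\otimes P^{*}=B\oplus Z$ so that this morphism equals $0_{A\to B}\oplus 1_Z$. Precomposing with the split inclusion $\iota_A\colon A\hookrightarrow Q_\k$ and using bifunctoriality of $\otimes$ gives $(\iota_A\otimes 1_{P\otimes P^{*}})\circ(1_A\otimes {\rm coev}_P)=0$, and since $\iota_A\otimes 1$ is a split mono this forces $1_A\otimes {\rm coev}_P=0$.

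The critical last step is to conclude $A=0$. Tensoring $1_A\otimes {\rm coev}_P=0$ on the right with $1_P$ and applying the snake identity
\[
(1_A\otimes 1_P\otimes {\rm ev}_P)\circ(1_A\otimes {\rm coev}_P\otimes 1_P)=1_{A\otimes P}
\]
yields $1_{A\otimes P}=0$, equivalently $1_A\otimes 1_P=0$ in $\T_\k$. Proposition \ref{ker}, applied with the generator $P$ of $\S(\T_\k)$ in the separated category $\T_\k$, now forces $1_A\in J_{\S(\T_\k)}=0$, hence $A=0$. Therefore $Q_\k=Z$ is a direct summand of $Q_\k\otimes P\otimes P^{*}\in\S(\T_\k)$, and since $\S(\T_\k)$ is closed under direct summands we conclude $Q_\k\in\S(\T_\k)$; combined with Lemma \ref{splisa}(iv) this gives the claimed equality $\S(\T_\k)=\S_\k$. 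The main obstacle in the argument is precisely the vanishing of the ``null'' summand $A$: separatedness of $\T_\k$ (via Proposition \ref{ker}) is what rules out a pathological direct summand with $A\otimes P=0$, while every other step is a transparent transfer of split or $R$-split structure between $\T$ and $\T_\k$ combined with the snake identities.
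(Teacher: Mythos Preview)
Your proof is correct and follows essentially the same strategy as the paper: for (i), both arguments establish that ${\rm coev}_P\circ$ is injective on $\Hom$ over $\k$ by invoking separatedness of $\T_\k$, and then deduce $R$-splitness; for (ii), both tensor ${\rm coev}_{\widetilde P}$ with $Q$, use (i) to see this morphism is split, and conclude that $Q_\k$ is a summand of $Q_\k\otimes P\otimes P^*\in\S(\T_\k)$.

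Two small remarks. First, your ``iff'' criterion in (i) is stated too strongly: the zero morphism between nonzero objects is $R$-split, yet its reduction is not injective. Only the implication ``reduction injective $\Rightarrow$ $R$-split'' holds in general (and this is indeed all you use, so the argument is unaffected). Second, the paper streamlines (ii) by first reducing to indecomposable $Q$: then the split morphism $1_Q\otimes{\rm coev}_{\widetilde P}$, being nonzero (since $Q_\k\otimes P\ne 0$ by separatedness), is automatically a split injection from an indecomposable, so one reads off directly that $Q_\k$ is a summand of $Q_\k\otimes P\otimes P^*$. Your route via the explicit decomposition $Q_\k=A\oplus Z$ and the vanishing of $A$ is a perfectly valid alternative that avoids the reduction to indecomposables, at the cost of one extra snake-identity computation.
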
 

\begin{proof} (i) By applying the faithful monoidal functor $F: \T_\k\to \C(\T_\k)$ and using Proposition \ref{fullness} we get that for any $X\in \T_\k$ the map 
${\rm coev}_P\circ:\Hom(X,\bold 1_\k)\to \Hom(X,P\otimes P^*)$ is injective. 
This implies that the map 
$$
({\rm coev}_{\widetilde{P}}\circ)_K: K\otimes_R\Hom(\widetilde{X},\bold 1)\to K\otimes_R \Hom(\widetilde{X},\widetilde{P}\otimes \widetilde{P}^*)
$$
is also injective, and thus has the same rank as ${\rm coev}_P$ (the dimension of the source). 
Thus ${\rm coev}_{\widetilde{P}}$ is split as a morphism of $R$-modules, as claimed. 

(ii) Let $Q$ be an indecomposable splitting object of $\T$, and $P$ a generator of $\S(\T_\k)$. Then  $Q_\k\otimes P\ne 0$ as $\T_\k$ is separated. Consider the morphism $f=1_Q\otimes 
{\rm coev}_{\widetilde P}:  Q\to Q\otimes \widetilde{P}\otimes \widetilde{P}^*$. Since $Q_\k\otimes P\ne 0$, we have $f_\k\ne 0$, hence $f\ne 0$. On the other hand, the morphism 
${\rm coev}_{\widetilde P}$ is $R$-split by (i), so the morphism $f$ is split. Since $Q$ is indecomposable, this implies that $f$ is a split injection. Thus $f_\k: Q_\k\to Q_\k\otimes P\otimes P^*$ is also a split injection, i.e., $Q_\k$ is a direct summand in $Q_\k\otimes P\otimes P^*$, hence split, as claimed. 
 \end{proof} 

For simplicity assume that $\T$ has finitely many indecomposables. Then we can define the category $\C(\T)$ in the same way as we did over a field. Namely, let $P$ be the direct sum of all indecomposable splitting objects of $\T_\k$, 
$\widetilde P$ the corresponding object of $\T$, $A=\End(\widetilde P)^{\rm op}$, and $\C(\T)=\Rep(A)$ be the category of finitely generated $A$-modules. Then $\C(\T)$ can be realized as a category 
of {\it resolutions} 
$$
...\to Q_2\to Q_1\to Q_0,
$$ 
where $Q_i\in \T$ are splitting objects (i.e., complexes $Q^\bullet$ of splitting objects such that 
the complex $\Hom(\widetilde P,Q^\bullet)$ is exact). As before, this is an $R$-linear abelian monoidal category with right exact tensor product. Moreover, we have a faithful monoidal functor 
$F: \T\to \C(\T)$ given by $F(X)=\Hom(\widetilde P,X)$. 

Of course, we cannot hope that the tensor product in $\C(\T)$ is exact, since for example $\C(\T)$ contains the category of finitely generated $R$-modules with tensor product over $R$ (quotients of a multiple of $\one_i$ for some $i$). 
However, it turns out that it is exact on a certain class of objects we call {\it flat}. 

\begin{definition} An object $X\in \C(\T)$ is {\it flat} if it is presented by an $R$-split morphism $f: Q_1\to Q_0$, $Q_1,Q_0\in \S$; i.e., $X={\rm Coker}(F(f))$. 
\end{definition} 

For instance, every splitting object is automatically flat.

\begin{lemma}\label{flatobj} The following conditions on $X\in \C(\T)$ are equivalent: 

(i) $X$ is flat; 

(ii) $X$ is a free $R$-module;   

(iii) Any morphism $f: Q_0\to Q_1$ with $Q_0,Q_1\in \S$ presenting $X$ is $R$-split. 
\end{lemma}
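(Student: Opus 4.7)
The implications (iii)$\Rightarrow$(i) and (i)$\Rightarrow$(ii) are nearly formal: (iii) gives (i) by taking any presentation, and for (i)$\Rightarrow$(ii) the definition of $R$-split applied with $Y=\widetilde P$ shows that $X=\Coker F(f)=\Coker f_{\widetilde P}$ is free as an $R$-module. So the real content is (ii)$\Rightarrow$(iii), which I plan to prove by a block-reduction argument.

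Given any presentation $f:Q_1\to Q_0$ with $Q_0,Q_1\in\S$, reduce modulo $\varepsilon$. Since $\S(\T_\k)$ is a splitting ideal, $f_\k$ is split in $\T_\k$, yielding idempotent decompositions $Q_{1,\k}=M_{1,\k}\oplus K_\k$ and $Q_{0,\k}=M_{0,\k}\oplus N_\k$ with $f_\k$ restricting to an isomorphism $M_{1,\k}\xrightarrow{\sim}M_{0,\k}$ and vanishing on the rest. Because $R$ is complete and $\End_\T(Q_i)$ is finite free over $R$, these idempotents lift to $\T$, giving compatible decompositions $Q_1=M_1\oplus K$ and $Q_0=M_0\oplus N$. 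Writing $f$ as a $2\times 2$ block matrix $\left(\begin{smallmatrix}\alpha&\beta\\\gamma&\delta\end{smallmatrix}\right)$ in these decompositions, the $(1,1)$-entry $\alpha:M_1\to M_0$ reduces to an isomorphism (hence is invertible by completeness of $R$), while $\beta,\gamma,\delta$ all reduce to $0$ and so lie in $\varepsilon\Hom$.

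Since $\alpha$ is invertible, Gaussian elimination (left- and right-multiplication by invertible block-unitriangular automorphisms of $Q_0$ and $Q_1$) reduces $f$ to $\left(\begin{smallmatrix}\alpha&0\\0&\varepsilon d\end{smallmatrix}\right)$ for some $d:K\to N$. Applying $F$, the $(1,1)$-block becomes invertible in $\C(\T)$, so as $R$-modules $X=\Coker F(f)=F(N)/\varepsilon\,\mathrm{im}\,F(d)$. If $F(d)\neq 0$, then by Nakayama's lemma $\mathrm{im}\,F(d)\neq\varepsilon\,\mathrm{im}\,F(d)$, and any $x\in\mathrm{im}\,F(d)\setminus\varepsilon\,\mathrm{im}\,F(d)$ gives a class $\bar x\in X$ which is nonzero and annihilated by $\varepsilon$, contradicting freeness of $X$ over $R$. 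Hence $F(d)=0$.

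It remains to upgrade $F(d)=0$ to $d=0$, i.e.\ to verify that $F:\T\to\C(\T)$ is faithful in the DVR setting. This follows from an $\varepsilon$-adic induction: separatedness of $\T_\k$ gives $F_\k$ faithful, so $F(d)=0$ forces $d_\k=0$, whence $d=\varepsilon d_1$; then $\varepsilon F(d_1)=F(d)=0$ lives in a torsion-free $R$-module, so $F(d_1)=0$, and iterating places $d$ into $\bigcap_n\varepsilon^n\Hom(K,N)=0$ (Krull intersection). Hence $d=0$, and after change of basis $f$ is literally the direct sum of the isomorphism $\alpha$ and the zero morphism, hence split in $\T$ and a fortiori $R$-split. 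The most delicate step, and what I expect to be the main obstacle, is the clean execution of idempotent lifting and the block-matrix reduction in the $R$-linear Karoubian category; once that is set up, the remaining argument is a standard combination of Nakayama and $\varepsilon$-adic completeness.
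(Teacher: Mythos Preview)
Your argument for (ii)$\Rightarrow$(iii) breaks at the first substantive step: the claim ``since $\S(\T_\k)$ is a splitting ideal, $f_\k$ is split in $\T_\k$'' is false. A splitting ideal is one whose objects split morphisms \emph{when tensored with them}; it does not follow that arbitrary morphisms between objects of the ideal are themselves split. Indeed, under $F_\k$ the ideal $\S(\T_\k)$ becomes the category of projectives in $\C(\T_\k)$, and morphisms between projectives are certainly not split in general (any nontrivial projective presentation of a non-projective object gives a counterexample). Concretely, take $X=\one$: it is free over $R$, but your argument would force any presentation $f$ of $\one$ to be split in $\T$, making $\one$ a direct summand of a projective and hence projective --- which fails whenever $\C(\T_\k)$ is not semisimple. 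So the conclusion you reach (that $f$ is genuinely split, not merely $R$-split) is strictly stronger than the lemma and simply not true.

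The paper's proof of (ii)$\Rightarrow$(iii) is much more direct and avoids any splitting of $f_\k$. One uses that $F$ is full on projectives (Proposition~\ref{fullness}) to identify $\Hom_\T(Z,Q_i)\cong\Hom_{\C(\T)}(F(Z),F(Q_i))$, whence the cokernel of $f_Z:\Hom(Z,Q_1)\to\Hom(Z,Q_0)$ embeds into $\Hom_{\C(\T)}(F(Z),X)\subset\Hom_R(F(Z),X)$. The latter is a finitely generated free $R$-module since $X$ is, and over a DVR any submodule of a finite free module is free; hence $\Coker(f_Z)$ is free and $f$ is $R$-split. No idempotent lifting, block reduction, or faithfulness argument is needed.
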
 

\begin{proof} Clearly (iii) implies (i). 

Suppose $X$ is flat, and presented by an $R$-split morphism 
$f: Q_1\to Q_0$. Then $X$ is the cokernel of the map 
$f\circ: \Hom(\widetilde P,Q_1)\to \Hom(\widetilde P,Q_0)$. By definition, this map is $R$-split, 
hence $X$ is a free $R$-module. Thus (i) implies (ii). 

Suppose that $X$ is a free $R$-module, and presented by a morphism 
$f: Q_1\to Q_0$. Then by Proposition \ref{fullness} for $Z\in \T$ the natural map 
$\Hom(Z,Q_i)\to \Hom(F(Z),F(Q_i))$ is an isomorphism. Thus we have an exact sequence 
$$
\Hom(Z,Q_1)\to \Hom(Z,Q_0)\to \Hom(F(Z),X). 
$$  
Thus the cokernel of the map $f\circ: \Hom(Z,Q_1)\to \Hom(Z,Q_0)$ is an $R$-submodule 
of $\Hom(F(Z),X)$ which is, in turn, contained in $\Hom_R(F(Z),X)$. But $\Hom_R(F(Z),X)$ is a free $R$-module since $X$ is free as an $R$-module. Thus, ${\rm Coker}(f\circ)$ is a free $R$-module and $f$ is $R$-split.  
\end{proof} 

\begin{corollary}\label{satmor} Let $Q_0,Q_1\in \S$. A morphism 
$f: Q_1\to Q_0$ is $R$-split if and only if $F(f): F(Q_1)\to F(Q_0)$ 
is split as a morphism of $R$-modules. 
\end{corollary}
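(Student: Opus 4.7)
The plan is to obtain both implications as essentially immediate consequences of Lemma \ref{flatobj}, with the cokernel of $F(f)$ serving as the bridge between the two notions.

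For the forward direction, I would simply specialize the $R$-split hypothesis. If $f$ is $R$-split then, by definition, for every $X\in\T$ the $R$-linear map $f\circ :\Hom(X,Q_1)\to\Hom(X,Q_0)$ has free cokernel. Taking $X=\widetilde{P}$ and recalling that $F(Y)=\Hom(\widetilde{P},Y)$ and $F(f)$ is precisely post-composition with $f$, this says exactly that $F(f):F(Q_1)\to F(Q_0)$ has free $R$-cokernel. Since $F(Q_0)$ and $F(Q_1)$ are themselves free $R$-modules (they are $\Hom$-spaces in the flat category $\T$), having free cokernel is equivalent to being split as a morphism of $R$-modules, which gives the conclusion.

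For the converse, I would set $N:=\Coker(F(f))$ computed in $\C(\T)$. Because $\C(\T)$ is the category of finitely generated modules over the $R$-algebra $A=\End(\widetilde{P})^{\op}$, the cokernel inside $\C(\T)$ coincides with the cokernel of $F(f)$ viewed as a morphism of $R$-modules. The hypothesis that $F(f)$ is split as a morphism of $R$-modules therefore means that $N$ is a free $R$-module. By Lemma \ref{flatobj}, (ii)$\Rightarrow$(iii), any morphism between splitting objects that presents $N$ is $R$-split; since $Q_0,Q_1\in\S$ and $f$ is such a presentation, we conclude that $f$ itself is $R$-split.

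The only point requiring verification is the compatibility of cokernels used in the converse direction: that the cokernel of $F(f)$ computed in $\C(\T)$ agrees with its cokernel as an $R$-module morphism. This is routine once one unwinds the definition of $\C(\T)$ as $A$-modules, since cokernels of $A$-module maps underlie cokernels of $R$-module maps. There is no genuine obstacle here; the substantive work was already packaged into Lemma \ref{flatobj}, and this corollary is a direct reformulation of it adapted to presenting morphisms rather than presented objects.
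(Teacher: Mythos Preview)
Your proof is correct and follows essentially the same approach as the paper's: specialize the $R$-split condition to $X=\widetilde{P}$ for the forward direction, and for the converse observe that the cokernel of $F(f)$ is free as an $R$-module and invoke Lemma~\ref{flatobj} (ii)$\Rightarrow$(iii). Your version is simply more explicit about the routine points (why free cokernel gives a split $R$-module map, and why the cokernel in $\C(\T)$ agrees with the underlying $R$-module cokernel), but the argument is the same.
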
 

\begin{proof} Suppose $f$ is $R$-split. Then $F(f)=f\circ: \Hom(\widetilde P,Q_1)\to \Hom(\widetilde P,Q_0)$ is split as a morphism of $R$-modules. Conversely, if $F(f)$ is split then $f$ presents the object ${\rm Coker}(F(f))$, 
which is free as an $R$-module, so by Lemma \ref{flatobj} $f$ is $R$-split. 
\end{proof} 

\begin{corollary}\label{tiltflat} For every $Z\in \T$ the object $F(Z)$ is flat. 
\end{corollary}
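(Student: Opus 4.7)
The plan is to reduce Corollary \ref{tiltflat} entirely to Lemma \ref{flatobj}, which characterizes flat objects of $\C(\T)$ as exactly those that are free as $R$-modules. So the goal becomes: show that $F(Z)$ is free over $R$ for every $Z\in \T$.

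First I would unpack the construction of $F$. By the definition given just before Lemma \ref{flatobj}, the category $\C(\T)$ is the category of finitely generated $A$-modules for $A=\End(\widetilde P)^{\op}$, and the functor $F$ is defined by $F(Z)=\Hom(\widetilde P,Z)$, with its $A$-module structure coming from precomposition. What matters for flatness is only the underlying $R$-module structure of $F(Z)$.

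Next I would invoke the hypothesis that $\T$ is a flat monoidal Karoubian category over $R$: this says precisely that $\Hom(X,Y)$ is a free $R$-module of finite rank for all $X,Y\in \T$. Applying this with $X=\widetilde P$ and $Y=Z$ shows that $F(Z)=\Hom(\widetilde P,Z)$ is a free finite rank $R$-module. By Lemma \ref{flatobj}, this is equivalent to $F(Z)$ being flat in the sense of the preceding definition, so we are done.

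There is no real obstacle here: the only potential subtlety is checking that $F(Z)$ indeed lies in $\C(\T)$ (i.e. that it is finitely generated as an $A$-module), but this is immediate because $F$ is a functor $\T\to \C(\T)$ by its very construction, and in any event the $R$-module $\Hom(\widetilde P,Z)$ has finite $R$-rank and hence is finitely generated even over $R$, let alone over $A$.
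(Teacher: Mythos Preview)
Your proof is correct and follows the same approach as the paper: both reduce to Lemma \ref{flatobj} by observing that $F(Z)=\Hom(\widetilde P,Z)$ is a free $R$-module by the standing flatness hypothesis on $\T$.
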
 

\begin{proof} This follows from Lemma \ref{flatobj} since $\Hom(\widetilde P,Z)=F(Z)$ is a free $R$-module. 
\end{proof} 

\begin{proposition}\label{flatrig} $X\in \C(\T)$ is flat if and only if it is rigid, and in this case the functors $X\otimes ?$ and $?\otimes X$ are exact. 
\end{proposition}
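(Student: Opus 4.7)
The plan is to prove the equivalence and the exactness claim in three logically connected steps, handling first the direction rigid $\Rightarrow$ flat (with the exactness statement for rigid $X$), then the harder direction flat $\Rightarrow$ rigid.

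For rigid $\Rightarrow$ exactness and flatness: If $X\in\C(\T)$ is rigid with two-sided duals ${}^*X,X^*$, then $X\otimes(-)$ admits $X^*\otimes(-)$ as a right adjoint and ${}^*X\otimes(-)$ as a left adjoint. A functor that is simultaneously a left and a right adjoint preserves all limits and colimits, hence is exact; the same holds for $(-)\otimes X$. To deduce flatness, note that $\one=F(\bold 1_\T)$ is flat by Corollary~\ref{tiltflat}, and therefore $R$-free by Lemma~\ref{flatobj}(ii). In particular, the endomorphism $\varepsilon\cdot 1_\one:\one\to\one$ given by multiplication by the uniformizer is injective. Applying the exact functor $X\otimes(-)$ preserves this injection, so $\varepsilon:X\to X$ is injective and $X$ is $R$-torsion-free. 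Since $A=\End(\widetilde P)^{\mathrm{op}}$ is a free $R$-module of finite rank and $X$ is a finitely generated $A$-module, $X$ is finitely generated over $R$; any finitely generated torsion-free module over the DVR $R$ is free, so $X$ is $R$-free, hence flat by Lemma~\ref{flatobj}(ii).

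For flat $\Rightarrow$ rigid: Let $X$ be flat with $R$-split presentation $f:\widetilde Q_1\to\widetilde Q_0$ with $Q_0,Q_1$ splitting, and set $X^*:=\ker(F(f^*):F(\widetilde Q_0^*)\to F(\widetilde Q_1^*))$ and ${}^*X:=\ker F({}^*f)$. Observe that $X^*$ is automatically flat: it is a finitely generated $R$-submodule of the $R$-free module $F(\widetilde Q_0^*)$, hence $R$-free since $R$ is a DVR, hence flat by Lemma~\ref{flatobj}(ii). The plan is then to construct ${\rm ev}_X:X^*\otimes X\to\one$ and ${\rm coev}_X:\one\to X\otimes X^*$ by descending ${\rm ev}_{\widetilde Q_0}$ along $X^*\hookrightarrow F(\widetilde Q_0^*)$ and $F(\widetilde Q_0)\twoheadrightarrow X$, and dually descending ${\rm coev}_{\widetilde Q_0}$; this follows the template of the proof of Proposition~\ref{rig2} verbatim. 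The verification of the two rigidity zig-zag identities is then the same element-chasing diagram argument as in Proposition~\ref{rig2}, applied to the flat objects $X$ and $X^*$.

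The main obstacle is that $\C(\T)$ is not a multiring category, so the biexactness of the tensor product assumed in Proposition~\ref{rig2} is not available here. Concretely, the coevaluation construction demands left exactness of $X\otimes(-)$ on the specific inclusion $X^*\hookrightarrow F(\widetilde Q_0^*)$ (so that $X\otimes X^*$ genuinely embeds in $X\otimes F(\widetilde Q_0^*)$). The crucial point is that the $R$-splitness of $f$, via Corollary~\ref{satmor}, forces all the modules in sight --- $X$, $X^*$, $\mathrm{im}\,F(f)$, $\mathrm{im}\,F(f^*)$ --- to be $R$-direct summands of the ambient $R$-free modules $F(\widetilde Q_i^{(*)})$. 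Combined with the exactness of tensoring with splitting objects (the DVR analogue of Proposition~\ref{spli}), this allows one to compute $X\otimes X^*$ via a double-complex argument in which all the differentials split as $R$-module maps, so the analogous snake-lemma reasoning that usually requires biexactness of tensor product succeeds and yields the required embedding. Once this exactness is established, the rigidity axioms reduce to the verification carried out in Proposition~\ref{rig2}.
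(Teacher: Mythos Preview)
The rigid $\Rightarrow$ flat direction is correct and essentially matches the paper's argument (the paper phrases the last step as: exactness of $X\otimes(-)$ restricted to the subcategory $R\text{-mod}\subset\C(\T)_{jj}$ forces $X$ to be $R$-flat, hence free; your $\varepsilon$-injection argument is an equivalent way to say this).

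The flat $\Rightarrow$ rigid direction has a gap. You correctly identify the obstacle --- $\C(\T)$ is not a multiring category, so the embedding $X\otimes X^*\hookrightarrow X\otimes F(\widetilde Q_0^*)$ needed for the coevaluation construction is not automatic --- but your proposed fix, a ``double-complex argument in which all the differentials split as $R$-module maps,'' is not a proof: you never actually establish the embedding, and $R$-splitness of the individual maps does not by itself give you left exactness of $X\otimes(-)$ at the level of $A$-modules. The paper resolves this cleanly by first proving, as a standalone lemma, that \emph{flat implies exact tensor functors}. The input is exactly the ``DVR analogue of Proposition~\ref{spli}'' you allude to: for flat $Y$ and projective $Q$, the product $Q\otimes Y$ is projective, because an $R$-split presentation of $Y$ becomes genuinely split after tensoring with the splitting object $Q$. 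With this in hand, the proof of Proposition~\ref{mainpro1a} goes through verbatim (replacing Proposition~\ref{spli} by this lemma) and yields exactness of $X\otimes(-)$ and $(-)\otimes X$ for flat $X$. Once that is established, the rigidity construction is literally the argument of Proposition~\ref{rig2}, with no ad hoc double-complex reasoning required.

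So your ingredients are all correct; the fix is organizational. Prove flat $\Rightarrow$ $Q\otimes X$ projective for $Q\in\S$ $\Rightarrow$ $X\otimes(-)$ exact (mimicking Proposition~\ref{mainpro1a}) as a separate step, and then invoke Proposition~\ref{rig2} directly rather than trying to patch its proof on the fly.
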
 

\begin{proof} We start with the following lemma. 
\begin{lemma}\label{spli11} For any flat $Y\in \C(\T)$ and projective $Q\in \C(\T)$ the tensor products $Q\otimes Y$ and $Y\otimes Q$ are projective. 
\end{lemma} 

\begin{proof} 
Let $f: Q_1\to Q_0$ be a presentation of $Y$ by an $R$-split morphism. By a generalization of Corollary \ref{rexa} to $R$-linear categories, the cokernel of the morphism 
$$
1_Q\otimes F(f): Q\otimes F(Q_1)\to Q\otimes F(Q_0)
$$ 
is $Q\otimes Y$. Since $f$ is $R$-split and $Q=F(Q')$ for some splitting object $Q'$, the morphism $1_Q\otimes F(f)=F(1_{Q'}\otimes f)$ is split (as so is $1_{Q'}\otimes f$). Hence $Q\otimes Y$ is a direct summand in $Q\otimes F(Q_0)=F(Q'\otimes Q_0)$, hence projective. 
\end{proof} 

\begin{lemma} \label{mainpro1a11} The following conditions on $X\in \C(\T)$ are equivalent: 

(i) $X$ is flat;

(ii) the functor $X\otimes$ is exact; 

(iii) the functor $\otimes X$ is exact. 
\end{lemma} 

\begin{proof} The proof that (i) implies (ii), (iii) is the same as that of Proposition \ref{mainpro1a}, using Lemma \ref{spli11} instead of Proposition \ref{spli}. 

To prove that (ii) implies (i), we may assume that $X\in \C(\T)_{ij}$ for some $i,j$. 
If the functor $X\otimes$ is exact then in particular it is exact on the subcategory 
of finitely generated $R$-modules, $R$-${\rm mod}\subset \C(\T)_{jj}$ (quotients of a multiple of $\one_j$). If $M\in R$-${\rm mod}$ then $X\otimes M$ is just the usual tensor product $X\otimes_R M$. Since this functor (of $M$) is exact, $X$ is flat as an $R$-module, i.e., free (as it is finitely generated). Hence it is flat as an object of $\C(\T)$ by Lemma \ref{flatobj}. 
\end{proof} 

\begin{lemma}\label{rig211} Let $X\in \C(\T)$ be a flat object which is 
the cokernel of an $R$-split morphism $f: Q_1\to Q_0$. Then $X$ is rigid. 
Namely, $X^*$ is the kernel  of $f^*: Q_0^*\to Q_1^*$ 
and ${^*}X$ is  the kernel of ${^*}f: {^*}Q_0\to {^*}Q_1$.
\end{lemma} 

\begin{proof} The proof is the same as that of Proposition \ref{rig2}, using Lemma \ref{mainpro1a11} instead of Proposition \ref{mainpro1a}. 
\end{proof} 

Lemma \ref{rig211} shows that any flat object is rigid. Also, for every rigid $X$ the functor $X\otimes $ is exact, which implies that $X$ is flat by Lemma \ref{mainpro1a11}. 
This completes the proof of Proposition \ref{flatrig}. 
\end{proof} 

Let us now describe the tensor product of $\C(\T)$ in terms of its realization as the category of finitely generated left $A$-modules. We have an $(A,A\otimes_R A)$-bimodule $T$, and 
$$
X\otimes Y=T\otimes_{A\otimes_R A}(X\otimes_R Y).
$$
The exactness of this functor on flat objects proved in Proposition \ref{flatrig} is equivalent to the condition that $T$ is projective as a right module (in particular, it is free over $R$). 

We see that the algebra $A$ has a structure of a {\it pseudo-Hopf algebra} over $R$ (\cite{EO}, 4.2). Thus $A_\k$ is a pseudo-Hopf algebra over $\k$ and $A_K:=K\otimes_R A$ is one over $K$. 
We have $\C(\T_\k)=\Rep(A_\k)$. Also it follows that the category $\T_K$ is separated (namely, 
for any splitting object $Q\in \T$ the object $Q_K$ is splitting in $\T_K$) and $\C(\T_K)=\Rep(A_K)$. 

\begin{remark} If $X\in \C(\T)$ is a flat object then $X^*$, ${}^*X$ are both the dual $R$-module of $X$ 
with the action of $a\in A$ given by $a_{X^*}=S(a)_X^*$, $a_{{}^*X}=S^{-1}(a)_X^*$ where $S: A\to A$ is the antipode.  
\end{remark} 

If there exists a pseudo-Hopf algebra $A$ over $R$ which is free of finite rank as an $R$-module and $\C_\k=\Rep(A_\k)$, $\C_K=\Rep(A_K)$, then we will say that $\C_K$ is {\it a lift} of $\C_\k$ over $K$. Note that a specific object of $\C_\k$ may admit more than one lift over $K$, or no such lifts at all. 
  
For example, let $H$ be a (quasi-)Hopf algebra over $R$ which is free of finite rank as an $R$-module (for example, the group algebra $RG$ of a finite group $G$). Then the tensor category $\Rep(H_K)$ is a lift of 
$\Rep(H_\k)$ over $K$. 

We see that in the above setting $\C(\T_K)$ is a lift of $\C(\T_\k)$ over $K$.  

From now on assume that the algebra $A_K$ is split over $K$, or, equivalently, the category $\C_K$ is split (note that this can always be achieved by a finite extension of $R$). For brevity denote $\C(\T),\C(\T)_\k$, $\C(\T)_K$ by $\C,\C_\k,\C_K$. Given a simple object $Y\in \C_K$, it is well known that one can choose (non-uniquely) a flat object $\overline{Y}\in \C$ such that $\overline{Y}_K=Y$. For a simple object $X\in \C_\k$ let $[Y:X]$ be the multiplicity of $X$ in the Jordan-H\"older series of the reduction $\overline{Y}_\k\in \C_\k$. This notation is justified because 
this multiplicity is, in fact, independent on the choice of $\overline{Y}$ (see \cite{Ek}). 
The {\it decomposition matrix} $D$ of $\C$ is the matrix $D$ with 
entries $D_{XY}=[Y:X]$.   

\begin{proposition}\label{cde} (i) (CDE triangle) Let $C_\k,C_K$ be the Cartan matrices 
of $\C_\k,\C_K$, and $D$ be the decomposition matrix of $\C$. Then the decomposition matrix for lifts of projectives in $\C_\k$ into projectives in $\C_K$ 
is $D^T$, and we have $C_\k=DC_KD^T$. 

(ii) ${\rm FPdim}(\C_\k)={\rm FPdim}(\C_K)$;

(iii) If $\T_K$ is complete and for all $i,j$ 
$$
\dim\Hom_{\C_\k}(F_\k(T_i),F_\k(T_j))=\dim\Hom_{\C_K}(F_K(T_i),F_K(T_j))
$$
then $\T_\k$ is complete.\footnote{Note that we automatically have $\dim\Hom_{\C_\k}(F_\k(T_i),F_\k(T_j))\ge \dim\Hom_{\C_K}(F_K(T_i),F_K(T_j))$ since 
$F_K(T_i)$ are lifts of $F_\k(T_i)$ over $K$
(as $A$-modules).} 
\end{proposition}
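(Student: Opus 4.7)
My plan is to establish (i) by adapting the classical CDE argument to this DVR setting, deduce (ii) from (i) by Frobenius--Perron bookkeeping, and prove (iii) by a direct dimension count. I expect the main technical content to lie in (i), where one must lift projectives from $\C_\k$ to $\C$ and identify how they decompose after passage to $\C_K$; once these decomposition coefficients are shown to be $D_{XY}$, the remaining parts are formal.

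For (i), I would first lift every primitive idempotent of $A_\k$ to a primitive idempotent of $A$, which is possible because $R$ is complete local and $A$ is $R$-free of finite rank. Each simple $X$ of $\C_\k$ then acquires an $R$-free projective lift $\widetilde P(X)\in\C$ with $\widetilde P(X)_\k\cong P_\k(X)$, and $\widetilde P(X)_K$ is automatically projective in $\C_K$. To determine the decomposition $\widetilde P(X)_K\cong\bigoplus_Y m_{XY}P_K(Y)$, I would compute $\dim_K\Hom_{\C_K}(\widetilde P(X)_K,\overline Y_K)$ both directly as $m_{XY}$ and, using projectivity and flatness, as the $R$-rank of $\Hom_\C(\widetilde P(X),\overline Y)$, which in turn equals $\dim_\k\Hom_{\C_\k}(P_\k(X),\overline Y_\k) = [\overline Y_\k:X] = D_{XY}$; hence $m_{XY}=D_{XY}$, yielding the claim about projective lifts. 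The Cartan identity $C_\k = DC_KD^T$ then falls out from computing the composition multiplicities of $\widetilde P(X')$ in two ways: reducing mod $\mathfrak p$ gives $(C_\k)_{X,X'}$, while the decomposition above combined with the decomposition map $D$ on Grothendieck groups gives $\sum_{Y,Y'}D_{X,Y}(C_K)_{Y,Y'}D_{X',Y'}$; the two must agree by additivity of reduction on short exact sequences of $R$-free modules.

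For (ii), I will use the identity $\FPdim(\C) = d^T C d$ valid in any finite multitensor category with enough projectives, where $d$ is the column vector of Frobenius--Perron dimensions of simples and $C$ is the Cartan matrix; this follows from $\FPdim(\C)=\sum_i\FPdim(X_i)\FPdim(P_i)$ together with $\FPdim(P_i)=\sum_j C_{ji}d_j$. The decomposition map $G_0(\C_K)\to G_0(\C_\k)$ is a homomorphism of based rings and hence respects the unique normalized positive character, so $d_K = D^T d_\k$. Combining with (i) then yields $\FPdim(\C_\k)=d_\k^T C_\k d_\k = d_\k^T(DC_KD^T)d_\k = d_K^T C_K d_K = \FPdim(\C_K)$.

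Finally, (iii) reduces to a rank count. The $R$-module $\Hom_\T(T_i,T_j)$ is free, so its rank equals simultaneously $\dim_\k\Hom_{\T_\k}(T_{i,\k},T_{j,\k})$ and $\dim_K\Hom_{\T_K}(T_{i,K},T_{j,K})$. Completeness and separatedness of $\T_K$ force $F_K$ to be bijective on Homs, so the latter dimension equals $\dim_K\Hom_{\C_K}(F_K(T_i),F_K(T_j))$, which by hypothesis coincides with $\dim_\k\Hom_{\C_\k}(F_\k(T_i),F_\k(T_j))$. Chaining these equalities shows that the natural map $F_\k$ on Homs has source and target of equal $\k$-dimension; since $\T_\k$ is separated it is injective on Homs, hence an isomorphism, so $\T_\k$ is complete.
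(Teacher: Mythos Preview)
Your proof is correct and follows essentially the same route as the paper. The paper dispatches (i) by citing the standard CDE triangle for flat deformations of finite-dimensional algebras over a complete DVR (reference [Ek]), whereas you spell out that argument; your (ii) and (iii) match the paper's reasoning almost verbatim, including the key identity $D^T\mathbf d_\k=\mathbf d_K$ and the dimension count via flatness of $\Hom_\T(T_i,T_j)$.
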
 

\begin{proof} (i) This holds for any flat deformation of finite dimensional algebras over a DVR, \cite{Ek}.  

(ii) Let $\bold d_\k,\bold d_K$ be the vectors of Frobenius-Perron dimensions in $\C_\k,\C_K$ (\cite{EGNO}, Subsections 3.3, 4.5). By (i) we have 
$$
{\rm FPdim}(\C_\k)=\bold d_\k^T C_\k \bold d_\k=\bold d_\k^T DC_K D^T\bold d_\k=
\bold d_K^T C_K \bold d_K={\rm FPdim}(\C_K). 
$$

(iii) We need to show that the functor $F_\k: \T_\k\to \C_\k$ is full. Since it is faithful, it suffices to show that 
$\dim\Hom_{\C_\k}(F_\k(T_i),F_\k(T_j))\le \dim\Hom_{\T_\k}(T_i,T_j)$. 
But 
$$
\dim\Hom_{\T_\k}(T_i,T_j)=\dim\Hom_{\T_K}(T_i,T_j)=\dim\Hom_{\C_K}(F_K(T_i),F_K(T_j)),
$$
which implies the statement.
\end{proof} 

\begin{corollary}\label{semisi} If $\T_K$ is semisimple and 
and for all $i,j$ 
$$
\dim\Hom_{\C_\k}(F_\k(T_i),F_\k(T_j))=\dim\Hom_{\C_K}(F_K(T_i),F_K(T_j))
$$
then $\T_\k$ is complete and the Cartan matrix of $\C_\k$ 
is given by $C=DD^T$. 
\end{corollary}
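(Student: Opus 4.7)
The plan is to reduce Corollary \ref{semisi} directly to the two parts of Proposition \ref{cde}. The only work is to check that the hypothesis ``$\T_K$ is semisimple'' already supplies both missing inputs: completeness of $\T_K$ and the identity shape of the Cartan matrix $C_K$.

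First I would verify that $\T_K$ is complete. Since $\T_K$ is a semisimple Karoubian rigid monoidal category with finite-dimensional Hom spaces, every object decomposes as a direct sum of simples; in particular every object is already projective--injective and every morphism is split. Thus $\S(\T_K) = \T_K$, and the construction of Section \ref{s2} gives $\C(\T_K) = \T_K$ with $F_K$ the tautological embedding, which is a (fully faithful) equivalence. In particular $\T_K$ is separated and complete.

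Next I would extract the Cartan matrix of $\C_K$. Because $\C_K = \C(\T_K) = \T_K$ is semisimple, every simple object is its own projective cover, so $\dim \Hom_{\C_K}(P_X, X') = \delta_{X,X'}$, i.e.\ $C_K = I$.

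Finally I would combine these observations with Proposition \ref{cde}. Part (iii) applies because $\T_K$ is now known to be complete and the Hom-dimension equality is assumed; hence $\T_\k$ is complete. Part (i) gives the CDE identity $C_\k = D\, C_K\, D^T$, and substituting $C_K = I$ yields $C_\k = D D^T$, as claimed. There is no real obstacle here: the whole content is recognizing that semisimplicity of $\T_K$ forces $\C(\T_K) = \T_K$ and $C_K = I$, so that Proposition \ref{cde} applies without further work.
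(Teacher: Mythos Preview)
Your proof is correct and follows essentially the same approach as the paper: the paper's proof is the one-liner ``This follows immediately from Proposition \ref{cde}, since the Cartan matrix of $\C_K=\T_K$ is the identity matrix,'' and you have simply spelled out the two observations (completeness of $\T_K$ and $C_K=I$) that justify invoking parts (i) and (iii) of that proposition.
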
 

\begin{proof} This follows immediately from Proposition \ref{cde}, since the Cartan matrix of $\C_K=\T_K$ is the identity matrix. 
\end{proof} 

\section{Properties of tilting modules for $SL_2(\k)$ and splitting objects of $\T_{n,p}$}\label{s3}

\subsection{Splitting objects in $\T_{n,p}$} 
Let $\T=\T_p$ be the category of tilting $SL_2(\k)-$modules, ${\rm char}(\k)=p>0$. Its indecomposable object of highest weight $m\in \BZ_{\ge 0}$ will be denoted $T_m$. For $r\in \BZ_{\ge 0}$ let $\St_r =T_{p^r-1}$ be the $r$-th Steinberg module; we will also denote 
$\St_1$ as $\St$. For $k\in \BZ_{\ge 1}$ let $\T_{<p^k-1}$ be the additive subcategory of $\T$
generated by $T_m$ with $m<p^k-1$.

Recall the definitions of Example \ref{mainex}(5). Namely, for $n\ge 0$ let $\mathcal{I}_n=\I_{n,p}$ be the thick ideal in $\T=\T_p$ generated by ${\St_n}$. We have $\mathcal{I}_n=\T_{\ge p^n-1}$, the additive subcategory of $\T_p$ spanned by the $T_m$ with $m\ge p^n-1$. Thus the tensor ideal $\langle\mathcal{I}_n\rangle$ consists of morphisms which factor through a direct sum of $T_m$, $m\ge p^n-1$. For $n\ge 1$ define the category $\T_{n,p}:=\T_p/\mathcal{I}_n$. This is a rigid monoidal Karoubian category with indecomposables $T_i$, $0\le i\le p^n-2$.

Let $L_m$ be the simple $SL_2(\k)-$module of highest weight $m\in \BZ_{\ge 0}$. Let $M\mapsto M^{(1)}$ be the Frobenius twist.

We first recall the following standard facts. 

\begin{proposition}\label{standfacts}
(i) (\cite[E.1]{Ja}) $L_a=T_a$ for $0\le a\le p-1$.

(ii) (Steinberg tensor product theorem, \cite[3.17]{Ja}) $L_{a+bp}=L_a\otimes L_b^{(1)}$, $0\le a\le p-1, b\in \BZ_{\ge 0}$.

(iii) (\cite[E.9]{Ja}) One has
\begin{equation} \label{Donkin alg}
T_{a+pb}=T_a\otimes T_b^{(1)}\; \mbox{for}\; p-1\le a\le 2p-2,\; b\in \BZ_{\ge 0}.
\end{equation}
In particular
$\St_r=\St \ot \St_{r-1}^{(1)}$ for $r\in \BZ_{\ge 1}$.
\end{proposition}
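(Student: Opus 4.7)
The proposition assembles three standard facts about tilting and simple modules for $SL_2(\k)$, all attributed to Jantzen's book. The plan is to indicate, for each part, the conceptual argument I would give if I had to prove it from scratch, before deferring to the precise write-up in the cited references.

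For part (i), the weight $a$ with $0 \le a \le p-1$ lies in the lowest alcove, where the Weyl module $\Delta(a) = S^a V$ (with $V$ the standard 2-dimensional representation) is already simple. I would verify this directly: the standard basis $v^i(v')^{a-i}$ of $S^a V$ with $0 \le i \le a$ shows that raising and lowering operators link all weight spaces without hitting zero in this range, so $\Delta(a)$ has no proper submodule, and hence $\Delta(a) = L_a$. Dualizing, $L_a = \nabla(a)$ as well, so $L_a$ has both a Weyl filtration and a good filtration. Since a tilting module is characterized by having both, and $T_a$ is the unique indecomposable tilting of highest weight $a$, we conclude $L_a = T_a$.

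For part (ii), writing $n = a + pb$ in base $p$ with $0 \le a < p$, the Frobenius twist $L_b^{(1)}$ has highest weight $pb$ and a trivial action of the first Frobenius kernel $(SL_2)_1$. Thus $L_a \otimes L_b^{(1)}$ has highest weight $n$. To see it is simple, I would restrict to $(SL_2)_1$: there $L_a$ is simple (as it is an irreducible $(SL_2)_1$-module because $a < p$) and $L_b^{(1)}$ is trivial, so any $(SL_2)$-submodule of $L_a \otimes L_b^{(1)}$ must be of the form $L_a \otimes W$ for some submodule $W \subset L_b^{(1)}$; but $L_b^{(1)}$ is simple as an $(SL_2)$-module, which forces the tensor product to be simple of highest weight $n$, hence $L_n$.

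For part (iii), which is Donkin's tensor product theorem for tiltings, the key point is that for $p-1 \le a \le 2p-2$ the restriction of $T_a$ to the Frobenius kernel $(SL_2)_1$ is both projective and injective. I would then argue that $T_a \otimes T_b^{(1)}$ has both a Weyl and a dual Weyl filtration (since tensoring with a Frobenius twist of a tilting preserves these filtrations in the indicated range), so it is tilting; its unique indecomposable summand of highest weight $a + pb$ is $T_{a+pb}$, and the careful $(SL_2)_1$-analysis rules out any other summands. The hardest step here is precisely this $(SL_2)_1$-projectivity of $T_a$ in the band $[p-1, 2p-2]$, which is the technical heart of Donkin's argument in Appendix E.9 of Jantzen; I would simply cite it. The final identity $\St_r = \St \otimes \St_{r-1}^{(1)}$ then follows by specializing to $a = p-1$ and $b = p^{r-1} - 1$ and iterating.
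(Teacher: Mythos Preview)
The paper does not actually prove this proposition: it is stated as ``the following standard facts'' and each part is simply attributed to the corresponding section of Jantzen's book, with no argument given. Your proposal goes further and sketches the underlying proofs, which is fine and the sketches are essentially correct, but you should be aware that in the paper itself this proposition functions purely as a citation of known results, not as something to be proved.

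A few minor comments on your sketches. In part (ii), your simplicity argument is a little loose: the step ``any $SL_2$-submodule of $L_a \otimes L_b^{(1)}$ must be of the form $L_a \otimes W$'' needs the fact that $L_a$ remains simple over the Frobenius kernel and that the Frobenius kernel acts trivially on $L_b^{(1)}$, together with a Clifford-theory style decomposition; this is right in spirit but the full argument (as in Jantzen) requires more care. In part (iii), you correctly identify $(SL_2)_1$-projectivity of $T_a$ for $p-1 \le a \le 2p-2$ as the key input, and defer to Jantzen for the details; that is exactly the right call, since indecomposability of $T_a \otimes T_b^{(1)}$ is the delicate point and not something one can wave through.
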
 

An important role below will be played by the following proposition. 

\begin{proposition}\label{splitmor} Let $k\in \BZ_{\ge 1}$ and $X,Y \in \T_{<p^k-1}$. Then for any morphism $f: X\to Y$ the morphism
$f\ot 1 :X\ot \St_{k-1} \to Y\ot \St_{k-1}$ is split.
\end{proposition}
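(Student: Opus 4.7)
My plan is to reduce, via Lemma~\ref{dirsum} and the bilinearity of the tensor product, to the case where $X=T_m$ and $Y=T_{m'}$ are indecomposable tilting modules with $m,m'<p^k-1$, and $f\colon T_m\to T_{m'}$ is a single morphism. For the main argument I would proceed by induction on $k$. The base case $k=1$ is immediate: $\St_0=\one$, so the statement asserts that any morphism in $\T_{<p-1}$ is split; but by Proposition~\ref{standfacts}(i) every $T_m$ with $m<p-1$ equals the simple module $L_m$, so $\T_{<p-1}$ is semisimple and morphisms between its objects are automatically split (as direct sums of identities between isomorphic simples and zero morphisms).

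For the inductive step I would use the factorization $\St_{k-1}=\St\otimes\St_{k-2}^{(1)}$ from Proposition~\ref{standfacts}(iii), rewriting $f\otimes 1_{\St_{k-1}}$ as $(f\otimes 1_{\St})\otimes 1_{\St_{k-2}^{(1)}}$. The central task is to analyze $T_m\otimes\St$ for $m<p^k-1$ and identify each indecomposable summand in a form compatible with the Frobenius twist, using iterated applications of Donkin's formula~\eqref{Donkin alg}. The goal is to match the indecomposable summands of the source and target so that each component of $f\otimes 1_{\St_{k-1}}$ is either the zero map (contributing to the trivial part of the splitting) or reduces, via Frobenius descent, to a morphism between objects of $\T_{<p^{k-1}-1}$ tensored with $\St_{k-2}$, where the inductive hypothesis applies to produce a splitting.

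The main obstacle will be the delicate combinatorics of this reduction. Donkin's formula~\eqref{Donkin alg} applies only when $p-1\le a\le 2p-2$, so extracting a usable Frobenius-twisted form from a given summand requires a careful case analysis based on the base-$p$ expansion of its highest weight, and summands whose leading digit is not $p-1$ cannot be peeled off directly. Auxiliary decompositions of $T_m\otimes T_s$ for small $s$ (such as tensoring with the standard representation $V=T_1$), which should be established earlier in this section, will likely be needed to handle these ``irregular'' cases and to confirm that, for $m<p^k-1$, every summand of $T_m\otimes\St$ can be brought into a form permitting the inductive descent. Packaging all components together and invoking Lemma~\ref{dirsum} in the reverse direction would then yield the asserted splitting.
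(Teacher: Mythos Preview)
Your approach differs substantially from the paper's. The paper does not induct on $k$ at the level of the splitting statement itself; instead it first proves, by induction on $k$, the auxiliary Lemma~\ref{splitmor1}: for $m \le p^k-1$ the module $L_m \otimes \St_{k-1}$ is tilting. (The induction uses Steinberg's tensor product theorem for $L_m$ together with $\St_k = \St \otimes \St_{k-1}^{(1)}$.) With this lemma in hand, the proof of Proposition~\ref{splitmor} works in the abelian category of $SL_2(\k)$-modules: one forms $\Ker(f)$, ${\rm Im}(f)$, $\Coker(f)$, notes that each has a composition series with factors $L_m$ for $m < p^k-1$, and concludes from the lemma that each becomes tilting after tensoring with $\St_{k-1}$. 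Since $\Ext^1$ between tilting modules vanishes, the two short exact sequences split after tensoring, and hence $f \otimes 1$ is split. The induction and the combinatorics are thus confined to a statement about simple modules, and the splitting itself follows from a single homological fact.

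Your strategy of staying entirely inside the tilting category and reducing via Donkin's formula runs into a genuine obstacle at the step you call ``Frobenius descent.'' Even once each summand of $T_m \otimes \St$ is written as $T_a \otimes T_b^{(1)}$ with $p-1 \le a \le 2p-2$, a morphism between $T_a \otimes T_b^{(1)}$ and $T_{a'} \otimes T_{b'}^{(1)}$ does not in general factor as a tensor product, and an arbitrary component of $f \otimes 1_{\St}$ need not be of the form $1 \otimes g^{(1)}$ for some $g$ in $\T_{<p^{k-1}-1}$. One can compute these Hom spaces (the paper does so later, in \eqref{hom rec}), but showing that further tensoring with $\St_{k-2}^{(1)}$ splits an arbitrary such morphism is not the same as applying the inductive hypothesis to a morphism of the shape $g \otimes 1_{\St_{k-2}}$; it would require a separate argument essentially as strong as the proposition itself. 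The paper's passage through simple modules and $\Ext$-vanishing sidesteps this difficulty entirely.
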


\begin{proof} We will use the following lemma. 

\begin{lemma}\label{splitmor1} Assume that $k\in \BZ_{\ge 1}$ and $m\le p^k-1$. Then $L_m\ot \St_{k-1}$ is tilting.
\end{lemma}

\begin{proof} 
We use induction in $k$. In the base case $k=1$, $\St_{k-1}=\St_0=\bold 1$ and 
the result follows from Proposition \ref{standfacts}(i). 

Now assume that Lemma \ref{splitmor1} holds for some $k$. Let $m\le p^{k+1}-1$. We write $m=a+bp$ where
$0\le a\le p-1$ and $0\le b\le p^k-1$, so $L_m=L_{a+bp}=L_a\otimes L_b^{(1)}$ by Proposition \ref{standfacts}(ii). We have
$$L_m\ot \St_k=L_a\otimes L_b^{(1)}\ot \St \ot \St_{k-1}^{(1)}=
T_a\ot \St \ot (L_b\ot \St_{k-1})^{(1)}.$$
By the induction assumption $L_b\ot \St_{k-1}$ is tilting, so $\St \ot (L_b\ot \St_{k-1})^{(1)}$ is also
tilting by Proposition \ref{standfacts}(iii). Thus $T_a\ot \St \ot (L_b\ot \St_{k-1})^{(1)}$ is also tilting as a product of two tilting
modules. We have thus proved the induction step and the Lemma follows.
\end{proof}

Now consider the exact sequences
$$0\to \Ker(f)\to X\to {\rm Im}(f)\to 0,$$ 
$$0\to {\rm Im}(f)\to Y\to \Coker(f)\to 0.$$
Let $M$ be one of $\Ker(f), {\rm Im}(f), \Coker(f)$.
The Jordan-Holder factors of $M$ are of the form $L_m, 0\le m<p^k-1$. 
Thus $M\ot \St_{k-1}$ has a filtration with subquotients of the form $L_m\ot \St_{k-1}, 0\le m<p^k-1$ 
which are tilting by Lemma \ref{splitmor1}. Since all $\Ext$s between tilting modules vanish (see \cite[Corollary E.2 (b)]{Ja}) this filtration splits  and $M\ot \St_{k-1}$ is tilting. Thus the sequences
$$0\to \Ker(f)\ot \St_{k-1}\to X\ot \St_{k-1} \to {\rm Im}(f)\ot \St_{k-1}\to 0,$$ 
$$0\to {\rm Im}(f)\ot \St_{k-1} \to Y\ot \St_{k-1} \to \Coker(f)\ot \St_{k-1}\to 0$$
also split and the result follows.
\end{proof}

\begin{remark} The inequality for $m$ in Lemma \ref{splitmor1} is the best possible: the module \linebreak $L_{p^k}\ot \St_{k-1}$
is not tilting. Namely this module has highest weight $p^k+p^{k-1}-1$ and dimension $2p^{k-1}$ (since
$L_{p^k}=L_1^{(k)}$); this dimension is less than the dimension of the Weyl module $W_{p^k+p^{k-1}-1}$, which equals $p^k+p^{k-1}$.
\end{remark}

\subsection{The functor $\T_{<p^n-1}\to \T_{n,p}$ is fully faithful} 

\begin{proposition}\label{fullfaith} The composite functor $\T_{<p^n-1}\hookrightarrow \T_p \to \T_{n,p}$ is an equivalence of additive categories. 
\end{proposition}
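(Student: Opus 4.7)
The functor is essentially surjective because $\T_{<p^n-1}$ and $\T_{n,p}$ are both Krull--Schmidt and have exactly the same set of indecomposable objects, namely $T_i$ for $0\le i\le p^n-2$. It is full because the quotient functor $\T_p\to\T_{n,p}$ is already full on Hom spaces. The substantive content is therefore faithfulness: we must show that any $f\colon X\to Y$ in $\T_{<p^n-1}\subseteq \T_p$ that factors through an object of $\I_n$ is in fact zero.

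I would prove this by tensoring with $\St_{n-1}$. Write $f=h\circ g$ with $g\colon X\to Z$, $h\colon Z\to Y$, and $Z\in \I_n$. Proposition \ref{splitmor} applied to $f$ (with $k=n$) shows that $f\otimes 1_{\St_{n-1}}$ is split in $\T_p$. Since $\St_{n-1}$ is a nonzero $\k$-vector space, tensoring with it is a faithful operation, so it suffices to show that the image $A$ of $f\otimes 1_{\St_{n-1}}$ vanishes. By splitness, $A$ is a direct summand of both $X\otimes \St_{n-1}$ and $Y\otimes\St_{n-1}$. A short diagram chase---post-composing a chosen section $A\hookrightarrow X\otimes\St_{n-1}$ with $g\otimes 1_{\St_{n-1}}$ produces a section of the induced retraction $Z\otimes\St_{n-1}\twoheadrightarrow A$---makes $A$ also a direct summand of $Z\otimes\St_{n-1}$. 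Since $\I_n$ is a thick ideal and $Z\in \I_n$, we get $Z\otimes\St_{n-1}\in \I_n$, hence every indecomposable summand of $A$ is of the form $T_k$ with $k\ge p^n-1$.

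To finish, one exploits the fact that $-\otimes \St_{n-1}$ is exact at the level of underlying $\k$-vector spaces, so $A$ is naturally identified with the submodule $\mathrm{image}(f)\otimes \St_{n-1}$ of $Y\otimes\St_{n-1}$. Since $\mathrm{image}(f)$ is a submodule of $Y\in\T_{<p^n-1}$, all of its composition factors are $L_k$ with $k<p^n-1$. The remaining task is to conclude that, under these constraints, $\mathrm{image}(f)\otimes\St_{n-1}$ must contain at least one indecomposable tilting summand $T_k$ with $k<p^n-1$ whenever $\mathrm{image}(f)\ne 0$, in direct contradiction with $A\in \I_n$; this forces $\mathrm{image}(f)=0$ and hence $f=0$.

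This last deduction is the main obstacle: the claim rests on a careful representation-theoretic analysis of the summand structure of $M\otimes\St_{n-1}$ for a submodule $M$ of an indecomposable tilting in $\T_{<p^n-1}$. The key inputs are Lemma \ref{splitmor1} (telling us that $L_k\otimes\St_{n-1}$ is tilting for $k<p^n-1$, so $\mathrm{image}(f)\otimes\St_{n-1}$ splits as a direct sum of such tiltings) together with Donkin's tensor product formula from Proposition \ref{standfacts}(iii), which is used to track which of the resulting $T_k$ indices fall in the range $k<p^n-1$. Small cases such as $p=n=2$ can be verified by hand (e.g.\ $L_0\otimes\St_1=\St_1=T_1\notin \I_2$), and the general claim follows by writing each relevant simple in its Steinberg tensor product decomposition.
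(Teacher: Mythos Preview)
Your reduction to faithfulness, and the diagram chase showing that the image $A$ of $f\otimes 1_{\St_{n-1}}$ is a direct summand of $Z\otimes\St_{n-1}\in\I_n$, are both correct. The approach is genuinely different from the paper's: the paper does not tensor with $\St_{n-1}$ at all but instead dualizes to a morphism $\mathbf{1}\to T_m^*\otimes T_k$, observes that any factorization through $\I_n$ forces a nonzero map $\mathbf{1}\to T_s$ with $s\ge p^n-1$, invokes Lemma~\ref{lem2} to get $s=2p^l-2$ with $l\ge n$ and simple socle (hence the map $T_s\to T_m^*\otimes T_k$ is injective), and then gets a contradiction from the elementary weight bound $m+k<2(p^n-1)$.

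Your approach, however, has a real gap at the final step. You propose to deduce that $\mathrm{image}(f)\otimes\St_{n-1}$ has a summand $T_j$ with $j<p^n-1$ from the fact that all composition factors of $\mathrm{image}(f)$ are $L_k$ with $k<p^n-1$. This inference is false at the level of individual composition factors: for $p=2$ one has $L_{2^{n-1}}=L_1^{(n-1)}$, and by iterating Donkin's formula (Proposition~\ref{standfacts}(iii)) one checks
\[
L_{2^{n-1}}\otimes\St_{n-1}=T_1^{(n-1)}\otimes T_{2^{n-1}-1}=T_{2^n-1}=\St_n\in\I_n,
\]
so this simple contributes \emph{no} summand outside $\I_n$ (for $n=2$ this is just $L_2\otimes T_1=T_3$). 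Thus the sentence ``the general claim follows by writing each relevant simple in its Steinberg tensor product decomposition'' is not correct as stated. To salvage your route you would need to exploit something specific about $\mathrm{image}(f)$ as a submodule of a tilting module (e.g.\ control over which simples can appear in its socle), not just a bound on its composition factors; you have not supplied such an argument, and it is not clear that doing so is any easier than the paper's short weight argument via Lemma~\ref{lem2}.
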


\begin{proof} By definition this functor is essentially surjective and full, so it remains to show that it is faithful. Thus our job is to show that 
no nonzero morphism in $\T_{<p^n-1}$ factors through $\I_n=\T_{\ge p^n-1}$. Assume that this is not
the case and let $\widetilde f\in \Hom(T_m, T_k)$ be a morphism in $\langle\mathcal{I}_n\rangle$ with $0\le m,k<p^n-1$. Then 
the morphism $f\in \Hom(\bold 1, T_m^*\ot T_k)$ which is the image of $\widetilde f$ under the canonical
isomorphism $\Hom(T_m, T_k)\simeq \Hom(\bold 1, T_m^*\ot T_k)$ is also in $\langle\mathcal{I}_n\rangle$. Thus there exists
an object $T_s$ with $s\ge p^n-1$ and morphisms $\bold 1 \to T_s$ and $T_s\to T_m^*\ot T_k$ such that
their composition is not zero. 

\begin{lemma}\label{lem2} (see for example \cite{C1}, Lemma 5.3.3 and references therein) 
Assume $\Hom(\bold 1 ,T_s)\ne 0$ in $\T_p$. Then $s=2p^l-2$ for some $l\in \BZ_{\ge 0}$ 
and the socle of $T_s$ is one-dimensional.
\end{lemma}

\begin{proof} For $0\le s\le 2p-2$ the lemma follows from the description of tilting modules
in \cite[E.1]{Ja}. Now assume that $s>2p-2$ and write $s=a+bp$ where $p-1\le a\le 2p-2$ and
$b\in \BZ_{\ge 0}$. Then by \cite[Lemma E.9]{Ja} we have $T_s=T_a\otimes T_b^{(1)}$. 
Thus $\Hom(\bold 1 ,T_s)=\Hom(T_a^* ,T_b^{(1)})$. Thus $T_a^*$ must have a quotient on which the first
Frobenius kernel of $SL_2(\k)$ acts trivially. Again by the description in \cite[E.1]{Ja} we see that $a=2p-2$ and the quotient
is the object $\bold 1=L_0$. Thus $\Hom(\bold 1, T_b^{(1)})\ne 0$, so $\Hom(\bold 1,T_b)\ne 0$. By induction
we get that $b=2p^{l-1}-2$, so $s=2p^{l}-2$ as required. 

The module $T_{2p^l-2}$ is a lift
to $SL_2(\k)$ of the injective hull of the trivial module over the $l$-th Frobenius kernel, see \cite[E.9]{Ja}; this
implies the statement about the socle.
\end{proof}

Lemma \ref{lem2} implies that the map $T_s\to T_m^*\ot T_k$ is injective and $s=2p^l-2$ with $l\ge n$. This
is impossible as all weights of $T_m^*\ot T_k$ have absolute value $\le m+k<2(p^n-1)$.
\end{proof} 

\subsection{Tensor ideals in $\T_p$}

Lemma \ref{lem2} can be used to establish the classification of tensor ideals
in $\T$.

\begin{proposition}\label{tensid} (\cite{C1}, Theorem 5.3.1) The only nonzero tensor ideals in $\T_p$ are the ideals $\langle\mathcal{I}_k\rangle$, $k\ge 0$.
\end{proposition}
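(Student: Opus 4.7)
The plan is to show that every nonzero thick tensor ideal $\mathcal{J}\subseteq\T_p$ coincides with $\langle\mathcal{I}_k\rangle$ for some $k\ge 0$. Let $m_0=\min\{m:T_m\in\mathcal{J}\}$, and set $k$ so that $p^k-1\le m_0<p^{k+1}-1$. By minimality of $m_0$ and the equality $\mathcal{I}_k=\T_{\ge p^k-1}$ recalled just above the proposition, $\mathcal{J}\subseteq\langle\mathcal{I}_k\rangle$; the content is then the reverse inclusion, which (since $\St_k$ generates $\mathcal{I}_k$ as a thick ideal) amounts to showing that $\St_k$ is a direct summand of $T_{m_0}\otimes X$ for some tilting $X$. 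I would prove the stronger statement by induction on $k$: for every $m$ with $p^k-1\le m<p^{k+1}-1$, such an $X$ exists.

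For the base case $k=0$, $T_m=L_m$ with $m\le p-2$ is simple and self-dual, and the composition $\bold 1\xrightarrow{\mathrm{coev}}T_m\otimes T_m^*\xrightarrow{\mathrm{ev}}\bold 1$ equals $(m+1)\cdot 1_{\bold 1}\ne 0$ in $\k$ (since $m+1<p$), exhibiting $\bold 1=\St_0$ as a summand of $T_m\otimes T_m^*$. For $k=1$, Donkin's formula (Proposition~\ref{standfacts}(iii)) together with the $k=0$ case reduces the problem to showing $\St\in\langle T_a\rangle$ for each $a\in[p-1,2p-2]$. When $a=p-1$ this is immediate. For $a\in[p,2p-2]$, a Weyl-character computation using the $SL_2$ linkage $a\sim 2p-2-a$ yields the Pieri-type identity $T_a\otimes V=T_{a+1}\oplus T_{a-1}$ for $a\in[p+1,2p-2]$, together with the wall identity $T_p\otimes V=T_{p+1}\oplus\St^{\oplus 2}$, so that iteratively tensoring with $V$ produces $T_{a-1},\ldots,T_p$, and then $\St$, as summands inside $\langle T_a\rangle$.

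For the inductive step $k\ge 2$, Donkin's formula writes $T_m=T_a\otimes T_b^{(1)}$ with $a\in[p-1,2p-2]$, and the constraint $p^k-1\le m<p^{k+1}-1$ forces $p^{k-1}-1\le b\le p^k-2$. Applying the inductive hypothesis to $T_b$ yields a tilting $Y$ with $\St_{k-1}$ a summand of $T_b\otimes Y$, and the $k=1$ case applied to $T_a$ yields a tilting $Z$ with $\St$ a summand of $T_a\otimes Z$. Then $\St\otimes\St_{k-1}^{(1)}=\St_k$ (by Proposition~\ref{standfacts}(iii)) appears as a summand of $(T_a\otimes Z)\otimes(T_b\otimes Y)^{(1)}=T_m\otimes(Z\otimes Y^{(1)})$, closing the induction.

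The main obstacle is the $k=1$ Pieri-type step: one must carry out the character identities case by case (distinguishing the lower wall $a=p$, the alcove interior $a\in[p+1,2p-3]$, and the upper wall $a=2p-2$) and verify that the iterated descent by $V$-tensoring genuinely picks out $\St$ as a summand at each stage. Once that computation is established, the inductive step is essentially bookkeeping via Donkin's formula.
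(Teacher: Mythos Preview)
Your argument classifies the \emph{thick ideals} (full subcategories closed under summands and tensoring) of $\T_p$, but the proposition is about \emph{tensor ideals}, which in this paper are collections of morphisms: $\langle\mathcal{I}_k\rangle$ consists of all morphisms factoring through an object of $\mathcal{I}_k$. These are different classifications, and the second is strictly stronger.

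Concretely, your opening step already fails for a tensor ideal $\mathcal{J}$: you set $m_0=\min\{m:T_m\in\mathcal{J}\}$, but for a tensor ideal this should mean $m_0=\min\{m:1_{T_m}\in\mathcal{J}\}$, and a nonzero tensor ideal need not a priori contain any identity morphism, so $m_0$ may not exist. Even granting that $m_0$ exists, your inclusion $\mathcal{J}\subseteq\langle\mathcal{I}_k\rangle$ does not follow ``by minimality of $m_0$'': minimality only says $1_{T_m}\notin\mathcal{J}$ for $m<m_0$, whereas you must show that every \emph{morphism} in $\mathcal{J}$ factors through some $T_m$ with $m\ge p^k-1$. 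Nothing in your outline addresses this.

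The paper's proof handles both issues simultaneously via Lemma~\ref{lem2}. Given any nonzero $f\in I$, one passes to a nonzero $g:\bold 1\to T_s$ in $I$; Lemma~\ref{lem2} forces $s=2p^l-2$ and $\dim\Hom(\bold 1,T_s)=1$, and since $T_{2p^l-2}$ is a summand of $\St_l\otimes\St_l^*$ one deduces $1_{\St_l}\in I$. Taking the minimal such $l=k$ gives $\langle\mathcal{I}_k\rangle\subseteq I$; the reverse inclusion then follows because a tensor ideal is determined by its components $\Hom(\bold 1,T_s)$, and these agree with those of $\langle\mathcal{I}_k\rangle$. Your Donkin-recursion argument is a perfectly good proof that the thick ideals form the chain $\mathcal{I}_0\supsetneq\mathcal{I}_1\supsetneq\cdots$, but to upgrade it to the tensor-ideal statement you still need the Lemma~\ref{lem2} step, which is the actual content here.
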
 
 
\begin{proof} 
Let $I$ be a nonzero tensor ideal in $\T_p$. Let $f\in I$ be a nonzero morphism, $f: X\to Y$. 
Then $f$ defines a nonzero element $\widetilde{f}\in \Hom(\bold 1,X\otimes Y^*)$ which belongs to $I$. 
Thus there exists a nonzero morphism $g: \bold 1\to T_m$ for some $m\ge 0$ which belongs to $I$. 
Let $m$ be the smallest such number. By Lemma \ref{lem2}, $m=2p^k-2$ for some $k\ge 0$, and  
$I$ contains $\Hom(\bold 1 ,T_{2p^k-2})$.
Since the map $\Hom(\bold 1 ,T_{2p^l-2})\to \Hom(\bold 1 ,\St_l\ot \St_l^*)$ induced by the inclusion of the
direct summand $T_{2p^l-2}\hookrightarrow \St_l\ot \St_l^*$ is an isomorphism  (as both spaces are 1-dimensional and the map is injective),    we see that $1_{\St_k}$ is contained
in $I$, i.e., $\langle\mathcal{I}_k\rangle\subset I$. This implies that $1_{\St_l}$ is contained in $I$ for all $l\ge k$. 
Thus, $\Hom(\bold 1,T_{2p^l-2})$ is contained in $I$ if and only if $l\ge k$, i.e., if and only if it is contained in $\langle\mathcal{I}_k\rangle$. 
Hence $I=\langle\mathcal{I}_k\rangle$, since a tensor ideal is uniquely determined by its components in $\Hom (\bold 1, T)$ where $T$ runs over indecomposable objects.
\end{proof} 

\subsection{Separatedness of $\T_{n,p}$ and its ideal of splitting objects} 

\begin{corollary}\label{separa} The category $\T_{n,p}$ is separated. 
\end{corollary}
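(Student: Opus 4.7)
The plan is to exhibit $\St_{n-1}$ as a splitting object of $\T_{n,p}$ and conclude separatedness from the faithfulness of tensoring with any nonzero rigid object; the main input is Proposition~\ref{splitmor}, and everything else is formal.

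First, I will verify that $\St_{n-1}$ is a splitting object of $\T_{n,p}$. Given a morphism $f: X \to Y$ in $\T_{n,p}$, note that every object of $\T_{n,p}$ is a direct sum of indecomposables $T_i$ with $i < p^n-1$, so by Proposition~\ref{fullfaith} both $X$ and $Y$ lie in $\T_{<p^n-1}$ and $f$ is the image of a uniquely determined morphism $\tilde f: X \to Y$ in $\T_p$. By Proposition~\ref{splitmor}, the morphism $\tilde f \otimes 1_{\St_{n-1}}$ is split in $\T_p$. Since the quotient functor $\T_p \to \T_{n,p}$ is additive and monoidal, and any additive functor sends split morphisms (i.e., direct sums of a zero morphism and an isomorphism) to split morphisms, the image $f \otimes 1_{\St_{n-1}}$ is split in $\T_{n,p}$. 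The same reasoning applied to $1_{\St_{n-1}} \otimes \tilde f$ yields the splitness of $1_{\St_{n-1}} \otimes f$. Thus $\St_{n-1}$ is a splitting object.

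Second, since $\T_{n,p}$ has only finitely many isomorphism classes of indecomposables, the splitting ideal $\S(\T_{n,p})$ is finitely generated as a left ideal. Applying Proposition~\ref{ker} to $\P = \S$, any $f \in J$ satisfies $f \otimes 1_Q = 0$ for every splitting object $Q$; in particular, $f \otimes 1_{\St_{n-1}} = 0$ in $\T_{n,p}$.

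Finally, the rigidity of $\St_{n-1}$ makes tensoring with $\St_{n-1}$ faithful: if $f \otimes 1_{\St_{n-1}} = 0$, then tensoring further with $\St_{n-1}^*$ gives $f \otimes 1_{\St_{n-1} \otimes \St_{n-1}^*} = 0$, and since $\bold 1$ is a direct summand of $\St_{n-1} \otimes \St_{n-1}^*$, restricting to this summand recovers $f$ and forces $f = 0$. Therefore $J = 0$, so $\T_{n,p}$ is separated. There is no real obstacle here once Proposition~\ref{splitmor} is in hand; the only point requiring slight care is the passage from splitness in $\T_p$ to splitness in the quotient $\T_{n,p}$, which is handled by the observation that additive functors preserve the direct-sum structure witnessing splitness.
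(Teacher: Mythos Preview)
Your argument has a genuine gap in the final step. The claim that $\bold 1$ is a direct summand of $\St_{n-1} \otimes \St_{n-1}^*$ in $\T_{n,p}$ is false for $n \ge 2$. Indeed, $\dim(\St_{n-1}) = p^{n-1} = 0$ in $\k$, so the composite $\bold 1 \xrightarrow{\mathrm{coev}} \St_{n-1} \otimes \St_{n-1}^* \to \bold 1$ vanishes; and since $\Hom(\bold 1, \St_{n-1}\otimes\St_{n-1}^*) \cong \End(\St_{n-1}) \cong \k$ and likewise $\Hom(\St_{n-1}\otimes\St_{n-1}^*, \bold 1) \cong \k$, there is no other candidate for a splitting. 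Concretely, for $p=3$, $n=2$ one has $T_2 \otimes T_2 = T_2 \oplus T_4$ with no $T_0$ summand. (From the perspective of the abelian envelope: $\St_{n-1}$ becomes projective in $\Ver_{p^n}$ while $\bold 1$ is not projective for $n\ge 2$, so $\bold 1$ cannot split off a projective object.) Rigidity alone does \emph{not} make tensoring with a nonzero object faithful in a Karoubian category.

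The paper avoids this trap by not attempting to prove faithfulness of $-\otimes \St_{n-1}$ on morphisms. Instead it proves the weaker object-level statement that $\St_{n-1}\otimes X = 0$ forces $X = 0$: the evaluation $\St_{n-1}^*\otimes\St_{n-1}\to\bold 1$, while not split, is still an \emph{epimorphism} in $\T_{n,p}$ (via Proposition~\ref{fullfaith}, since it is surjective in $\Rep(SL_2(\k))$), so if $\St_{n-1}\otimes X=0$ then $\mathrm{coev}_X=0$ and hence $X=0$. The conclusion then comes from the classification of tensor ideals (Proposition~\ref{tensid}): if $J\ne 0$, it must contain the identity of some nonzero indecomposable, in particular of $\St_{n-1}$ itself, forcing $\St_{n-1}\otimes\St_{n-1}=0$ and thus $\St_{n-1}=0$, a contradiction. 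Your first two steps are correct; what is missing is precisely this replacement for the false direct-summand claim.
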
 

\begin{proof} By Proposition \ref{splitmor}, $\St_{n-1}=T_{p^{n-1}-1}$ is a splitting object 
in $\T_{n,p}$.

Also we claim that if $\St_{n-1}\otimes X=0$ for an object $X\in \T_{n,p}$ then $X=0$. Indeed, in this case the composite map $\St_{n-1}^*\otimes \St_{n-1}\to {\bold 1}\to X\otimes X^*$ is zero, and the evaluation map $\St_{n-1}^*\otimes \St_{n-1}\to \bold 1$ is an epimorphism by Proposition \ref{fullfaith}. Hence the coevaluation map 
$\bold 1\to X\otimes X^*$ is zero.

This implies the statement by Proposition \ref{tensid}.  
\end{proof} 

Let $\S_{n,p}$ be the ideal of splitting objects of $\T_{n,p}$. 

\begin{proposition}\label{idspli} We have $\S_{n,p}=\mathcal{I}_{n-1}/\mathcal{I}_n$. 
\end{proposition}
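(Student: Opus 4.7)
The plan is to establish both inclusions $\mathcal{I}_{n-1}/\mathcal{I}_n \subseteq \S_{n,p}$ and $\S_{n,p} \subseteq \mathcal{I}_{n-1}/\mathcal{I}_n$, the hard work lying in the second. The first inclusion is short: Proposition \ref{splitmor} with $k=n$, combined with Proposition \ref{fullfaith} (which realizes every morphism in $\T_{n,p}$ as one between objects in $\T_{<p^n-1}$), shows that $\St_{n-1}$ is a splitting object of $\T_{n,p}$. Since $\S_{n,p}$ is a thick ideal and the thick ideal in $\T_p$ generated by $\St_{n-1}$ is $\mathcal{I}_{n-1}$ by definition, every $T_m$ with $p^{n-1}-1 \le m \le p^n-2$ lies in $\S_{n,p}$.

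For the reverse inclusion, I would first classify thick ideals in $\T_{n,p}$ and then pin down $\S_{n,p}$ among them. The maps $\P \mapsto \langle\P\rangle$ and $I \mapsto \{X : 1_X \in I\}$ between thick ideals and tensor ideals are mutually inverse when restricted to the $\mathcal{I}_k$ and $\langle\mathcal{I}_k\rangle$, so Proposition \ref{tensid} implies that the nonzero thick ideals in $\T_p$ are precisely $\mathcal{I}_k$, $k\ge 0$. Consequently the nonzero thick ideals in $\T_{n,p}$ are $\mathcal{I}_k/\mathcal{I}_n$ for $0\le k \le n-1$. Since $\S_{n,p}$ is a nonzero thick ideal containing $\St_{n-1}$, we get $\S_{n,p} = \mathcal{I}_k/\mathcal{I}_n$ for some $k\le n-1$, and the task reduces to ruling out $k \le n-2$.

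This last step is the main obstacle and is only nontrivial for $n\ge 2$. Suppose for contradiction that $\S_{n,p} = \mathcal{I}_k/\mathcal{I}_n$ with $k \le n-2$. Then $\St_{n-2}$ is an indecomposable splitting object, and Lemma \ref{spliob}(b) applied with $Q=\St_{n-2}$, $R=\St_{n-1}$ asserts equivalence of: (i) there exists $X\in\T_{n,p}$ with $\St_{n-1}$ a direct summand of $X\otimes\St_{n-2}$; (ii) there exists $Y\in\T_{n,p}$ with $\St_{n-2}$ a direct summand of $Y\otimes\St_{n-1}$. Statement (i) holds by taking $X = T_{p^{n-2}(p-1)}$, since then $X\otimes\St_{n-2}$ has highest weight $p^{n-2}(p-1)+p^{n-2}-1 = p^{n-1}-1$, forcing $\St_{n-1} = T_{p^{n-1}-1}$ to appear as the (multiplicity-one) highest-weight summand. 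Statement (ii) fails: in $\T_p$ every $Y\otimes\St_{n-1}$ lies in the thick ideal $\mathcal{I}_{n-1}$, so its indecomposable summands are $T_j$ with $j\ge p^{n-1}-1$; invoking Proposition \ref{fullfaith}, passage to $\T_{n,p}$ merely deletes those $T_j$ with $j\ge p^n-1$, and hence cannot introduce $T_{p^{n-2}-1}=\St_{n-2}$ as a summand. This contradiction forces $k=n-1$, completing the proof.
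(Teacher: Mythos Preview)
Your proof is correct. The first inclusion and the classification of thick ideals match the paper's setup, but your argument for ruling out $k\le n-2$ takes a different route from the paper's.

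The paper argues as follows: once one knows $\S_{n,p}=\mathcal{I}_m/\mathcal{I}_n$ for some $m<n$, the ideal $\S_{n,p}$ is indecomposable (it is generated as a left ideal by $\St_m$), and then Proposition~\ref{uniqmin} applies: in a separated category with indecomposable $\S$, the ideal $\S$ is the \emph{unique minimal} nonzero thick ideal. Since the minimal nonzero thick ideal in $\T_{n,p}$ is $\mathcal{I}_{n-1}/\mathcal{I}_n$, the result follows immediately. By contrast, you bypass Proposition~\ref{uniqmin} entirely and work directly with Lemma~\ref{spliob}(b): you exhibit an explicit $X$ making $\St_{n-1}$ a summand of $X\otimes\St_{n-2}$, and then observe that no $Y$ can make $\St_{n-2}$ a summand of $Y\otimes\St_{n-1}$ because the latter lies in $\mathcal{I}_{n-1}/\mathcal{I}_n$ and Proposition~\ref{fullfaith} guarantees no new summands appear under the quotient. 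This asymmetry contradicts Lemma~\ref{spliob}(b) if $\St_{n-2}$ were splitting. Your approach is more concrete and self-contained, making the obstruction to $\St_{n-2}$ being a splitting object completely explicit; the paper's approach is shorter and showcases the general machinery of Section~\ref{s2}, at the cost of invoking the auxiliary Proposition~\ref{uniqmin} (whose proof, of course, also rests on Lemma~\ref{spliob}).
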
 

\begin{proof} By Proposition \ref{tensid} and Corollary \ref{separa}, $\S_{n,p}=\mathcal{I}_m/\mathcal{I}_n$ for some $m<n$. 
Thus $\S_{n,p}$ is indecomposable. Hence the claim follows from Corollary \ref{separa} and Proposition \ref{uniqmin}.
\end{proof} 

\subsection{Donkin's recursive formula and auxiliary properties of tilting modules} \label{Donkin1}

We recall Donkin's recursive formula for the tilting $SL_2(\k)-$modules, see 
\cite[Example 2, p. 47]{D}.  

Let $W_a$ denote the standard (i.e., Weyl) $SL_2(\k)$--module with highest weight $a$. 
Recall that the modules $T_i=W_i=L_i$ are simple for $0\le i\le p-1$, and $T_0=\bold 1$. 
Let $V:=L_1$. Recall the following result. 

\begin{lemma}\label{smalltilt} (\cite[E.1]{Ja}) (i)  One has
\begin{equation} \label{eq2}
V\ot T_i=T_{i-1}\oplus T_{i+1}\; \mbox{if}\; 1\le i\le p-2.
\end{equation}

(ii) For $p\le a\le 2p-2$ the module $T_a$ has a submodule $W_a$ such that $T_a/W_a=W_{2p-2-a}$; thus its composition series is $T_a=[L_{2p-2-a},V^{(1)}\otimes L_{a-p},L_{2p-2-a}]$. 
\end{lemma}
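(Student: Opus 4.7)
The plan is to treat parts (i) and (ii) separately, using the Pieri rule for Weyl modules of $SL_2$ together with basic facts about tilting modules (self-duality, closure under tensor products, vanishing of $\Ext^1$ between them).

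For part (i), I would first note that $V\otimes T_i$ is tilting, since $V=T_1$ and the tensor product of tiltings is tilting. Because $T_i=L_i=W_i$ for $0\le i\le p-1$, I can identify $V\otimes T_i$ with $V\otimes W_i$, and invoke the characteristic-free Pieri rule for $SL_2$, which yields a short exact sequence
\[
0\to W_{i-1}\to V\otimes W_i\to W_{i+1}\to 0.
\]
Under the hypothesis $1\le i\le p-2$, both $i\pm 1\le p-1$, so $W_{i\pm 1}=L_{i\pm 1}=T_{i\pm 1}$ are simple tilting modules; since $\Ext^1$ vanishes between tiltings (the fact already cited in the proof of Proposition \ref{splitmor}), the sequence splits, and Krull--Schmidt identifies the summands as $T_{i+1}$ and $T_{i-1}$.

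For part (ii), fix $p\le a\le 2p-2$, so that $0\le 2p-2-a\le p-2$. The strategy is to pin down the Weyl filtration of $T_a$ by a reciprocity argument. By Donkin's reciprocity for tilting modules (proved using Ext-vanishing and self-duality), the multiplicity of $W_b$ in a good filtration of $T_a$ equals the composition multiplicity $[W_a:L_b]$. The linkage principle for $SL_2$ in the first two alcoves (together with $\dim W_a=a+1$ and $\dim L_a=2(a-p+1)$) shows that the only composition factors of $W_a$ in this range are $L_a$ at the head and $L_{2p-2-a}$ at the socle, each with multiplicity one. Hence the Weyl filtration of $T_a$ has exactly two terms $W_a$ (top) and $W_{2p-2-a}$ (bottom); since $W_a$ appears as a submodule (being the Weyl cover of the highest weight $a$) and $W_{2p-2-a}=L_{2p-2-a}$ is simple, one obtains the claimed short exact sequence $0\to W_a\to T_a\to W_{2p-2-a}\to 0$. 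To finish, I would apply Steinberg's tensor product theorem (Proposition \ref{standfacts}(ii)) to $a=(a-p)+p\cdot 1$, giving $L_a=L_{a-p}\otimes L_1^{(1)}=V^{(1)}\otimes L_{a-p}$, and stringing together the three composition factors produces the stated series $[L_{2p-2-a},V^{(1)}\otimes L_{a-p},L_{2p-2-a}]$.

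The main obstacle is the input from Donkin reciprocity plus the linkage-principle computation of $[W_a:L_b]$, but both are standard for $SL_2$ and available in Jantzen, so the argument reduces to a clean invocation rather than a new calculation.
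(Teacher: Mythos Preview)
The paper does not supply its own proof here; it simply cites \cite[E.1]{Ja}. So the relevant question is whether your argument is correct on its own.

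Part (i) is fine: the Pieri short exact sequence for $V\ot W_i$ together with $\Ext^1$-vanishing between tilting modules gives the splitting, and Krull--Schmidt identifies the summands.

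Part (ii) has a genuine gap. The ``Donkin reciprocity for tilting modules'' you invoke, namely $(T_a:W_b)=[W_a:L_b]$, is not a theorem and fails in general. For $p=3$, $a=12$ one finds (using Proposition~\ref{standfacts}(iii)) that the Weyl factors of $T_{12}$ are $W_{12},W_{10},W_6,W_4$, while the composition factors of $W_{12}$ are $L_{12},L_{10},L_4,L_0$; thus $(T_{12}:W_6)=1\ne 0=[W_{12}:L_6]$ and $(T_{12}:W_0)=0\ne 1=[W_{12}:L_0]$. Already for $p=2$, $a=4$ one has $(T_4:W_0)=0$ but $[W_4:L_0]=1$. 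Your identity happens to hold on the interval $[p,2p-2]$, but not for the reason given; you may be conflating it with BGG reciprocity for $G_1T$-modules, which involves baby Verma modules rather than Weyl modules.

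A correct route: the highest weight vector gives $W_a\hookrightarrow T_a$, and $T_a/W_a$ still has a $\Delta$-filtration whose factors $W_b$ satisfy $b<a$ and, by the linkage principle, lie in the dot-orbit of $a$. For $p\le a\le 2p-2$ the only such $b\ge 0$ is $2p-2-a$, and $W_{2p-2-a}=L_{2p-2-a}$ is simple, so $T_a/W_a\cong L_{2p-2-a}^{\oplus k}$. Since $W_a$ is not self-dual (head $L_a$, socle $L_{2p-2-a}$) while $T_a$ is, one has $k\ge 1$. For $k=1$, use that $T_a$ restricted to the first Frobenius kernel $G_1$ is the injective hull of $L_{2p-2-a}|_{G_1}$ (this is precisely the content of \cite[E.8--E.9]{Ja}), hence $\dim T_a=2p$, and a dimension count forces $k=1$. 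The composition series then follows from Steinberg's tensor product theorem exactly as you wrote.
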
 

Using Proposition \ref{standfacts}(iii), 
this implies 
\begin{proposition}\label{tenst1}  
(i) for $p>2$ we have
\begin{equation} \label{eq1}
V\ot T_{a+pb}=\left\{ 
\begin{array}{ccc} 
T_p\ot T_b^{(1)}=T_{p+pb}&\mbox{if}&a=p-1,\\
2T_{p-1+pb}\oplus T_{p+1+pb}&\mbox{if}&a=p,\\
T_{a-1+pb}\oplus T_{a+1+pb}&\mbox{if}&p<a<2p-2,\\
T_{2p-3+pb}\oplus T_{p-1}\ot (V\ot T_b)^{(1)}&\mbox{if}&a=2p-2.
\end{array}
\right.
\end{equation}

(ii) For $p=2$ we have 
\begin{equation} \label{eq1new}
V\ot T_{a+2b}=\left\{ 
\begin{array}{ccc} 
T_2\ot T_b^{(1)}=T_{2+2b},\ a=1\\
2T_{1+2b}\oplus V\ot (V\ot T_b)^{(1)}, a=2.
\end{array}
\right.
\end{equation}
\end{proposition}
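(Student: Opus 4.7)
The plan is to use Proposition~\ref{standfacts}(iii) to reduce the computation of $V \otimes T_{a+pb}$ to that of $V \otimes T_a$ for $a$ in the boundary range $p-1 \le a \le 2p-2$ (and $1 \le a \le 2$ for $p = 2$). Indeed, the Donkin identity $T_{a+pb} = T_a \otimes T_b^{(1)}$ immediately gives $V \otimes T_{a+pb} = (V \otimes T_a) \otimes T_b^{(1)}$, so it suffices to compute $V \otimes T_a$ in the four cases $a = p-1$, $a = p$, $p < a < 2p-2$, $a = 2p-2$ (respectively $a \in \{1,2\}$).

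Since $V = T_1$ and $T_a$ are tilting, so is $V \otimes T_a$, and it therefore decomposes into a direct sum of indecomposable tiltings $T_\mu$ whose multiplicities are determined by the formal character. These multiplicities I would read off by a direct character computation, using Lemma~\ref{smalltilt}(ii) to write down the characters of $T_a$ for $p \le a \le 2p-2$. For $a = p-1$ the argument is shorter: $V \otimes T_{p-1}$ has highest weight $p$ and dimension $2p$, matching $T_p$, and both are tilting, so they coincide. For the remaining three cases I would check that the character of $V \otimes T_a$ matches $2T_{p-1} \oplus T_{p+1}$, $T_{a-1} \oplus T_{a+1}$, and $T_{2p-3} \oplus T_{2p-1}$ respectively. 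The $p = 2$ cases reduce to $V \otimes V = T_2$ and $V \otimes T_2 = 2T_1 \oplus T_3$, by the same method.

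To finish, I would tensor each summand by $T_b^{(1)}$ and repackage using Proposition~\ref{standfacts}(iii). For summands $T_{a'}$ with $p-1 \le a' \le 2p-2$, the identity $T_{a'} \otimes T_b^{(1)} = T_{a' + pb}$ is immediate; this handles the cases $a = p-1$, $a = p$, and $p < a < 2p-2$ directly. In the case $a = 2p-2$ one summand is $T_{2p-1}$, which lies just beyond the allowed range, so I would rewrite it using Proposition~\ref{standfacts}(iii) applied with $a = p-1$ and $b = 1$, giving $T_{2p-1} = T_{p-1} \otimes V^{(1)}$, whence $T_{2p-1} \otimes T_b^{(1)} = T_{p-1} \otimes (V \otimes T_b)^{(1)}$. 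The analogous $p = 2$ case uses $T_3 = V \otimes V^{(1)}$ to produce the summand $V \otimes (V \otimes T_b)^{(1)}$.

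The only step requiring real care is the case $a = 2p-2$ (and the analogous $a = 2$ when $p = 2$), where the formula produces the unresolved summand $T_{p-1} \otimes (V \otimes T_b)^{(1)}$ rather than a single tilting module: this is the genuinely recursive feature of the formula, reflecting the fact that $V \otimes T_b$ admits no closed form without further induction on the $p$-adic length of $b$. Everything else is routine character matching combined with repeated application of Proposition~\ref{standfacts}(iii).
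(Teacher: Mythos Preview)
Your proposal is correct and follows exactly the approach the paper intends: the paper's proof is simply the phrase ``Using Proposition~\ref{standfacts}(iii), this implies,'' combined with the preceding Lemma~\ref{smalltilt}, and you have faithfully unpacked that into the reduction $V\otimes T_{a+pb}=(V\otimes T_a)\otimes T_b^{(1)}$, the character computation of $V\otimes T_a$ for $p-1\le a\le 2p-2$, and the repackaging via Donkin's identity (including the rewriting $T_{2p-1}=T_{p-1}\otimes V^{(1)}$ in the boundary case). There is nothing to add.
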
 

We also have the following result describing tensor products of tilting modules with small highest weights. 

\begin{proposition}\label{tenprotilt}  
The tensor product $T_m\otimes T_r$ for $1\le m,r\le p-1$ is given by the formula 
$$
T_m\otimes T_r=\coplus_{|m-r|\le k\le 2(p-2)-m-r}T_k\oplus \coplus_{p-1\le k\le m+r}T_k,
$$
if $m+r\ge p$, and 
$$
T_m\otimes T_r=\coplus_{|m-r|\le k\le m+r}T_k
$$
if $m+r<p$, where the summation is over integers $k$ such that $m+r-k$ is even. 
\end{proposition}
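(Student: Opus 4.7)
The plan is to pass to formal characters. Since $T_m = L_m = W_m$ for $1 \le m \le p-1$, its character $\chi(T_m) = \chi_m$ is the Weyl character of highest weight $m$, and the classical Clebsch--Gordan identity (a Laurent-polynomial identity independent of characteristic) gives
$$
\chi_m \chi_r \;=\; \sum_{\substack{|m-r|\le k\le m+r\\ k\equiv m+r\;(\mathrm{mod}\,2)}} \chi_k.
$$
Because $T_m \otimes T_r$ is tilting, it is a direct sum of indecomposable tiltings, and the multiplicities are determined by its character (indecomposable tiltings being labelled by highest weight, hence distinguished by their characters). By Lemma~\ref{smalltilt}(ii), $\chi(T_k) = \chi_k$ for $0 \le k \le p-1$, and $\chi(T_k) = \chi_k + \chi_{2p-2-k}$ for $p \le k \le 2p-2$.

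The next step is to decompose the Clebsch--Gordan sum as a sum of tilting characters by peeling off $T_k$ for $k \ge p$ from the top. For each $k \in [p, m+r]$ of parity $m+r$, I would subtract $\chi_k + \chi_{2p-2-k}$, consuming one of each from the sum. This is legal since $\chi_{2p-2-k}$ is present: the parity condition is automatic ($2p-2-k \equiv k \equiv m+r \pmod{2}$), and the range condition $|m-r| \le 2p-2-k$ reduces to $m+r + |m-r| = 2\max(m,r) \le 2(p-1)$, which is the central input from the hypothesis $m, r \le p-1$. After all these subtractions, the residual character is a sum of $\chi_j$ over
$$
j \in \Big([|m-r|,\,p-1] \setminus [2p-2-m-r,\,p-2]\Big), \qquad j \equiv m+r \pmod{2},
$$
and each such $\chi_j$ with $j \le p-1$ equals the tilting character $\chi(T_j)$.

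Computing this set difference under the parity constraint — using that $2p-3-m-r$ has parity opposite to $m+r$, so the usable upper endpoint of the left piece drops to $2p-4-m-r$ — gives the surviving indices as $[|m-r|, 2(p-2)-m-r]$ together with $\{p-1\}$ precisely when $p-1 \equiv m+r \pmod{2}$. Reassembling with the $T_k$'s peeled off for $k \in [p, m+r]$, and attaching $T_{p-1}$ (when it survives) to the second range of the proposition, yields the stated decomposition for $m+r \ge p$. The case $m+r < p$ is immediate: every Clebsch--Gordan index satisfies $k < p$, so $\chi_k = \chi(T_k)$ and no pairing is needed. The only nontrivial point is the parity-driven indexing bookkeeping — in particular, the shift of the upper endpoint of the first summation from $2p-3-m-r$ down to $2(p-2)-m-r$, and the correct assignment of $T_{p-1}$ to the second range when its parity matches that of $m+r$.
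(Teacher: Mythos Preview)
Your proof is correct and takes a genuinely different route from the paper's. The paper argues by induction on $m$ using the recursive rule $V\otimes T_i = T_{i-1}\oplus T_{i+1}$ for $1\le i\le p-2$ (Lemma~\ref{smalltilt}(i)) together with the formulas \eqref{eq1}, \eqref{eq1new} for $V\otimes T_j$ when $j\ge p-1$; this stays entirely inside the tilting category and never mentions characters. You instead compute the character of $T_m\otimes T_r$ via Clebsch--Gordan and then repack it as a sum of tilting characters using the identity $\chi(T_k)=\chi_k+\chi_{2p-2-k}$ for $p\le k\le 2p-2$, invoking the (standard) fact that a tilting module is determined up to isomorphism by its character. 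Your approach is more direct and avoids an induction, at the mild cost of appealing to that character-determines-tilting fact; the paper's approach is self-contained within the already-established tensor recursions. The parity bookkeeping you flagged (the drop from $2p-3-m-r$ to $2(p-2)-m-r$, and the placement of $T_{p-1}$ in the second sum exactly when $p-1\equiv m+r\pmod 2$) is handled correctly.
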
 

\begin{proof} This follows immediately from Lemma \ref{smalltilt} and Equations \eqref{eq1},\eqref{eq1new}
\end{proof} 

\begin{proposition} \label{Tm appears}
Assume that $0\le i\le p-1$ and $m=pr+p-2$ for some $r\in \BZ_{\ge 0}$.
Then $T_i\otimes T_k$ contains $T_m$ as a summand if and only if $k=m-i$, in which case $T_m$ appears with multiplicity 1.
\end{proposition}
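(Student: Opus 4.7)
The strategy is to reduce, via Donkin's formula (Proposition~\ref{standfacts}(iii)), to an analysis of $L_i\otimes T_{k_0}$ for $k_0\in[p-1,2p-2]$, and then apply a character computation supplemented by a key structural fact about restriction to the first Frobenius kernel $G_1$. The case $r=0$ (so $m=p-2$) is an immediate application of Proposition~\ref{tenprotilt}: for $k\le p-1$, the decomposition of $L_i\otimes L_k$ shows $T_{p-2}$ is a summand precisely when $i+k=p-2$, with multiplicity one; and for $k\ge p-1$ the key structural fact stated below will force every summand of $L_i\otimes T_k$ to have highest weight at least $p-1>p-2$.

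Assume $r\ge 1$. Donkin's formula gives $T_m=T_{2p-2}\otimes T_{r-1}^{(1)}$, and since $2p-2-i\in[p-1,2p-2]$ for $0\le i\le p-1$, also $T_{m-i}=T_{2p-2-i}\otimes T_{r-1}^{(1)}$. Therefore $T_i\otimes T_{m-i}=(L_i\otimes T_{2p-2-i})\otimes T_{r-1}^{(1)}$. A character computation using the Weyl decomposition $\chi_{T_{2p-2-i}}=\chi_{W_{2p-2-i}}+\chi_{W_i}$ together with the Pieri rule $\chi_{W_a}\chi_{W_b}=\sum_{s=0}^{\min(a,b)}\chi_{W_{a+b-2s}}$ shows that $T_{2p-2}$ occurs in $L_i\otimes T_{2p-2-i}$ with multiplicity exactly one as its unique highest-weight summand; tensoring with $T_{r-1}^{(1)}$ and invoking Donkin yields the existence part.

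For uniqueness, suppose $T_m$ is a summand of $T_i\otimes T_k$. The weight bound forces $k\ge m-i\ge p-1$, so I write $T_k=T_{k_0}\otimes T_{k_1}^{(1)}$ in Donkin form with $k_0\in[p-1,2p-2]$. The key structural fact I would invoke is that $T_{k_0}$ for $k_0$ in this range restricts to an injective (hence projective) indecomposable over $G_1$, so $L_i\otimes T_{k_0}$ is $G_1$-projective, and consequently every indecomposable tilting summand $T_{j_s}$ satisfies $j_s\ge p-1$ (since the tiltings that are $G_1$-projective are exactly those of index at least $p-1$, as follows from Donkin's form). Writing $T_{j_s}=T_{j_{s,0}}\otimes T_{j_{s,1}}^{(1)}$ and decomposing $T_{j_{s,1}}\otimes T_{k_1}=\bigoplus_t T_{u_t}$, one has
\[ T_{j_s}\otimes T_{k_1}^{(1)}=T_{j_{s,0}}\otimes\bigl(T_{j_{s,1}}\otimes T_{k_1}\bigr)^{(1)}=\bigoplus_t T_{j_{s,0}+p u_t}, \]
so $T_m=T_{2p-2+p(r-1)}$ appears if and only if $j_{s,0}=2p-2$ and some $u_t=r-1$. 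The bound $j_s\le i+k_0\le 3p-3$ forces $j_{s,1}=0$, whence $j_s=2p-2$; then the condition $T_{r-1}\subset T_0\otimes T_{k_1}=T_{k_1}$ forces $k_1=r-1$.

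The principal obstacle I anticipate is the final verification that $T_{2p-2}$ appears in $L_i\otimes T_{k_0}$ (for $k_0\in[p-1,2p-2]$) with multiplicity exactly one when $k_0=2p-2-i$ and zero otherwise. The naive coefficient of $\chi_{W_{2p-2}}$ in $\chi_{L_i}\cdot\chi_{T_{k_0}}$ is positive for several values of $k_0$, but these contributions are cancelled by the $W_{2p-2}$-component appearing in the Weyl filtrations of higher-weight tilting summands $T_j$ with $2p-1\le j\le 3p-3$ (which enter the decomposition precisely because of the Frobenius-twisted Donkin factorization $T_j=T_{j-p}\otimes V^{(1)}$). I would verify this cancellation by expanding $\chi_{T_j}=\chi_{T_{j-p}}(e^p+e^{-p})$ in the Weyl basis case by case. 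Once this multiplicity statement is established, combining $k_0=2p-2-i$ with $k_1=r-1$ gives $k=(2p-2-i)+p(r-1)=m-i$, finishing the proof.
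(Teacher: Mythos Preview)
Your approach is correct but takes a genuinely different route from the paper. The paper argues by a short induction on $i$: if $T_m$ is a summand of $T_i\otimes T_k$, then (using $V\otimes T_{i-1}=T_{i-2}\oplus T_i$) it is a summand of $T_{i-1}\otimes(V\otimes T_k)$, so by the inductive hypothesis $T_{m-i+1}$ occurs in $V\otimes T_k$; the Donkin recursion~\eqref{eq1} then forces $k\in\{m-i,m-i+2\}$, and $k=m-i+2$ is eliminated by a two-line multiplicity count using $T_{i-2}\otimes T_{m-i+2}\supset T_m$. No character identities and no $G_1$-theory enter.

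Your route---reducing via Donkin's formula and $G_1$-projectivity to the single base case $m=2p-2$, $k_0\in[p-1,2p-2]$, then settling that case by a Weyl-character cancellation---is structurally appealing and does go through. For the ``principal obstacle'': among the tilting modules $T_j$ with $p-1\le j\le 3p-3$, one checks that only $T_{2p-2}$ and $T_{2p}$ carry $\chi_{W_{2p-2}}$ in their Weyl filtration (each once), so $[T_i\otimes T_{k_0}:T_{2p-2}]=d_{2p-2}-d_{2p}$ where $d_l$ is the coefficient of $\chi_{W_l}$ in $\chi_{L_i}\chi_{T_{k_0}}$; this difference equals $1$ exactly when $i+k_0=2p-2$. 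The trade-off is that you need the fact (implicit in \cite[E.9]{Ja} but not isolated in the paper) that the $G_1$-projective tilting modules are precisely those of index $\ge p-1$, plus this Weyl-basis bookkeeping; the paper's induction is more self-contained and uniformly short.
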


\begin{proof} For $p=2$ the statement is clear from \eqref{eq1new}, so we may assume that $p>2$. It is clear that $T_i\ot T_{m-i}$ contains $T_m$ with multiplicity 1, so we only need to prove the ``only if'' statement.  
The case $i=0$ is clear as $T_0=\bf 1$. For $i\ge 1$ we proceed 
by induction in $i$. The base case $i=1$ is clear by inspection of \eqref{eq1} and \eqref{eq2}. Suppose $1<i\le p-1$ and the statement is known for $i-1$, and let us prove it for $i$. So by \eqref{eq2}, $T_{i-1}\otimes V\otimes T_k$ contains $T_m$. By the induction assumption, $V\otimes T_k$ must contain $T_{m-i+1}$. But $m-i+1\equiv -i-1\not \equiv -1\pmod{p}$. 
Thus by \eqref{eq1} we have either $k=m-i$ or $k=m-i+2$ and $V\otimes T_k$ contains $T_{m-i+1}$ with multiplicity 1. For the sake of contradiction assume that $k=m-i+2$. Then $T_{i-1}\otimes V\otimes T_k=(T_{i-2}\oplus T_i)\ot T_k$ contains $T_m$ with multiplicity 1. But $T_{i-2}\otimes T_k$
already contains $T_{i-2+k}=T_m$, so $T_i\otimes T_k$ cannot contain $T_m$ (as the multiplicity is 1). Thus this case is impossible and $k=m-i$, as desired. 
\end{proof}

\subsection{Objects $Y$ in symmetric tensor categories 
with $\wedge^2Y=\one$} 

Let $\C$ be a symmetric tensor category over an algebraically closed field $\k$ of characteristic $p\ge 0$. 
We denote by $c_{X,Y}: X\otimes Y\to Y\otimes X$ the braiding of $\C$. 
Recall that for every $Y\in \C$ we have an injection $\wedge^2Y\hookrightarrow Y\otimes Y$ as the image of $c-1$, where $c:=c_{Y,Y}$, and a surjection $Y\otimes Y\twoheadrightarrow \wedge^2Y$, the
quotient map by the kernel of $c-1$. The identification between these two copies of $\wedge^2 Y$ is implemented by the map $c-1: Y\otimes Y\to Y\otimes Y$. 
Note that for a filtered object $Y\in \C$ we have an injection 
$\wedge^2{\rm gr}Y\hookrightarrow {\rm gr}(\wedge^2Y)$ (an isomorphism if $p\ne 2$). Hence if $Y=[W,Z]$ is an extension of $W$ by $Z$ then the composition series of $\wedge^2Y$ contains the union of the composition series of $\wedge^2 W,Z\otimes W,\wedge^2 Z$ (if $p\ne 2$, they coincide). 

\begin{lemma}\label{lemm0} (i) If $\wedge^2Y=L$ is invertible then either $Y$ is simple or it is an extension $[\chi,\psi]$ of an even invertible object $\chi$ (i.e., such that $c_{\chi,\chi}=1$) by an even invertible object $\psi$, such that $\chi\otimes \psi\cong L$.

(ii) If in (i) $Y$ is not simple then $\wedge^3Y=0$. 
\end{lemma}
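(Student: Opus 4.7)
My plan is to establish the structure of $Y$ in (i) by applying the composition-factor bound just before the lemma to $\wedge^2 Y$, and to deduce $\wedge^3 Y = 0$ in (ii) via a length count in $Y^{\otimes 3}$. For (i), I would choose a simple subobject $Z \hookrightarrow Y$ and set $W := Y/Z$; the inequality $\ell(\wedge^2 Z) + \ell(Z \otimes W) + \ell(\wedge^2 W) \le \ell(\wedge^2 Y) = 1$, combined with the non-vanishing of $Z \otimes W$ in a rigid tensor category, forces $\wedge^2 Z = \wedge^2 W = 0$ and $Z \otimes W \cong L$ simple. The object $W$ must itself be simple, since a proper nonzero $W_1 \subsetneq W$ would give a proper nonzero $Z \otimes W_1 \subsetneq Z \otimes W$ by exactness of $Z \otimes (-)$. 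Invertibility of $L \cong Z \otimes W$ then forces each of $Z$ and $W$ to be invertible (with inverses $W \otimes L^{-1}$ and $Z \otimes L^{-1}$), and $\wedge^2 Z = \mathrm{Im}(c_{Z,Z} - 1) = 0$ gives $c_{Z,Z} = 1$, so $Z$ is even, and likewise $W$. Setting $\psi := Z$ and $\chi := W$ completes part (i).

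For (ii), I would use the natural analog of the $\wedge^2$ definition from just before the lemma: $\wedge^3 Y = Y^{\otimes 3}/\bigl(\ker(c_{12} - 1) + \ker(c_{23} - 1)\bigr)$. The argument is a length count showing the sum of these two subobjects exhausts $Y^{\otimes 3}$. We have $\ell(Y^{\otimes 3}) = 8$, and by exactness of $\otimes Y$ the subobject $\ker(c_{12} - 1) = \ker(c - 1) \otimes Y = S^2 Y \otimes Y$ has length $\ell(S^2 Y) \cdot 2 = 3 \cdot 2 = 6$, using $\ell(S^2 Y) = \ell(Y \otimes Y) - \ell(\wedge^2 Y) = 4 - 1 = 3$; similarly $\ell(\ker(c_{23} - 1)) = 6$. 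Their intersection equals the $S_3$-fixed subobject $S^3 Y \subseteq Y^{\otimes 3}$ (since $c_{12}, c_{23}$ generate $S_3$), and compatibility of the braiding action with the filtration on $Y$ yields $\mathrm{gr}\, S^3 Y \subseteq S^3(\mathrm{gr}\, Y) = S^3(\chi \oplus \psi) \cong \chi^3 \oplus \chi^2\psi \oplus \chi\psi^2 \oplus \psi^3$, of total length $4$. Therefore $\ell(\ker(c_{12} - 1) + \ker(c_{23} - 1)) \ge 6 + 6 - 4 = 8$, so the sum equals all of $Y^{\otimes 3}$, and $\wedge^3 Y = 0$.

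The key technical input is the length identity $\ell(S^3(\chi \oplus \psi)) = 4$, which follows from the binomial decomposition $S^n(A \oplus B) \cong \bigoplus_{k=0}^n S^k A \otimes S^{n-k} B$, valid in any symmetric tensor category, together with $S^n \omega = \omega^{\otimes n}$ of length $1$ for any even invertible $\omega$. The inclusion $\mathrm{gr}\, S^3 Y \subseteq S^3(\mathrm{gr}\, Y)$ is then standard: the $S_3$-action on $Y^{\otimes 3}$ via braidings preserves the filtration (since the braiding preserves the $\psi$-filtration on $Y \otimes Y$), and functoriality of $\mathrm{gr}$ carries $S_3$-fixed elements in $Y^{\otimes 3}$ to $S_3$-fixed elements in $\mathrm{gr}\, Y^{\otimes 3} = (\mathrm{gr}\, Y)^{\otimes 3}$. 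The hardest part is verifying that this length count truly gives $\wedge^3 Y = 0$ uniformly across all characteristics, in particular $p = 2$ and $p = 3$, where semisimplicity of symmetric group representations fails; the argument above sidesteps these issues by working with lengths and the associated graded rather than with antisymmetrizers.
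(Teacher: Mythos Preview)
Your proof of (i) is correct and follows the same line as the paper's, with more explicit detail: the paper simply invokes the composition-series bound to get $\wedge^2\chi=\wedge^2\psi=0$ and $\chi\otimes\psi\cong L$, leaving invertibility and evenness of $\chi,\psi$ implicit, whereas you spell these out carefully.

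Your proof of (ii) is correct but takes a genuinely different route. The paper argues directly with $\wedge^3$: for $p\ne 2$ it uses the surjection $\wedge^3(\mathrm{gr}\,Y)\twoheadrightarrow\mathrm{gr}(\wedge^3 Y)$ together with $\wedge^3(\chi\oplus\psi)=0$; for $p=2$ this surjection fails, so the paper gives a separate argument via the idempotent $1+(123)+(132)$ on $\wedge^2 Y\otimes Y$. You instead dualize the problem, bounding $\ell(S^3 Y)$ via the injection $\mathrm{gr}(S^3 Y)\hookrightarrow S^3(\mathrm{gr}\,Y)$ (which holds in \emph{every} characteristic, since taking invariants is a subfunctor), and then close with a length count. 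This buys you a uniform argument with no case split. One point worth making explicit in your write-up: length is multiplicative here ($\ell(Y^{\otimes 3})=8$, $\ell(\ker(c-1)\otimes Y)=6$) only because all composition factors of $Y$ are invertible; this is true but deserves a sentence. Your identification $\wedge^3 Y = Y^{\otimes 3}/(\ker(c_{12}-1)+\ker(c_{23}-1))$ is the exterior-algebra definition (degree-$3$ part of $T(Y)/\langle \ker(c-1)\rangle$), which is the natural extension of the paper's $\wedge^2$ and agrees with all standard definitions in $\Vec$.
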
 

\begin{proof} (i) Suppose $Y$ is not simple and is an extension of $\chi$ by $\psi$. 
Then $\wedge^2Y=L$ has composition series containing the union of composition series of $\wedge^2\chi,\chi\otimes \psi,\wedge^2\psi$. Thus $\chi\otimes \psi=L$ and $\wedge^2\chi=0$, $\wedge^2\psi=0$, which proves (i). 

(ii) We have a filtration on $Y$ with ${\rm gr}Y=\psi\oplus \chi$, thus
$\wedge^3({\rm gr}Y)=0$. If $p\ne 2$ then 
${\rm gr}\wedge^3Y$ is a quotient of $\wedge^3{\rm gr}Y$ 
(and they coincide if $p\ne 3$), so $\wedge^3Y=0$. 
On the other hand, if $p=2$ then 
note that ${\rm gr}(\wedge^2 Y)=\wedge^2({\rm gr}Y)=L$, hence 
${\rm gr}(\wedge^2 Y\otimes Y)=\wedge^2({\rm gr}Y)\otimes {\rm gr}Y$. 
But for any object $Z\in \C$, $\wedge^3Z$ is canonically a direct summand 
in $\wedge^2Z\otimes Z$ (the image of the idempotent $1+(123)+(132)$). Hence ${\rm gr}(\wedge^3 Y)=\wedge^3{\rm gr}Y=0$, i.e., 
$\wedge^3Y=0$. 
\end{proof} 

\begin{remark} $Y=[\chi,\psi]$ implies that $\wedge^2Y$ is invertible only in characteristic $p\ne 2$. For example, let $P$ be the projective cover of $\one$ in the category $\C_1$ of \cite{BE} (see also \cite{Ven}, Subsection 1.5). Then $P=[\one,\one]$ but $\wedge^2P=\one\oplus \one\ne \one$. 
\end{remark} 

\begin{definition} Let us say that an object $Y\in \C$ is a {\it fermion}
if $p\ne 2$, $Y\otimes Y\cong \one$, and $c_{Y,Y}=-1$. 
\end{definition}
 
In other words, a fermion is a simple object generating 
${\rm sVec}$.

\begin{proposition}\label{prop1} Let $Y\in \C$ and $\wedge^2 Y=\one$. Then the following conditions are equivalent: 

(i) $Y$ is invertible; 

(ii) $Y$ is a fermion; 

(iii) $\wedge^3 Y\ne 0$. 
\end{proposition}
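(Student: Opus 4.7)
The three easy implications can be handled quickly. The implication (ii)$\Rightarrow$(i) is immediate from the definition of a fermion. For (i)$\Rightarrow$(ii), invertibility of $Y$ makes $Y\otimes Y$ simple, so the nonzero subobject $\wedge^2 Y = \one$ forces $Y\otimes Y = \one$; the braiding $c_{Y,Y}$ is then a scalar $\lambda$ with $\lambda^2 = 1$, and the nonvanishing of $\wedge^2 Y$ (which is the image of $c-1$) rules out $\lambda = 1$, so $\lambda = -1$, which forces $p\ne 2$ and makes $Y$ a fermion. For (ii)$\Rightarrow$(iii), the fermionic braiding $c_{Y,Y} = -1$ forces $S_n$ to act on $Y^{\otimes n}$ through the sign character, so $\wedge^n Y = Y^{\otimes n}$; in particular $\wedge^3 Y = Y^{\otimes 3} = Y\otimes(Y\otimes Y) = Y\ne 0$.

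For the main implication (iii)$\Rightarrow$(ii), the plan is as follows. First, Lemma~\ref{lemm0}(ii) forces $Y$ to be simple, since otherwise $\wedge^3 Y$ would vanish. The nondegenerate antisymmetric pairing arising from $Y\otimes Y\twoheadrightarrow \wedge^2 Y = \one$ identifies $Y \cong Y^*$. Next, $\wedge^3 Y$ is naturally a subquotient of $\wedge^2 Y\otimes Y = Y$ (via the antisymmetrizer on $Y^{\otimes 3}$ factoring through the partial antisymmetrizer in characteristic $\ne 2,3$, or via the cyclic idempotent $1 + (123) + (132)$ on $\wedge^2 Y\otimes Y$ otherwise); simplicity of $Y$ together with $\wedge^3 Y\ne 0$ therefore gives $\wedge^3 Y = Y$. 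It remains to show $Y\otimes Y = \one$: setting $W := S^2 Y$, one has $Y\otimes Y = \one\oplus W$ in characteristic $\ne 2$, and the goal is $W = 0$. Applying Pieri's rule to $\wedge^3 Y \otimes Y$ and using $\wedge^3 Y = Y$ yields $\one\oplus W = \wedge^4 Y\oplus \mathbb{S}^{(2,1,1)} Y$, while the Littlewood--Richardson decomposition of $\wedge^2 Y\otimes \wedge^2 Y = \one$ gives $\one = \mathbb{S}^{(2,2)} Y\oplus \mathbb{S}^{(2,1,1)} Y\oplus \wedge^4 Y$. Subtracting these equalities in the Grothendieck group produces $[W] + [\mathbb{S}^{(2,2)} Y] = 0$; since both classes are sums of classes of simples with nonnegative coefficients, both must vanish, so $W = 0$. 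Then $Y\otimes Y = \one$, $Y$ is invertible, and (i)$\Rightarrow$(ii) yields that $Y$ is a fermion.

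The main obstacle is verifying the Schur functor identities used in the last step outside of characteristic zero. They hold verbatim in characteristic $\ne 2, 3$ because the representation theories of $S_3$ and $S_4$ are semisimple there. In characteristic $2$ the implications involving (i) and (ii) are vacuous (an invertible object has $c_{Y,Y} = 1$, so $\wedge^2 Y = 0 \ne \one$), and a direct verification along the lines of the characteristic-$2$ portion of the proof of Lemma~\ref{lemm0}(ii) shows (iii) also fails for any $Y$ with $\wedge^2 Y = \one$. In characteristic $3$ the Pieri and Littlewood--Richardson identities must be replaced by direct manipulations using the cyclic idempotent $1 + (123) + (132)$ and the natural surjections $\wedge^k Y\otimes Y\twoheadrightarrow \wedge^{k+1} Y$, which should give the same conclusion.
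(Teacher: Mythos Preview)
Your argument for characteristic $0$ or $p>3$ is correct and genuinely different from the paper's. Where the paper extracts the relation $(1+(jk))(1-(ij))=0$ on $Y^{\otimes m}$ and uses it to show $c_{Y\otimes Y,Y\otimes Y}=1$ (hence $\wedge^2(Y^{\otimes 2})=0$, hence $Y^{\otimes 2}$ invertible via \cite[Lemma~3.2]{BE}), you instead compare the Pieri decomposition of $\wedge^3 Y\otimes Y$ with the Littlewood--Richardson decomposition of $\wedge^2 Y\otimes \wedge^2 Y$ in the Grothendieck group and cancel. This is clean and conceptual, but it requires the Schur functors for partitions of $4$ to be well defined and to satisfy the usual decomposition rules, which needs $\k S_4$ semisimple, i.e.\ $p>3$.

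The gaps are at $p=2$ and $p=3$, where you only gesture. For $p=2$, your reduction ``(i) and (ii) are vacuous, and (iii) also fails by the method of Lemma~\ref{lemm0}(ii)'' does not work: that lemma treats only non-simple $Y$, whereas in the hard direction (iii)$\Rightarrow$(ii) you have already reduced to $Y$ simple. The paper's relation argument covers $p=2$ uniformly (it only needs $3$ invertible), and then the resulting conclusion ``$Y$ invertible'' combined with your own (i)$\Rightarrow$(ii) observation shows a posteriori that (iii) cannot occur in characteristic $2$. For $p=3$, saying the cyclic idempotent and the surjections $\wedge^k Y\otimes Y\twoheadrightarrow \wedge^{k+1}Y$ ``should give the same conclusion'' is not a proof; this is precisely the case the paper singles out, and its argument is quite different: one shows $(1-(13)(24))(1+(12))(1+(34))=0$ on $Y^{\otimes 4}$, hence $\wedge^2(S^2Y)=0$, so $S^2Y$ is zero or invertible, and then a categorical-dimension count ($\dim Y=2$, $\dim S^2Y=0$) rules out invertibility. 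Your Grothendieck-group subtraction does not survive to $p=3$ because $\mathbb{S}^{(2,2)}Y$ and $\mathbb{S}^{(2,1,1)}Y$ are no longer available as honest summands.
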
 

\begin{proof} (i)$\implies$(ii). 
We have $Y^{\otimes 2}=\wedge^2Y=\one$, so $c=c_{Y,Y}$ is a scalar such that 
$c^2=1$. Moreover, since $\wedge^2 Y\ne 0$, we have $c\ne 1$. 
Thus $c=-1$ and $p\ne 2$. 

(ii)$\implies$(iii) is obvious (we have $\wedge^3 Y=Y$). 

(iii)$\implies$(i). By Lemma \ref{lemm0}(ii), $Y$ is simple. Since 
$\wedge^2 Y\otimes Y=Y$, we must have $\wedge^3 Y=Y$, hence 
$\wedge^2Y\otimes Y=\wedge^3Y$. Thus the morphism $1+(23)$ vanishes 
on $\wedge^2Y\otimes Y={\rm Im}(1-(12))$. This means that on $Y^{\otimes 3}$ (hence on $Y^{\otimes m}$ for any $m\ge 3$) we have 
$(1+(23))(1-(12))=0$. Conjugating this equation by $S_m$, we get that on $Y^{\otimes m}$, $(1+(jk))(1-(ij))=0$, i.e., 
\begin{equation}\label{eqq}
1+(jk)=(1+(jk))(ij). 
\end{equation} 
for any distinct $1\le i,j,k\le m$. 

Assume that $p\ne 3$. Then, adding the equations \eqref{eqq} for 
$ijk=123, 231, 312$, we get that $3(1-(123))=0$, hence 
$(123)=1$ on $Y^{\otimes 3}$. 
This implies that $c_{Y\otimes Y,Y\otimes Y}=(13)(24)=(134)(234)$ acts trivially on $Y^{\otimes 4}$. Hence $\wedge^2(Y^{\otimes 2})=0$. So by Lemma 3.2 of \cite{BE}, 
$Y^{\otimes 2}$ is invertible. Hence $Y$ is invertible, as desired. 

It remains to consider the case $p=3$. 
From \eqref{eqq}, on $Y^{\otimes 4}$ we have 
$$
(1+(12))(1+(34))(13)(24)=(1+(12))(1+(34)).
$$
Since $(13)(24)$ commutes with $(1+(12))(1+(34))$, this implies that
$$
(1-(13)(24))(1+(12))(1+(34))=0.
$$
This means that $c_{S^2Y,S^2Y}=1$, so $\wedge^2(S^2Y)=0$. By Lemma 3.2 of \cite{BE}, this means that 
$S^2Y$ is zero or invertible. 

Let $d=\dim Y$. Then $\dim \wedge^2 Y=
\frac{d(d-1)}{2}=1$, so $d=2$ and $\dim S^2Y=\frac{d(d+1)}{2}=0$. 
Thus $S^2Y$ cannot be invertible, so 
$S^2Y=0$. Thus $Y\otimes Y=\wedge^2Y=\one$, i.e. $Y$ is invertible, as claimed. 
\end{proof} 

\subsection{Temperley-Lieb objects in monoidal categories} 
Let $\C$ be a monoidal category over an algebraically closed field $\k$. 

\begin{definition} {\it A (classical) Temperley-Lieb (TL) object} in $\C$ is an object $X\in \C$ admitting a left dual $X^*$ and equipped with an isomorphism $\phi: X\to X^*$ such that $\Tr((\phi^{-1})^*\circ \phi)=-2$. 
\end{definition} 

The maps $\phi,\phi^{-1}$ give rise to maps $\alpha: \one\to X\otimes X$, 
$\beta: X\otimes X\to \one$, which give $X$ a structure of the left dual (and hence also the right dual) of itself. Thus a TL object is canonically self-dual. We also have 
$\beta\circ \alpha=-2$. 

Now let $\C$ be symmetric.

\begin{definition} A TL object $X\in \C$ is said to be {\it symmetric} if 
 $c=c_{X,X}$ is given by the formula 
\begin{equation}\label{braideq}
c=1+\alpha\circ \beta. 
\end{equation} 
\end{definition} 

Note that if $X$ is a symmetric TL object then 
$$
c\circ \alpha=(1+\alpha\circ \beta)\circ \alpha=\alpha-2\alpha=-\alpha.
$$
Similarly, $\beta\circ c=-\beta$. Thus $\phi^*=-\phi$. So 
$$
-2=\Tr((\phi^{-1})^*\circ \phi)=\Tr(-1_X)=-\dim X,
$$
hence $\dim X=2$. 

\begin{example}\label{examm} 1. $X=0$ is a (symmetric) TL object iff $p=2$. 

2. A fermion is a symmetric TL object iff $p=3$. 

2. Let $V=T_1$ be the 2-dimensional tautological representation of $SL_2(\k)$, with standard basis $x,y$. We have an isomorphism $\phi: V\to V^*$ given by $\phi(x)=y^*$, $\phi(y)=-x^*$. Thus $\phi^*=-\phi$, so $(V,\phi)$ is a TL object (as $\dim V=2$). Moreover, $\alpha\circ \beta$ vanishes on $x\otimes x$ and $y\otimes y$ 
and maps $x\otimes y$ to $y\otimes x-x\otimes y$ and $y\otimes x$ to 
$x\otimes y-y\otimes x$, so we have $\alpha\circ \beta=c-1$. Thus $(V,\phi)$ is a symmetric TL object. 
\end{example} 

Recall that $\T_p=\T_p(\k)$ denotes the classical Temperley-Lieb category, i.e., the category of tilting modules for $SL_2(\k)$ (if $p=0$, it coincides with ${\rm Rep}(SL_2(\k)$). Example \ref{examm} shows that $V$ is a symmetric TL object both in $\T_p(\k)$ and in ${\rm Rep}(SL_2(\k))$.

\begin{proposition}\label{uni} 
(i) If $\C$ is a Karoubian monoidal category over $\k$ then 
the category of monoidal functors $F: \T_p\to \C$ is equivalent to the category of TL objects in $\C$, via $F\mapsto F(V)$. 

(ii) If $\C$ is a Karoubian symmetric monoidal category over $\k$ then the equivalence of (i) restricts to an equivalence between the category of symmetric monoidal functors $F: \T_p\to \C$ and the category of symmetric TL objects in $\C$. 
\end{proposition}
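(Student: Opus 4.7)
The plan is to identify $\T_p$ with the Karoubian envelope of the free $\k$-linear monoidal category generated by a Temperley--Lieb object, which makes both parts of the proposition essentially formal.

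For part (i), one direction is immediate: a monoidal functor $F: \T_p\to \C$ preserves duals, evaluation and coevaluation maps, and traces. Since $(V,\phi)$ is a TL object in $\T_p$ by Example \ref{examm}(2), its image $F(V)$ with $F(\phi)$ is a TL object in $\C$, and the assignment is natural in $F$.

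For the converse, given a TL object $(X,\phi_X)$ in $\C$ with associated $\alpha_X: \one\to X\ot X$ and $\beta_X: X\ot X\to \one$ satisfying $\beta_X\circ \alpha_X=-2$ and the snake identities, I would build a monoidal functor $F: \T_p\to \C$ in two steps. Let $\T_p^\circ\subset \T_p$ denote the (non-Karoubian) full monoidal subcategory with objects $V^{\ot n}$, $n\ge 0$. A classical result (Rumer--Teller--Weyl in characteristic zero, extended to positive characteristic by flat base change from the $\ZZ$-form) identifies $\Hom_{\T_p}(V^{\ot m},V^{\ot n})$ with the free $\k$-module on planar $(m,n)$-Temperley--Lieb diagrams with loop value $-2$; each such diagram is expressible as a composition of tensor products of $1_V$, $\alpha_V$, $\beta_V$, and all relations among such expressions are consequences of $\beta_V\circ \alpha_V=-2$ together with the snake identities. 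Define $F$ on $\T_p^\circ$ by $V^{\ot n}\mapsto X^{\ot n}$ and on diagrams by substituting $\alpha_X,\beta_X$ for $\alpha_V,\beta_V$; the TL object axioms for $X$ ensure well-definedness. Extend $F$ to the Karoubian closure $\T_p$ by sending the image of each idempotent $e\in \End_{\T_p}(V^{\ot n})$ to the image of the idempotent $F(e)\in \End_\C(X^{\ot n})$, which exists because $\C$ is Karoubian. Naturality in $X$ is immediate, and the composite $F\mapsto F(V)\mapsto F_{F(V)}$ recovers $F$ up to canonical monoidal isomorphism since the two functors agree on $\T_p^\circ$ by construction and the extension to the Karoubian closure is then forced.

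For part (ii), Example \ref{examm}(2) establishes that $V\in \T_p$ is a \emph{symmetric} TL object, i.e.\ $c_{V,V}=1+\alpha_V\circ \beta_V$. A monoidal functor $F: \T_p\to \C$ between symmetric Karoubian monoidal categories is symmetric if and only if it preserves the braiding on a set of tensor generators, and $V$ tensor-generates $\T_p$; thus $F$ is symmetric iff $c_{F(V),F(V)}=F(c_{V,V})=1+\alpha_{F(V)}\circ \beta_{F(V)}$, which is exactly the condition that $F(V)$ be a symmetric TL object. Hence the equivalence of (i) restricts to the one claimed in (ii).

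The main technical obstacle is the diagrammatic presentation of $\Hom_{\T_p}(V^{\ot m},V^{\ot n})$ in arbitrary characteristic $p$: surjectivity onto planar diagrams is easy from the description of $V$ and its dual, but the absence of extra relations (injectivity of the map from the abstract Temperley--Lieb module) is the Schur--Weyl-type input for $SL_2$ and is the only non-formal ingredient. Everything else in the argument is a formal consequence of universal properties and Karoubian completion.
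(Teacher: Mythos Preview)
Your proposal is correct and follows essentially the same approach as the paper. For (i) the paper simply cites \cite{O1}, Theorem 2.4, whose content is precisely the diagrammatic presentation of $\T_p$ as the Karoubian envelope of the Temperley--Lieb category at loop value $-2$ that you sketch; for (ii) the paper's argument is identical to yours, namely that a monoidal functor out of $\T_p$ is symmetric iff it preserves the braiding on the tensor generator $V$, which amounts to the equation $c=1+\alpha\circ\beta$.
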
 

\begin{proof} (i) was proved in \cite{O1}, Theorem 2.4. (ii) follows from (i) using Example \ref{examm}. Indeed, for a functor $F$ to be symmetric, it is necessary and sufficicient that it preserve the braiding on $V\otimes V$, which is equivalent to equation \eqref{braideq}. 
\end{proof} 

For $p>0$, recall that $\T_{n,p}=\T_{n,p}(\k)$ denotes the quotient of $\T_p=\T_p(\k)$ by the tensor ideal generated by the $n$-th Steinberg module ${\rm St}_n=T_{p^n-1}$ (we agree that $\T_{\infty,p}(\k)=\T_p(\k)$ for any $p\ge 0$).  Then the image $V_n$ of $V=T_1$ in $\T_{n,p}$ is a symmetric TL object. 

Let $Q_{n,p}$ be the $p^n-1$-th {\it Chebyshev polynomial of the second kind} defined by the formula
\begin{equation}\label{cheb} 
Q_{n,p}(2\cos x)=\frac{\sin(p^nx)}{\sin(x)}. 
\end{equation} 
Write $Q_{n,p}=Q_{n,p}^+-Q_{n,p}^-$, collecting the terms with positive coefficients into $Q_{n,p}^+$ and the terms with negative coefficients into $Q_{n,p}^-$. In the category $\T_p$ we have a (non-unique) isomorphism ${\rm St}_n\oplus Q_{n,p}^-(V)\cong Q_{n,p}^+(V)$ which restricts to a split injection $\iota_n: Q_{n,p}^-(V)\hookrightarrow Q_{n,p}^+(V)$. This injection defines an isomorphism $Q_{n,p}^-(V_n)\cong Q_{n,p}^+(V_n)$. 

Now let $\C$ be a symmetric tensor category over $\k$ of characteristic $p>0$ and $X$ be a symmetric TL object of $\C$. Then for each $n\ge 0$ Proposition \ref{uni}(ii) gives rise to a split injection $\iota_n: Q_{n,p}^-(X)\hookrightarrow Q_{n,p}^+(X)$. 

\begin{definition} We say that $X$ has {\it degree} $n\ge 1$ if $n$ is the smallest integer for which the split injection $\iota_n$ is an isomorphism. If there is no such $n$, we say that $X$ is of degree $\infty$ (in particular, the degree is always $\infty$ in characteristic zero). 
\end{definition} 

\begin{proposition}\label{uni11} If $\C$ is a Karoubian symmetric monoidal category and $1\le n\le \infty$, then the category of symmetric monoidal functors $F: \T_{n,p}\to \C$ is equivalent to the category of symmetric TL objects $X\in \C$ of degree $\le n$, via $X=F(V_n)$. Moreover, $F$ is faithful if and only if the degree of $X$ is exactly $n$. 
\end{proposition}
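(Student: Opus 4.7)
The plan is to deduce Proposition \ref{uni11} from its classical analogue, Proposition \ref{uni}(ii), by analyzing when a symmetric monoidal functor $F_0:\T_p\to\C$ descends to the quotient $\T_{n,p}=\T_p/\langle\mathcal{I}_n\rangle$. By Proposition \ref{uni}(ii), symmetric monoidal functors $F_0:\T_p\to\C$ correspond to symmetric TL objects $X=F_0(V)\in\C$. Since $\mathcal{I}_n$ is the thick ideal generated by $\St_n$, I will first observe that $F_0$ factors through $\T_{n,p}$ if and only if $F_0(\St_n)=0$: a monoidal functor preserves tensor products and direct summands, so vanishing on $\St_n$ forces vanishing on the entire thick ideal $\mathcal{I}_n$, and hence on every morphism in $\langle\mathcal{I}_n\rangle$.

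The next step connects the vanishing $F_0(\St_n)=0$ to the degree condition on $X$. The chosen split injection $\iota_n:Q_{n,p}^-(V)\hookrightarrow Q_{n,p}^+(V)$ in $\T_p$ has cokernel $\St_n$, so applying $F_0$ yields a split injection $Q_{n,p}^-(X)\hookrightarrow Q_{n,p}^+(X)$ with cokernel $F_0(\St_n)$. By construction this image is exactly the $\iota_n$ associated to $X$ via Proposition \ref{uni}(ii), so $F_0(\St_n)=0$ is equivalent to $\iota_n$ for $X$ being an isomorphism, i.e., to $X$ having degree $\le n$. The bijection on morphism sets between the two categories is then inherited from Proposition \ref{uni}(ii), since monoidal natural transformations between functors $\T_{n,p}\to\C$ coincide with those between their lifts to $\T_p$.

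For the faithfulness claim, I will use Proposition \ref{tensid}, which identifies the nonzero tensor ideals of $\T_p$ with the strictly descending nested chain $\langle\mathcal{I}_0\rangle\supsetneq\langle\mathcal{I}_1\rangle\supsetneq\cdots$. The kernel of $F_0$ is either $\{0\}$ or one of the $\langle\mathcal{I}_k\rangle$, and it must contain $\langle\mathcal{I}_n\rangle$ for $F$ to be defined on $\T_{n,p}$; hence it equals $\langle\mathcal{I}_k\rangle$ for a unique $k\le n$ (setting $k=\infty$ in the case of the zero ideal, which is only relevant when $n=\infty$). By the previous step, this $k$ is precisely the smallest integer with $F_0(\St_k)=0$, i.e., the degree of $X$. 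Faithfulness of $F:\T_{n,p}\to\C$ amounts to $\ker F_0=\langle\mathcal{I}_n\rangle$, i.e., $k=n$, i.e., $X$ has degree exactly $n$.

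The argument is largely bookkeeping on top of the results already established; the one point requiring mild care is the identification of the canonical $\iota_n$ attached to $X$ with $F_0(\iota_n)$, which is however built into the construction since $F_0$ is the functor induced by $X$ via Proposition \ref{uni}(ii).
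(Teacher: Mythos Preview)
Your proof is correct and follows essentially the same approach as the paper's: both deduce the result from Proposition~\ref{uni}(ii) and Proposition~\ref{tensid}. You have simply spelled out in detail the two steps the paper leaves implicit, namely that factoring through $\T_{n,p}$ is equivalent to $F_0(\St_n)=0$ (hence to $\iota_n$ for $X$ being an isomorphism, i.e., degree $\le n$), and that the kernel of $F_0$ must be some $\langle\mathcal{I}_k\rangle$ with $k$ equal to the degree of $X$.
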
 

\begin{proof} This follows from Propositions \ref{uni}(ii) and \ref{tensid}. 
\end{proof} 

\subsection{Symmetric Temperley-Lieb objects in symmetric tensor categories} 

From now on assume that $\C$ is an (abelian) symmetric tensor category.  
Let $X\in \C$ be a symmetric TL object. 

\begin{lemma}\label{lemm1} (i) The maps $\alpha$, 
$\beta$ factor through $\wedge^2 X$ and, if $X\ne 0$, 
induce isomorphisms $\one\cong \wedge^2X$, $\wedge^2X\cong \one$. 
Moreover, these isomorphisms are mutually inverse.  

(ii) If $p\ne 3$ then $\wedge^3X=0$. 
\end{lemma}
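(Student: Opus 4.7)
The plan for (i) is to exploit the defining identity $c-1 = \alpha\circ\beta$ of a symmetric TL object to identify $\wedge^2 X$ with $\one$ directly. Since $X\ne 0$, rigidity forces the coevaluation $\alpha\colon\one\to X\otimes X$ and the evaluation $\beta\colon X\otimes X\to\one$ to be nonzero; as $\one$ is simple, this makes $\alpha$ a monomorphism and $\beta$ an epimorphism. Hence $c-1 = \alpha\circ\beta$ is itself an epi-mono factorization of $c-1$ with image $\one$. Compared to the tautological epi-mono factorization $c-1 = i\circ p$, where $p\colon X\otimes X\twoheadrightarrow\wedge^2 X$ is the projection by $\ker(c-1)$ and $i\colon\wedge^2 X\hookrightarrow X\otimes X$ is the inclusion of $\mathrm{Im}(c-1)$, uniqueness of epi-mono factorizations in an abelian category produces a single isomorphism $\nu\colon\wedge^2 X\xrightarrow{\sim}\one$ with $\beta = \nu\circ p$ and $\alpha = i\circ\nu^{-1}$. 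Setting $\tilde\beta := \nu$ and $\tilde\alpha := \nu^{-1}$ then gives the claimed factorizations of $\alpha$ and $\beta$ through $\wedge^2 X$, and these are mutually inverse by construction.

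For (ii) I would split on whether $X$ is simple. If $X$ is not simple, then Lemma~\ref{lemm0}(ii) applied with $L = \wedge^2 X = \one$ (which is invertible) gives $\wedge^3 X = 0$ in any characteristic. If $X$ is simple, Proposition~\ref{prop1} with $Y = X$ asserts that $\wedge^3 X\ne 0$ would force $X$ to be a fermion. But fermions are invertible objects of categorical dimension $-1$, whereas the computation in the paragraph immediately preceding Example~\ref{examm} shows that every symmetric TL object has $\dim X = 2$. The equality $-1 = 2$ holds in $\k$ precisely when $p = 3$, so for $p\ne 3$ the object $X$ cannot be a fermion, and consequently $\wedge^3 X = 0$.

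The step demanding the most care is the ``mutually inverse'' clause in (i): one must interpret the two copies of $\wedge^2 X$ correctly, as a subobject and as a quotient of $X\otimes X$ identified via $c-1$ in the paper's conventions, so that the factorizations $\tilde\alpha,\tilde\beta$ genuinely compose to identities rather than to some nonzero scalar (as would happen if the sub and quotient identifications were accidentally misaligned). Packaging both factorizations into a single isomorphism $\nu$ via the uniqueness of epi-mono factorization makes this compatibility automatic. Part (ii) is then a short deduction combining Lemma~\ref{lemm0}(ii), Proposition~\ref{prop1}, and the dimensional constraint $\dim X = 2$ characteristic of symmetric TL objects.
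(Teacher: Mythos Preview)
Your proof is correct and follows essentially the same approach as the paper: for (i), both arguments use that $\alpha$ is a monomorphism and $\beta$ an epimorphism together with $c-1=\alpha\circ\beta$ to identify $\operatorname{Im}(c-1)=\wedge^2X$ with $\one$, and your packaging via uniqueness of epi--mono factorizations is a clean way to get the ``mutually inverse'' clause that the paper leaves implicit. For (ii) your case split on simplicity is harmless but unnecessary, since Proposition~\ref{prop1} already absorbs the non-simple case via Lemma~\ref{lemm0}(ii); the paper simply invokes Proposition~\ref{prop1} directly to conclude that $\wedge^3X\ne 0$ forces $X$ to be a fermion of dimension $-1$, contradicting $\dim X=2$ unless $p=3$.
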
 

\begin{proof} We may assume that $X\ne 0$. 

(i) Since $\alpha$ is a coevaluation map, it is injective. 
Similarly, $\beta$ is an evaluation map, hence surjective. 
Thus by \eqref{braideq} we have ${\rm Im}(\alpha)={\rm Im}(\alpha\circ \beta)={\rm Im}(c-1)=\wedge^2 X$. Hence 
$\alpha$ gives an isomorphism $\one\to \wedge^2 X$, as claimed. 
The second statement is proved similarly. The last statement follows immediately from \eqref{braideq}. 

(ii) Suppose $\wedge^3X\ne 0$. Then by Proposition \ref{prop1}, 
$X$ is a fermion and $\dim X=-1$. In particular, outside of characteristic $3$ we have
$\dim X\ne 2$, so $X$ is not a symmetric TL object. 
\end{proof} 

\begin{proposition}\label{prop2} $X\in \C$ is a symmetric TL object if and only if 
$\wedge^2X=\one$ and $\wedge^3X=0$, or $X$ is a fermion in characteristic $3$ (in which case $\wedge^2X=\one$ but $\wedge^3X=X\ne 0$), or $X=0$ in characteristic $2$.  
\end{proposition}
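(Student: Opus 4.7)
My plan is to prove the two directions of the equivalence separately, with most of the work in the \emph{if} direction.

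For the \emph{only if} direction I simply collect previous results. If $X$ is a nonzero symmetric TL object, Lemma \ref{lemm1}(i) gives $\wedge^2 X = \one$; if additionally $\wedge^3 X \ne 0$, then Proposition \ref{prop1} forces $X$ to be a fermion with $\dim X = -1$. Since the discussion immediately preceding Example \ref{examm} shows $\dim X = 2$ for every nonzero symmetric TL object, we get $-1 = 2$ in $\k$, i.e.\ $p = 3$. When $X = 0$, the condition $\Tr((\phi^{-1})^* \circ \phi) = -2$ reads $0 = -2$, forcing $p = 2$.

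For the \emph{if} direction the two degenerate cases are immediate. If $X = 0$ in characteristic $2$, the unique $\phi : 0 \to 0$ trivially satisfies every axiom. If $X$ is a fermion in characteristic $3$, then $X \otimes X \cong \one$, $c_{X,X} = -1$, a canonical self-duality $\phi$ exists, the braiding relation $-1 = 1 + \alpha\beta$ becomes $\alpha\beta = -2 = 1$ in char $3$, and the trace condition reads $-\dim X = -(-1) = 1 = -2$, again in char $3$.

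The main case is $\wedge^2 X = \one$ with $\wedge^3 X = 0$. Here I pick an isomorphism $\wedge^2 X \cong \one$ and set $\alpha : \one \xrightarrow{\sim} \wedge^2 X \hookrightarrow X \otimes X$ and $\beta : X \otimes X \twoheadrightarrow \wedge^2 X \xrightarrow{\sim} \one$. By the definition of $\wedge^2 X$ as the image of $c-1$, one has $\alpha \circ \beta = c - 1$, so the braiding axiom $c = 1 + \alpha\beta$ holds automatically, and $c|_{\wedge^2 X} = -1$ yields $\beta \circ \alpha = -2$. Define $\phi : X \to X^*$ and $\psi : X^* \to X$ as the adjoints of $\beta$ and $\alpha$; the task reduces to the snake identity $(1_X \otimes \beta) \circ (\alpha \otimes 1_X) = 1_X$, which is equivalent to $\psi = \phi^{-1}$ and promotes $(X, \phi)$ to a self-duality.

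The snake identity is the main obstacle, and this is where $\wedge^3 X = 0$ enters. In characteristics coprime to $6$ the hypothesis is equivalent to the vanishing of the full antisymmetrizer $\sum_{\sigma \in S_3}\mathrm{sign}(\sigma)\sigma$ on $X^{\otimes 3}$; composing this identity on the right with $\alpha \otimes 1_X$ and using $c_{12}(\alpha \otimes 1_X) = -(\alpha \otimes 1_X)$ collapses it to the three-term relation $\alpha \otimes 1_X + 1_X \otimes \alpha = (23)(\alpha \otimes 1_X)$ in $\Hom(X, X^{\otimes 3})$. Applying $1_X \otimes \beta$ to this, together with $\beta\alpha = -2$ and $\beta \circ c = -\beta$, gives $2\cdot(1_X \otimes \beta)(\alpha \otimes 1_X) = 2\cdot 1_X$, hence the snake identity. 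The characteristics $p = 2, 3$ require the characteristic-specific idempotent $1 + (123) + (132)$ (the one used in the proof of Proposition \ref{prop1}) in place of the antisymmetrizer, and running the same reduction yields the snake identity again; this characteristic-by-characteristic verification is the most delicate point of the argument. Once the snake identity is in hand, $\phi$ is an isomorphism, and a direct computation of $\Tr((1_X \otimes \beta)(\alpha \otimes 1_X))$ (as $-\beta\alpha = 2$ via $c\alpha = -\alpha$) matched against $\Tr(1_X) = \dim X$ gives $\dim X = 2$; finally $\beta \circ c = -\beta$ translates into $\phi^* = -\phi$, so $\Tr((\phi^{-1})^* \circ \phi) = \Tr(-1_X) = -\dim X = -2$, completing the verification.
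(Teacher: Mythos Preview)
Your overall strategy for the \emph{if} direction is different from the paper's, and the characteristic~$2$ case contains a genuine gap.

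The paper never attempts to prove the snake identity $(1_X\otimes\beta)(\alpha\otimes 1_X)=1_X$ directly from $\wedge^3X=0$. Instead it splits on whether $X$ is simple. If $X$ is simple, the map $\phi\colon X\to X^*$ induced by the isomorphism $\wedge^2X\cong\one$ is nonzero, hence an isomorphism by simplicity; if $X$ is not simple, Lemma~\ref{lemm0} forces $X=[\chi,\chi^{-1}]$ with $\chi$ even invertible, and the self-duality is read off from that structure. The hypothesis $\wedge^3X=0$ is then used only to pin down $\dim X=2$, via dimension formulas specific to each characteristic.

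Your direct route is clean when $p\ne 2$: from the three-term relation $\alpha\otimes 1_X+1_X\otimes\alpha=(23)(\alpha\otimes 1_X)$ one contracts with $1_X\otimes\beta$ and obtains $2s-2=0$, hence $s=1_X$ where $s=(1_X\otimes\beta)(\alpha\otimes 1_X)$. But in characteristic~$2$ this computation collapses to $0=0$: one has $\beta\alpha=-2=0$, $\beta\circ c=-\beta=\beta$, and all sign distinctions disappear, so contracting the three-term relation with $1_X\otimes\beta$ (or with $\beta\otimes 1_X$, or after first applying any $(ij)$) yields only tautologies. The element $1+(123)+(132)$ you invoke produces on $\wedge^2X\otimes X$ the \emph{same} three-term relation, so it does not supply new information. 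I do not see how your method establishes that $\phi$ is invertible in characteristic~$2$ without importing an extra structural argument such as the paper's dichotomy. Your remark that this is ``the most delicate point'' is accurate, but the sketch does not close the gap.

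Two minor inaccuracies: $1+(123)+(132)$ is not an idempotent in characteristic~$3$ (it squares to zero there), and the element is used in the proof of Lemma~\ref{lemm0}(ii), not of Proposition~\ref{prop1}.
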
 

Note that by Proposition \ref{prop1}, the condition $\wedge^3X=0$ in Proposition \ref{prop2} can be replaced with the condition that $X$ is not invertible. 

\begin{proof} Suppose $X\ne 0$ is a symmetric TL object. By Lemma \ref{lemm1}(i), 
we have $\wedge^2X=\one$. If $\wedge^3X\ne 0$ then by Proposition \ref{prop1}, 
$X$ is a fermion and by Lemma \ref{lemm1}(ii), $p=3$. 
 
For the converse
 we may suppose that $\wedge^2X=\one$ and $\wedge^3X=0$. If $X$ is not simple 
then by Lemma \ref{lemm0} $X=[\chi,\chi^*]$ with even invertible $\chi$. Thus 
$X$ is a symmetric TL object: we have $\dim X=2$ the natural map $\alpha: \one\to \wedge^2X$ gives rise to an isomorphism $\phi: X\to X^*$ such that $\phi^*=-\phi$. 

It remains to consider the case when $X$ is simple. 
The isomorphism $\wedge^2 X\cong \one$ defines a morphism $\phi: X\to X^*$ such that $\phi^*=-\phi$. Moreover, since $X$ is simple and $\phi\ne 0$, it is an isomorphism. 

It remains to show that $d:=\dim X$ equals $2$. 
For $p\ne 2,3$, we have $\frac{d(d-1)}{2}=1$, $\frac{d(d-1)(d-2)}{6}=0$, so $d=2$. For $p=3$ we have $\frac{d(d-1)}{2}=1$, so $d^2-d-2=0$, hence again $d=2$. Finally, for $p=2$ 
we have $\dim(\wedge^3X)=\dim(\wedge^2X)d$ (see for example \cite{EHO}), which implies 
that $d=0=2$. 
\end{proof} 

\begin{remark} The zero objects in characteristic $2$ and fermions in characteristic $3$ are exactly the symmetric TL objects of degree $1$ in characteristics $2,3$. 
\end{remark} 

\begin{corollary}\label{corro} Let $\C$ be a symmetric tensor category over $\k$. Then: 

(i) (\cite{BE}, Proposition 3.4) The category of symmetric monoidal functors $F: \T_p\to \C$ is equivalent to the category of objects $X\in \C$ with an isomorphism $\wedge^2X\cong \one$ such that 
$\wedge^3 X=0$ or $X$ is a fermion for $p=3$, or $X=0$ for $p=2$.  

(ii) (\cite{BE}, Corollary 3.5)\footnote{In the published version of \cite{BE}, in Proposition 3.4 and Corollary 3.5, the cases of odd invertible object $X$ in characteristic $3$ and $X=0$ in characteristic $2$ are missed. This inaccuracy is corrected in the latest arXiv version of \cite{BE}.}
 The category of symmetric monoidal functors $F: \T_{n,p}\to \C$ is equivalent to the category of objects $X\in \C$ as in (i) such that $Q_{n,p}^+(X)\cong Q_{n,p}^-(X)$, via $X=F(V_n)$. Moreover, $F$ is faithful if and only if $Q_{n-1,p}^+(X)\ncong Q_{n-1,p}^-(X)$. 
\end{corollary}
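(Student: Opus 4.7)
The plan is to derive the corollary by assembling three earlier results: Propositions \ref{uni}, \ref{uni11}, and \ref{prop2}, together with the classification of tensor ideals in Proposition \ref{tensid}. Part (i) is essentially immediate: Proposition \ref{uni}(ii) identifies symmetric monoidal functors $F: \T_p \to \C$ with symmetric Temperley--Lieb objects $X = F(V) \in \C$, while Proposition \ref{prop2} characterizes such objects as exactly those $X$ with $\wedge^2 X \cong \one$ together with one of the three alternatives ($\wedge^3 X = 0$, or a fermion in characteristic $3$, or $X = 0$ in characteristic $2$). Composing these two equivalences gives (i).

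For part (ii), the starting point is Proposition \ref{uni11}: symmetric monoidal functors $F: \T_{n,p} \to \C$ correspond to symmetric Temperley--Lieb objects of degree $\le n$ via $X = F(V_n)$, with $F$ faithful exactly when the degree equals $n$. In view of (i), the only remaining task is to translate the degree conditions into the stated conditions on $Q_{n,p}^{\pm}(X)$.

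The key observation is that, by the very construction of $\iota_n$, there is a direct sum decomposition $Q_{n,p}^+(V) \cong \St_n \oplus Q_{n,p}^-(V)$ in $\T_p$ whose complementary projection realizes the splitting of $\iota_n$. Applying a symmetric monoidal $F$ with $F(V) = X$ transports this to $Q_{n,p}^+(X) \cong F(\St_n) \oplus Q_{n,p}^-(X)$ in $\C$. Hence the statement that $\iota_n$ becomes an isomorphism in $\C$ (i.e., $X$ has degree $\le n$) is equivalent to $F(\St_n) = 0$. The bridge from the specific isomorphism $\iota_n$ to the abstract isomorphism $Q_{n,p}^+(X) \cong Q_{n,p}^-(X)$ asserted in the corollary is a Krull--Schmidt argument: since $\C$ is artinian, an abstract isomorphism $F(\St_n) \oplus Q_{n,p}^-(X) \cong Q_{n,p}^-(X)$ forces the summand $F(\St_n)$ to vanish; the converse is immediate.

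For the faithfulness clause, the same scheme applies one step lower. By Proposition \ref{tensid}, the nonzero tensor ideals of $\T_p$ form a single descending chain $\langle \mathcal{I}_0 \rangle \supset \langle \mathcal{I}_1 \rangle \supset \cdots$, so those of $\T_{n,p}$ are linearly ordered and the unique maximal proper quotient through which $F$ could factor corresponds to killing $\St_{n-1}$. Thus $F$ is non-faithful iff $F(\St_{n-1}) = 0$, which by the same Krull--Schmidt argument applied to the decomposition $Q_{n-1,p}^+(X) \cong F(\St_{n-1}) \oplus Q_{n-1,p}^-(X)$ is equivalent to $Q_{n-1,p}^+(X) \cong Q_{n-1,p}^-(X)$. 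I do not expect any substantial obstacle: the main work has been done in Propositions \ref{uni}, \ref{uni11}, \ref{prop2}, and \ref{tensid}, and the one genuinely new step is the Krull--Schmidt observation used to pass from an abstract isomorphism in $\C$ to the vanishing of $F(\St_n)$ (and of $F(\St_{n-1})$).
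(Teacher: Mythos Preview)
Your proof is correct and follows the same approach as the paper, which simply cites Propositions \ref{uni}(ii), \ref{uni11}, and \ref{prop2}. You supply more detail than the paper does---notably the Krull--Schmidt argument bridging the degree condition of Proposition \ref{uni11} with the abstract isomorphism $Q_{n,p}^+(X)\cong Q_{n,p}^-(X)$---and your separate appeal to Proposition \ref{tensid} for the faithfulness clause is redundant, since that input is already absorbed into the proof of Proposition \ref{uni11}.
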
 

\begin{proof} (i) This follows from Proposition \ref{uni}(ii) and Proposition \ref{prop2}. 

(ii) This follows from Proposition \ref{uni11} and Proposition \ref{prop2}. 
\end{proof} 

\section{The Verlinde categories $\Ver_{p^n}$}\label{s4}

\subsection{The definition of the Verlinde categories $\Ver_{p^n}$, $\Ver_{p^n}^+$} 
Let $p$ be a prime. Recall that ${\rm Ver}_p$ denotes the Verlinde category, which is the semisimplification of the category of modular representations $\Rep_\k(\Bbb Z/p)$ (\cite{GK,GM,O}). 

Let $n\ge 1$ be an integer. By Corollary \ref{separa}, the category $\T_{n,p}$ is separated, and by Proposition \ref{idspli} its ideal of splitting objects $\S_{n,p}=\mathcal{I}_{n-1}/\mathcal{I}_n$ is spanned by the objects $T_i$, $p^{n-1}-1\le i\le p^n-2$ and is indecomposable. Thus, using the main construction of Section 2, we can define finite symmetric tensor categories 
$$
\Ver_{p^n}=\Ver_{p^n}(\k):=\C(\T_{n,p}). 
$$ 
If $p^n>2$, we can also consider subcategories $\T_{n,p}^+\subset \T_{n,p}$ spanned by $T_i$ with even $i$, and the 
corresponding subcategories $\Ver_{p^n}^+:=\C(\T_{n,p}^+) \subset \Ver_{p^n}$. This makes $\Ver_{p^n}$ into faithfully $\Bbb Z/2$-graded tensor categories with trivial components $\Ver_{p^n}^+$ (\cite{EGNO}, Subsection 4.14). 

\begin{definition} We will call the categories $\Ver_{p^n}$, $\Ver_{p^n}^+$ the {\bf Verlinde categories}.
\end{definition} 

The motivation for this terminology will be clarified in Subsection \ref{lifttochar0}. 

\subsection{$\T_{n,p}$ is complete} 

\subsubsection{The completeness theorem} \label{compthe} 

\begin{theorem} \label{CartanFP0}
(i) The category $\T_{n,p}$ is complete, so it is a full rigid monoidal Karoubian subcategory of $\Ver_{p^n}=\C(\T_{n,p})$. 

(ii) $\Ver_{p^n}$ is the abelian envelope of $\T_{n,p}$. 
\end{theorem}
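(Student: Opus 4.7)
The plan is to establish completeness via the deformation-theoretic machinery of Subsection~\ref{DVR}, by lifting $\T_{n,p}$ to a flat family over a mixed-characteristic DVR whose generic fibre is semisimple, and then invoking Corollary~\ref{semisi}. Once completeness is in hand, part (ii) is immediate from Theorem~\ref{main1}(ii): a separated and complete rigid monoidal Karoubian category always has $\C(\T)$ as its abelian envelope, and here $\C(\T_{n,p})=\Ver_{p^n}$ by definition.

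First I would construct a flat rigid monoidal Karoubian category $\T$ over a complete DVR $R$ of mixed characteristic $(0,p)$ with residue field $\k$ and fraction field $K$ containing a primitive $p^n$-th root of unity $q$. The natural candidate (the content of Subsection~\ref{lifttochar0}) is the category of tilting modules for Lusztig's divided-power integral form of $U_q(\mathfrak{sl}_2)$ over $R$, quotiented by the tensor ideal generated by the $n$-th quantum Steinberg module. Specializing $q\mapsto 1$ identifies $\T_\k$ with $\T_{n,p}$, while the generic fibre $\T_K$ is the quantum tilting category at a primitive $p^n$-th root of unity modulo its Steinberg ideal, classically known to be the semisimple Verlinde braided fusion category $\Ver_{p^n}(K)$.

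To apply Corollary~\ref{semisi}, I must verify the Hom-dimension hypothesis
\[
\dim_\k \Hom_{\C_\k}(F_\k(T_i),F_\k(T_j))=\dim_K \Hom_{\C_K}(F_K(T_i),F_K(T_j))
\]
for all indecomposable tiltings. Since $\T_K$ is semisimple, $\C_K=\T_K$, so the right-hand side equals $\dim_K\Hom_{\T_K}(T_i,T_j)$, which in turn equals $\mathrm{rank}_R\Hom_\T(T_i,T_j)=\dim_\k\Hom_{\T_\k}(T_i,T_j)$ by freeness. Faithfulness of $F_\k$ already gives $\ge$; the real content is the reverse inequality, which expresses a semicontinuity statement running \emph{against} the usual upper-semicontinuity of kernel dimensions under specialization. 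This is the principal obstacle. I would resolve it by constructing explicit two-term $R$-split projective presentations of each $F(T_i)$ inside the ambient category $\T$: by Proposition~\ref{splitmor}, every morphism between objects of $\T_{<p^{n-1}-1}$ becomes split after tensoring with $\St_{n-1}$, so composing with the evaluation $\St_{n-1}^{*}\otimes\St_{n-1}\to\bold 1$ presents $T_i$ as a cokernel of an $R$-split map between splitting objects. These presentations compute $\Hom_\C(F(T_i),F(T_j))$ as the kernel of an $R$-linear map between free $R$-modules whose rank is stable under base change because the maps stay $R$-split in both fibres, forcing the two Hom dimensions to agree.

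With the hypothesis of Corollary~\ref{semisi} verified, completeness of $\T_{n,p}$ follows, yielding the full embedding $\T_{n,p}\hookrightarrow \Ver_{p^n}$ asserted in (i). Part (ii) is then the direct application of Theorem~\ref{main1}(ii) to the separated and complete $\T=\T_{n,p}$. The main difficulty, concentrated entirely in the Hom-dimension check, is handled by the $R$-split presentations provided by Proposition~\ref{splitmor} — this is precisely the reason that proposition is the structural heart of the paper.
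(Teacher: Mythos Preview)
Your route via Corollary~\ref{semisi} is circular as written. Once you observe that $\T_K$ is semisimple (so $\C_K=\T_K$) and that $\Hom_\T$ is $R$-free, the hypothesis of Corollary~\ref{semisi},
\[
\dim_\k\Hom_{\C_\k}(F_\k T_i,F_\k T_j)=\dim_K\Hom_{\C_K}(F_KT_i,F_KT_j),
\]
becomes exactly $\dim_\k\Hom_{\C_\k}(F_\k T_i,F_\k T_j)=\dim_\k\Hom_{\T_\k}(T_i,T_j)$, i.e.\ fullness of $F_\k$. So Corollary~\ref{semisi} gives no leverage here unless you can compute $\dim\Hom_{\C_\k}$ by some independent means. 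Your proposed independent means---``$R$-split presentations make the kernel rank constant''---does not work: $R$-splitness of $f\colon Q_1\to Q_0$ says the cokernels of $\Hom_\T(X,Q_1)\to\Hom_\T(X,Q_0)$ are free, whereas computing $\Hom_\C(F(T_i),F(T_j))$ as $\Ker\bigl(\Hom_\T(Q_0,T_j)\to\Hom_\T(Q_1,T_j)\bigr)$ requires the cokernel of the \emph{contravariant} map $\circ f$ to be torsion-free. Nothing in Subsection~\ref{DVR} provides this, and Proposition~\ref{splitmor} (a statement over $\k$) does not supply it either.

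The paper avoids lifting entirely and computes $\Hom_{\C_\k}(\one,F(T_i))$ directly over $\k$. Using the bar-type presentation of $\one$ as $\Coker\bigl(\tau\colon S^{\otimes 4}\to S^{\otimes 2}\bigr)$ with $S=\St_{n-1}$, one must show exactness of
\[
\Hom_{\T_{n,p}}(\bold 1,T_i)\to\Hom_{\T_{n,p}}(S^{\otimes 2},T_i)\to\Hom_{\T_{n,p}}(S^{\otimes 2},T_i\otimes S^{\otimes 2}).
\]
For $i\le p^{n-1}-2$ (the nontrivial range), all three terms and both maps already live in $\T_{<p^n-1}$, so by Proposition~\ref{fullfaith} the sequence is identified with the same sequence in $\T_p$, where it is obviously exact. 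For $p=2$ this needs a slight refinement (the paper's alternative proof in \S\ref{altproof} handles it uniformly via Lemma~\ref{lemma embedd}). The lift to characteristic zero is used later, for the Cartan-matrix formula $C=DD^T$, but not for completeness itself.
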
 

\begin{proof} (i) For $p=2$ the result follows from \cite{BE}, so we may assume that $p\ge 3$. 

We need to show that for any $X,Y\in \T_{n,p}$, the map $\Hom(X,Y)\to \Hom(F(X),F(Y))$ 
is an isomorphism, or, equivalently, that the map $\Hom(\bold 1,Y\otimes X^*)\to \Hom(\one,F(Y)\otimes F(X)^*)=\Hom(\one,F(Y\otimes X^*))$ is an isomorphism. Thus, it suffices to show that 
for any $i\in [0,p^n-2]$ the map $\Hom(\bold 1,T_i)\to \Hom(\one, F(T_i))$ is an isomorphism. 
For $i\ge p^{n-1}-1$ this follows from Proposition \ref{fullness}, so it suffices to treat the case 
$i\le p^{n-1}-2$. 

Let $S=T_{p^{n-1}-1}$. To compute $\Hom(\one,F(T_i))$, we will use the presentation of $\one$ as the cokernel of the map 
$$
\tau: S^*\otimes S\otimes S^*\otimes S\to S^*\otimes S.
$$
Note that $S\cong S^*$. Hence we get that 
$$
\Hom(\one,F(T_i))={\rm Ker}(\Hom(S^{\otimes 2},T_i)\to \Hom(S^{\otimes 4},T_i)).
$$
Thus it suffices to check that the sequence 
$$
\Hom_{\T_{n,p}}(\bold 1,T_i)\to \Hom_{\T_{n,p}}(S^{\otimes 2},T_i)\to \Hom_{\T_{n,p}}(S^{\otimes 4},T_i)
$$ 
is exact. This sequence can be rewritten as 
\begin{equation}\label{seq1}
\Hom_{\T_{n,p}}(\bold 1,T_i)\to \Hom_{\T_{n,p}}(S^{\otimes 2},T_i)\to \Hom_{\T_{n,p}}(S^{\otimes 2},T_i\otimes S^{\otimes 2}).
\end{equation}
Let us now consider the same sequence in $\T_p$:
\begin{equation}\label{seq2}
\Hom_{\T_{p}}(\bold 1,T_i)\to \Hom_{\T_{p}}(S^{\otimes 2},T_i)\to \Hom_{\T_{p}}(S^{\otimes 2},T_i\otimes S^{\otimes 2}),
\end{equation}
which is clearly exact. But since $i\le p^{n-1}-2$ and $p\ge 3$, 
the sequence \eqref{seq2} actually belongs to $\T_{<p^n-1}$, hence is isomorphic 
to \eqref{seq1} by Proposition \ref{fullfaith}. Thus \eqref{seq1} is exact, as required.\footnote{This method of proof was suggested by K. Coulembier.} 

(ii) now follows from Theorem \ref{main1}. 
\end{proof}

Another proof of Theorem \ref{CartanFP0}(i) which works in any positive characteristic and does not use 
\cite{BE} is given in Subsection \ref{altproof} below. 

Theorem \ref{CartanFP0} allows one to easily compute the dimension of the space of invariants in $V^{\otimes r}$, where $V:=F(T_1)\in \Ver_{p^{n}}$. 
It is clear that if $r$ is odd then this space is zero, so it suffices to compute
the invariants in $V^{\otimes 2m}$. Let $d_{mn}=\dim \Hom(\one,V^{\otimes 2m})$. 

\begin{proposition}\label{inva} 
Let $f_n(z)=\sum_{m\ge 0}d_{mn}z^m$.
Then 
\begin{equation}\label{fnz}
f_n(z)=\frac{(t+t^{-1})(t^{p^n-1}-t^{-p^n+1})}{t^{p^n}-t^{-p^n}},
\end{equation}
where $z=(t+t^{-1})^{-2}$. 
\end{proposition}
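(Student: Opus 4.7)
The strategy is to use completeness to express $d_{mn}$ as a multiplicity sum, identify this sum as a coefficient in a Chebyshev quotient ring, and evaluate the resulting generating function.

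By Theorem \ref{CartanFP0}(i), $F \colon \T_{n,p} \hookrightarrow \Ver_{p^n}$ is fully faithful, so $d_{mn} = \dim \Hom_{\T_{n,p}}(\bold 1, T_1^{\otimes 2m})$. Decompose $T_1^{\otimes 2m} = \bigoplus_{k\ge 0} m_k^{(2m)} T_k$ in $\T_p$; summands with $k \ge p^n - 1$ vanish in $\T_{n,p}$. For $k \le p^n - 2$, Proposition \ref{fullfaith} identifies $\Hom_{\T_{n,p}}(\bold 1, T_k) = \Hom_{\T_p}(\bold 1, T_k)$, and Lemma \ref{lem2} makes this one-dimensional exactly when $k = 2p^l - 2$, forcing $0 \le l \le n-1$. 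Therefore $d_{mn} = \sum_{l=0}^{n-1} m_{2p^l - 2}^{(2m)}$.

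Let $s = t + t^{-1}$ and let $P_j(s) \in \Bbb Z[s]$ be the polynomial with $P_j(2\cos\theta) = \sin(j\theta)/\sin\theta$, so $\chi(W_k) = P_{k+1}(s)$ and $\chi(\St_n) = P_{p^n}$. I claim that $K_0(\T_{n,p}) \cong R_n := \Bbb Z[s]/(P_{p^n}(s))$ via the character map, with the functional $[T]\mapsto \dim\Hom_{\T_{n,p}}(\bold 1, T)$ corresponding to the coefficient of $P_1 = 1$ in the basis $\{P_1,\ldots,P_{p^n-1}\}$ of $R_n$. One direction uses $(T \colon W_0) = \dim\Hom_{SL_2(\k)}(\bold 1, T)$ for tilting $T$, so this coefficient is the Weyl-filtration multiplicity of $W_0$. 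The nontrivial inclusion $\chi(\I_n) \subseteq (P_{p^n})$ — i.e.\ $\chi(T_k) \in (P_{p^n})$ for all $k \ge p^n - 1$ — follows by induction on $n$ using Donkin's recursion $T_{a+pb} = T_a \otimes T_b^{(1)}$ (Proposition \ref{standfacts}(iii)), the factorization $P_{p^n}(s) = P_p(s)\,P_{p^{n-1}}(t^p + t^{-p})$, and the base case $n=1$ (handled by Lemma \ref{smalltilt}(ii) via $\chi(T_{p+j})(s) = P_p(s)(P_{j+2}(s) - P_j(s))$ for $0 \le j \le p-2$). Hence $d_{mn}$ equals the coefficient of $P_1$ in $s^{2m}$ modulo $P_{p^n}(s)$.

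Applying the discrete orthogonality $\frac{2}{p^n}\sum_{\ell=1}^{p^n-1} P_j(s_\ell)P_k(s_\ell)\sin^2(\pi\ell/p^n) = \delta_{jk}$ at the zeros $s_\ell = 2\cos(\pi\ell/p^n)$ of $P_{p^n}$, we obtain $d_{mn} = \frac{2}{p^n}\sum_\ell s_\ell^{2m}\sin^2(\pi\ell/p^n)$ and hence
\[
f_n(z) = \frac{2}{p^n}\sum_{\ell=1}^{p^n - 1}\frac{\sin^2(\pi\ell/p^n)}{1 - s_\ell^2 z}.
\]
This is the partial fraction expansion of $\frac{s\,P_{p^n-1}(s)}{P_{p^n}(s)}$ in the variable $s$ (with $z = 1/s^2$): the residues at the simple poles $s = s_\ell$ match via $P_{p^n - 1}(s_\ell) = (-1)^{\ell+1}$ and $P'_{p^n}(s_\ell) = -\frac{p^n(-1)^\ell}{2\sin^2(\pi\ell/p^n)}$. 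Rewriting in $t$ via $P_j(s) = (t^j - t^{-j})/(t - t^{-1})$ converts $\frac{s\,P_{p^n-1}(s)}{P_{p^n}(s)}$ into $\frac{(t+t^{-1})(t^{p^n-1} - t^{-p^n+1})}{t^{p^n} - t^{-p^n}}$, proving the claim. The main obstacle is the character-ring identification in the second paragraph — specifically the Donkin induction for $\chi(\I_n) = (P_{p^n})$ — which is elementary but requires attention to the base case and the range constraints on Donkin's formula.
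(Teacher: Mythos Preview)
Your proof is correct and self-contained; the paper's own proof merely invokes Theorem~\ref{CartanFP0} to get $d_{mn}=\dim\Hom_{\T_{n,p}}(\bold 1,T_1^{\otimes 2m})$ and then cites \cite{BE}, Proposition~3.9 for the remaining generating-function computation, so you have supplied the missing details.

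One minor simplification: the Donkin induction you sketch for $\chi(\I_n)\subseteq (P_{p^n})$ is not needed. Since $\I_n$ is a thick tensor ideal in $\T_p$, its image in the split Grothendieck ring $K_0(\T_p)\cong\Bbb Z[s]$ is an ideal containing $\chi(\St_n)=P_{p^n}$; the quotient $K_0(\T_{n,p})$ is free of rank $p^n-1$, as is $\Bbb Z[s]/(P_{p^n})$, so the inclusion $(P_{p^n})\subseteq K_0(\I_n)$ is an equality by rank comparison. This avoids the Donkin recursion and the case analysis at $n=1$ entirely.
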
 

\begin{proof} By Theorem \ref{CartanFP0}, $d_{mn}=\dim \Hom(\bold 1,T_1^{\otimes 2m})$. 
The rest of the argument is the same as in \cite{BE}, proof of Proposition 3.9. 
\end{proof} 

{\bf Remark.} Proposition \ref{inva} generalizes \cite{BE}, Proposition 3.3. Note that there is a misprint in \cite{BE}, Proposition 3.3 and the proof of Proposition 3.9: $2^n$ should be replaced by $2^{n+1}$.  

\subsubsection{An alternative proof of Theorem \ref{CartanFP0}(i)}\label{altproof} Let us give another (somewhat more complicated) proof of Theorem \ref{CartanFP0}(i). The advantage of this proof is that it works for all positive characteristics and does not use \cite{BE}. 

We need to show that the evaluation map $S^{\otimes 2}\to \bold 1$ induces an isomorphism of functors
$$
\Hom_{\T_{n,p}}(\bold 1,?)\simeq {\rm Ker}(\Hom_{\T_{n,p}}(S^{\otimes 2},?)\to \Hom_{\T_{n,p}}(S^{\otimes 4},?)).
$$
Recall that by Proposition \ref{fullfaith} we have an equivalence of additive categories $\T_{n,p}\simeq \T_{<p^n-1}$, so we need to establish an
isomorphism 
$$
\Hom_{\T_{<p^n-1}}(\bold 1,?)\simeq {\rm Ker}(\Hom_{\T_{<p^n-1}}(A,?)\to \Hom_{\T_{<p^n-1}}(B,?))
$$
where $A,B\in \T_{<p^n-1}$ are objects corresponding to $S^{\otimes 2}$ and $S^{\otimes 4}$ under
the equivalence of Proposition \ref{fullfaith}. Let $C$ be the cokernel of the map $B\to A$ in the category
of $SL_2(\k)-$modules. Then the functor ${\rm Ker}(\Hom (A,?)\to \Hom (B,?))$ on the category
of $SL_2(\k)-$modules (and hence on its full subcategory $\T_{<p^n-1}$) is isomorphic to
$\Hom (C,?)$ and we need to show that the maps 
\begin{equation} \label{Cmaps}
\Hom (\bold 1,T_i)\to \Hom (C,T_i)
\end{equation}
induced by a natural map $C\to \bold 1$ are isomorphisms
for $i\in [0, p^n-2]$. Moreover, we already know that this is the case for $i\in [p^{n-1}-1,p^n-2]$. 
In particular, the map $C\to \bold 1$ is non-zero, hence surjective; thus the maps \eqref{Cmaps}
are injective. To show surjectivity we will use the following

\begin{lemma} \label{lemma embedd}
(i) Assume that $i\in [p^{k-1}-1,p^k-2]$. Then the module $T_i$ embeds into some module
$T_j$ with $j\in [p^k-1,p^{k+1}-2]$.

(ii) Assume that the module $T_i$ embeds into $T_{2p^k-2}$. Then $i=2p^l-2$ with $l\in [0,k]$.
\end{lemma}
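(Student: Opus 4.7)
My plan is to prove (i) by induction on $k$, using the Donkin decomposition $T_{a+pb} = T_a \otimes T_b^{(1)}$ from Proposition \ref{standfacts}(iii) to propagate an embedding across Frobenius layers, and to prove (ii) by identifying the socle of $T_{2p^k-2}$ and invoking the previously established Lemma \ref{lem2}.

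For the base case $k=1$ of part (i), I would use that $T_i = L_i$ is simple for $i \in [0, p-2]$ and set $j = 2p-2-i$, which lies in $[p, 2p-2] \subset [p-1, p^2-2]$. By Lemma \ref{smalltilt}(ii), the socle of $T_j$ is $L_{2p-2-j} = L_i$, producing the desired embedding $T_i \hookrightarrow T_j$. For the inductive step, I would take $i \in [p^k-1, p^{k+1}-2]$ with $k \ge 1$ and write $i = a + pb$ uniquely with $a \in [p-1, 2p-2]$ and $b \ge 0$; a direct bounds check shows $b \in [p^{k-1}-1, p^k-2]$. The inductive hypothesis yields an embedding $T_b \hookrightarrow T_{b'}$ with $b' \in [p^k-1, p^{k+1}-2]$. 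Since the Frobenius twist is exact and tensoring over $\k$ with any module preserves injections, this gives $T_a \otimes T_b^{(1)} \hookrightarrow T_a \otimes T_{b'}^{(1)}$. Proposition \ref{standfacts}(iii) identifies these objects as $T_i$ and $T_j$ respectively, and direct bounds give $j = a + pb' \in [p^{k+1}-1, p^{k+2}-2]$, closing the induction.

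For (ii), I would first establish that the socle of $T_{2p^k-2}$ is the one-dimensional module $\bold 1$. Lemma \ref{lem2} already tells us this socle is one-dimensional, and it must be $\bold 1$ because $\Hom(\bold 1, T_{2p^k-2}) \ne 0$ (an easy Donkin induction using $T_{2p^k-2} = T_{2p-2} \otimes T_{2p^{k-1}-2}^{(1)}$ together with $\bold 1 \hookrightarrow T_{2p-2}$ from Lemma \ref{smalltilt}(ii)). Given any embedding $T_i \hookrightarrow T_{2p^k-2}$, the socle of $T_i$ is semisimple and embeds into $\bold 1$, forcing it to equal $\bold 1$. In particular $\Hom(\bold 1, T_i) \ne 0$, and Lemma \ref{lem2} then gives $i = 2p^l-2$ for some $l \ge 0$. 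The bound $l \le k$ follows from the highest-weight inequality $i \le 2p^k-2$.

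The main obstacle I anticipate is the Donkin bookkeeping in part (i): one must write $i = a+pb$ with $a$ in the nonstandard range $[p-1, 2p-2]$ (rather than $[0, p-1]$) so that Proposition \ref{standfacts}(iii) applies, and then verify that $b$ lands in precisely the interval needed by the inductive hypothesis. Part (ii) is essentially formal once the socle of $T_{2p^k-2}$ has been pinned down.
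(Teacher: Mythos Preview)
Your proof is correct and follows essentially the same approach as the paper. The inductive step in (i) and the socle argument in (ii) are identical to the paper's proof; the only cosmetic difference is in the base case of (i), where you directly read off the socle of $T_{2p-2-i}$ from Lemma~\ref{smalltilt}(ii), while the paper obtains the same embedding via the duality $T_i = T_i^* \to T_{2p-2-i}$ arising from the summand $T_{2p-2} \subset T_i \otimes T_{2p-2-i}$.
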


\begin{proof}
(i) We proceed by induction in $k$. In the case $k=1$ we have an embedding $\bold 1=T_0\hookrightarrow T_{2p-2}$
(see Lemma \ref{lem2}); for any $i\in [0,p-2]$ the module $T_{2p-2}$ is a direct summand of 
$T_i\ot T_{2p-2-i}$ so we have a non-zero map $\bold 1\to T_i\ot T_{2p-2-i}$ and, thus, a non-zero
map $T_i=T_i^*\to T_{2p-2-i}$. This map is injective by Proposition \ref{standfacts} (i) and the base
case of the induction is done. 

Now let $k>1$. Write $i=a+pb$ where $p-1\le a\le 2p-2$ and $b\in \BZ_{\ge 0}$. Then $b$ satisfies
$p^{k-2}-1\le b\le p^{k-1}-2$ and by the induction assumption we have an embedding
$T_b \subset T_c$ where $c\in [p^{k-1}-1, p^k-2]$. Using Proposition \ref{standfacts} (iii) we have an embedding
$$T_i=T_a\ot T_b^{(1)}\hookrightarrow T_a\ot T_c^{(1)}=T_j$$
where $j=a+pc\in [p^k-1,p^{k+1}-2]$. This justifies the induction step.

(ii) By Lemma \ref{lem2} the socle of $T_{2p^k-2}$ is isomorphic to $\bold 1$. Thus the same is true
for $T_i$ and the result follows from Lemma \ref{lem2}.
\end{proof} 

By iterating Lemma \ref{lemma embedd} (i) we see that each $T_i$ with $i\in [0,p^{n-1}-2]$ embeds into $T_j$
with $j=j(i)\in [p^{n-1}-1,p^n-2]$; moreover by Lemma \ref{lemma embedd} (ii) $j(i)=2p^{n-1}-2$ if and only if $i=2p^l-2$ with $l\in [0,n-2]$. Thus we have an embedding 
$\Hom(C,T_i)\hookrightarrow \Hom(C,T_j)=\Hom(\bold 1,T_j)$; so, using Lemma \ref{lem2}, we conclude that
$\dim \Hom(C,T_i)\le \dim \Hom(\bold 1, T_i)$. Thus the injective maps \eqref{Cmaps} are surjective
and proof is complete.

\vskip .05in

{\bf Remark} (i) Note that in all cases $A=T_{p^{n-1}-1}^{\otimes 2}$ and $B$ is obtained from
$T_{p^{n-1}-1}^{\otimes 4}$ by omitting all the summands $T_j$ with $j\ge p^n-1$. Thus
for $p\ge 5$ we have $B=T_{p^{n-1}-1}^{\otimes 4}$, hence $C=\bold 1$. Moreover, 
for $p\ge 3$ 
$$
\Hom_{\T_{<p^{n-1}-1}}(B,T_{p^{n-1}-1}^{\otimes 2})=
\Hom_{\T_{n,p}}(T_{p^{n-1}-1}^{\otimes 4},T_{p^{n-1}-1}^{\otimes 2})=
$$
$$
\Hom_{\T_{n,p}}(T_{p^{n-1}-1}^{\otimes 3},T_{p^{n-1}-1}^{\otimes 3})=
\Hom_{SL_2(\bold k)}(T_{p^{n-1}-1}^{\otimes 3},T_{p^{n-1}-1}^{\otimes 3})=
\Hom_{SL_2(\bold k)}(T_{p^{n-1}-1}^{\otimes 4},T_{p^{n-1}-1}^{\otimes 2}),
$$
which again implies that $C=\bold 1$, yielding the result. This is, essentially, the first proof of Theorem \ref{CartanFP0}(i) given in Subsection \ref{compthe}. 

On the other hand,
for $p=2$ and $n=2$ we have $A=T_1^{\otimes 2}=T_2$, $T_1^{\otimes 4}=T_2\oplus T_2\oplus T_4$
and $B=T_2\oplus T_2$. The Loewy series of $T_2$ is $[L_0,L_2,L_0]$, so $C=T_2/L_0\ne \bold 1$ in this case. 

Thus the argument involving Lemma \ref{lemma embedd} is needed for $p=2$, but not really for $p\ge 3$.  

(ii) The proof of Lemma \ref{lemma embedd} gives a more precise result: for
$i\in [ap^{k-1}-1, (a+1)p^{k-1}-2]$ with $a\in [1,p-1]$, we get that $T_i$ embeds into $T_j$
with $j=2p^{k-1}(p-a)+i$.

\subsection{Basic properties of $\Ver_{p^n}$} 

\begin{theorem}\label{cyclot} (i) If $n=1$ then $\Ver_{p^n}=\Ver_p$, $\Ver_{p^n}^+=\Ver_p^+$.

(ii) We have a fully faithful monoidal functor $F: \T_{n,p}\to \Ver_{p^n}$ 
which defines an equivalence between $\S_{n,p}$ and the category 
of projectives in $\Ver_{p^n}$. In particular, $\Ver_{p^n}$ has 
$p^{n-1}(p-1)$ simple objects whose projective covers are 
$F(T_i)$, $p^{n-1}-1\le i\le p^n-2$, and all simple objects in $\Ver_{p^n}$ are self-dual. 

(iii) We have $\Ver_{2^n}=\C_{2n-2}$ and if $n\ge 2$ then $\Ver_{2^n}^+=\C_{2n-3}$, where $\C_i$ are the categories defined in \cite{BE}. 

(iv) For $p^n>2$ let $\mO_{n,p}$ be the ring of integers in the field $\QQ(2\cos(2\pi/p^{n}))=\QQ(q^2+q^{-2})$, where $q=q_{n,p}:=\exp(\pi i/p^{n})$; that is, $\mO_{n,p}=\ZZ[2\cos(2\pi/p^{n})]$ (\cite{W}, p.16, Proposition 2.16). Then the Frobenius-Perron dimension $\FPdim$ defines an isomorphism of the Grothendieck ring 
${\rm Gr}(\Ver_{p^n}^+)$ onto $\mO_{n,p}$.   

(v) There are no tensor functors $\Ver_{p^n}^+\to {\rm Ver}_{p^m}^+$ for $m<n$. 
\end{theorem}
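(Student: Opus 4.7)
The plan is to derive a contradiction by comparing ranks of the real cyclotomic rings appearing as Grothendieck rings. Suppose for contradiction that a tensor functor $F \colon \Ver_{p^n}^+ \to \Ver_{p^m}^+$ exists with $m < n$. Since $F$ is exact and monoidal, passing to Grothendieck rings gives a unital ring homomorphism $[F] \colon \mO_{n,p} \to \mO_{m,p}$, where we use Theorem \ref{cyclot}(iv) to identify $\mathrm{Gr}(\Ver_{p^k}^+)$ with $\mO_{k,p} = \ZZ[2\cos(2\pi/p^k)] \subset \mathbb{R}$ via the Frobenius--Perron dimension.

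Next I would invoke the standard fact (\cite{EGNO}, Proposition 4.5.7) that any tensor functor between tensor categories preserves Frobenius--Perron dimensions of objects; this follows from the uniqueness of the positive ring homomorphism $\mathrm{Gr}(\mathcal{D}) \to \mathbb{R}$ sending each simple to its Perron eigenvalue. In particular $\FPdim \circ [F] = \FPdim$ on $\mO_{n,p}$, so $[F]$ coincides with the abstract map obtained from the embeddings $\mO_{n,p}, \mO_{m,p} \hookrightarrow \mathbb{R}$. Since $[F]$ sends $1$ to $1$ it is nonzero, and since $\mO_{n,p}, \mO_{m,p}$ are integral domains of characteristic zero, $[F]$ is automatically injective.

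Finally I would compare degrees over $\QQ$: tensoring $[F]$ with $\QQ$ yields an injection of fields $\QQ(2\cos(2\pi/p^n)) \hookrightarrow \QQ(2\cos(2\pi/p^m))$. Classically, the degree of $\QQ(2\cos(2\pi/p^k))$ over $\QQ$ equals $\varphi(p^k)/2$, i.e.\ $p^{k-1}(p-1)/2$ for odd $p$ and $2^{k-2}$ for $p = 2$ with $k \ge 2$ (recall the footnote that $\Ver_2^+$ is not defined, so $n, m \ge 2$ in the case $p = 2$). In all admissible ranges this quantity is strictly increasing in $k$, contradicting the existence of the field embedding above. The proof is short once Theorem \ref{cyclot}(iv) and preservation of $\FPdim$ are invoked, and I do not anticipate a substantial obstacle; the only minor point requiring care is tracking the $p = 2$ boundary where $\Ver_2^+$ is excluded.
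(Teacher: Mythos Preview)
Your argument for part (v) is correct and is precisely the natural expansion of the paper's one-line proof ``(v) follows from (iv)'': both rely on the identification of ${\rm Gr}(\Ver_{p^k}^+)$ with $\mO_{k,p}$ from (iv) together with preservation of Frobenius--Perron dimension under tensor functors, which forces an impossible embedding of a larger real cyclotomic ring into a smaller one. One minor remark: your injectivity of $[F]$ already follows immediately from $\FPdim\circ[F]=\FPdim$ with $\FPdim$ injective, so the integral-domain detour is unnecessary (though not incorrect for these particular rings).
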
 

\begin{proof} Parts (i), (ii) are immediate from the previous results. Part (iii) follows from (ii) and \cite{BE}, Theorem 4.14. 

Let us prove (iv). Note that the split Grothendieck ring of $\T_{n,p}$ is a $\Bbb Z_+$-ring of finite rank (\cite{EGNO}, Section 3), so for $T\in \T_{n,p}$ we can define its Frobenius-Perron dimension $\FPdim(T)$. Recall that 
$Q_{n,p}$ denotes the Chebyshev polynomial of the second kind \eqref{cheb}. 
 By \cite{BE}, Corollary 3.5, 
$$Q_{n,p}(\FPdim(T_1))=0,$$ which implies that $\FPdim(T_1)=\FPdim(F(T_1))=2\cos(\pi/p^{n})$. 
Thus we have $\FPdim(T_2)=2\cos(2\pi/p^n)+1$ for $p>2$ and $\FPdim(T_2)=2\cos(2\pi/p^n)+2$ for $p=2$. Hence the image of $\FPdim$ contains $\mO_{n,p}$. On the other hand, since the rank of $\Ver_{p^n}^+$ is $p^{n-1}(p-1)/2$, which equals the degree of $\QQ(2\cos(2\pi/p^{n}))$ over $\Bbb Q$, we see that $\FPdim$ lands in $\QQ(2\cos(2\pi/p^{n}))$, hence in $\mO_{n,p}$, since the Frobenius-Perron dimension of any object is an algebraic integer (\cite{EGNO}, Proposition 3.3.4). Thus $\FPdim: {\rm Gr}(\Ver_{p^n}^+)\to \mO_{n,p}$ 
is an isomorphism. 

(v) follows from (iv).
\end{proof} 

\begin{remark} For $p=2$ Theorem \ref{cyclot} is proved in \cite{BE}, Corollary 2.2 (the ring $\mathcal{O}_{n,2}$ is denoted there by $\mO_{n-2}$). 
\end{remark} 

Recall that $Q_{n,p}=Q_{n,p}^+-Q_{n,p}^-$ is the splitting of the Chebyshev polynomial of the second kind $Q_{n,p}$ (see \eqref{cheb})
into the parts with positive and negative coefficients. 
Also let us say that an invertible object $X$ is {\it odd} 
if the braiding acts on $X\otimes X$ by $-1$.  

\begin{corollary}\label{uni1} (The universal property of $\Ver_{p^n}$)
Let $\D$ be a symmetric tensor category over $\k$, and $p^n>2$. Then the category of symmetric tensor functors $E\colon \Ver_{p^n}\to \D$ is equivalent to the category of objects $X\in \D$ equipped with an isomorphism 
$\wedge^2X\cong \one$ such that 

(1) $\wedge^3X=0$ or $X$ is an odd invertible object (fermion) in characteristic $3$; 
and 

(2) $Q_{n,p}^+(X)\cong Q_{n,p}^-(X)$; and 

(3) $Q_{n-1,p}^+(X)\ncong Q_{n-1,p}^-(X)$. 

This equivalence is given by $E\mapsto E(V)$, where $V:=F(T_1)$. In particular, the group of tensor automorphisms of the identity functor of $\Ver_{p^n}$ is $\Bbb Z/2$ for $p>2$ and trivial for $p=2$. 
\end{corollary}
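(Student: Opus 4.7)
The plan is to deduce this as a direct combination of Corollary~\ref{corro}(ii), which classifies symmetric monoidal functors out of $\T_{n,p}$, and the universal property of the abelian envelope established in Section~\ref{s2}. First, by Theorem~\ref{CartanFP0}(ii), the category $\Ver_{p^n}$ is the abelian envelope of $\T_{n,p}$, so by Theorem~\ref{main1}(ii) restriction along the fully faithful monoidal embedding $F\colon \T_{n,p}\hookrightarrow \Ver_{p^n}$ gives an equivalence between the category of symmetric tensor functors $E\colon \Ver_{p^n}\to \D$ and the category of \emph{faithful} symmetric monoidal functors $\overline F\colon \T_{n,p}\to \D$. Corollary~\ref{corro}(ii) in turn identifies the latter with the category of objects $X\in \D$ with $\wedge^2X\cong \one$ satisfying (1) and $Q_{n,p}^+(X)\cong Q_{n,p}^-(X)$, where the faithfulness of $\overline F$ corresponds precisely to $Q_{n-1,p}^+(X)\ncong Q_{n-1,p}^-(X)$, i.e.~to condition~(3); the degenerate $X=0$ case (permissible in Corollary~\ref{corro}(ii) when $p=2$) is automatically excluded here by condition~(3), since the polynomials $Q_{n-1,p}^\pm$ vanish at $0$. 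Composing the two equivalences yields the asserted bijection $E\mapsto E(V)=E(F(V_n))$.

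For the automorphism statement, the object $V=F(T_1)$ tensor-generates $\Ver_{p^n}$ (every $T_i$ is a summand of a tensor power of $V=T_1$, and every object of $\Ver_{p^n}$ is a cokernel of a morphism between projectives $F(T_j)$), so any tensor automorphism $\eta$ of the identity functor is determined by $\eta_V$. Since $F$ is fully faithful and $\End_{\T_{n,p}}(T_1)=\End_{SL_2(\k)}(L_1)=\k$, we have $\End(V)=\k$ in $\Ver_{p^n}$, so $\eta_V=\lambda\cdot 1_V$ for some $\lambda\in \k^\times$. Compatibility of $\eta$ with the structural isomorphism $\wedge^2V\cong \one$ (on which $\eta$ acts as both the identity and as $\lambda^2$) forces $\lambda^2=1$; conversely, by the universal property just proved, any such $\lambda$ extends uniquely to a tensor automorphism of the identity. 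Hence the automorphism group is $\{\lambda\in \k^\times : \lambda^2=1\}$, which equals $\Bbb Z/2$ for $p>2$ and is trivial for $p=2$.

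No serious obstacle arises, since all the genuinely technical content (the abelian envelope construction, the completeness of $\T_{n,p}$, and the classification of symmetric Temperley--Lieb objects) has already been established in Sections~\ref{s2} and~\ref{s3}; the remaining work is simply to compose the two equivalences and check the automorphism compatibility. The subtlest point — that the composed equivalence produces genuine tensor (exact monoidal) functors, not merely monoidal ones — is built into the formulation of the universal property in Theorem~\ref{main1}(ii).
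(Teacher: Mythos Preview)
Your proposal is correct and follows essentially the same approach as the paper: invoke Theorem~\ref{CartanFP0}(ii) (abelian envelope) together with Corollary~\ref{corro}(ii) (classification of faithful symmetric monoidal functors out of $\T_{n,p}$), and for the automorphism statement identify $\Aut(V,\wedge^2V\cong\one)$ with $\{\lambda\in\k^\times:\lambda^2=1\}$. The paper's proof is a one-line citation of these same ingredients; you have simply spelled out the composition of equivalences and the $\lambda^2=1$ computation explicitly.
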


\begin{proof} This follows from Theorem \ref{CartanFP0} and Corollary \ref{corro}(ii). The last statement follows since the group of automorphisms of $V$ preserving the identification $\wedge^2V\cong \one$ is $\Bbb Z/2$ for $p>2$ and trivial for $p=2$.
\end{proof} 

\subsection{The lift of $\Ver_{p^n}$ to characteristic zero}\label{lifttochar0}

Let $W(\k)$ be the ring of Witt vectors of $\k$, $K_0$ its fraction field, and $\overline{K}_0$ the algebraic closure of $K_0$. 
Fix $n$ and let $\zeta=\zeta_{n,p}\in \overline{K}_0$ be a primitive $p^n$-th root of unity. Consider the discrete valuation ring $R_{n,p}:=W(\k)[\zeta]$ with uniformizing element $\zeta-1$. Let $K$ be the fraction field of $R_{n,p}$ (a ramified extension of $K_0$ of degree $p^{n-1}(p-1)$). 

\begin{proposition} The category $\Ver_{p^n}(\k)$ admits a flat deformation to a braided category over $R_{n,p}$, whose generic fiber is the semisimple Verlinde category $\Ver_{p^n}(K)$ associated to the quantum group $SL_2$ at a $p^n$-th root of unity. 
\end{proposition}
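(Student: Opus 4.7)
The plan is to deduce the proposition from the DVR-theoretic machinery of Subsection~\ref{DVR}, applied to a flat $R_{n,p}$-linear lift $\T$ of the Karoubian category $\T_{n,p}(\k)$ coming from Lusztig's integral form of quantum $SL_2$ at $q=\zeta$.

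First, I would let $\T_p^R$ be the Karoubian monoidal category over $R_{n,p}$ additively generated by the integral tilting modules $\mathcal{T}_m$, $m\ge 0$, for Lusztig's divided-power form $U_q^{\rm res}$ base-changed via $q\mapsto \zeta$, and set $\T:=\T_p^R/\langle\mathcal{T}_{p^n-1}\rangle$, the quotient by the tensor ideal generated by the $n$-th Steinberg module. Each $\mathcal{T}_m$ is indecomposable and $R_{n,p}$-flat, with a Weyl and a dual Weyl filtration whose successive quotients are free $R_{n,p}$-modules with the classical characters, so $\Hom(\mathcal{T}_m,\mathcal{T}_k)$ is free of finite rank over $R_{n,p}$ and $\T$ is a flat braided rigid monoidal Karoubian $R_{n,p}$-linear category, the braiding coming from the quantum $R$-matrix. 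The reduction $\T_\k$ is the classical $SL_2(\k)$-tilting category modulo $\St_n$, i.e.\ $\T_{n,p}(\k)$.

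Second, the hypotheses of Subsection~\ref{DVR} are now in place: by Corollary~\ref{separa} and Proposition~\ref{idspli}, $\T_\k$ is separated and its ideal of splitting objects is finitely generated and indecomposable. The construction of Subsection~\ref{DVR} therefore produces an $R_{n,p}$-linear flat abelian monoidal category $\C(\T)$ with $\C(\T)_\k\cong \Ver_{p^n}(\k)$, and the braiding of $\T$ descends to $\C(\T)$ because the tensor product on $\C(\T)$ is computed via cokernels of $R$-split morphisms from $\T$. For the generic fibre, the tensor ideal generated by $\mathcal{T}_{p^n-1}$ in the tilting category of $U_\zeta(\mathfrak{sl}_2)$ over $K$ coincides with the ideal of negligible morphisms, since the quantum integer $[p^n]_\zeta$ vanishes; hence $\T_K$ is the semisimple fusion category with simples $L_0,\dots,L_{p^n-2}$, which is by definition $\Ver_{p^n}(K)$. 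Since $\T_K$ is semisimple, every object is simultaneously projective and a splitting object, so $\C(\T)_K=\T_K=\Ver_{p^n}(K)$ as braided tensor categories, yielding the required lift. Corollary~\ref{semisi} then gives the additional CDE/Cartan compatibility for free.

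The main obstacle is the flatness input, namely the verification that the family $\{\mathcal{T}_m\}$ over $R_{n,p}$ really interpolates between the classical tiltings in the special fibre and the quantum tiltings over $K$, with the ranks of Hom spaces stable under both specializations. This is standard but delicate: it rests on the existence of Weyl and dual Weyl filtrations whose quotients are free $R_{n,p}$-modules of the expected characters, combined with the vanishing of $\Ext^1$ between tilting modules in both specializations, so that the specializations of a $\mathcal{T}_m$ remain indecomposable and tilting. Once this flatness is supplied, the abstract DVR machinery of Subsection~\ref{DVR} takes over and produces the braided $R_{n,p}$-linear deformation asserted in the proposition.
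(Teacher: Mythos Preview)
Your proposal is correct and follows the same overall strategy as the paper: build a flat braided rigid monoidal Karoubian category over $R_{n,p}$ whose special fibre is $\T_{n,p}(\k)$ and whose generic fibre is the semisimple quantum Verlinde category, then invoke the DVR machinery of Subsection~\ref{DVR} to obtain the deformation of $\C(\T_{n,p})=\Ver_{p^n}(\k)$.

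The one genuine difference is in how the flat lift is produced. The paper uses the \emph{lopsided Temperley--Lieb category} $TL(\zeta)$ with parameter $\delta=\zeta^{1/2}+\zeta^{-1/2}$ (in the sense of Morrison--Snyder), which is defined diagrammatically over $\ZZ[\zeta]$ and hence over $R_{n,p}$; flatness of Hom-spaces is therefore automatic, and the identifications $TL(\zeta)_\k\cong\T_p$ and $TL(\zeta)_K\cong\text{Tilt}(SL_2^{\zeta^{1/2}})$ are part of the standard Temperley--Lieb package. You instead realize the lift via integral tilting modules for Lusztig's divided-power form, which is conceptually equivalent but shifts the burden onto verifying that the integral tiltings are $R_{n,p}$-free with the correct specializations---exactly the ``main obstacle'' you flag. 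The paper's diagrammatic route sidesteps this entirely, which is its practical advantage; your route has the merit of generalizing more readily beyond $SL_2$.
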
 

\begin{proof} We will use the theory of splitting objects for categories defined over a complete DVR developed in Subsection \ref{DVR}. 

Let $TL(\zeta)$ be the lopsided version of the Temperley-Lieb category with parameter $\delta=\zeta^{1/2}+\zeta^{-1/2}$, see \cite{MS}, Subsection 2.5; it is defined over $\Bbb Z[\zeta]$, hence over $R_{n,p}$, and we'll use the $R_{n,p}$-linear version. Then $TL(\zeta)$ is a flat braided monoidal Karoubian category over $R_{n,p}$ with indecomposables $\widetilde{T}_i$, $i\ge 0$. Also, $TL(\zeta)_\k=\T_p$ is the symmetric category of tilting modules for $SL_2(\k)$, while $TL(\zeta)_K$ is (the lopsided version of) the braided category of tilting modules for the quantum group $SL_2^{\zeta^{1/2}}$ over $K$. Moreover, $TL(\zeta)$ has a tensor ideal $\mathcal{I}(\zeta)$ generated by the Steinberg module ${\bf St}=\widetilde{T}_{p^n-1}$ over the quantum group, which specializes to $\mathcal{I}_n$ over $\k$, and localizes to $\mathcal{I}(\zeta)_K:=K\otimes_{R_{n,p}} \mathcal{I}(\zeta)$ over $K$ (note that other tensor ideals $\mathcal{I}_k$, $k\ne n$, do not lift to tensor ideals in $TL(\zeta)$). Thus we can define the flat braided monoidal category $\T(\zeta):=TL(\zeta)/\mathcal{I}(\zeta)$ over $R_{n,p}$ with indecomposables $\widetilde{T}_i$, $0\le i\le p^n-2$, whose reduction $\T(\zeta)_\k$ is the symmetric category $\T_{n,p}$ with indecomposables $T_i$, $0\le i\le p^n-2$, while the localization $\T_K$ is the split semisimple braided Verlinde category ${\rm Ver}_{p^n}(K)$ (the semisimplification of $TL(\zeta)_K$, see \cite{BK}, Subsection 3.3 and references therein), with simple objects $T_i^\zeta=W_i^\zeta=L_i^\zeta$, $0\le i\le p^n-2$, the Weyl modules over the quantum group (which are also tilting and simple modules). Thus the objects $\widetilde{T}_i$, $p^{n-1}-1\le i\le p^n-2$ are splitting objects of $\T(\zeta)$.
\end{proof} 

\begin{remark}\label{notmod} 
Note that for $p>2$, since $\zeta$ is a root of unity of odd order, the category $TL(\zeta)_K$ is {\bf not} modular (unlike the situation of \cite{BK}, 
Subsection 3.3); rather, its symmetric (M\"uger) center is $\sVec_K$ generated by $T_0^\zeta=\bold 1$ and $T_{p^n-2}^\zeta=\psi$. In fact, we have 
$\Ver_{p^n}(K)=\Ver_{p^n}^+(K)\boxtimes \sVec_K$ as braided categories, 
where $\Ver_{p^n}^+(K)$ is the (modular) category generated by the 
objects of $\Ver_{p^n}(K)$ with even highest weight.     
\end{remark} 

\subsection{The Frobenius-Perron dimension and the Grothendieck ring of $\Ver_{p^n}$}
\begin{proposition} \label{CartanFP}
(i) We have 
$$
\FPdim(\Ver_{p^n})=\frac{p^n}{2\sin^2(\frac{\pi}{p^n})}.
$$

(ii) The Cartan matrix of $\Ver_{p^n}$ is given by $C=DD^T$, where $D$ is its decomposition matrix (with respect to its lift to characteristic zero). 
\end{proposition}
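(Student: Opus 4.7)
My plan is to deduce both parts from the flat lift of $\Ver_{p^n}(\k)$ to the semisimple braided category $\Ver_{p^n}(K)$ constructed in Subsection \ref{lifttochar0}, so that the CDE machinery of Proposition \ref{cde} applies with $\C_\k=\Ver_{p^n}(\k)$ and $\C_K=\Ver_{p^n}(K)$. The key input is that $\Ver_{p^n}(K)$ is split semisimple over $K$, which holds because $K=\mathrm{Frac}(R_{n,p})$ contains the primitive $p^n$-th root of unity $\zeta$ and the Weyl modules $L_k^\zeta$ for quantum $SL_2$ at $\zeta$ are absolutely simple.

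Part (ii) is then immediate: since $\Ver_{p^n}(K)$ is semisimple, its Cartan matrix $C_K$ is the identity, so the CDE triangle (Proposition \ref{cde}(i)) gives $C=DC_KD^T=DD^T$.

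For part (i), I would invoke Proposition \ref{cde}(ii) to reduce the problem to computing $\FPdim(\Ver_{p^n}(K))$. In this semisimple category the Frobenius–Perron dimension equals the sum of squares of the FP dimensions of the simples $L_0^\zeta,\dots,L_{p^n-2}^\zeta$. The truncated $SL_2$ fusion rule $V\otimes L_k=L_{k-1}+L_{k+1}$ (with $L_{p^n-1}$ killed by the Steinberg ideal) together with $\FPdim(V)=2\cos(\pi/p^n)$, the largest real root of the relevant Chebyshev polynomial, forces
\[
\FPdim(L_k^\zeta)=\frac{\sin((k+1)\pi/p^n)}{\sin(\pi/p^n)}, \qquad 0\le k\le p^n-2.
\]
The computation of $\sum_{k=0}^{p^n-2}\FPdim(L_k^\zeta)^2$ then reduces to the elementary identity
\[
\sum_{j=1}^{p^n-1}\sin^2(j\pi/p^n)=\frac{p^n}{2},
\]
which one checks by writing $\sin^2 x=(1-\cos 2x)/2$ and using that the sum of the nontrivial $p^n$-th roots of unity equals $-1$. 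Since all the conceptual content is already packaged in Proposition \ref{cde} and the existence of the lift from Subsection \ref{lifttochar0}, there is no substantive obstacle; the only routine verification is the trigonometric identity above.
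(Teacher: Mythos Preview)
Your proof is correct and follows essentially the same route as the paper: both parts are deduced from the flat lift to the semisimple category $\Ver_{p^n}(K)$ via Proposition \ref{cde}, with (ii) coming from the CDE triangle since $C_K=\mathrm{Id}$, and (i) from $\FPdim(\C_\k)=\FPdim(\C_K)$ together with the explicit computation of $\sum_k \sin^2(k\pi/p^n)$. The paper states the trigonometric identity without proof and packages the semisimple case as Corollary \ref{semisi}, but the content is the same.
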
 

\begin{proof} 
(i) This follows from Proposition \ref{cde}(i) since $${\rm FPdim}(\Ver_N(K))=\sum_{k=1}^{N-1}\frac{\sin^2(\frac{\pi k}{N})}{\sin^2(\frac{\pi}{N})}=\frac{N}{2\sin^2(\frac{\pi}{N})}.$$ 

(ii) This follows from Theorem \ref{CartanFP0}(i) and Corollary \ref{semisi}, as the category $\Ver_{p^n}(K)$ is semisimple. 
\end{proof} 

\begin{corollary} \label{Supervec}
If $p>2$ then we have $\Ver_{p^n}=\Ver_{p^n}^+\boxtimes \sVec$. 
Thus the Grothendieck ring of $\Ver_{p^n}$ is given by ${\rm Gr}(\Ver_{p^n})=\mO_{n,p}\Bbb Z/2=\mO_{n,p}[g]/(g^2-1)$. 
\end{corollary}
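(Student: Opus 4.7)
The plan is to exhibit a fermion $\psi$ in the odd component of the natural $\BZ/2$-grading on $\Ver_{p^n}$ (whose trivial component is $\Ver_{p^n}^+$ by construction), and use it to build an equivalence $\Ver_{p^n}^+ \boxtimes \sVec \simeq \Ver_{p^n}$. The natural candidate is $\psi := F(T_{p^n-2}) \in \Ver_{p^n}$, which lies in the odd component because $p^n - 2$ is odd for $p > 2$.

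First I would show that $\psi$ is invertible with $\psi \otimes \psi \cong \one$, by invoking the flat deformation to characteristic zero from Subsection \ref{lifttochar0}. The object $\widetilde T_{p^n-2}$ in $\C(\T(\zeta))$ over the DVR $R_{n,p}$ has generic fiber $T^{\zeta}_{p^n-2} \in \Ver_{p^n}(K)$, which is invertible and generates the symmetric (M\"uger) center $\sVec_K$ together with $\one$ (Remark \ref{notmod}). Because Hom spaces in $\C(\T(\zeta))$ are free $R_{n,p}$-modules, Krull--Schmidt holds over $R_{n,p}$, and one decomposes $\widetilde T_{p^n-2}^{\otimes 2}$ into indecomposable flat objects whose generic fibers match the semisimple decomposition $T^{\zeta}_{p^n-2} \otimes T^{\zeta}_{p^n-2} \cong \one$ in $\Ver_{p^n}(K)$. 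A flat object over $R_{n,p}$ with vanishing generic fiber is zero, so $\widetilde T_{p^n-2}^{\otimes 2} \cong \one$ already over $R_{n,p}$. Reducing modulo $\mathfrak p$ gives $\psi \otimes \psi \cong \one$ in $\Ver_{p^n}$. To compute $c_{\psi,\psi}$, note that under this identification the braiding $c_{\widetilde T_{p^n-2},\widetilde T_{p^n-2}}$ becomes an element of $\End_{\C(\T(\zeta))}(\one) = R_{n,p}$, whose image in $K$ equals the self-braiding of $T^{\zeta}_{p^n-2}$; this is $-1$ because that object is the odd invertible generator of $\sVec_K \subset \Ver_{p^n}(K)$. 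Since $-1 \in R_{n,p}$ and reduces to $-1 \ne 1$ in $\k$ for $p > 2$, we conclude $c_{\psi,\psi} = -1$, so $\psi$ is a fermion and generates a copy of $\sVec$ inside $\Ver_{p^n}$.

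With $\psi$ in hand, I would define $G \colon \Ver_{p^n}^+ \boxtimes \sVec \to \Ver_{p^n}$ by $X \boxtimes \one \mapsto X$ and $X \boxtimes \pi \mapsto X \otimes \psi$, where $\pi$ is the odd generator of $\sVec$. The functor $G$ is monoidal because $\psi \otimes \psi \cong \one$, and symmetric because $c_{\psi,\psi} = -1 = c_{\pi,\pi}$. Essential surjectivity follows from the $\BZ/2$-grading: tensoring with $\psi$ is an equivalence between the even and odd components, so every simple in the odd component has the form $X \otimes \psi$ for some simple $X \in \Ver_{p^n}^+$. Full faithfulness is checked case by case on the four combinations $X_i \boxtimes Y_i$, $Y_i \in \{\one, \pi\}$: the diagonal cases use invertibility of $\psi$, while the off-diagonal Hom vanishes on both sides thanks to the faithfulness of the $\BZ/2$-grading. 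Hence $G$ is an equivalence of symmetric tensor categories. Taking Grothendieck rings,
\[
\mathrm{Gr}(\Ver_{p^n}) \;\cong\; \mathrm{Gr}(\Ver_{p^n}^+) \otimes_{\BZ} \mathrm{Gr}(\sVec) \;\cong\; \mO_{n,p}[g]/(g^2 - 1),
\]
where $g$ is the class of $\psi$.

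The main obstacle is the descent of the invertibility isomorphism and of the braiding scalar from the generic fiber to the special fiber. This rests on the flatness of the lift $\C(\T(\zeta))$, on the validity of Krull--Schmidt over the DVR $R_{n,p}$, and on the convenient fact that the relevant braiding scalar is $-1$, already a unit of $R_{n,p}$ that reduces to a nontrivial element of $\k$ precisely because $p > 2$ (which is why the statement fails at $p = 2$).
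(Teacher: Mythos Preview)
Your proof is correct and follows essentially the same strategy as the paper: use the lift to characteristic zero from Subsection~\ref{lifttochar0} and Remark~\ref{notmod} to produce the fermion $\psi = F(T_{p^n-2})$ in $\Ver_{p^n}$, then deduce the decomposition. The paper's own argument is terser in one place: rather than invoking Krull--Schmidt over $R_{n,p}$ to show $\psi\otimes\psi\cong\one$, it simply observes that $\FPdim(\psi)=\FPdim(T^{\zeta}_{p^n-2})=1$, and any object of Frobenius--Perron dimension $1$ in a finite tensor category is invertible. Your Krull--Schmidt route works too, but the $\FPdim$ shortcut avoids the bookkeeping with flat objects. Conversely, your proof is more explicit where the paper is sketchy: you spell out why the self-braiding $c_{\psi,\psi}=-1$ survives the reduction (the scalar $-1\in R_{n,p}$ reduces to $-1\ne 1$ in $\k$ for $p>2$), and you write down the equivalence $G$ and verify it directly, whereas the paper leaves these steps implicit.
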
 

\begin{proof} The first statement holds because it holds for the category $\Ver_{p^n}(K)$ (cf. Remark \ref{notmod}). Namely, the nontrivial invertible object of $\Ver_{p^n}(K)$ has to descend to an invertible object over $\k$ (as it has $\FPdim=1$). The second statement then follows from Theorem \ref{cyclot}(iv). 
\end{proof} 

\subsection{The Cartan matrix and the decomposition matrix of $\Ver_{p^n}$}\label{carmat} 

Now we can compute the decomposition matrix $D$ (and hence the Cartan matrix $C$) of $\Ver_{p^n}$. Recall that the indecomposable projectives of $\Ver_{p^n}$ are $\bold P_i=F(T_i)$ for $p^{n-1}-1\le i\le p^n-2$. So we may label the simple objects by the same integers and denote them by $\bold L_i$; thus $\bold P_i$ is the projective cover of $\bold L_i$. Also we have the {\it extended decomposition matrix} $\widetilde D$, a square matrix of size $p^n-1$ with entries $d_{ij}$, $0\le i,j\le p^n-2$, where $d_{ij}:=[(\widetilde{T}_i)_K: W_j^\zeta]$ is the multiplicity of $W_j^\zeta$ in the reduction of $\widetilde{T}_i$ to $K$. Then $D$ is the submatrix of $\widetilde{D}$ consisting of the rows labeled by $p^{n-1}-1\le i\le p^n-2$. 

Note that the characters of $W_j^\zeta$ are $[j+1]_x=\frac{x^{j+1}-x^{-j-1}}{x-x^{-1}}$. Thus if $\chi_i(x)$ is the character of $T_i$ 
as an $SL_2(\k)$-module then the entries $d_{ij}$ of $\widetilde{D}$ (hence $D$) are determined from the formula 
\begin{equation}\label{Donkin}
\chi_i(x)=\sum_j d_{ij}\frac{x^{j+1}-x^{-j-1}}{x-x^{-1}}.
\end{equation}

\begin{proposition}\label{Cartan nondeg} (i) We have 
$$
\sum_j d_{ij}x^{j+1}=(\chi_i(x)(x-x^{-1}))_+,
$$
where the subscript $+$ denotes the part with positive powers of $x$. 

(ii) The decomposition matrix $D$ has maximal rank 
$p^{n-1}(p-1)$; 

(iii) The entries of the Cartan matrix $C$ are $c_{ij}=(\chi_i,\chi_j)$ (the inner product of $SL_2$-characters). 

(iv) The Cartan matrix $C$ of $\Ver_{p^n}$ is positive definite, hence nondegenerate. 
\end{proposition}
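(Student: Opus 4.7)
The plan is to prove the four parts in order, with each part building naturally on the previous.

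For (i), I would simply multiply both sides of the defining equation \eqref{Donkin} by $(x-x^{-1})$, obtaining
$$\chi_i(x)(x-x^{-1})=\sum_j d_{ij}\bigl(x^{j+1}-x^{-j-1}\bigr).$$
Since $j\ge 0$, the terms $x^{j+1}$ carry strictly positive powers of $x$ and the terms $x^{-j-1}$ carry strictly negative powers, so extracting the positive-power part immediately yields the claim.

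For (ii), the key observation is that $\chi_i$ is a character of an $SL_2(\k)$-module, hence invariant under $x\mapsto x^{-1}$, which makes $\chi_i(x)(x-x^{-1})$ anti-invariant under $x\mapsto x^{-1}$. Thus its positive-power part determines the full Laurent polynomial, and hence determines $\chi_i$. The characters $\chi_{p^{n-1}-1},\dots,\chi_{p^n-2}$ of the distinct non-isomorphic indecomposable tilting modules $T_i$ are linearly independent (they have distinct highest weights, which are visible as leading monomials), so by (i) the rows of $D$ are linearly independent, giving full row rank $p^{n-1}(p-1)$.

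For (iii), I would invoke Proposition \ref{CartanFP}(ii), which gives $C=DD^T$, so $c_{ij}=\sum_k d_{ik}d_{jk}$. Now the Weyl characters $\frac{x^{k+1}-x^{-k-1}}{x-x^{-1}}$ of the modules $W_k^\zeta$ form an orthonormal basis with respect to the standard $SL_2$-character inner product (Weyl's integration formula), so the expansion \eqref{Donkin} gives $(\chi_i,\chi_j)=\sum_k d_{ik}d_{jk}$, which coincides with $c_{ij}$.

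For (iv), the identity $C=DD^T$ together with the full row rank established in (ii) implies positive-definiteness: for any nonzero real vector $v$ of appropriate size, $v^TCv=\|D^Tv\|^2>0$ since $D^T$ has trivial kernel (as $D$ has full row rank). I do not anticipate a real obstacle in any of the four steps; the only subtle point is the symmetry argument in (ii), which requires noticing that $\chi_i(x)=\chi_i(x^{-1})$ rescues information lost when restricting to positive powers.
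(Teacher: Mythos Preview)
Your proof is correct and follows essentially the same approach as the paper. The only noteworthy difference is in part (ii): the paper simply observes that $d_{ij}=0$ for $j>i$ (since $T_i$ has highest weight $i$), which together with $d_{ii}=1$ makes the submatrix of $D$ on columns $p^{n-1}-1\le j\le p^n-2$ lower triangular with ones on the diagonal, hence of full rank. Your symmetry argument (the positive part of $\chi_i(x)(x-x^{-1})$ determines $\chi_i$, and the $\chi_i$ are independent by highest weight) reaches the same conclusion but is slightly more roundabout; the triangularity observation is the quicker route here.
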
 

\begin{proof} (i) is immediate from formula \eqref{Donkin}.
(ii) follows since $d_{ij}=0$ if $j>i$. (iii) follows from (i) 
and Proposition \ref{CartanFP}(ii). Finally, (iv) follows from (ii), (iii). 
\end{proof} 

\begin{corollary} \label{FPcomp}
The Frobenius-Perron dimensions $d_i=\FPdim(\bold L_i)$ of simple objects of $\Ver_{p^n}$ are determined uniquely by the system of equations 
$$
C\bold d=\bold p,
$$
where $\bold d=(d_i)$, and $\bold p=(p_i)$ is the vector of Frobenius-Perron dimensions 
$p_i$ of $\bold P_i$, $p^{n-1}-1\le i\le p^n-2$. That is, 
$$
\bold d=C^{-1}\bold p.
$$ 
\end{corollary}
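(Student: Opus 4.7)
The plan is to derive the linear system $C\bold d = \bold p$ by applying the Frobenius-Perron dimension homomorphism to the composition series of each indecomposable projective, and then invert $C$ using a result already established above.

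First I would recall the standard interpretation of the Cartan matrix: by definition, the entry $c_{ij}$ of $C$ equals the multiplicity $[\bold P_i:\bold L_j]$ of the simple object $\bold L_j$ among the composition factors of the projective cover $\bold P_i$ of $\bold L_i$ in $\Ver_{p^n}$. This is well-defined because $\Ver_{p^n}$ is a finite (multi)tensor category with enough projectives, so each $\bold P_i$ has a finite Jordan--Hölder filtration.

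Next, I would invoke additivity of Frobenius-Perron dimension on short exact sequences (a basic property of $\FPdim$ as a ring homomorphism from the Grothendieck ring, see \cite{EGNO}, Proposition 4.5.7). Writing $\bold P_i$ as an iterated extension of its composition factors and taking $\FPdim$ gives
\[
p_i \;=\; \FPdim(\bold P_i) \;=\; \sum_j [\bold P_i:\bold L_j]\,\FPdim(\bold L_j) \;=\; \sum_j c_{ij}\, d_j \;=\; (C\bold d)_i,
\]
for each $i$ with $p^{n-1}-1\le i\le p^n-2$. Assembling these identities gives the matrix equation $C\bold d=\bold p$.

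Finally, for uniqueness I would cite Proposition \ref{Cartan nondeg}(iv), which states that $C$ is positive definite and in particular invertible, so we may solve $\bold d = C^{-1}\bold p$. There is no real obstacle here: the only nontrivial input is Proposition \ref{Cartan nondeg}(iv) (whose proof uses $C=DD^T$ with $D$ of maximal rank, coming from the fact that the decomposition matrix $D$ is upper triangular with $1$'s on the diagonal after appropriate indexing), and this has already been established.
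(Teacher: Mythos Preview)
Your argument is correct and matches the paper's approach: the paper's proof is the one-line ``This follows from Proposition \ref{Cartan nondeg}(iv),'' which implicitly relies on exactly the derivation $p_i=\sum_j c_{ij}d_j$ from additivity of $\FPdim$ that you have spelled out. You have simply made explicit the standard identification of Cartan matrix entries with composition multiplicities and the resulting linear system, which is entirely appropriate.
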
 

\begin{proof} This follows from Proposition \ref{Cartan nondeg}(iv). 
\end{proof} 

The vector $\bold p$ can be computed from the formula $\bold p=D\bold w$, where 
$w_j=[j+1]_{q}=\frac{\sin(\pi (j+1)/p^n)}{\sin(\pi/p^n)}$ are the Frobenius-Perron dimensions of the Weyl modules $W_j^\zeta$ in $\Ver_{p^n}(K)$. 
Thus we get 
$$
\bold d=C^{-1}D\bold w=(DD^T)^{-1}D\bold w
$$
So we obtain

\begin{proposition} \label{FPcompu}
We have $\bold d=(DD^T)^{-1}D\bold w$, where $w_j=[j+1]_{q}$ 
for $0\le j\le p^n-2$ and $d_{ij}$ are the coefficients of $\chi_i(x)(x-x^{-1})$
for $p^{n-1}-1\le i\le p^n-2$. 
\end{proposition}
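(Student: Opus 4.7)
The strategy is to assemble previously established ingredients. Corollary~\ref{FPcomp} gives $\bold d = C^{-1}\bold p$, Proposition~\ref{CartanFP}(ii) gives $C = DD^T$, and Proposition~\ref{Cartan nondeg}(i) expresses the $d_{ij}$ as the coefficients of $\chi_i(x)(x-x^{-1})$. The only remaining step is to prove the identity $\bold p = D\bold w$ relating the Frobenius--Perron dimensions of the indecomposable projectives $\bold P_i$ in $\Ver_{p^n}(\k)$ to those of the Weyl-module simples $W_j^\zeta$ in the characteristic-zero lift; granting this, $\bold d = C^{-1}\bold p = (DD^T)^{-1}D\bold w$, which is the claim.

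To establish $\bold p = D\bold w$, I would pass to the flat family $\T(\zeta)$ over $R_{n,p}$ constructed in Subsection~\ref{lifttochar0}. Each projective $\bold P_i = F(T_i)$ lifts to $\widetilde T_i \in \T(\zeta)$, whose class in the semisimple generic fiber decomposes as $[(\widetilde T_i)_K] = \sum_j d_{ij}[W_j^\zeta]$ by the very definition of $D$. Since $\FPdim W_j^\zeta = [j+1]_q = w_j$ in $\Ver_{p^n}(K)$, this yields $\FPdim (\widetilde T_i)_K = \sum_j d_{ij}w_j$, so it remains to verify that $\FPdim \bold P_i$ in $\Ver_{p^n}(\k)$ agrees with this value.

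The cleanest justification is character-theoretic. Writing $\chi_i(x) = P_i(x+x^{-1})$ for a polynomial $P_i \in \Bbb Z[u]$ and using that the character homomorphism on the split Grothendieck ring of $\T_p$ is injective (the distinct $\chi_i$ being linearly independent Laurent polynomials), one has $[T_i] = P_i([V])$ in that ring, and after applying $F$ the identity $[\bold P_i] = P_i([V])$ in $K(\Ver_{p^n}(\k))$. Applying $\FPdim$ together with $\FPdim V = q + q^{-1}$ (from $Q_{n,p}([V]) = 0$, cf.\ Corollary~\ref{uni1}), we obtain $p_i = P_i(q+q^{-1}) = \chi_i(q) = \sum_j d_{ij}w_j$, the last equality being Proposition~\ref{Cartan nondeg}(i) evaluated at $x = q$. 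The proof is essentially bookkeeping: all the structural input has been prepared in earlier sections, and the only delicate point---the compatibility of $\FPdim$ across the two fibers, which have Grothendieck rings of different ranks---is handled by this character identity.
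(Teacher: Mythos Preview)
Your proof is correct and follows essentially the same approach as the paper: combine $\bold d = C^{-1}\bold p$, $C = DD^T$, and $\bold p = D\bold w$. The paper simply asserts the last identity on the grounds that the $w_j$ are the Frobenius--Perron dimensions of the $W_j^\zeta$ in the semisimple lift, whereas you spell out the character-theoretic justification $p_i = \chi_i(q) = \sum_j d_{ij}[j+1]_q$ explicitly; this is a helpful elaboration but not a different strategy.
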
  

This determines the Grothendieck ring of $\Ver_{p^n}^+$ with $\Bbb Z_+$-basis since the Frobenius-Perron dimension defines an isomorphism of this ring with $\mathcal{O}_{n,p}$.

\begin{remark} We indicate here an alternative approach to the results of Subsections \ref{lifttochar0}, \ref{carmat} which avoids using the lifting to characteristic zero. Namely one can compute the entries of the Cartan matrix directly as
$$
c_{ij}=\dim \Hom_{\Ver_{p^n}}(\bold P_i,\bold P_j)=\dim \Hom_{\T_p}(T_i,T_j)
$$ 
using Proposition \ref{fullfaith} and Theorem \ref{CartanFP0}. However, it is well known that $\dim \Hom_{\T_p}(T_i,T_j)=(\chi_i,\chi_j)$, see for example \cite[Section 3A]{AST}, so we get the formula $C=DD^T$ as in Proposition \ref{CartanFP} (ii) where $D$ is {\em defined} by equation \eqref{Donkin}. This implies that Propositions
\ref{Cartan nondeg} and \ref{FPcompu} hold and an easy computation gives the formula from Proposition \ref{CartanFP} (i).
\end{remark}

The matrix $\widetilde{D}$ (hence $D$) can now 
be easily computed via Donkin's explicit algorithm, see 
Proposition \ref{standfacts}(iii) and \cite[E.9]{Ja}. However, 
there is an even more explicit formula for $\widetilde{D}$, given in \cite{TW}. 
Namely, given a number $a\in [p^{n-1},p^n-1]$ written in base $p$ as 
$\overline{a_1...a_n}$ (so $a_1\ne 0$), we say that $b$ is a {\it descendant} of $a$ if $b=a_1p^{n-1}\pm a_2p^{n-2}\pm...\pm a_n$ for some choice of signs (clearly, in this case $b\in [1,p^n-1]$). Thus, every number $a$ has $2^\ell$ distinct descendants, where $\ell$ is the number of nonzero digits among $a_2,...,a_n$. 

\begin{proposition}\label{TWquiver}\cite{TW}
One has $d_{ij}=1$ if $j+1$ is a descendant of $i+1$, otherwise 
$d_{ij}=0$. 
\end{proposition}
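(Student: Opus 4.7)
The plan is to proceed by induction on $n$, using Donkin's recursive formula $T_{a+pb}=T_a\otimes T_b^{(1)}$ for $p-1\le a\le 2p-2$ (Proposition \ref{standfacts}(iii)) together with the character description \eqref{Donkin}. Via \eqref{Donkin}, the proposition is equivalent to the character identity
$$\chi_i(x)=\sum_{\epsilon\in\{\pm 1\}^{n-1}}[i_1p^{n-1}+\epsilon_2i_2p^{n-2}+\cdots+\epsilon_ni_n]_x$$
for $i+1=\overline{i_1i_2\cdots i_n}$ in base $p$ with $i_1\ne 0$, where $[m]_x:=(x^m-x^{-m})/(x-x^{-1})$ is the character of the Weyl module $W_{m-1}$. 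The base case $n=1$ is immediate since $T_i=L_i=W_i$ has character $[i+1]_x$ for $0\le i\le p-2$.

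For the inductive step, given $i$ with $i+1=\overline{i_1\cdots i_{n+1}}$, I would split into two cases according to whether the last digit $i_{n+1}$ vanishes. If $i_{n+1}=0$, set $m:=\overline{i_1\cdots i_n}$, so that $i=(p-1)+p(m-1)$ and Donkin's formula gives $T_i=\St\otimes T_{m-1}^{(1)}$. The easily verified identity $[p]_x\cdot[k]_{x^p}=[pk]_x$, combined with the induction hypothesis for $T_{m-1}$, expresses $\chi_i(x)$ as a sum of $[pk]_x$ over descendants $k$ of $m$, which coincide with the descendants of $i+1=pm$ (since $\pm 0=0$). If $i_{n+1}\ne 0$, set $a=p+i_{n+1}-1\in[p,2p-2]$ and $b+1=\overline{i_1\cdots i_n}$, so that $i=a+pb$ and $T_i=T_a\otimes T_b^{(1)}$. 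Lemma \ref{smalltilt}(ii) gives $\chi_a(x)=[p+i_{n+1}]_x+[p-i_{n+1}]_x$, and a direct calculation (expand $[p+\alpha]_x+[p-\alpha]_x=[p]_x(x^\alpha+x^{-\alpha})$ and multiply) yields
$$\bigl([p+i_{n+1}]_x+[p-i_{n+1}]_x\bigr)\cdot[k]_{x^p}=[pk+i_{n+1}]_x+[pk-i_{n+1}]_x,$$
from which the induction hypothesis for $T_b$ recovers the desired expansion of $\chi_i$.

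The main obstacle will be verifying that the descendants produced by this recursion are pairwise distinct, so that each $d_{ij}$ equals $0$ or $1$ rather than a larger nonnegative integer. Concretely, one must show that the map $(\epsilon_2,\ldots,\epsilon_n)\mapsto i_1p^{n-1}+\sum_{k\ge 2}\epsilon_ki_kp^{n-k}$ is injective after quotienting by the equivalence $\epsilon_k\sim -\epsilon_k$ whenever $i_k=0$. This follows from a $p$-adic valuation argument: if two sign sequences give equal values, take the largest index $k$ where they differ and $i_k\ne 0$; the difference then equals $\pm 2i_kp^{n-k}$ plus terms of strictly larger $p$-adic valuation, and since $0<|2i_k|<2p$ with $2i_k\ne p$ for odd $p$, this term is nonzero modulo $p^{n-k+1}$ (for $p=2$ the same argument works after shifting the tracked power of $2$ upward by one), a contradiction. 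Granted this injectivity, the inductive character computation shows each descendant contributes exactly one Weyl module summand, completing the proof.
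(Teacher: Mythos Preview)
The paper does not give its own proof of this proposition; it simply cites \cite{TW}. So there is nothing to compare against, and you are supplying an argument the paper omits.

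Your inductive strategy via Donkin's formula $T_{a+pb}=T_a\otimes T_b^{(1)}$ and the character identities
\[
[p]_x\,[k]_{x^p}=[pk]_x,\qquad \bigl([p+\alpha]_x+[p-\alpha]_x\bigr)[k]_{x^p}=[pk+\alpha]_x+[pk-\alpha]_x
\]
is correct and does the job, and your distinctness argument via $p$-adic valuation is also fine (including the $p=2$ adjustment).

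However, the displayed ``equivalent'' character identity at the start is misstated. Summing over \emph{all} $\epsilon\in\{\pm 1\}^{n-1}$ overcounts: if $\ell$ of the digits $i_2,\dots,i_n$ are nonzero, each descendant is hit $2^{\,n-1-\ell}$ times, so your formula would give $\chi_i=2^{\,n-1-\ell}\sum_{\text{descendants}}[\cdot]_x$. (Concretely, for $i+1=p=\overline{10}$ your formula yields $2[p]_x$, but $T_{p-1}=\St$ has character $[p]_x$.) The proposition asserts each $d_{ij}\in\{0,1\}$, so the correct equivalent statement is
\[
\chi_i(x)=\sum_{\substack{j+1\text{ a descendant}\\ \text{of }i+1}}[j+1]_x,
\]
a sum over \emph{distinct} descendants. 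Fortunately, this is exactly what your inductive step actually proves: in the $i_{n+1}=0$ case you explicitly sum over descendants $k$ of $m$, and in the $i_{n+1}\ne 0$ case you double the set of descendants, with your injectivity argument guaranteeing no collisions. So the only fix needed is to restate the target identity as a sum over descendants rather than over all sign sequences; the rest of the argument then goes through.
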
 

This yields a very explicit description of the Cartan matrix of $\Ver_{p^n}$. 

\begin{corollary}\label{carmat1} The entry $c_{ij}$ of the Cartan matrix 
is the number of common descendants of $i+1$ and $j+1$. 
\end{corollary}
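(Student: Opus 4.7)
The plan is to derive the corollary directly from two results already established: the identity $C = DD^T$ from Proposition \ref{CartanFP}(ii), and the explicit combinatorial description of the entries of $\widetilde D$ (and hence $D$) from Proposition \ref{TWquiver}.

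First I would unfold the matrix product. For indices $i, j$ in the range $[p^{n-1}-1,\, p^n-2]$ which label the projective covers of the simple objects of $\Ver_{p^n}$, the formula $C = DD^T$ gives
$$
c_{ij} \;=\; \sum_{k=0}^{p^n-2} d_{ik}\, d_{jk}.
$$
Here each index $k$ in the summation corresponds to a simple object $W_k^\zeta$ in the characteristic zero lift, equivalently to an integer $k+1 \in [1, p^n-1]$, which is the natural range in which descendants live.

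Next I would substitute in Proposition \ref{TWquiver}: each factor $d_{ik}$ is either $0$ or $1$, and equals $1$ precisely when $k+1$ is a descendant of $i+1$; similarly for $d_{jk}$ with respect to $j+1$. Consequently the product $d_{ik} d_{jk}$ equals $1$ if and only if $k+1$ is a descendant of both $i+1$ and $j+1$, that is, a common descendant, and otherwise vanishes. Summing, $c_{ij}$ is exactly the number of common descendants of $i+1$ and $j+1$.

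There is essentially no obstacle here; the only bookkeeping worth checking is that the indexing ranges match. The rows of $D$ are indexed by $i \in [p^{n-1}-1, p^n-2]$, so that $i+1 \in [p^{n-1}, p^n-1]$ has a base-$p$ expansion of length exactly $n$ with nonzero leading digit, which is precisely the setting in which the notion of descendant was defined before Proposition \ref{TWquiver}. Hence the identification is immediate and yields the claim.
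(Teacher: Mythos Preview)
Your proof is correct and follows exactly the same approach as the paper, which simply cites Proposition~\ref{CartanFP}(ii) and Proposition~\ref{TWquiver}; you have merely spelled out the matrix product $C=DD^T$ and the resulting count explicitly.
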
 

\begin{proof} This follows from Proposition \ref{CartanFP}(ii) and Proposition \ref{TWquiver}. 
\end{proof} 

\subsection{An explicit formula for the Cartan matrix of $\Ver_{p^n}$} 

Let us now derive an explicit formula for the Cartan matrix $C=C_{n,p}$ of the category $\Ver_{p^n}$. We first consider the case $p=2$. In this case let $C_{n,2}^+$ 
be the Cartan matrix of $\Ver_{2^n}^+$. Then it is shown in \cite{BE}, 
Proposition 4.2 that 
$$
C_{n+1,2}^+=\left(\begin{matrix} 2C_{n,2}^+ & C_{n,2}^+\\ C_{n,2}^+ & 2C_{n-1,2}\end{matrix}\right)
$$
and 
$$
C_{n,2}=C_{n,2}^+\oplus C_{n-1,2},
$$
for an appropriate labeling of rows and columns, where $C_{1,2}=1,C_{2,2}^+=2$. Hence
$$
\left(\begin{matrix} C_{n+1,2}^+\\ C_{n,2}\end{matrix}\right)=
\begin{pmatrix}
  \begin{matrix}
  2 & 1 \\
  1 & 0
  \end{matrix}
  & \rvline & \begin{matrix}
  0 & 0 \\
  0 & 2
  \end{matrix} \\
\hline
 \begin{matrix}
  1 & 0 \\
  0 & 0
  \end{matrix}  & \rvline &
  \begin{matrix}
  0 & 0 \\
  0 & 1
  \end{matrix}
\end{pmatrix}
\otimes \left(\begin{matrix} C_{n,2}^+\\ C_{n-1,2}\end{matrix}\right).
$$
Thus we get 

\begin{proposition}\label{chara2}
One has
$$
\left(\begin{matrix} C_{n+1,2}^+\\ C_{n,2}\end{matrix}\right)=
\begin{pmatrix}
  \begin{matrix}
  2 & 1 \\
  1 & 0
  \end{matrix}
  & \rvline & \begin{matrix}
  0 & 0 \\
  0 & 2
  \end{matrix} \\
\hline
 \begin{matrix}
  1 & 0 \\
  0 & 0
  \end{matrix}  & \rvline &
  \begin{matrix}
  0 & 0 \\
  0 & 1
  \end{matrix}
\end{pmatrix}^{\otimes n-1}
\otimes \left(\begin{matrix} 2\\ 1\end{matrix}\right).
$$
\end{proposition}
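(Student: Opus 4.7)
The plan is to proceed by induction on $n$, treating the proposition as essentially a repackaging of the recursive formulas
\[
C_{n+1,2}^+=\left(\begin{matrix} 2C_{n,2}^+ & C_{n,2}^+\\ C_{n,2}^+ & 2C_{n-1,2}\end{matrix}\right),\qquad
C_{n,2}=C_{n,2}^+\oplus C_{n-1,2}
\]
stated immediately before the proposition (which are \cite{BE}, Proposition 4.2). All the real content lies in those two identities; the proposition just iterates them.

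The first step is to observe that the $4\times 4$ matrix
\[
M=\begin{pmatrix}
  2 & 1 & 0 & 0 \\
  1 & 0 & 0 & 2 \\
  1 & 0 & 0 & 0 \\
  0 & 0 & 0 & 1
\end{pmatrix}
\]
encodes exactly one application of this recursion. Namely, decompose $M$ into four $2\times 2$ blocks $M_{ij}$ as displayed, and interpret $M\otimes\left(\begin{smallmatrix} A\\ B\end{smallmatrix}\right)$ as the column vector whose $i$-th entry is $M_{i1}\otimes A+M_{i2}\otimes B$ (a Kronecker product of scalar-entry blocks with matrix entries, producing a matrix of matrices). A direct check gives
\[
(M\otimes v)_1=\begin{pmatrix} 2A & A\\ A & 2B\end{pmatrix},\qquad
(M\otimes v)_2=\begin{pmatrix} A & 0\\ 0 & B\end{pmatrix}=A\oplus B,
\]
so that specializing to $A=C_{n,2}^+$, $B=C_{n-1,2}$ reproduces precisely the recursion above. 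In other words, the recursion can be written compactly as
\[
\left(\begin{matrix} C_{n+1,2}^+\\ C_{n,2}\end{matrix}\right)=M\otimes \left(\begin{matrix} C_{n,2}^+\\ C_{n-1,2}\end{matrix}\right).
\]

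The next step is the induction itself. The base case $n=1$ asks that $\left(\begin{smallmatrix} C_{2,2}^+\\ C_{1,2}\end{smallmatrix}\right)=\left(\begin{smallmatrix} 2\\ 1\end{smallmatrix}\right)$, and this holds since $C_{1,2}=1$ and $C_{2,2}^+=2$, as noted in the excerpt. For the inductive step, assume the formula for $n$ and apply the compact recursion once more:
\[
\left(\begin{matrix} C_{n+1,2}^+\\ C_{n,2}\end{matrix}\right)=M\otimes\left(\begin{matrix} C_{n,2}^+\\ C_{n-1,2}\end{matrix}\right)=M\otimes\Bigl(M^{\otimes n-2}\otimes\left(\begin{matrix} 2\\ 1\end{matrix}\right)\Bigr)=M^{\otimes n-1}\otimes\left(\begin{matrix} 2\\ 1\end{matrix}\right),
\]
using associativity of the Kronecker product in the obvious sense.

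There is no genuine obstacle here; the only subtlety is bookkeeping, namely verifying that the block-interpretation of $M\otimes(\,\cdot\,)$ agrees both with the $2\times 2$ block recursion for $C_{n+1,2}^+$ and with the direct sum recursion for $C_{n,2}$. Once that unpacking is made explicit, the proposition reduces to a one-line induction on top of \cite{BE}, Proposition 4.2.
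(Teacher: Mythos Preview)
Your proof is correct and follows essentially the same route as the paper: state the recursion from \cite{BE}, Proposition~4.2, rewrite it as a single application of the $4\times 4$ block matrix $M$ to the column $\left(\begin{smallmatrix}C_{n,2}^+\\ C_{n-1,2}\end{smallmatrix}\right)$, then iterate. The paper does this in two lines without spelling out the block check or the induction; your version just makes those steps explicit.
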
 

This can also be derived from Corollary \ref{carmat1}. 

Now let us pass to the case $p>2$, and compute $C_{n,p}$ in a somewhat different way. For two numbers $a,b$ with exactly $m+1$ digits in the base $p$ expansion let $X_{ab}(m)$ be the number of common descendants of $a,b$. By Corollary \ref{carmat1}, $C_{n+1,p}=X(n)$ (where we label simple objects of $\Ver_{p^n}$ by highest weight of the projective cover). Let $Y_{ab}(m)$ be the number of descendants $\widetilde{a}$ of $a$ such that $\widetilde{b}:=\widetilde{a}-1$ is a descendant of $b$. We will often write 
$X_{ab}(m)$, $Y_{ab}(m)$ as $X_{ab}$, $Y_{ab}$ respectively, using that 
$m$ can be recovered as the number of digits of $a$ and $b$.
It is clear that for $m=0$ we have $X_{ab}=\delta_{ab}$ and $Y_{ab}=\delta_{a,b+1}$. 

Let $a=\overline{a_m...a_1a_0}$, $b=\overline{b_m...b_1b_0}$ where $a_m,b_m\ne 0$, and let $a'=\overline{a_m...a_1},b'=\overline{b_m...b_1}$.  

\begin{proposition}\label{Tro1} (i) If $a_0=b_0=0$ then $X_{ab}=X_{a'b'}$. 

(ii) If $a_0=b_0\ne 0$ then $X_{ab}=2X_{a'b'}$. 

(iii) If $a_0+b_0=p$ then $X_{ab}=Y_{a'b'}+Y_{b'a'}$.

(iv) Otherwise $X_{ab}=0$. 
\end{proposition}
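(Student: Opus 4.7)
My plan is to prove this entirely by base-$p$ arithmetic, using the recursive structure of descendants from the definition preceding Proposition \ref{TWquiver}. First I would establish the fundamental observation: writing $a = p\,a' + a_0$, a number $\widetilde{a}$ is a descendant of $a$ if and only if $\widetilde{a} = p\,\widetilde{a'} \pm a_0$ for some descendant $\widetilde{a'}$ of $a'$. This follows immediately by peeling off the last sign in the defining expression $a_m p^m \pm a_{m-1}p^{m-1} \pm \cdots \pm a_0$. When $a_0 = 0$ the two signs coincide, giving a bijection between descendants of $a$ and descendants of $a'$; when $a_0 \neq 0$ the two signs give distinct values, so each descendant of $a'$ yields exactly two descendants of $a$.

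With this in hand, a common descendant of $a$ and $b$ corresponds to a solution of
\[
p\,\widetilde{a'} \pm a_0 \;=\; p\,\widetilde{b'} \pm b_0,
\]
i.e., $p(\widetilde{a'} - \widetilde{b'}) = (\pm b_0 \mp a_0)$, where $\widetilde{a'},\widetilde{b'}$ run over descendants of $a', b'$ respectively. The right-hand side lies in $\{\pm(a_0-b_0),\pm(a_0+b_0)\}$, and since $0\le a_0,b_0\le p-1$ with $p$ odd, divisibility by $p$ forces either $a_0=b_0$ (right side equal to $0$) or $a_0+b_0=p$ (right side equal to $\pm p$). This already collapses the analysis into the four stated cases.

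The remainder is bookkeeping. In case (i) the unique sign choice forces $\widetilde{a'}=\widetilde{b'}$, yielding $X_{a'b'}$. In case (ii) only the matched sign pairs $(+,+)$ and $(-,-)$ survive; each forces $\widetilde{a'}=\widetilde{b'}$ and produces a distinct common descendant $p\widetilde{a'}\pm a_0$, giving $2X_{a'b'}$. In case (iii) only the pairs $(+,-)$ and $(-,+)$ survive; the first forces $\widetilde{b'}=\widetilde{a'}+1$ and so counts $Y_{b'a'}$, while the second forces $\widetilde{a'}=\widetilde{b'}+1$ and counts $Y_{a'b'}$. In case (iv) no sign choice yields an integer, so $X_{ab}=0$.

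The main (albeit mild) obstacle I foresee is verifying that the two sign choices in cases (ii) and (iii) do not produce the same common descendant, lest I double count. This is where oddness of $p$ plays its role: in case (ii), $p\widetilde{a'}+a_0$ and $p\widetilde{a'}-a_0$ have distinct residues mod $p$ because $2a_0\not\equiv 0\pmod{p}$, and in case (iii) the two sign patterns produce descendants with residues $\equiv a_0$ and $\equiv -a_0\pmod{p}$ respectively, again distinct since $a_0 \neq p/2$. Once this disjointness is noted, the four cases are exhaustive and the identities follow.
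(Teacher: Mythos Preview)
Your proof is correct and follows essentially the same approach as the paper's own proof: both reduce to the equation $p\widetilde{a'}\pm a_0=p\widetilde{b'}\pm b_0$ and analyze the four sign patterns. The paper's proof is terser (declaring (i), (ii), (iv) ``easy'' and only writing out (iii)), whereas you spell out all cases and make explicit the disjointness check that prevents double counting---a point the paper leaves implicit.
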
 

\begin{proof} (i), (ii), (iv) are easy, let us prove (iii). 
Having a common descendant means that $\widetilde{a}'p\pm a_0=\widetilde{b}'p\pm b_0$ 
for some choice of signs. The possible choices are $+,-$, which gives 
$\widetilde{a}'-\widetilde{b}'=-1$ and $-,+$, which gives $\widetilde{a}'-\widetilde{b}'=1$, as desired. 
\end{proof} 

\begin{proposition}\label{Tro2} (i) If $a_0-b_0=\pm 1$ then $Y_{ab}=X_{a'b'}$. 

(ii) If $a_0-b_0=p-1$ then $Y_{ab}=Y_{a'b'}$. 

(iii) If $a_0+b_0=p+1$ then $Y_{ab}=Y_{b'a'}$. 

(iv) Otherwise $Y_{ab}=0$. 
\end{proposition}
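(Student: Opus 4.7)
The plan is to imitate the proof of Proposition \ref{Tro1} by decomposing each descendant according to its last digit. Every descendant $\widetilde{a}$ of $a=\overline{a_m\ldots a_0}$ may be written as $\widetilde{a}=\widetilde{a}'\,p+\epsilon_a a_0$, where $\widetilde{a}'$ is a descendant of $a'$ and $\epsilon_a\in\{+1,-1\}$, with the caveat that when $a_0=0$ the two sign choices yield the same $\widetilde{a}$; similarly for $\widetilde{b}$. Substituting into $\widetilde{a}-\widetilde{b}=1$ gives
\[
(\widetilde{a}'-\widetilde{b}')\,p+\epsilon_a a_0-\epsilon_b b_0=1.
\]
Since $|\epsilon_a a_0-\epsilon_b b_0|\le a_0+b_0<2p$, the integer $\Delta:=\widetilde{a}'-\widetilde{b}'$ must lie in $\{-1,0,1\}$, and correspondingly $\epsilon_a a_0-\epsilon_b b_0\in\{1,\,1-p,\,1+p\}$.

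I would then enumerate the four sign patterns $(\epsilon_a,\epsilon_b)$ and identify, for each admissible value of $\epsilon_a a_0-\epsilon_b b_0$, which pairs $(a_0,b_0)\in[0,p-1]^2$ can realize it. The case $\Delta=0$ forces $a_0-b_0=\pm 1$ and contributes $X_{a'b'}$ pairs with $\widetilde{a}'=\widetilde{b}'$, producing clause (i). The case $\Delta=1$ forces $\epsilon_a a_0-\epsilon_b b_0=1-p$, whose analysis picks out the arithmetic condition of clause (ii) and contributes $Y_{a'b'}$ pairs with $\widetilde{a}'=\widetilde{b}'+1$. The case $\Delta=-1$ forces $a_0+b_0=p+1$ and contributes $Y_{b'a'}$ pairs with $\widetilde{b}'=\widetilde{a}'+1$, giving clause (iii). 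For any $(a_0,b_0)$ not meeting one of these conditions, no choice of signs yields $\epsilon_a a_0-\epsilon_b b_0\in\{1,\,1-p,\,1+p\}$, so $Y_{ab}=0$, which is clause (iv).

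The hardest part will be the bookkeeping in the degenerate situations $a_0=0$ or $b_0=0$, where two sign choices at the vanishing coordinate coincide: apparent double counts -- for instance the sign patterns $(+,+)$ and $(-,-)$ may describe the same pair $(\widetilde{a},\widetilde{b})$ -- must be collapsed so that each realized pair is counted exactly once, while conversely several distinct sign patterns sometimes yield the same arithmetic condition on $(a_0,b_0)$ and must be merged into a single clause. Getting this accounting right is what pins down the precise boundaries of clauses (i)--(iii) and ensures the clean dichotomy in (iv). A little extra care will be needed for small primes (such as $p=2$ or $p=3$), where different arithmetic conditions may coincide, but the uniform sign-decomposition framework handles these cases automatically.
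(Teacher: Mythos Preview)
Your approach is correct and is precisely the elaboration of the paper's one-line proof ``Similar to Proposition \ref{Tro1}'': you write descendants as $\widetilde{a}=\widetilde{a}'p+\epsilon_a a_0$, reduce $\widetilde{a}-\widetilde{b}=1$ to a congruence forcing $\Delta=\widetilde{a}'-\widetilde{b}'\in\{-1,0,1\}$, and read off the three clauses from the three values of $\Delta$. Note that the paper is working under the standing assumption $p>2$ in this subsection, so the $p=2$ caveat is unnecessary, and the conditions in (i)--(iii) are automatically mutually exclusive (parity obstructions rule out overlaps).
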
 

\begin{proof} Similar to Proposition \ref{Tro1}.
\end{proof} 

Let $Z=Y+Y^T$. 

\begin{corollary}\label{Tro3} (i) If $a_0=b_0=0$ then $X_{ab}=X_{a'b'}$. 

(ii) If $a_0=b_0\ne 0$ then $X_{ab}=2X_{a'b'}$. 

(iii) If $a_0+b_0=p$ then $X_{ab}=Z_{a'b'}$.

(iv) Otherwise $X_{ab}=0$. 
\end{corollary}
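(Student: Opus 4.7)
The plan is to observe that Corollary \ref{Tro3} is an immediate reformulation of Proposition \ref{Tro1}, the only new ingredient being the definition of the symmetrized matrix $Z = Y + Y^T$, which was introduced specifically to package cases (i) and (iii) of Proposition \ref{Tro1} into a single clean statement.

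First I would dispatch cases (i), (ii), and (iv), which are literally identical in the two statements and require nothing to prove beyond citing Proposition \ref{Tro1}(i), (ii), (iv). Then for case (iii), where $a_0 + b_0 = p$, Proposition \ref{Tro1}(iii) gives
\[
X_{ab} = Y_{a'b'} + Y_{b'a'}.
\]
By definition of $Z$, we have $Z_{a'b'} = Y_{a'b'} + (Y^T)_{a'b'} = Y_{a'b'} + Y_{b'a'}$, so $X_{ab} = Z_{a'b'}$, as required.

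There is no real obstacle here; the corollary is purely notational consolidation. The substantive combinatorial content lies entirely in Propositions \ref{Tro1} and \ref{Tro2}, which encode the recursive behavior of descendants under appending a digit in base $p$. The utility of introducing $Z$ will appear downstream, where Corollary \ref{Tro3} together with an analogous recursion for $Z$ derived from Proposition \ref{Tro2} should allow one to write down the Cartan matrix $C_{n+1,p}$ as a tensor-product-type expression in the style of Proposition \ref{chara2}, generalizing the $p=2$ formula to arbitrary primes.
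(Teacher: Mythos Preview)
Your proposal is correct and matches the paper's approach exactly: the paper gives no proof at all for this corollary, treating it as an immediate consequence of Proposition~\ref{Tro1} together with the definition $Z = Y + Y^T$ introduced in the line just before the statement. Your observation that the content is purely notational consolidation, and that the downstream purpose is the tensor-product recursion for $C_{n,p}$, is accurate.
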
 

\begin{corollary} 
(i) If $a_0-b_0=\pm 1$ then $Z_{ab}=2X_{a'b'}$. 

(ii) If $a_0+b_0=p-1$ then $Z_{ab}=Z_{a'b'}$. 

(iii) If $a_0+b_0=p+1$ then $Z_{ab}=Z_{a'b'}$. 

(iv) Otherwise $Z_{ab}=0$. 
\end{corollary}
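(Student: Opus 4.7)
The plan is to compute $Z_{ab}=Y_{ab}+Y_{ba}$ by applying Proposition \ref{Tro2} to each of the two summands, using the observation that each of the nonzero conditions appearing in Proposition \ref{Tro2} is symmetric under the swap $a\leftrightarrow b$ and that $X_{a'b'}=X_{b'a'}$. Once that observation is in hand, the four cases become a short bookkeeping exercise.

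For case (i) of the corollary, $a_0-b_0=\pm 1$, both $Y_{ab}$ and $Y_{ba}$ fall under Proposition \ref{Tro2}(i), giving $Z_{ab}=X_{a'b'}+X_{b'a'}=2X_{a'b'}$. For case (iii), Proposition \ref{Tro2}(iii) yields $Y_{ab}=Y_{b'a'}$, and applying the same case with $a$ and $b$ exchanged yields $Y_{ba}=Y_{a'b'}$, so $Z_{ab}=Y_{a'b'}+Y_{b'a'}=Z_{a'b'}$ by the definition of $Z$. Case (iv) is immediate since under the remaining conditions neither $Y_{ab}$ nor $Y_{ba}$ lies in a nonzero case of Proposition \ref{Tro2}.

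The interesting case is (ii), $a_0+b_0=p-1$, which covers more pairs $(a_0,b_0)$ than the literal sub-case $(p-1,0)$ of Proposition \ref{Tro2}(ii). Here I would redo the sign bookkeeping used in the proof of Proposition \ref{Tro2}: writing $\tilde a=\tilde a'p+\epsilon a_0$ and $\tilde b=\tilde b'p+\delta b_0$ with $\epsilon,\delta\in\{\pm 1\}$, the equation $\tilde a-1=\tilde b$ becomes $(\tilde a'-\tilde b')p+\epsilon a_0-\delta b_0=1$. Under $a_0+b_0=p-1$ with $p$ odd, the only admissible combination is $\epsilon=-1$, $\delta=+1$, $\tilde a'-\tilde b'=1$, and each valid $\tilde a'\in\mathrm{desc}(a')$ with $\tilde a'-1\in\mathrm{desc}(b')$ yields exactly one such pair; hence $Y_{ab}=Y_{a'b'}$. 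By the symmetric argument $Y_{ba}=Y_{b'a'}$, and summing gives $Z_{ab}=Z_{a'b'}$.

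The only real obstacle is this sign analysis in case (ii); the remaining cases are purely formal consequences of Proposition \ref{Tro2}, the symmetry of $X$, and the definition $Z=Y+Y^T$. For $p=2$ the conditions in cases (i) and (ii) collapse onto the same pairs $(a_0,b_0)$ and one keeps the formula from case (i); this edge case is checked directly.
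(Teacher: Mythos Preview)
Your argument is correct and follows the intended route of computing $Z_{ab}=Y_{ab}+Y_{ba}$ via Proposition~\ref{Tro2}. You were right to single out case~(ii) as the nontrivial one: Proposition~\ref{Tro2}(ii) as stated only covers the endpoint $(a_0,b_0)=(p-1,0)$, whereas the Corollary asserts $Z_{ab}=Z_{a'b'}$ for \emph{every} pair with $a_0+b_0=p-1$. Your direct sign analysis fills this gap correctly: writing $\tilde a=\tilde a'p+\epsilon a_0$, $\tilde b=\tilde b'p+\delta b_0$ and solving $(\tilde a'-\tilde b')p+\epsilon a_0-\delta b_0=1$ under $a_0+b_0=p-1$ with $p$ odd, the unique admissible combination is indeed $\epsilon=-1$, $\delta=+1$, $\tilde a'-\tilde b'=1$ (the apparent extra solutions when $a_0=0$ or $b_0=0$ collapse, since a zero digit makes the sign irrelevant), giving $Y_{ab}=Y_{a'b'}$ in the full case. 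So your proof is in fact more complete than what the paper writes out. For cases (i), (iii), (iv) your use of Proposition~\ref{Tro2} is legitimate, since for $p>2$ the conditions $|a_0-b_0|=1$, $a_0+b_0=p-1$, $a_0+b_0=p+1$ are pairwise disjoint and the missing case of Proposition~\ref{Tro2} does not arise there.

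One correction: the entire discussion from Proposition~\ref{Tro1} onward is explicitly for $p>2$ (the paper handles $p=2$ separately via Proposition~\ref{chara2}), so your closing remark about $p=2$ is unnecessary---and it is also wrong. For $p=2$ the pair $(a_0,b_0)=(1,0)$ satisfies both conditions~(i) and~(ii), and both contributions must be \emph{added}, yielding $Z_{ab}=2X_{a'b'}+Z_{a'b'}$ rather than $2X_{a'b'}$ alone. This is exactly what the matrices $A$ and $B$ in Corollary~\ref{Tro5} encode (for $p=2$ they share a nonzero entry at position $(1,0)$).
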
 

Define the following $p\times p$ matrices $A,B,S,D$ with rows and columns labeled by $0,...,p-1$.
Let $A$ be defined by $A_{ij}=\delta_{i,j-1}+\delta_{i,j+1}$, 
$B$ be defined by $B_{ij}=\delta_{i+j,p-1}+\delta_{i+j,p+1}$, $S$ be the matrix with $S_{ij}=\delta_{i+j,p}$, and $D:={\rm diag}(1,2,2,...,2)$. 

\begin{corollary}\label{Tro5} We have $X(0)={\rm Id}_{p-1},Z(0)=\overline{A}$, 
where $\overline{A}$ is obtained from $A$ by deleting the 0-th row and column,
and for $m\ge 1$
$$
X(m)=D\otimes X(m-1)+S\otimes Z(m-1),\ Z(m)=2A\otimes X(m-1)+B\otimes Z(m-1).
$$
\end{corollary}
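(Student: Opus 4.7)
The idea is that Corollary \ref{Tro5} is just a repackaging of Corollary \ref{Tro3} and the unnamed corollary immediately preceding \ref{Tro5}, rewritten in the language of $p \times p$ blocks indexed by the last base-$p$ digit. I will index each $(m+1)$-digit number $a = \overline{a_m \ldots a_1 a_0}$ (with leading digit $a_m$ nonzero) as the pair $(a_0, a')$, where $a_0 \in \{0,\ldots,p-1\}$ is the units digit and $a' = \overline{a_m \ldots a_1}$ is a nonzero $m$-digit number. Under this convention, the Kronecker product $M \otimes N$ for $M$ a $p\times p$ matrix and $N$ a matrix indexed by $m$-digit numbers with nonzero leading digit has entries $(M \otimes N)_{(a_0,a'),(b_0,b')} = M_{a_0 b_0}\, N_{a' b'}$.

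The base case is immediate: by definition $X_{ab}(0) = \delta_{ab}$ on indices $\{1,\ldots,p-1\}$, giving $X(0) = \mathrm{Id}_{p-1}$; and $Y_{ab}(0) = \delta_{a,b+1}$ on the same index set, so $Z(0) = Y(0) + Y(0)^T$ has a $1$ exactly at the super- and sub-diagonal positions, matching $\overline{A}$.

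For the inductive step, I would simply read off, case by case, what each tensor-product entry equals and compare with the recursion from Corollary \ref{Tro3} and its $Z$-analogue. For $X(m)$, I examine
\[
(D \otimes X(m-1) + S \otimes Z(m-1))_{(a_0,a'),(b_0,b')} = D_{a_0 b_0}\, X_{a'b'} + S_{a_0 b_0}\, Z_{a'b'},
\]
and check that the four mutually exclusive cases for $(a_0,b_0)$ exactly reproduce the four cases of Corollary \ref{Tro3}: $a_0=b_0=0$ contributes via $D_{00}=1$, giving $X_{a'b'}$; $a_0=b_0\ne 0$ gives $D_{a_0 a_0}=2$, yielding $2X_{a'b'}$; $a_0+b_0=p$ (with $a_0\ne b_0$ for $p>2$, automatic since then $D_{a_0 b_0}=0$) gives $S_{a_0 b_0}=1$, yielding $Z_{a'b'}$; all other cases have both matrix entries equal to $0$. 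The same routine check, applied to the recursion for $Z_{ab}$, matches $2A \otimes X(m-1) + B \otimes Z(m-1)$ once we observe that $Z = Y + Y^T$ is symmetric so that the two sub-cases $a_0 + b_0 = p\pm 1$ both collapse to $Z_{a'b'}$.

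The only mild subtlety is verifying that the four (respectively four) cases are disjoint so that no double-counting occurs in the matrix sum; this follows from a parity argument for $p$ odd (the conditions $a_0 - b_0 = \pm 1$ and $a_0 + b_0 \in \{p-1,p+1\}$ cannot both hold since one forces $a_0+b_0$ odd and the other forces it even), and from direct inspection for $p=2$. With that, the identity of the matrices follows entrywise and the corollary is proved.
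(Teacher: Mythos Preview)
Your proof is correct and is exactly the straightforward entrywise verification the paper intends; indeed the paper states Corollary~\ref{Tro5} without proof, treating it as an immediate repackaging of Corollary~\ref{Tro3} and the corollary just before \ref{Tro5}. One minor remark: the entire passage from Proposition~\ref{Tro1} onward is explicitly set up for $p>2$ (the case $p=2$ having been handled separately via Proposition~\ref{chara2}), so your closing comment about ``direct inspection for $p=2$'' is unnecessary---and in fact for $p=2$ the cases $a_0=b_0\ne 0$ and $a_0+b_0=p$ overlap, so the disjointness claim would fail there.
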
 

Thus we get 
$$
\left(\begin{matrix}X(m)\\ Z(m)\end{matrix}\right)=\left(\begin{matrix}D & S\\ 2A & B\end{matrix}\right)\otimes\left(\begin{matrix}X(m-1)\\ Z(m-1)\end{matrix}\right).
$$
So we get 

\begin{proposition}\label{chaodd} One has 
$$
\left(\begin{matrix}C_{n,p}\\ Z(n-1)\end{matrix}\right)=\left(\begin{matrix}D & S\\ 2A & B\end{matrix}\right)^{\otimes n-1}\otimes \left(\begin{matrix}{\rm Id}_{p-1}\\ \overline{A}\end{matrix}\right)
$$
 \end{proposition}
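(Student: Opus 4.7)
The plan is to prove the formula by straightforward induction on $n$, since Proposition~\ref{chaodd} is essentially the closed-form unrolling of the linear recursion established in Corollary~\ref{Tro5}. By Corollary~\ref{carmat1} we have $C_{n,p}=X(n-1)$, so it is equivalent to prove, for all $m\ge 0$,
\[
\left(\begin{matrix}X(m)\\ Z(m)\end{matrix}\right)=M^{\otimes m}\otimes \left(\begin{matrix}{\rm Id}_{p-1}\\ \overline{A}\end{matrix}\right),\qquad M:=\left(\begin{matrix}D & S\\ 2A & B\end{matrix}\right),
\]
and then specialize to $m=n-1$.

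For the base case $m=0$, the right-hand side is by convention just the initial vector $\begin{pmatrix}\mathrm{Id}_{p-1}\\ \overline{A}\end{pmatrix}$, which matches $\begin{pmatrix}X(0)\\ Z(0)\end{pmatrix}$ by the initial conditions recorded in Corollary~\ref{Tro5}. For the inductive step, assume the identity holds for $m-1$. Applying the one-step recursion from Corollary~\ref{Tro5},
\[
\left(\begin{matrix}X(m)\\ Z(m)\end{matrix}\right)=M\otimes\left(\begin{matrix}X(m-1)\\ Z(m-1)\end{matrix}\right),
\]
and substituting the inductive hypothesis yields $M\otimes\bigl(M^{\otimes m-1}\otimes v_0\bigr)=M^{\otimes m}\otimes v_0$, where $v_0=\begin{pmatrix}\mathrm{Id}_{p-1}\\ \overline{A}\end{pmatrix}$. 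This completes the induction.

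The one subtlety worth pinning down is the interpretation of the symbol $M^{\otimes k}$, which is not a literal $k$-fold Kronecker product of the $2\times 2$ block matrix $M$. Rather, $M$ acts on 2-vectors of matrices $\begin{pmatrix}X\\ Z\end{pmatrix}$ by simultaneously performing $2\times 2$ matrix multiplication at the outer block level and Kronecker product $\otimes$ at the inner level, so that $M^{\otimes k}\otimes v_0$ denotes the result of $k$ iterations of this mixed operation. The associativity $M\otimes(M^{\otimes m-1}\otimes v_0)=M^{\otimes m}\otimes v_0$ used in the induction then reduces to the standard mixed-product identity $(P\otimes Q)(R\otimes T)=(PR)\otimes(QT)$ for Kronecker products together with bilinearity, and this is the only minor technical point; there is no serious obstacle, as the recursion of Corollary~\ref{Tro5} already has exactly the shape required.
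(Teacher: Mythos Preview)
Your proof is correct and follows exactly the paper's approach: the proposition is simply the closed-form unrolling of the recursion from Corollary~\ref{Tro5}, and the paper proves it in precisely this way (the text between Corollary~\ref{Tro5} and Proposition~\ref{chaodd} is nothing more than writing the recursion in block-vector form and iterating). Your added remark clarifying the meaning of $M^{\otimes k}$ as iterated application of the mixed Kronecker/block operation is a welcome elaboration, but the underlying argument is the same.
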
 
 
\begin{example} 
For $p=3$ we have 
$$
\left(\begin{matrix} C_{n,3}\\ Z(n-1)\end{matrix}\right)=
\begin{pmatrix}
  \begin{matrix}
  1 & 0 & 0 \\
  0 & 2 & 0 \\
  0 & 0 & 2
  \end{matrix}
  & \rvline & 
 \begin{matrix}
  0 & 0 & 0 \\
  0 & 0 & 1 \\
  0 & 1 & 0
  \end{matrix}   \\
\hline
  \begin{matrix}
  0 & 2 & 0 \\
  2 & 0 & 2 \\
  0 & 2 & 0
  \end{matrix}  & \rvline &
   \begin{matrix}
  0 & 0 & 1 \\
  0 & 1 & 0 \\
  1 & 0 & 1
  \end{matrix}
\end{pmatrix}^{\otimes n-1}
\otimes \left(\begin{matrix}  \begin{matrix}
  1 & 0 \\
  0 & 1
  \end{matrix}\\ \hline \begin{matrix}
  0 & 1 \\
  1 & 0
  \end{matrix}\end{matrix}\right).
$$
Thus, for $n=2$ we get 
$$
C_{2,3}=\begin{pmatrix}
 \begin{matrix}
  1 & 0 & 0 \\
  0 & 2 & 0 \\
  0 & 0 & 2
  \end{matrix}
  & \rvline &
 \begin{matrix}
  0 & 0 & 0 \\
  0 & 0 & 1 \\
  0 & 1 & 0
  \end{matrix}   \\
\hline
 \begin{matrix}
  0 & 0 & 0 \\
  0 & 0 & 1 \\
  0 & 1 & 0
  \end{matrix}
  & \rvline &
 \begin{matrix}
  1 & 0 & 0 \\
  0 & 2 & 0 \\
  0 & 0 & 2
  \end{matrix}
  \end{pmatrix}
 $$
 which after permuting rows and columns falls into a direct sum of 
 the Cartan matrices for the four blocks of $\Ver_{3^2}$, two copies of $1$ 
 and two copies of 
 $\left(\begin{matrix}
  2 & 1 \\
  1 & 2   
 \end{matrix}\right)$.
\end{example} 

\begin{corollary}\label{powersof2} All entries of the Cartan matrix $C_{n,p}$ are either $0$ 
or $2^m$ with $0\le m\le n-1$. 
\end{corollary}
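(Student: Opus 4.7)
My plan is to prove the two cases ($p$ odd and $p = 2$) separately, in each case using the explicit tensor-product recursions for the Cartan matrices together with a direct induction.

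For $p > 2$, Corollary~\ref{Tro5} gives $C_{n,p} = X(n-1)$, where the matrices $X(m), Z(m)$ are determined by the base values $X(0) = \mathrm{Id}_{p-1}$, $Z(0) = \overline{A}$, and the recursions
$$X(m) = D \otimes X(m-1) + S \otimes Z(m-1), \qquad Z(m) = 2A \otimes X(m-1) + B \otimes Z(m-1).$$
I will prove by induction on $m \ge 0$ that every entry of $X(m)$ and of $Z(m)$ lies in $\{0\} \cup \{2^k : 0 \le k \le m\}$. The base case is immediate since $\mathrm{Id}_{p-1}$ and $\overline{A}$ have entries in $\{0,1\}$. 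The inductive step rests on the observation that, in each of the two recursions, the two summands have disjoint supports. Indeed, $D_{ij} \ne 0$ forces $i = j$ while $S_{ij} \ne 0$ forces $i + j = p$, and for odd $p$ these are incompatible (no integer $i$ satisfies $2i = p$). Similarly, $A_{ij} \ne 0$ forces $|i - j| = 1$ while $B_{ij} \ne 0$ forces $i + j \in \{p-1, p+1\}$, and combining these would force $2i \in \{p, p-2\}$, again impossible for odd $p$. Consequently each entry of $X(m)$ (resp.\ $Z(m)$) equals a single product of the form $D_{ij} X(m-1)_{a'b'}$ or $S_{ij} Z(m-1)_{a'b'}$ (resp.\ $2A_{ij} X(m-1)_{a'b'}$ or $B_{ij} Z(m-1)_{a'b'}$), with coefficient in $\{0,1,2\}$. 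The inductive hypothesis then completes the step, and specializing to $m = n-1$ yields the desired bound for $C_{n,p}$.

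For $p = 2$, the same strategy applies using the block recursion from \cite{BE}, Proposition~4.2, which underlies Proposition~\ref{chara2}:
$$C_{n+1,2}^+ = \begin{pmatrix} 2C_{n,2}^+ & C_{n,2}^+ \\ C_{n,2}^+ & 2C_{n-1,2} \end{pmatrix}, \qquad C_{n,2} = C_{n,2}^+ \oplus C_{n-1,2}.$$
Each of the four blocks of $C_{n+1,2}^+$ is a single scalar from $\{1, 2\}$ times a single matrix drawn from level $n$ or $n-1$, so no sums of distinct sub-blocks occur and a direct induction on $n$, starting from $C_{1,2} = 1$ and $C_{2,2}^+ = 2$, propagates the desired bound $2^{n-1}$.

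The only substantive step is the disjoint-support claim in the odd-$p$ case; once that elementary arithmetic fact about $D, S, A, B$ is verified, the remainder is a mechanical induction, and I do not anticipate any deeper obstacle.
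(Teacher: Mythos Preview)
Your proof is correct and follows essentially the same approach as the paper: the key point in both is that the matrices $D,S$ have disjoint supports and the matrices $A,B$ have disjoint supports (for odd $p$), so the recursion of Proposition~\ref{chaodd} never adds two nonzero numbers and each step multiplies by an element of $\{0,1,2\}$; likewise for $p=2$ via the block recursion of Proposition~\ref{chara2}. (One minor imprecision: your combined case analysis for $A,B$ should give $2i\in\{p-2,p,p+2\}$ rather than just $\{p,p-2\}$, but all three are odd for odd $p$, so the conclusion is unaffected.)
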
 

\begin{proof} It follows from Propositions \ref{chara2},\ \ref{chaodd} that 
while computing entries of $C_{n,p}$, we never have to add nonzero numbers and only have to multiply them (for example, for $p>2$ this is so because the matrices 
$D$ and $S$ don't have nonzero entries in the same position, and $A,B$ don't either). 
This implies the statement. 
\end{proof} 

\begin{example}\label{cart11} The Cartan matrix entry $c_{\one,\one}$ of $C=C_{n,p}$ equals $2^{n-1}$. 
\end{example} 

\begin{remark} 1. For $p=2$ Corollary \ref{powersof2} is proved in \cite{BE}, Proposition 4.2. 

2. Here is another proof of Corollary \ref{powersof2}.
Namely, by Proposition \ref{standfacts}(iii), it suffices to show that for $p-1\le a,c\le 2p-2$ the space
$$\Hom(T_a\ot T_b^{(1)},T_c\ot T_d^{(1)})=\Hom(T_a\ot
T_c, (T_b\ot T_d)^{(1)})
$$
has dimension $0$ or $2^m$, $0\le m\le n-1$. But among the direct summands of $T_a\ot T_c$ only $T_{2p-2}$ and $T_{3p-2}$ have 
quotients on which the Frobenius kernel acts trivially (note that $T_{3p-2}$ appears only if $a$ and $c$
have different parities and the corresponding quotient is $T_1^{(1)}$). Thus we have the following
recursive formulas:
\begin{equation}\label{hom rec}
\dim \Hom(T_a\ot T_b^{(1)},T_c\ot T_d^{(1)})=
\left\{ 
\begin{array}{ccc} 
2\dim \Hom(T_b,T_d)&\mbox{if}&a=c\ne p-1,\\
\dim \Hom(T_b,T_d)&\mbox{if}&a=c=p-1,\\
\dim \Hom(T_b, V\ot T_d)&\mbox{if}&a+c=3p-2.\\
0\text{  otherwise}.
\end{array}
\right.
\end{equation}
We can combine formula \eqref{hom rec} with \eqref{eq1} and \eqref{eq2}. One observes that in all cases the number 
$\dim \Hom(T_b, V\ot T_d)$ equals to $\dim \Hom(T_b, T_{d\pm 1})$ or $\dim \Hom(T_{b\pm 1}, T_d)$
as the summands in \eqref{eq1} are typically in different linkage classes. This implies that all
nonzero entries of the Cartan matrix are powers of 2, and the power is $\le n-1$.
\end{remark}

\subsection{The objects $\Bbb T_i=F(T_i)$}
Let us now discuss the properties of the objects $\Bbb T_i:=F(T_i)$. Recall that for $p^{n-1}-1\le i\le p^n-2$
the objects $\Bbb T_i=\bold P_i$ are precisely the indecomposable projectives of the category $\Ver_{p^n}$.

Next consider the case $i<p$. Note that for such $i$ we have 
$\Bbb T_i=S^i\Bbb T_1$. 

\begin{lemma}\label{l11} For $i<p$ the objects $\Bbb T_i$ are simple. The projective cover of 
$\Bbb T_i$ is $\Bbb T_{2p^{n-1}-2-i}$.
\end{lemma}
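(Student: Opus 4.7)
The plan is to first identify the only possible projective cover of $\Bbb T_i$ by a $\Hom$-computation in the tilting category, and then prove simplicity by induction on $i$ using the tilting recursion.

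For the projective cover statement, I would use the full faithfulness of $F\colon\T_{n,p}\to\Ver_{p^n}$ from Theorem \ref{CartanFP0} to compute, for each projective $\bold P_k=F(T_k)$ with $k\in[p^{n-1}-1,p^n-2]$,
$$\dim\Hom_{\Ver_{p^n}}(\bold P_k,\Bbb T_i)=\dim\Hom_{SL_2(\k)}(T_k,L_i)=\dim\Hom_{SL_2(\k)}(\one,T_k\otimes T_i),$$
using that $T_i=L_i$ is simple and self-dual for $i<p$. By Lemma \ref{lem2} this counts indecomposable summands of $T_k\otimes T_i$ of the form $T_{2p^l-2}$, and by Proposition \ref{Tm appears} such a summand appears with multiplicity one precisely when $k=2p^l-2-i$. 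A range check shows that for $i<p$ and $p^{n-1}-1\le k\le p^n-2$, the only admissible value is $l=n-1$, giving $k=2p^{n-1}-2-i$. Hence the head of $\Bbb T_i$ is a single simple $\bold L$ with projective cover $\bold P_{2p^{n-1}-2-i}$; self-duality of $T_i$ together with self-duality of simples from Theorem \ref{cyclot}(ii) shows that the socle of $\Bbb T_i$ is also $\bold L$, so in particular $\Bbb T_i$ is indecomposable with head and socle equal to $\bold L$.

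Simplicity I would prove by induction on $i$. The base case $i=0$ is immediate: $\Bbb T_0=\one$ is simple in the connected tensor category $\Ver_{p^n}$. For the inductive step, push the tilting decomposition $V\otimes T_{i-1}=T_{i-2}\oplus T_i$ of Lemma \ref{smalltilt}(i) through the tensor functor $F$ to obtain $V\otimes\Bbb T_{i-1}=\Bbb T_{i-2}\oplus\Bbb T_i$ in $\Ver_{p^n}$, with the convention $\Bbb T_{-1}:=0$. Assuming inductively that $\Bbb T_{i-1}$ and $\Bbb T_{i-2}$ are simple and pairwise non-isomorphic (which follows from distinct Frobenius--Perron dimensions $[j+1]_q$ in the Grothendieck-ring injection $\FPdim\colon\operatorname{Gr}(\Ver_{p^n}^+)\hookrightarrow\mO_{n,p}$ of Theorem \ref{cyclot}(iv)), failure of simplicity of $\Bbb T_i$ would force the simple $\bold L$ to appear with multiplicity at least two in the Jordan--H\"older series of $\Bbb T_i$ (once through the head, once through the socle), and hence $\FPdim(\Bbb T_i)\ge 2\FPdim(\bold L)\ge 2$, using that $\FPdim$ of any simple is at least $1$ by the evaluation $\bold L\otimes\bold L^*\twoheadrightarrow\one$.

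The main obstacle is that the crude bound $[i+1]_q<2$ suffices only for small $i$ (e.g.\ $i=1$), so for larger $i<p$ one needs a sharper argument. The natural refinement exploits the fact that $V\otimes\Bbb T_{i-1}$ decomposes via Krull--Schmidt into exactly two indecomposables $\Bbb T_{i-2}$ and $\Bbb T_i$, so $\operatorname{length}(V\otimes\Bbb T_{i-1})=1+\operatorname{length}(\Bbb T_i)$. Combining this with the Hom-vanishing
$$\Hom_{\Ver_{p^n}}(\Bbb T_i,\Bbb T_j)=\Hom_{SL_2(\k)}(L_i,L_j)=\delta_{ij}\k\qquad(0\le i,j<p),$$
coming again from full faithfulness of $F$, rules out any $\Bbb T_j$ with $j<i$ appearing as an intermediate composition factor of $\Bbb T_i$; the remaining possible composition factors are pinned down by the head/socle calculation above, forcing $\operatorname{length}(\Bbb T_i)=1$ and hence $\Bbb T_i\cong\bold L$.
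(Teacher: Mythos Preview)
Your first paragraph is essentially the paper's proof, and it already establishes simplicity --- you just do not notice it. In any finite-length abelian category with enough projectives one has $[\,X:\bold L_k\,]=\dim\Hom(\bold P_k,X)$. Your computation $\dim\Hom_{\Ver_{p^n}}(\bold P_k,\Bbb T_i)=\delta_{k,\,2p^{n-1}-2-i}$ therefore says that $\Bbb T_i$ has exactly one composition factor, namely $\bold L_{2p^{n-1}-2-i}$, with multiplicity one; hence $\Bbb T_i$ is simple and its projective cover is $\bold P_{2p^{n-1}-2-i}=\Bbb T_{2p^{n-1}-2-i}$. This is precisely the paper's argument. You instead read the computation as only giving the head, which undersells it and forces you into the unnecessary inductive detour.

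That inductive detour has a genuine gap. The Hom-vanishing $\Hom_{\Ver_{p^n}}(\Bbb T_i,\Bbb T_j)=\delta_{ij}\k$ controls morphisms between the $\Bbb T_j$, not their occurrence as composition factors of $\Bbb T_i$; a simple $\bold L$ appearing in the middle of $\Bbb T_i$ need not produce a map $\Bbb T_i\to\bold L$ or $\bold L\to\Bbb T_i$. Moreover, at this stage you have not yet identified which simples of $\Ver_{p^n}$ are of the form $\Bbb T_j$ for $j<p$ (that is the content of the lemma itself), so restricting attention to those as potential ``intermediate'' factors is circular. The clean fix is simply to drop the induction and use the composition-multiplicity interpretation of $\dim\Hom(\bold P_k,-)$ from your first paragraph.
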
 

\begin{proof} We claim that if, for $j\in [p^{n-1}-1,p^n-2]$, we have $\Hom(T_j,T_i)\ne 0$ then it is \linebreak 1-dimensional and $j=2p^{n-1}-2-i$. Indeed, 
$$
\Hom(T_j,T_i)=\Hom({\bf 1},T_i\ot T_j^*)=\Hom({\bf 1},T_i\ot T_j).
$$
Thus by Lemma \ref{lem2} $\Hom(T_j,T_i)\ne 0$ if and only if $T_i\ot T_j$ contains $T_{2p^{n-1}-2}$ as a
direct summand. Now the claim follows from Proposition \ref{Tm appears}. 

It follows that for any $\Bbb T_i$ there exists a unique indecomposable projective $P$ such that 
$\Hom(P, \Bbb T_i)\ne 0$ and this $\Hom$--space is one dimensional. This implies the statement. 
\end{proof} 

\begin{lemma}\label{l12} The object $\Bbb T_p$ has length $3$ and composition series 
$[\Bbb T_{p-2},U,\Bbb T_{p-2}]$ for some simple $U$. 
\end{lemma}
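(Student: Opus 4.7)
The plan is to combine a Grothendieck-class computation, via the lift to characteristic zero, with an explicit construction of a filtration arising from the classical Loewy structure of $T_p$ as an $SL_2(\k)$-module.

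For the length, I would use the lift $\widetilde T_p$ discussed in Subsection~\ref{lifttochar0}. Its generic fiber decomposes as $(\widetilde T_p)_K=W_p^\zeta\oplus W_{p-2}^\zeta=L_p^\zeta\oplus L_{p-2}^\zeta$ in the semisimple category $\Ver_{p^n}(K)$, since both Weyl modules are simple when $p\le p^n-2$ (holding for $n\ge 2$). The CDE triangle (Proposition~\ref{cde}) then gives $[\Bbb T_p]=[(L_p^\zeta)_\k]+[(L_{p-2}^\zeta)_\k]$ in the Grothendieck group. Since $T_{p-2}=L_{p-2}$ is a simple tilting module lifting to the simple $L_{p-2}^\zeta$, the class $[(L_{p-2}^\zeta)_\k]$ is a single simple equal to $[\Bbb T_{p-2}]$. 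For the remaining class, Proposition~\ref{TWquiver} says that the multiplicity of $\bold L_i$ in $(L_p^\zeta)_\k$ is $1$ precisely when $p+1$ is a descendant of $i+1$ in the base-$p$ sense. A direct mod-$p$ case analysis on the last digit of $i+1$ shows that exactly two such $i$ occur: one is $i=2p^{n-1}-p$, which by Lemma~\ref{l11} corresponds to $\Bbb T_{p-2}$, and the other gives a new simple $U$. Combining, $[\Bbb T_p]=2[\Bbb T_{p-2}]+[U]$, so $\Bbb T_p$ has length three.

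For the filtration, I would exploit Lemma~\ref{smalltilt}(ii): in $\Rep(SL_2(\k))$, the module $T_p$ has a filtration $0\subset L_{p-2}\subset W_p\subset T_p$ with successive quotients $L_{p-2},L_p,L_{p-2}$. This yields a monomorphism $L_{p-2}=T_{p-2}\hookrightarrow T_p$ (via $L_{p-2}\hookrightarrow W_p\hookrightarrow T_p$) and an epimorphism $T_p\twoheadrightarrow L_{p-2}=T_{p-2}$, both of which are morphisms between tilting modules and so descend to $\T_{n,p}$ by Proposition~\ref{fullfaith}. Applying the faithful functor $F$ (Corollary~\ref{separa}) gives nonzero morphisms $\iota\colon\Bbb T_{p-2}\to\Bbb T_p$ and $\pi\colon\Bbb T_p\to\Bbb T_{p-2}$ in $\Ver_{p^n}$; since $\Bbb T_{p-2}$ is simple (Lemma~\ref{l11}), $\iota$ is a monomorphism and $\pi$ an epimorphism. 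Moreover $\pi\circ\iota=0$ because the corresponding composition in $\Rep(SL_2(\k))$ vanishes: the image of $L_{p-2}\hookrightarrow T_p$ lies inside $W_p$, which is the kernel of $T_p\twoheadrightarrow L_{p-2}$.

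Setting $M_1:=\Image(\iota)\cong\Bbb T_{p-2}$ and $K:=\Ker(\pi)$, the vanishing $\pi\circ\iota=0$ gives $M_1\subset K$. This produces a filtration $0\subset M_1\subset K\subset\Bbb T_p$ with successive quotients $\Bbb T_{p-2}$, $K/M_1$, $\Bbb T_{p-2}$. By the length count of the first paragraph, $K/M_1$ has length one, hence is simple; matching with the composition factors identifies it with $U$, completing the proof. The main obstacle is the combinatorial case analysis of descendants in the length computation; while this is mechanical (a short recursion on the last digit of $i+1$ using $p+1\equiv 1\pmod p$), it is the only nontrivial computational step.
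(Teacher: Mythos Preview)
Your argument is correct, but it takes a substantially different and heavier route than the paper's. Both proofs obtain the filtration $0\subset\Bbb T_{p-2}\subset K\subset\Bbb T_p$ from the evident nonzero maps $\Bbb T_{p-2}\to\Bbb T_p$ and $\Bbb T_p\to\Bbb T_{p-2}$ (your treatment of $\pi\circ\iota=0$ via the Weyl filtration of $T_p$ is fine; the paper implicitly uses indecomposability of $\Bbb T_p$ to the same end). The divergence is in showing that the middle subquotient $U$ is simple. The paper simply computes
\[
\FPdim(U)=\FPdim(\Bbb T_p)-2\FPdim(\Bbb T_{p-2})=q^{p}+q^{-p}<2,
\]
and since every simple has Frobenius--Perron dimension $\ge 1$, this forces $U$ to be simple. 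Your approach instead lifts to characteristic zero, invokes the CDE triangle, and then counts composition factors of $(W_p^\zeta)_\k$ via the descendant description of the decomposition matrix (Proposition~\ref{TWquiver}). This is valid and even yields the bonus of identifying $U$ explicitly as a particular $\bold L_i$, but the ``mechanical'' case analysis you allude to (showing that $p+1$ has exactly two ancestors in $[p^{n-1},p^n-1]$) is a genuine inductive argument that you have not written out; for instance already at $n=2$ one of the two solutions has leading digit $a_1=2$, so the mod-$p$ recursion on the last digit alone is not quite enough. The paper's $\FPdim$ trick bypasses all of this in one line.
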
 

\begin{proof} We have $\Bbb T_p=\Bbb T_1\otimes \Bbb T_{p-1}$. Thus 
$\FPdim(\Bbb T_p)=\frac{(q^2-q^{-2})(q^{p}-q^{-p})}{(q-q^{-1})^2}$. 
But we have nonzero morphisms $\Bbb T_p\to \Bbb T_{p-2}$ and $\Bbb T_{p-2}\to \Bbb T_p$, 
so by Lemma \ref{l11}, $\Bbb T_{p}$ has composition series as claimed with some object $U$. It remains to show that $U$ is simple. To this end, note that 
$$
\FPdim(U)=\FPdim(\Bbb T_p)-2\FPdim(\Bbb T_{p-2})=
$$
$$
=\frac{(q^2-q^{-2})(q^{p}-q^{-p})-2(q-q^{-1})(q^{p-1}-q^{-p+1})}{(q-q^{-1})^2}=q^p+q^{-p}.
$$
Thus $\FPdim(U)<2$, so $U$ is simple. 
\end{proof} 

\subsection{Some properties of the Frobenius functor}
Consider the Frobenius functor 
$$
\Bbb F: \Ver_{p^n}\to \Ver_{p^n}^{(1)}\boxtimes \Ver_p
$$ 
defined in \cite{EO} (note that $\Ver_{p^n}^{(1)}\cong \Ver_{p^n}$ since the categories $\T_{n,p}$ and $\Ver_{p^n}$ are defined over the prime field). 

\begin{proposition}\label{notfroex} For $n\ge 2$ the functor $\Bbb F$ is not exact, i.e., the category $\Ver_{p^n}$ is not Frobenius exact in the sense of \cite{EO}. Equivalently, 
$\Ver_{p^n}$ is not locally semisimple in the sense of \cite{C2} (see \cite{EO}, Remark 7.2). 
\end{proposition}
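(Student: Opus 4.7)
The strategy is to argue by contradiction, combining the main theorem of \cite{CEO} with the incompressibility of the Verlinde categories established in Theorem \ref{maintheorem}(vi).

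Suppose, for contradiction, that $\Bbb F$ were exact, i.e., that $\Ver_{p^n}$ were Frobenius exact in the sense of \cite{EO}. Then the main theorem of \cite{CEO} --- the one deep input I would invoke --- produces a symmetric fiber functor $F\colon \Ver_{p^n}\to \Ver_p$.

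Assume first that $p>2$. Restricting $F$ along the tensor inclusion $\Ver_{p^n}^+\hookrightarrow \Ver_{p^n}$ gives a symmetric tensor functor $G\colon \Ver_{p^n}^+\to \Ver_p$. By Theorem \ref{maintheorem}(vi), $\Ver_{p^n}^+$ is incompressible, and in particular does not admit any tensor functor to a category of strictly smaller Frobenius--Perron dimension. I would then finish by computing, using Proposition \ref{CartanFP} together with $\Ver_{p^n}=\Ver_{p^n}^+\boxtimes\sVec$ (Corollary \ref{Supervec}), that
$$
\FPdim(\Ver_{p^n}^+)=\tfrac{1}{2}\FPdim(\Ver_{p^n})=\frac{p^n}{4\sin^2(\pi/p^n)} \ > \ \frac{p}{2\sin^2(\pi/p)}=\FPdim(\Ver_p)
$$
for $n\ge 2$, which follows from the elementary estimate $\sin(\pi/p^n)<\pi/p^n$. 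This contradicts the existence of $G$.

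For $p=2$ the situation is even simpler: $\Ver_2=\Vec$, because in characteristic $2$ the regular $\ZZ/2$-module is projective of categorical dimension $2=0$ and hence negligible in the semisimplification. The fiber functor would therefore give a tensor functor $F\colon \Ver_{2^n}\to \Vec$, contradicting the incompressibility of $\Ver_{2^n}$ (Theorem \ref{maintheorem}(vi)), since $\FPdim(\Ver_{2^n})=\frac{2^n}{2\sin^2(\pi/2^n)}>1=\FPdim(\Vec)$ for all $n\ge 2$.

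I expect the main obstacle to be the appeal to \cite{CEO}, which is a deep result; once it is available, the conclusion reduces to a routine Frobenius--Perron dimension estimate played off against the incompressibility of Theorem \ref{maintheorem}(vi). A self-contained argument exhibiting an explicit short exact sequence in $\Ver_{p^n}$ that $\Bbb F$ fails to preserve would presumably be considerably more involved.
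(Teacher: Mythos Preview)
Your argument is correct and follows the same contrapositive strategy as the paper: Frobenius exactness would yield a symmetric fiber functor $\Ver_{p^n}\to\Ver_p$, which cannot exist. The paper's proof is more economical in two respects. First, since $\Ver_{p^n}$ is a \emph{finite} tensor category, \cite[Section 8]{EO} already furnishes the fiber functor to $\Ver_p$ under the Frobenius-exactness hypothesis; the more general result of \cite{CEO} is not needed. Second, to rule out such a functor the paper invokes only Theorem~\ref{cyclot}(v), whose proof is an immediate Frobenius--Perron dimension/Grothendieck ring argument from Theorem~\ref{cyclot}(iv), rather than the full incompressibility theorem (Theorem~\ref{incompressible}), which appears later in the paper and requires substantially more work. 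Your route is logically sound (there is no circularity, since the proof of Theorem~\ref{incompressible} does not rely on Proposition~\ref{notfroex}), but it forward-references heavier machinery than necessary.
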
 

\begin{proof} Since there are no tensor functors $\Ver_{p^n}\to \Ver_p$ (Theorem \ref{cyclot}(v)), by \cite[Section 8]{EO} the category $\Ver_{p^n}$ is not Frobenius exact. 
\end{proof} 

In this section we discuss some properties of $\Bbb F$ restricted to $F(\T_{n,p})\subset \Ver_{p^n}$. We assume that $n>1$; see \cite[Example 3.8]{O} for the case $n=1$.

\begin{corollary}\label{c11} We have $\Bbb F(\Bbb T_1)=U\boxtimes \one \in \Ver_{p^n}\boxtimes \Ver_p$, where $U$ is the object from Lemma \ref{l12}. 
\end{corollary}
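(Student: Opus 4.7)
My approach is to identify $U$ with the ``Frobenius twist'' of $V$ in a categorical sense and then invoke the defining property of the Frobenius functor from \cite{EO} on such objects. The starting observation is that the composition series $[\Bbb T_{p-2}, U, \Bbb T_{p-2}]$ of $\Bbb T_p$ established in Lemma \ref{l12} is exactly parallel to the composition series $[L_{p-2}, L_p, L_{p-2}]$ of the tilting module $T_p$ for $SL_2(\k)$ (coming from Lemma \ref{smalltilt}(ii) with $a=p$), with $U$ playing the role of the Frobenius twist $L_p = V^{(1)}$. This parallel is reinforced by the FP-dimension computation $\FPdim U = q^p + q^{-p}$, which is precisely the character $x+x^{-1}$ of $V$ evaluated at $x=q^p$, the expected FP-dimension of a Frobenius-twisted copy of $V$ in the lift to characteristic zero.

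First I would establish the identification $U \cong \Bbb T_1^{[2]}$ using the Steinberg tensor product theorem for $\Ver_{p^n}$ (Theorem \ref{maintheorem}(8)): $U$ must be a simple ``level-two'' object by its position in the composition series of $\Bbb T_p = V \otimes \Bbb T_{p-1}$, and matching of FP-dimensions forces it to be $\Bbb T_1^{[2]}$. Next, I would use the construction of $\Bbb F$ from \cite{EO} as (essentially) the functor $X \mapsto X^{\otimes p}$ equipped with its cyclic $\Bbb Z/p$-action, semisimplified in the $\Ver_p$ direction. Applying this to $V = \Bbb T_1$, one computes $V^{\otimes p}$ in $\Ver_{p^n}$ using the recursions $V \otimes \Bbb T_i = \Bbb T_{i-1} \oplus \Bbb T_{i+1}$ for $1 \leq i \leq p-2$ together with $V \otimes \Bbb T_{p-1} = \Bbb T_p$, and extracts the $\Bbb Z/p$-invariants-modulo-projectives.

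The main obstacle will be keeping track of the $\Bbb Z/p$-action on $V^{\otimes p}$ summand by summand, and verifying that the only composition factor surviving after passing to $\Ver_p$-isotypic components with trivial $\Bbb Z/p$-action is precisely one copy of $U$. To handle this, I would exploit the lift to characteristic zero from Subsection \ref{lifttochar0}: in $\Ver_{p^n}(K)$, the object $V^{\otimes p}$ decomposes as a direct sum of Weyl modules with computable $\Bbb Z/p$-character, and the reduction to characteristic $p$ is controlled by the decomposition matrix of Subsection \ref{carmat}. The cyclic action in characteristic zero is determined by the braided structure, and after semisimplification through $\Ver_p$ only the ``quantum Frobenius'' summand corresponding to $U$ survives with trivial $\Bbb Z/p$-component, yielding $\Bbb F(\Bbb T_1) = U \boxtimes \one$ as claimed.
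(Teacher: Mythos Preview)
Your proposal has two genuine problems.

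First, invoking the Steinberg tensor product theorem for $\Ver_{p^n}$ to identify $U\cong \Bbb T_1^{[2]}$ is circular. In the paper's logical order the objects $\Bbb T_i^{[k]}$ for $k\ge 2$ are only defined \emph{after} the embedding $H\colon \Ver_{p^{n-1}}\hookrightarrow \Ver_{p^n}$ of Theorem~\ref{inclusion} is constructed, and that construction rests on Corollary~\ref{c11} (via formula~\eqref{Frobenii} in Proposition immediately following it). This step is also unnecessary: the corollary only claims $\Bbb F(\Bbb T_1)=U\boxtimes \one$ for the $U$ of Lemma~\ref{l12}, so nothing beyond ``$U$ is the middle composition factor of $\Bbb T_p$'' is required.

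Second, the plan to control the $\Bbb Z/p$-action on $V^{\otimes p}$ by lifting to $\Ver_{p^n}(K)$ does not work as stated. The lift of Subsection~\ref{lifttochar0} is only \emph{braided}, coming from quantum $SL_2$ at a root of unity; it is not symmetric. The Frobenius functor of \cite{EO} is built from the symmetric $S_p$-action (restricted to $\Bbb Z/p$) on $X^{\otimes p}$, and this structure does not lift: the cyclic action you would obtain in $\Ver_{p^n}(K)$ from the braiding is governed by the $R$-matrix, not the flip, and is a different action altogether. So the decomposition matrix gives no information about the $\Bbb Z/p$-module structure you actually need.

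The paper's argument avoids both issues by staying inside $\T_{n,p}$, where $V^{\otimes p}$ is literally a tensor power of the tautological $SL_2(\k)$-module with the honest permutation action of $\Bbb Z/p$. One decomposes $V^{\otimes p}=\bigoplus_{i<p-2}\Bbb T_i\otimes \pi_i\ \oplus\ M$ with each multiplicity space $\pi_i$ a \emph{free} $\Bbb Z/p$-module (hence annihilated on passage to $\Ver_p$), leaving $M=(p-2)\Bbb T_{p-2}\oplus \Bbb T_p=(p-2)\Bbb T_{p-2}\oplus [\Bbb T_{p-2},U,\Bbb T_{p-2}]$ by Lemma~\ref{l12}; the definition of $\Bbb F$ then gives the result directly. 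Your middle paragraph already contains this computation of $V^{\otimes p}$ --- the point is that no detour through characteristic zero is needed to finish it.
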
 

\begin{proof} We have a decomposition $\Bbb T_1^{\otimes p}=\coplus_{i<p-2}\Bbb T_i\otimes \pi_i\oplus M$, where $\pi_i$ are free $\Bbb Z/p$-modules and 
$$
M=(p-2)\Bbb T_{p-2}\oplus \Bbb T_p=
(p-2)\Bbb T_{p-2}\oplus [\Bbb T_{p-2},U,\Bbb T_{p-2}],
$$
where the last equality comes from Lemma \ref{l12}.
This implies the statement. 
\end{proof} 

Corollary \ref{c11} implies that for $n>1$ the functor $\Bbb F$ restricted to $F(\T_{n,p})$ lands in $\Ver_{p^n}\boxtimes \one
\subset \Ver_{p^n}\boxtimes \Ver_p$. In what follows we identify $\Ver_{p^n}\boxtimes \one$ with
$\Ver_{p^n}$, so $\Bbb F: F(\T_{n,p})\to \Ver_{p^n}$.

The following result should be compared with \eqref{Donkin alg}.
\begin{proposition} Let $n>1$. 
For $p-1\le a\le 2p-2$ and $b\in \BZ_{\ge 0}$ we have
\begin{equation} \label{Frobenii}
\Bbb T_a\ot \Bbb F (\Bbb T_b)=\Bbb T_{a+pb}.
\end{equation}
\end{proposition}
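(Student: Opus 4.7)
The plan is to argue by induction on $b$, building on Corollary~\ref{c11} and Donkin's formula (Proposition~\ref{standfacts}(iii)).

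\emph{Base case $b=0$.} Since $\Bbb F$ is a unital tensor functor, $\Bbb F(\Bbb T_0) = \Bbb F(\one) = \one$, which by the remark after Corollary~\ref{c11} lives inside $\Ver_{p^n} \boxtimes \one \simeq \Ver_{p^n}$ for $n > 1$; thus the formula reduces to the tautology $\Bbb T_a \otimes \one = \Bbb T_a$.

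\emph{Base case $b=1$.} This is the heart of the argument. By Corollary~\ref{c11}, $\Bbb F(\Bbb T_1) = U$, the simple middle composition factor of $\Bbb T_p = [\Bbb T_{p-2}, U, \Bbb T_{p-2}]$ (Lemma~\ref{l12}), so $[U] = [\Bbb T_p] - 2[\Bbb T_{p-2}]$ in the Grothendieck ring of $\Ver_{p^n}$. Using the character identity $\chi_p(x) = (x+x^{-1})\chi_{p-1}(x)$ (Lemma~\ref{smalltilt}), I would verify $\chi_p - 2\chi_{p-2} = x^p + x^{-p}$, which equals the character of $T_1^{(1)}$. Combined with Donkin's formula $T_{a+p} = T_a \otimes T_1^{(1)}$ for $p-1 \le a \le 2p-2$, this yields the Grothendieck identity $[\Bbb T_a \otimes U] = [\Bbb T_{a+p}]$. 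I would then lift this to an isomorphism by using the fullness of $F$ on $\T_{n,p}$ (Theorem~\ref{CartanFP0}(i)): the canonical projection $T_a \otimes T_p \twoheadrightarrow T_{a+p}$ in $\T_p$ (picking out the unique indecomposable tilting summand of highest weight $a+p$) transfers to a morphism in $\Ver_{p^n}$, which, composed with the natural surjection $\Bbb T_a \otimes \Bbb T_p \twoheadrightarrow \Bbb T_a \otimes U$ arising from the composition series of $\Bbb T_p$, produces a nonzero map $\Bbb T_a \otimes U \to \Bbb T_{a+p}$. By Krull--Schmidt, matching Grothendieck classes and Frobenius--Perron dimensions (Corollary~\ref{FPcomp}), this map is an isomorphism.

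\emph{Inductive step.} Suppose the formula is established for all values $\le b$. Apply $\Bbb F$ to the tilting-module Clebsch--Gordan decomposition of $\Bbb T_1 \otimes \Bbb T_b$ in terms of $\Bbb T_{b\pm 1}$ (Lemma~\ref{smalltilt}(i), Proposition~\ref{tenst1}, with appropriate modifications at the boundary values $b = p-1, p, 2p-2$). Since $\Bbb F$ is a tensor functor, this produces a decomposition of $U \otimes \Bbb F(\Bbb T_b) = \Bbb F(\Bbb T_1 \otimes \Bbb T_b)$. Tensoring by $\Bbb T_a$ for $p-1 \le a \le 2p-2$, using the base case $\Bbb T_a \otimes U = \Bbb T_{a+p}$ and the inductive hypothesis applied to $\Bbb F(\Bbb T_{b-1})$ and $\Bbb F(\Bbb T_b)$, one isolates $\Bbb T_a \otimes \Bbb F(\Bbb T_{b+1})$ as $\Bbb T_{a+p(b+1)}$ (either directly or via a Krull--Schmidt cancellation). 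For $b$ such that $a + pb > p^n - 2$, both sides vanish by definition of $\T_{n,p}$.

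The principal obstacle is the base case $b=1$: lifting the Grothendieck equality $[\Bbb T_a \otimes U] = [\Bbb T_{a+p}]$ to an actual isomorphism. When $n = 2$ every $\Bbb T_a$ with $a \ge p-1 = p^{n-1}-1$ is projective and the argument simplifies considerably; for $n \ge 3$ the objects $\Bbb T_a$ with $a \le p^{n-1}-2$ are not projective in $\Ver_{p^n}$, and one must combine the fullness of $F$ on $\T_{n,p}$ with careful Krull--Schmidt bookkeeping to realize the isomorphism.
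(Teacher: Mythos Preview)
Your base case $b=1$ contains a genuine gap that undermines the rest. You claim a ``natural surjection $\Bbb T_a\ot\Bbb T_p\twoheadrightarrow\Bbb T_a\ot U$,'' but there is no surjection $\Bbb T_p\to U$: in the composition series $\Bbb T_p=[\Bbb T_{p-2},U,\Bbb T_{p-2}]$ the object $U$ is the \emph{middle} layer, so it is only a subquotient, not a quotient, of $\Bbb T_p$. Consequently there is no map $\Bbb T_a\ot U\to\Bbb T_{a+p}$ produced by your construction. More fundamentally, even if you had such a nonzero map, equality of classes in the Grothendieck ring of the non-semisimple category $\Ver_{p^n}$ does not imply isomorphism of objects, and you have no independent reason to know $\Bbb T_a\ot U$ is indecomposable or lies in $F(\T_{n,p})$. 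Your inductive step inherits the same problem: after writing $\Bbb T_a\ot U\ot\Bbb F(\Bbb T_b)=\Bbb T_{a+p}\ot\Bbb F(\Bbb T_b)$ you are outside the range $p-1\le a\le 2p-2$, and $\Bbb F(\Bbb T_b)$ is not known to be tilting, so no Krull--Schmidt cancellation is available.

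The paper's argument circumvents exactly this difficulty. It first uses the splitting property of $\St=\Bbb T_{p-1}$ (Proposition~\ref{splitmor} with $k=2$) to show that the maps $\Bbb T_{p-2}\to\Bbb T_p$ and $\Bbb T_p\to\Bbb T_{p-2}$ split after tensoring with $\St$, yielding the single identity $\St\ot U=\Bbb T_{2p-1}$. From this it proves inductively that $\St\ot\Bbb F(\Bbb T_i)\in F(\T^1_{n,p})$, hence $\Bbb T_a\ot\Bbb F(\Bbb T_b)\in F(\T^1_{n,p})$ for all $a\ge p-1$. Now all objects being compared are tilting, so one can work in the \emph{split} Grothendieck ring $K(\T_{n,p})$, where Krull--Schmidt does give isomorphisms. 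The induction on $b$ then establishes $\St\ot\Bbb T_a\ot\Bbb F(\Bbb T_{m+1})=\St\ot\Bbb T_{a+p(m+1)}$ by computing $\Bbb T_{2p-1}\ot\Bbb T_{a+pm}$ directly via Donkin's formula in $\T_p$, and finally cancels the factor $\St$ using that $K(\T_{n,p})$ is a semisimple commutative ring (so $a^2b_1=a^2b_2\Rightarrow ab_1=ab_2$). The step you are missing is precisely this passage through $F(\T^1_{n,p})$: without it, Grothendieck-ring equalities in $\Ver_{p^n}$ carry no information about isomorphism.
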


\begin{proof} The morphisms $\Bbb T_{p-2}\to \Bbb T_p$ and $\Bbb T_p\to \Bbb T_{p-2}$ from the
proof of Lemma \ref{l12} split after tensoring with $\St =\Bbb T_{p-1}$ by Proposition \ref{splitmor},
so we get $\St \ot U=\Bbb T_{2p-1}$ by an easy computation (where we denote $F(\St)$ just by $\St$ for brevity).

Let $\T^1_{n,p}=\mathcal{I}_1/\mathcal{I}_n\subset \T_{n,p}$ be the thick ideal generated by $\St$. 
We prove now that \linebreak $\St \ot \Bbb F (\Bbb T_i)\in F(\T^1_{n,p})$ by induction in $i$. The base case $i=0$ is clear
as $T_0=\one$. Assume that $\St \ot \Bbb F (\Bbb T_i)\in F(\T^1_{n,p})$ holds for some $i$. Then $\Bbb T_{i+1}$ is a direct summand of $\Bbb T_i \ot \Bbb T_1$, so $\St \ot \Bbb F (\Bbb T_{i+1})$ is a direct
summand of $\St \ot \Bbb F (\Bbb T_i)\ot \Bbb F (\Bbb T_1)=\St \ot U\ot \Bbb F (\Bbb T_i)=
\Bbb T_{2p-1}\ot \Bbb F (\Bbb T_i)$. Since $\Bbb T_{2p-1}$ is a direct summand of $\Bbb T_{p}\ot \St$, 
we see that $\St \ot \Bbb F (\Bbb T_{i+1})$ is a direct summand of $\Bbb T_{p}\ot \St \ot \Bbb F (\Bbb T_i)$ and thus $\St \ot \Bbb F (\Bbb T_{i+1})\in F(\T^1_{n,p})$ since the category $\T^1_{n,p}$ is closed under tensor products and direct summands.

For $a\ge p-1$ the object $\Bbb T_a$ is a direct summand of $\Bbb T_{a-(p-1)}\ot \St$, so 
$\Bbb T_a\ot \Bbb F (\Bbb T_b)$ is a direct summand of $\Bbb T_{a-(p-1)}\ot \St \ot \Bbb F (\Bbb T_b)$,
so $\Bbb T_a\ot \Bbb F (\Bbb T_b)\in F(\T^1_{n,p})$ by the preceding paragraph.

We now prove the proposition by induction in $b$ with the base case $b=0$ being clear.
Thus assume that \eqref{Frobenii} holds for all $b\le m$. Let $S=S(m)$ be the multi-subset of $\BZ_{\ge 0}$ such that 
$$T_1\ot T_m=T_{m+1}\oplus \coplus_{s\in S}T_s.$$
It is clear that $s<m$ for any $s\in S$, so \eqref{Frobenii} holds for $b=s\in S$. We have
$$\St \ot \Bbb T_a\ot \Bbb F (\Bbb T_{m+1}\oplus \coplus_{s\in S}\Bbb T_s)=\St \ot \Bbb T_a\ot \Bbb F (\Bbb T_1\ot \Bbb T_m)=\St \ot U\ot \Bbb T_a\ot \Bbb F (\Bbb T_m)=\Bbb T_{2p-1}\ot \Bbb T_{a+pm}=$$
$$F(T_{2p-1}\ot T_{a+pm})=F(T_{p-1}\ot T_1^{(1)}\ot T_a\ot T_m^{(1)})=\St \ot F(T_a\ot (T_1\ot T_m)^{(1)})=$$
$$\St \ot F\biggl(T_a\ot (T_{m+1}\oplus \coplus_{s\in S}T_s)^{(1)}\biggr)=\St \ot F(T_{a+p(m+1)}\oplus \coplus_{s\in S}T_{a+ps})=
$$
$$
=
\St \ot (\Bbb T_{a+p(m+1)}\oplus \coplus_{s\in S}\Bbb T_{a+ps}).$$
Thus using the induction assumption for $b=s\in S$ we get
$$\St \ot \Bbb T_a\ot \Bbb F (\Bbb T_{m+1})=\St \ot \Bbb T_{a+p(m+1)},$$
that is \eqref{Frobenii} holds for $b=m+1$ after tensoring with $\St$. 

Luckily we can cancel $\St$: in the split Grothendieck ring $K(\T_{n,p})$ we have $K(\T^1_{n,p})=[\St]K(\T_{n,p})$ and we get the
cancellation by applying the following result to the ring $K(\T_{n,p})$ and $a=[\St]$:
in a commutative semisimple ring the equality $a^2b_1=a^2b_2$
implies $ab_1=ab_2$.  
\end{proof}

\subsection{The inclusion $\Ver_{p^{n-1}}\hookrightarrow \Ver_{p^{n}}$}

In this section we use \eqref{Frobenii} to prove the following theorem. 

\begin{theorem} \label{inclusion}
For $n\ge 2$ we have a natural inclusion $H: \Ver_{p^{n-1}}\hookrightarrow \Ver_{p^{n}}$ 
as a tensor subcategory. 
\end{theorem}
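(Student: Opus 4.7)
My approach is to apply Corollary \ref{uni1}, invoking the universal property of $\Ver_{p^{n-1}}$ as the abelian envelope of $\T_{n-1,p}$ (Theorem \ref{CartanFP0}(ii)). The task reduces to exhibiting an object $X \in \Ver_{p^n}$ with $\wedge^2 X \cong \one$, together with $\wedge^3 X = 0$ or $X$ an odd invertible in characteristic $3$, satisfying $Q_{n-1,p}^+(X) \cong Q_{n-1,p}^-(X)$ and $Q_{n-2,p}^+(X) \ncong Q_{n-2,p}^-(X)$ (the last for faithfulness). The edge case $p^{n-1}=2$ (i.e.\ $p=2$, $n=2$), where $\Ver_{p^{n-1}}=\Vec$, is handled directly by sending $\one\mapsto\one$.

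The candidate is $X=U$, the simple object of Lemma \ref{l12} characterized by $\Bbb F(\Bbb T_1)=U\boxtimes\one$ in Corollary \ref{c11}; note that $\FPdim(U)=2\cos(\pi/p^{n-1})$. The isomorphism $\wedge^2 U\cong\one$ follows by applying the additive symmetric monoidal functor $\Bbb F$ to the identity $\wedge^2\Bbb T_1\cong\one$ (which in turn descends from $\det V=\one$ via the tensor functor $F$), using in characteristic $\ne 2$ that $\wedge^2$ is the image of an idempotent preserved by any additive monoidal functor. The auxiliary condition $\wedge^3 U=0$ (or $U$ an odd invertible in characteristic $3$) is then automatic: since $U$ is simple and $\wedge^2 U\cong\one$, Proposition \ref{prop1} yields the dichotomy. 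For the polynomial identity $Q_{n-1,p}^+(U)\cong Q_{n-1,p}^-(U)$, I start from the Temperley-Lieb splitting $Q_{n-1,p}^+(\Bbb T_1)\cong Q_{n-1,p}^-(\Bbb T_1)\oplus\Bbb T_{p^{n-1}-1}$ in $\Ver_{p^n}$ (the image under $F$ of the $\T_p$-splitting $\iota_{n-1}$ introduced before Corollary \ref{corro}); applying $\Bbb F$ yields $Q_{n-1,p}^+(U)\cong Q_{n-1,p}^-(U)\oplus\Bbb F(\Bbb T_{p^{n-1}-1})$. The vanishing $\Bbb F(\Bbb T_{p^{n-1}-1})=0$ is then forced by equation \eqref{Frobenii}: setting $a=p-1$, $b=p^{n-1}-1$ gives $\St\otimes\Bbb F(\Bbb T_{p^{n-1}-1})=\Bbb T_{p^n-1}=0$ in $\Ver_{p^n}$, and since $\FPdim(\St)=\sin(\pi/p^{n-1})/\sin(\pi/p^n)\ne 0$, tensoring by $\St$ is a faithful endofunctor. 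Finally, $Q_{n-2,p}^+(U)\ncong Q_{n-2,p}^-(U)$ follows from the numerical inequality $Q_{n-2,p}(2\cos(\pi/p^{n-1}))=\sin(\pi/p)/\sin(\pi/p^{n-1})\ne 0$.

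Corollary \ref{uni1} thereby produces a faithful symmetric tensor functor $H\colon \Ver_{p^{n-1}}\to\Ver_{p^n}$. To upgrade faithfulness to full faithfulness---realizing $H$ as an embedding of a tensor subcategory---it suffices to verify that the restriction $H|_{\T_{n-1,p}}\colon T_b\mapsto \Bbb F(\Bbb T_b)$ is fully faithful: a standard diagram chase through presentations by objects of $\T_{n-1,p}$ in the abelian envelope $\Ver_{p^{n-1}}$ then promotes the conclusion to full faithfulness of $H$ itself. The main obstacle I anticipate is precisely this Hom-space equality $\dim\Hom_{\Ver_{p^n}}(\Bbb F(\Bbb T_a),\Bbb F(\Bbb T_b))=\dim\Hom_{\T_{n-1,p}}(T_a,T_b)$ for $0\le a,b\le p^{n-1}-2$. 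My intended strategy is to lift both sides to the semisimple characteristic-zero category $\Ver_{p^n}(K)$ of Subsection \ref{lifttochar0}, where the analogue of $\Bbb F$ is the classical quantum Frobenius at $\zeta_{n,p}$ whose action on Weyl modules is explicit, and then descend the Hom-equality to characteristic $p$ via the decomposition-matrix formulas of Proposition \ref{cde}.
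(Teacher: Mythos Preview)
Your construction of $H$ via the symmetric TL object $U=\Bbb F(\Bbb T_1)$ and Corollary~\ref{uni1} is the paper's approach in different packaging: the paper directly observes that the symmetric monoidal functor $\Bbb F\circ F\colon\T_{n,p}\to\Ver_{p^n}$ kills $\St_{n-1}$ but not $\St_{n-2}$ (both via \eqref{Frobenii}, exactly as you do), hence factors through a faithful $\widetilde H\colon\T_{n-1,p}\to\Ver_{p^n}$, then invokes the abelian envelope. One small repair: for $p=2$, $n\ge 3$ your idempotent argument for $\wedge^2 U\cong\one$ fails, but this is unnecessary---$U$ is a symmetric TL object simply because it is the image of one under the symmetric monoidal functor $\Bbb F\circ F$, and then Lemma~\ref{lemm1}(i) gives $\wedge^2 U\cong\one$ directly.

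The genuine gap is your fullness argument. The Frobenius functor $\Bbb F$ is a characteristic-$p$ construction built from the $S_p$-equivariant structure on $X^{\otimes p}$; it has no lift to the semisimple category $\Ver_{p^n}(K)$, and Lusztig's quantum Frobenius lives on the big quantum group, not on its semisimplification. Even if you produced some characteristic-zero embedding $\Ver_{p^{n-1}}(K)\hookrightarrow\Ver_{p^n}(K)$, you would still need to show that it is compatible with $\Bbb F$ under reduction---which is essentially the statement you are trying to prove. Moreover, Proposition~\ref{cde} does not transport Hom-dimensions downward: it gives $C_\k=DC_KD^T$, which bounds $\dim\Hom$ from \emph{below} in terms of lifts, not from above.

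The paper's fullness proof avoids all of this. It suffices to bound $\dim\Hom(\one,\Bbb F(\Bbb T_b))$ for each $b$. Since $\one$ is a quotient of $\Bbb T_{2p-2}^*=\Bbb T_{2p-2}$, one has
\[
\dim\Hom(\one,\Bbb F(\Bbb T_b))\le\dim\Hom(\Bbb T_{2p-2}^*,\Bbb F(\Bbb T_b))=\dim\Hom(\one,\Bbb T_{2p-2}\otimes\Bbb F(\Bbb T_b))=\dim\Hom(\one,\Bbb T_{2p-2+pb})
\]
by \eqref{Frobenii}; then Lemma~\ref{lem2} shows the right-hand side is nonzero only when $2p-2+pb=2p^{l+1}-2$, i.e.\ $b=2p^l-2$, and then it equals $1$. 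This exactly matches $\dim\Hom_{\T_{n-1,p}}(\bold 1,T_b)$, completing fullness of $\widetilde H$ and hence of $H$.
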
 

\begin{proof}
Recall that we have a tensor functor $\Bbb F: F(\T_{n,p})\to \Ver_{p^n}$.

\begin{corollary} $\Bbb F (F(\St_{n-1}))=0$ and $\Bbb F(F(\St_{n-2}))\ne 0$.
\end{corollary}

\begin{proof} By \eqref{Frobenii}
we have 
$$
\Bbb F (\Bbb T_{p^{n-1}-1})\ot \Bbb T_{p-1}=\Bbb T_{p-1+p(p^{n-1}-1)}=\St_n=0
$$
and
$$
\Bbb F (\Bbb T_{p^{n-2}-1})\ot \Bbb T_{p-1}=\Bbb T_{p-1+p(p^{n-2}-1)}=\St_{n-1}\ne 0. \qedhere
$$
\end{proof}

Thus the functor $\Bbb F\circ F$ induces a faithful monoidal functor $\widetilde H: \T_{n-1,p}\to \Ver_{p^n}$, see Proposition \ref{tensid} or \cite{BE}, Lemma 3.7. But  $\T_{n-1,p}$ has an abelian envelope $\Ver_{p^{n-1}}$ by Theorem
\ref{CartanFP0}. Thus the functor $\widetilde H$ extends to an (exact) tensor functor $H: \Ver_{p^{n-1}}\to \Ver_{p^n}$. This functor is automatically faithful, see \cite[Remark 4.3.10]{EGNO}. We claim
that it is also full, i.e., $H$ is an embedding, see \cite[6.3]{EGNO}.

\begin{corollary} The functor $\widetilde H$ is full. In particular the functor $H$
is full on projective objects of $\Ver_{p^{n-1}}$.
\end{corollary}

\begin{proof} It is sufficient to show that the injective map $\Hom(\one,\Bbb T_b)\to \Hom(\Bbb F (\one), \Bbb F (\Bbb T_b))$ is surjective for any $b$. Thus by Lemma \ref{lem2} we need to show that $\Hom(\one, \Bbb F (\Bbb T_b))$
is one dimensional for $b=2p^l-2, 0\le l\le n-1$ and zero otherwise. But since $\one$ is a quotient of
$\Bbb T_{2p-2}^*$, we have
$$\dim \Hom(\one, \Bbb F (\Bbb T_b))\le \dim \Hom(\Bbb T_{2p-2}^*, \Bbb F (\Bbb T_b))=
$$
$$
=\dim \Hom(\one, \Bbb T_{2p-2}\ot \Bbb F (\Bbb T_b))=\dim \Hom(\one, \Bbb T_{2p-2+pb}),$$
and the result follows from Lemma \ref{lem2}.
\end{proof}

It follows that the functor $H$ is full which proves the theorem.
\end{proof}

\begin{remark} Here is an alternative approach to the existence of the functor $H$. 
By Proposition \ref{notfroex} the category $\Ver_{p^n}$ for $n\ge 2$ is not Frobenius exact, so by \cite{EO}, Proposition 7.6, $\Bbb F(Q)=0$ for every projective $Q\in \Ver_{p^n}$. 
This means that the monoidal functor $\Bbb F\circ F: \T_{n,p}\to \Ver_{p^n}\boxtimes \Ver_p$ factors through $\T_{n-k,p}$ for some $k>0$. But 
$\T_{n-k,p}$ has an abelian envelope $\Ver_{p^{n-k}}$. Thus the functor $\Bbb F\circ F$ factors through a tensor functor $\widetilde H: \Ver_{p^{n-k}}\to \Ver_{p^n}\boxtimes \Ver_p$. By Corollary \ref{c11}, this functor lands in $\Ver_{p^n}$ (as $\Bbb T_1$ generates $\Ver_{p^{n-k}}$). Recall that the Frobenius-Perron
dimensions of the objects of $\Ver_{p^{n-k}}$ are contained in the ring $\mO_{n-k,p}$, see Theorem 
\ref{cyclot} (iv). However by Corollary \ref{c11} the image of the functor $\widetilde H$ contains an object $U\boxtimes \one$ such that its Frobenius-Perron dimension generates the ring $\mO_{n-1,p}$,
see the proof of Lemma \ref{l12}. We conclude that $k=1$.
\end{remark}

In what follows we consider $\Ver_{p^{n-1}}$ as a subcategory of $\Ver_{p^n}$ via the functor $H$.

\begin{corollary} \label{covers}
Assume $n\ge 2$.
Let $L$ be a simple object of $\Ver_{p^{n-1}}$ with projective cover $\Bbb T_r$ where
$r\in [p^{n-2}-1, p^{n-1}-2]$. Then projective cover of $L$ considered as an object of $\Ver_{p^n}$
is $\Bbb T_{2p-2+pr}$.
\end{corollary}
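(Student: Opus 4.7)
The plan is to exhibit a surjection from $\Bbb T_{2p-2+pr}$ onto $H(L)$ and then invoke the fact that $H(L)$ is indecomposable to conclude.

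First I would verify that $\Bbb T_{2p-2+pr}$ is an indecomposable projective of $\Ver_{p^n}$: since $r\in[p^{n-2}-1,p^{n-1}-2]$, we have $2p-2+pr\in[p^{n-1}+p-2,p^n-2]\subset[p^{n-1}-1,p^n-2]$, which is exactly the range of indices labeling the projective covers by Theorem \ref{cyclot}(ii). Next, recall from the proof of Theorem \ref{inclusion} that $H$ is defined via $\Bbb F\circ F$, so the indecomposable projective $\Bbb T_r$ of $\Ver_{p^{n-1}}$ maps under $H$ to $\Bbb F(\Bbb T_r)\in\Ver_{p^n}$. Applying the Frobenius tensor product formula \eqref{Frobenii} with $a=2p-2$ and $b=r$ gives
\[
\Bbb T_{2p-2+pr}\;=\;\Bbb T_{2p-2}\otimes H(\Bbb T_r)\qquad\text{in }\Ver_{p^n}.
\]

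The surjection $\Bbb T_r\twoheadrightarrow L$ in $\Ver_{p^{n-1}}$ provided by the projective cover property, after applying the exact tensor functor $H$ and tensoring with the rigid (hence exact-by-tensoring) object $\Bbb T_{2p-2}$, yields a surjection
\[
\Bbb T_{2p-2+pr}=\Bbb T_{2p-2}\otimes H(\Bbb T_r)\twoheadrightarrow \Bbb T_{2p-2}\otimes H(L).
\]
To pass from this to a surjection onto $H(L)$ itself, I would produce a surjection $\Bbb T_{2p-2}\twoheadrightarrow\one$ in $\Ver_{p^n}$. By the completeness of $\T_{n,p}$ (Theorem \ref{CartanFP0}(i)) together with Proposition \ref{fullfaith},
\[
\Hom_{\Ver_{p^n}}(\Bbb T_{2p-2},\one)\;=\;\Hom_{\T_p}(T_{2p-2},\bold 1)\;=\;\k
\]
by Lemma \ref{lem2} (and self-duality of $T_{2p-2}$); any nonzero element of this $\Hom$ space must be surjective since $\one$ is simple. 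Tensoring this surjection with $H(L)$ and composing produces the desired surjection $\pi:\Bbb T_{2p-2+pr}\twoheadrightarrow H(L)$.

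To conclude, I would use that $H$ is fully faithful, so $\End_{\Ver_{p^n}}(H(L))=\End_{\Ver_{p^{n-1}}}(L)=\k$, making $H(L)$ indecomposable in $\Ver_{p^n}$. A standard lifting argument then forces the indecomposable projective $\Bbb T_{2p-2+pr}$, surjecting onto the indecomposable $H(L)$ via $\pi$, to coincide with the projective cover of $H(L)$: any projective cover $P'\twoheadrightarrow H(L)$ admits a lift $\Bbb T_{2p-2+pr}\to P'$ whose image together with $\ker(P'\to H(L))\subseteq\mathrm{rad}(P')$ generates $P'$, hence the lift is surjective and, between indecomposable projectives, an isomorphism. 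The only mildly delicate step is verifying the existence of the surjection $\Bbb T_{2p-2}\twoheadrightarrow\one$ uniformly in $n,p$, but this is handled cleanly by reducing to the tilting category $\T_p$ via completeness.
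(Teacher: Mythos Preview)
Your proof is correct and follows essentially the same approach as the paper's: use \eqref{Frobenii} with $a=2p-2$ to identify $\Bbb T_{2p-2+pr}=\Bbb T_{2p-2}\otimes H(\Bbb T_r)$, then use the surjection $\Bbb T_{2p-2}\twoheadrightarrow\one$ to obtain a surjection $\Bbb T_{2p-2+pr}\twoheadrightarrow H(\Bbb T_r)\twoheadrightarrow H(L)$. The paper compresses this to two sentences, leaving implicit the index-range check, the justification of the surjection $\Bbb T_{2p-2}\twoheadrightarrow\one$, the indecomposability of $H(L)$, and the standard lifting argument that an indecomposable projective surjecting onto an indecomposable object must be its projective cover; you have spelled out each of these steps correctly.
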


\begin{proof} By \eqref{Frobenii}
we have $\Bbb T_{2p-2+pr}=\Bbb F (\Bbb T_{r})\ot \Bbb T_{2p-2}$, so $H(\Bbb T_r)=\Bbb F (\Bbb T_{r})$ is a quotient of $\Bbb T_{2p-2+pr}$ as
$\Bbb T_{2p-2}$ surjects onto $\one$. 
\end{proof}

\begin{proposition}\label{not maximal}
The functor $X\mapsto H(X)\ot {\Bbb T}_{p-1}$ is an (additive) equivalence of $\Ver_{p^{n-1}}$ with a direct summand of the category $\Ver_{p^n}$.
\end{proposition}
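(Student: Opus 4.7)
The plan is to analyze the functor $G\colon \Ver_{p^{n-1}}\to\Ver_{p^n}$ defined by $G(X):=H(X)\otimes \Bbb T_{p-1}$. It is additive and exact, since $H$ is a tensor functor (Theorem \ref{inclusion}) and tensoring with any object in $\Ver_{p^n}$ is exact. Since $H$ coincides with $\Bbb F\circ F$ on the tilting subcategory $\T_{n-1,p}\subset \Ver_{p^{n-1}}$ and \eqref{Frobenii} with $a=p-1$ gives $\Bbb T_{p-1}\otimes \Bbb F(\Bbb T_b)=\Bbb T_{p-1+pb}$, the functor $G$ sends $F(T_b)\in\Ver_{p^{n-1}}$ to $\Bbb T_{p-1+pb}\in\Ver_{p^n}$ for every $b\in[0,p^{n-1}-2]$. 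In particular, the indecomposable projectives of $\Ver_{p^{n-1}}$ (those $F(T_b)$ with $b\in[p^{n-2}-1,p^{n-1}-2]$) are sent to indecomposable projectives $\Bbb T_{p-1+pb}\in\Ver_{p^n}$ with $p-1+pb\in[p^{n-1}-1,p^n-p-1]$.

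I would next show $G$ is fully faithful on the tilting subcategory. By Proposition \ref{fullfaith} and Theorem \ref{CartanFP0}(i), the relevant Hom-spaces in $\Ver_{p^{n-1}}$ and $\Ver_{p^n}$ agree with the corresponding Hom-spaces of rational $SL_2(\k)$-modules. Denote by $w_j:=[j+1]_x=(x^{j+1}-x^{-j-1})/(x-x^{-1})$ the Weyl character. Donkin's identity $T_{p-1+pr}=\St\otimes T_r^{(1)}$ (Proposition \ref{standfacts}(iii)) combined with the character identity
\[
w_{p-1}(x)\,w_a(x^p)=w_{(a+1)p-1}(x),
\]
which follows directly from $w_{p-1}(x)=(x^p-x^{-p})/(x-x^{-1})$, yields $\chi_{T_{p-1+pr}}=\sum_a d_{r,a}\,w_{(a+1)p-1}$ where $d_{r,a}$ are the Weyl decomposition numbers of $T_r$. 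Orthogonality of Weyl characters then gives $(\chi_{T_{p-1+pr}},\chi_{T_{p-1+pb}})=(\chi_{T_r},\chi_{T_b})$; by Proposition \ref{Cartan nondeg}(iii), this matches the required Hom-dimensions. Since $G$ is exact and preserves projectives, a standard resolution argument upgrades full faithfulness from projective tiltings to the whole abelian category $\Ver_{p^{n-1}}$.

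It remains to prove that the image of $G$ is a direct summand of $\Ver_{p^n}$, which I expect to be the main obstacle. Let $P:=\bigoplus_{j=p^{n-1}-1}^{p^n-2}\Bbb T_j$ be the projective generator of $\Ver_{p^n}$ and $Q:=\bigoplus_{b=p^{n-2}-1}^{p^{n-1}-2}\Bbb T_{p-1+pb}$, a direct summand of $P$. Using Donkin's unique decomposition $j=a+pb$ with $p-1\le a\le 2p-2$, the complementary summand is $Q':=\bigoplus_{a\in[p,2p-2],\ b\in[p^{n-2}-1,p^{n-1}-2]}\Bbb T_{a+pb}$. The direct summand property is equivalent to the idempotent cutting out $Q$ being central in $\End(P)^{\rm op}$, i.e., to $\Hom(Q,Q')=0=\Hom(Q',Q)$. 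I would prove this vanishing via characters: expanding $w_{a'}(x)\,w_c(x^p)\cdot(x-x^{-1})(x^p-x^{-p})$ and comparing exponents modulo $p$ shows that, for $a'\in[0,p-2]$ and any $c$, the product $w_{a'}(x)\,w_c(x^p)$ lies in the $\mathbb{Z}$-span of $\{w_m : m+1\equiv\pm(a'+1)\pmod p\}$. Applied to $\chi_{T_a}=w_a+w_{2p-2-a}$ for $a\in[p,2p-2]$ (both summand indices having residue modulo $p$ in $[0,p-2]$), this shows that $\chi_{T_{a+pb}}$ is supported on $w_m$ with $m\not\equiv p-1\pmod p$, whereas $\chi_{T_{p-1+pr}}$ is supported on $w_m$ with $m\equiv p-1\pmod p$. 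Orthogonality of Weyl characters together with Proposition \ref{Cartan nondeg}(iii) then force the Hom vanishing, completing the direct summand decomposition.
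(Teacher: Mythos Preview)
Your proof is correct and follows the same overall strategy as the paper: exactness is clear, \eqref{Frobenii} with $a=p-1$ shows indecomposable projectives go to indecomposable projectives, and the remaining work is a Hom computation between tiltings $T_{a+pb}$. The only real difference is in how that Hom computation is carried out. The paper simply invokes the recursive formula \eqref{hom rec}, which in one stroke gives $\dim\Hom(T_{p-1+pb},T_{p-1+pd})=\dim\Hom(T_b,T_d)$ (fullness on projectives) and $\dim\Hom(T_{p-1+pb},T_{c+pd})=0$ for $c\in[p,2p-2]$ (the direct-summand vanishing, since $p-1+c=3p-2$ would force $c=2p-1$). You instead rederive both facts via Weyl-character orthogonality (Proposition~\ref{Cartan nondeg}(iii)) and the identity $w_{p-1}(x)w_c(x^p)=w_{(c+1)p-1}(x)$, together with your residue-mod-$p$ analysis of $w_{a'}(x)w_c(x^p)$. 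This is a legitimate and somewhat more elementary route that avoids the Frobenius-kernel argument behind \eqref{hom rec}, at the cost of a longer computation.

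One small point of presentation: you state the key support claim for $a'\in[0,p-2]$ but then apply it to $w_a$ with $a\in[p,2p-2]$. Your parenthetical about residues makes clear you intend the claim for any index with $(\text{index}+1)\not\equiv 0\pmod p$, and indeed your expansion argument works in that generality; it would be cleaner to state it that way from the start. Also worth noting: you are more explicit than the paper about why the image is a genuine direct summand (the Hom-vanishing between $Q$ and $Q'$), which the paper leaves implicit in its appeal to \eqref{hom rec}.
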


\begin{proof} It is clear that this functor is exact and faithful. We claim that it sends projective objects to projective objects and that it is full on projective objects. The first assertion is immediate
from \eqref{Frobenii} where we substitute $a=p-1$. The second assertion is an immediate consequence of \eqref{hom rec}. The proposition is proved.
\end{proof} 

\subsection{The tensor product theorem for $\Ver_{p^n}$}\label{tenprot}
Here is a generalization of Lemma \ref{l11}.

\begin{proposition} \label{simples}
Assume $n\ge 2$.
Let $L$ be a simple object of $\Ver_{p^{n-1}}$ with projective
cover $\Bbb T_{r}$, $r\in [p^{n-2}-1, p^{n-1}-2]$. Then for any $i\in [0,p-1]$ the object $\Bbb T_i\ot L$ 
of $\Ver_{p^n}$ is simple with projective cover $\Bbb T_{2p-2-i+pr}$. Any simple object of $\Ver_{p^n}$ is of this form.
\end{proposition}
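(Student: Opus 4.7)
My plan is to proceed in three stages: (i) construct a projective cover surjection, (ii) count to identify all simples, and (iii) prove simplicity.

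For (i): since $2p-2-i\in [p-1,2p-2]$ for $i\in [0,p-1]$, equation~\eqref{Frobenii} with $a=2p-2-i$ and $b=r$ gives $\Bbb T_{2p-2-i+pr}=\Bbb T_{2p-2-i}\otimes H(\Bbb T_r)$. Lemma~\ref{smalltilt}(ii) yields a surjection $\Bbb T_{2p-2-i}\twoheadrightarrow \Bbb T_i$ onto the simple head of this small tilting, and applying the exact embedding $H$ to the projective cover surjection $\Bbb T_r\twoheadrightarrow L$ in $\Ver_{p^{n-1}}$ gives $H(\Bbb T_r)\twoheadrightarrow L$; tensoring produces $\Bbb T_{2p-2-i+pr}\twoheadrightarrow \Bbb T_i\otimes L$. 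Since $2p-2-i+pr\in [p^{n-1}-1,p^n-2]$, the source is an indecomposable projective, so its simple head $M_{i,L}$ is a quotient of the nonzero object $\Bbb T_i\otimes L$ (nonvanishing by Frobenius--Perron positivity), and $\Bbb T_{2p-2-i+pr}$ is the projective cover of $M_{i,L}$.

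For (ii): the map $(i,r)\mapsto 2p-2-i+pr$ is a bijection from $[0,p-1]\times [p^{n-2}-1,p^{n-1}-2]$ onto $[p^{n-1}-1,p^n-2]$ by uniqueness of base-$p$ expansion, so the $M_{i,L}$ have pairwise distinct projective covers. Since both sides have cardinality $p^{n-1}(p-1)$, the $M_{i,L}$ exhaust the simples of $\Ver_{p^n}$, yielding the last assertion.

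For (iii), which is the main obstacle, my approach is a double induction on $n$ and on $i$. The cases $i=0$ and $i=p-1$ are immediate: $\Bbb T_0\otimes L=L$ is simple, and Proposition~\ref{not maximal} furnishes an equivalence $X\mapsto H(X)\otimes \Bbb T_{p-1}$ from $\Ver_{p^{n-1}}$ onto a direct summand of $\Ver_{p^n}$, which sends simples to simples. For intermediate $i\in [1,p-2]$, the Clebsch--Gordan identity $V\otimes \Bbb T_{i-1}=\Bbb T_i\oplus \Bbb T_{i-2}$ gives a direct-sum decomposition $V\otimes (\Bbb T_{i-1}\otimes L)=(\Bbb T_i\otimes L)\oplus (\Bbb T_{i-2}\otimes L)$ in $\Ver_{p^n}$. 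Under the inner inductive hypothesis that $\Bbb T_j\otimes L=M_{j,L}$ for $j<i$, this becomes $V\otimes M_{i-1,L}=(\Bbb T_i\otimes L)\oplus M_{i-2,L}$; the Chebyshev recursion $[2]_q[i]_q=[i+1]_q+[i-1]_q$, together with the multiplicativity $\FPdim(M_{j,L})=\FPdim(\Bbb T_j)\FPdim(L)$ for $j<i$ and the global identity $\sum_M\FPdim(M)=\bigl(\sum_{j=0}^{p-1}\FPdim(\Bbb T_j)\bigr)\bigl(\sum_L\FPdim(L)\bigr)$ that follows from the bijection in Stage~(ii) and the outer inductive hypothesis at level $n-1$, then pins down $\FPdim(M_{i,L})=\FPdim(\Bbb T_i)\FPdim(L)=\FPdim(\Bbb T_i\otimes L)$. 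Since $\Bbb T_i\otimes L$ surjects onto $M_{i,L}$ with equal Frobenius--Perron dimensions, the kernel must vanish, giving $\Bbb T_i\otimes L=M_{i,L}$ as required.
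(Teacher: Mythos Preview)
Your Stages~(i) and~(ii) are correct, as are the cases $i=0$ and $i=p-1$ in Stage~(iii). The gap is in the treatment of the intermediate $i$. The ``global identity''
\[
\sum_M\FPdim(M)=\Bigl(\sum_{j=0}^{p-1}\FPdim(\Bbb T_j)\Bigr)\Bigl(\sum_L\FPdim(L)\Bigr)
\]
does not follow from the two ingredients you cite. The bijection in Stage~(ii) only indexes the simples of $\Ver_{p^n}$ by pairs $(i,L)$; it carries no information about $\FPdim(M_{i,L})$. The outer inductive hypothesis at level $n-1$ controls the Frobenius--Perron dimensions of simples of $\Ver_{p^{n-1}}$, not of $\Ver_{p^n}$. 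In fact the identity is \emph{equivalent} to what you are trying to prove: from the surjection in Stage~(i) you already have $\FPdim(M_{i,L})\le\FPdim(\Bbb T_i)\FPdim(L)$ for every pair, so summing gives
\[
\sum_M\FPdim(M)\le\Bigl(\sum_{j}\FPdim(\Bbb T_j)\Bigr)\Bigl(\sum_L\FPdim(L)\Bigr),
\]
with equality precisely when each surjection $\Bbb T_i\otimes L\twoheadrightarrow M_{i,L}$ is an isomorphism. Thus invoking the global identity at this point is circular. (The Chebyshev recursion you mention only reproduces the tautology $\FPdim(\Bbb T_i\otimes L)=\FPdim(\Bbb T_i)\FPdim(L)$; it does not touch $\FPdim(M_{i,L})$.)

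The paper avoids dimension bookkeeping entirely and argues directly that $\Bbb T_i\otimes L$ is simple by computing all Homs from indecomposable projectives. Using self-duality of $\Bbb T_i$ one has $\Hom(\Bbb T_j,\Bbb T_i\otimes L)\cong\Hom(\Bbb T_j\otimes\Bbb T_i,L)$; by Corollary~\ref{covers} the projective cover of $L$ in $\Ver_{p^n}$ is $\Bbb T_{2p-2+pr}$, so this Hom is nonzero exactly when $\Bbb T_j\otimes\Bbb T_i$ contains $\Bbb T_{2p-2+pr}$ as a summand. Proposition~\ref{Tm appears} then forces $j=2p-2-i+pr$ with multiplicity one. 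Since a unique indecomposable projective maps nontrivially into $\Bbb T_i\otimes L$, with one-dimensional Hom, the object is simple and its projective cover is identified.
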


\begin{proof} We claim that there is a unique $j\in [p^{n-1}-1,p^{n}-2]$ such that
$\Hom(\Bbb T_j, \Bbb T_i\ot L)\ne 0$ and this $\Hom$--space is one-dimensional.
But 
$$
\Hom(\Bbb T_j, \Bbb T_i\ot L)=\Hom(\Bbb T_j\ot \Bbb T_i^*, L)=\Hom(\Bbb T_j\ot \Bbb T_i, L)
$$
and by Corollary \ref{covers} this space is nonzero if and only if $\Bbb T_j\ot \Bbb T_i$ contains 
$\Bbb T_{2p-2+pr}$ as a direct summand. The first statement now follows from Proposition \ref{Tm appears}. The second statement is a consequence of the count of the simple modules. 
\end{proof}

By iterating Theorem \ref{inclusion} we get a sequence of embeddings $\Ver_p\subset \Ver_{p^2}\subset \ldots \subset \Ver_{p^n}$.
For any $r\ge 2$ and $i\in [0,p-1]$ let us denote by $\Bbb T_i^{[r]}$ the simple object of the category
$\Ver_{p^r}$ as in Lemma \ref{l11} possibly considered as an object of $\Ver_{p^s}$ with $s\ge r$. Similarly, for $i\in [0,p-2]$ let $\Bbb T_i^{[1]}$ denote the simple object of $\Ver_p$
possibly considered as an object of $\Ver_{p^s}$ with $s\ge 1$. Then Theorem \ref{inclusion} 
implies 
\begin{proposition}\label{frobsim}
For $0\le i\le p-1$ and $m\ge 2$, we have $\Bbb F(\Bbb T_i^{[m]})=\Bbb T_i^{[m-1]}$ 
except if $m=2$ and $i=p-1$, in which case $\Bbb F(\Bbb T_i^{[m]})=0$. 
\end{proposition}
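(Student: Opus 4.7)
The plan is to read off the statement directly from the construction of the embedding $H$ in the proof of Theorem \ref{inclusion}. Recall that $H$ was built there by first noting that $\Bbb F(F_m(\St_{m-1})) = 0$, so the composite $\Bbb F \circ F_m : \T_{m,p} \to \Ver_{p^m}$ descends to a faithful monoidal functor $\widetilde H : \T_{m-1,p} \to \Ver_{p^m}$, and then extending $\widetilde H$ to the abelian envelope $\Ver_{p^{m-1}}$ of $\T_{m-1,p}$. Writing $F_r : \T_{r,p} \to \Ver_{p^r}$ for the canonical functor, this construction gives the identity
$$\Bbb F(F_m(T)) \;=\; H(F_{m-1}(T))$$
in $\Ver_{p^m}$ for every $T$ in the subcategory $\T_{m-1,p} \subset \T_{m,p}$. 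This is the only input needed.

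First I would dispose of the exceptional case $m = 2$, $i = p-1$. Here $\Bbb T_{p-1}^{[2]} = F_2(T_{p-1}) = F_2(\St_1)$, and the corollary in the proof of Theorem \ref{inclusion} already shows $\Bbb F(F_2(\St_1)) = 0$, giving the claim.

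For the remaining case $m \ge 2$, $i \in [0,p-1]$, $(m,i) \neq (2,p-1)$, I would check that $T_i$ lies in the subcategory $\T_{m-1,p}$, i.e.\ that $i \le p^{m-1} - 2$: for $m \ge 3$ this is automatic from $p^{m-1} \ge p^2 \ge p+1$, while for $m = 2$ the exclusion $i \ne p-1$ gives exactly $i \le p-2$. Applying the displayed identity with $T = T_i$ then yields
$$\Bbb F(\Bbb T_i^{[m]}) \;=\; \Bbb F(F_m(T_i)) \;=\; H(F_{m-1}(T_i)) \;=\; H(\Bbb T_i^{[m-1]}),$$
and under the identification of $\Ver_{p^{m-1}}$ with its image in $\Ver_{p^m}$ via the fully faithful embedding $H$ (by which the statement of the proposition is framed, in view of the chain $\Ver_p \subset \Ver_{p^2} \subset \cdots$), the right-hand side is simply $\Bbb T_i^{[m-1]}$.

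There is no real obstacle here; the only bookkeeping to be careful about is the identification $\Ver_{p^m} \boxtimes \one \cong \Ver_{p^m}$ provided by Corollary \ref{c11}, which allows one to view $\Bbb F$ restricted to $F(\T_{m,p})$ as landing in $\Ver_{p^m}$. Since this is the same convention used throughout the construction of $H$, the proposition is essentially a restatement of what was already established to build the inclusions between the $\Ver_{p^n}$.
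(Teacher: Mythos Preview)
Your proof is correct and is precisely the approach the paper intends: the paper's entire proof is the phrase ``Then Theorem \ref{inclusion} implies'', and you have accurately unpacked this by observing that the defining property of $H$ is the identity $\Bbb F(F_m(T)) = H(F_{m-1}(T))$ for $T \in \T_{m-1,p}$, then specializing to $T = T_i$ and handling the boundary case $m=2$, $i=p-1$ via the corollary $\Bbb F(F(\St_{n-1}))=0$.
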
 

For a sequence of base $p$ digits
$a_1,a_2,\ldots, a_n \in [0,p-1]$ let 
$$
\overline{a_1a_2\ldots a_n}=a_1p^{n-1}+\ldots +a_{n-1}p+a_n
$$ 
be the
corresponding integer in base $p$. For a base $p$ digit $a$ let $a^*:=p-1-a$. By iterating Proposition \ref{simples} we get the following result (cf. Steinberg's tensor product theorem \cite[3.17]{Ja} and \cite[Theorem 2.1 (ix)]{BE}):

\begin{theorem}\label{tpt}
Let $i_1\in [0,p-2]$, $i_2,\ldots,i_n\in [0,p-1]$, and $i=\overline{i_1...i_n}\in [0,p^{n-1}(p-1)-1]$ the corresponding integer written in base $p$. Then the object
$L_i:=\Bbb T^{[1]}_{i_1}\ot \Bbb T^{[2]}_{i_2}\ot \ldots \ot \Bbb T^{[n]}_{i_n}\in \Ver_{p^n}$ is simple, and any simple object of $\Ver_{p^n}$ can be uniquely written in this form.\footnote{The object $L_i$ should not be mixed up with the simple module over $SL_2(\k)$ with highest weight $i$ which we also denoted by $L_i$. We hope this will not lead to confusion since the simple $SL_2(\k)$-modules will not be used from this point on.}   The projective cover of $L_i$ is $P_i:=\bold P_s=\Bbb T_s$ with 
$$
s=s(i):=p^{n-1}-1+\overline{i_1i_2^*\ldots i_n^*};
$$
 in other words, $L_i=\bold L_s$. Moreover, the simple objects of $\Ver_{p^n}^+$ are the
$L_i$ for even $i$. 
\end{theorem}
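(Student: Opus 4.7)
The proof will proceed by induction on $n$, using Proposition \ref{simples} as the key recursive tool. In the base case $n=1$, the category $\Ver_p$ is semisimple with $p-1$ simple objects, and by Lemma \ref{l11} these are precisely the objects $\Bbb T_i^{[1]} = F(T_i)$ for $i \in [0,p-2]$, with each being its own projective cover $\Bbb T_{p-2-i_1}$ (agreeing with $s(i) = p^0-1+i_1^*$ if one interprets the empty product correctly, or treating $n=1$ separately).

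For the inductive step, assume the theorem for $\Ver_{p^{n-1}}$. Given any simple object $X \in \Ver_{p^n}$, Proposition \ref{simples} writes $X = \Bbb T_{i_n} \otimes L$ uniquely with $i_n \in [0,p-1]$ and $L$ a simple object of $\Ver_{p^{n-1}}$, embedded via Theorem \ref{inclusion}. By the inductive hypothesis, $L = \Bbb T^{[1]}_{i_1} \otimes \cdots \otimes \Bbb T^{[n-1]}_{i_{n-1}}$ (with $i_1 \in [0,p-2]$), and the projective cover of $L$ in $\Ver_{p^{n-1}}$ is $\Bbb T_r$ with $r = p^{n-2}-1+\overline{i_1 i_2^* \cdots i_{n-1}^*}$. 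Identifying $\Bbb T_{i_n}$ with $\Bbb T_{i_n}^{[n]}$ via Lemma \ref{l11} yields the desired decomposition. Proposition \ref{simples} also says the projective cover in $\Ver_{p^n}$ is $\Bbb T_{2p-2-i_n+pr}$; a direct computation shows
\[
2p-2-i_n+pr = p^{n-1}-1+(p-1-i_n)+p\cdot\overline{i_1 i_2^* \cdots i_{n-1}^*} = p^{n-1}-1+\overline{i_1 i_2^* \cdots i_n^*},
\]
which is exactly $s(i)$. Uniqueness of the tuple $(i_1,\ldots,i_n)$ then follows from the fact that $(i_1,\ldots,i_n) \mapsto \overline{i_1 i_2^* \cdots i_n^*}$ is a bijection between the allowed tuples and $[0,(p-1)p^{n-1}-1]$, matching the count of simple objects via Theorem \ref{cyclot}(ii).

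For the final statement about $\Ver_{p^n}^+$, I would use the faithful $\mathbb{Z}/2$-grading on $\Ver_{p^n}$ in which $\Bbb T_j$ has degree $j \bmod 2$ (this is where $\Ver_{p^n}^+$ lives, being the trivial component). For $p$ odd, each $\Bbb T_{i_k}^{[k]}$ has degree $i_k \bmod 2$, so $L_i$ has degree $i_1+\cdots+i_n \bmod 2 = i \bmod 2$ (since $p^k$ is odd). For $p=2$ the constraint forces $i_1=0$ and only $\Bbb T_1^{[k]}$ factors contribute odd degree; one then checks these match the parity of $i$ via a separate base-$2$ argument.

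The main subtlety in carrying this out is bookkeeping: verifying that the index $s(i)$ lies in $[p^{n-1}-1,p^n-2]$ (so that it really does label a projective cover of a simple in $\Ver_{p^n}$), checking that the recursion of Proposition \ref{simples} composes correctly when $L$ comes from $\Ver_{p^{n-1}} \subset \Ver_{p^n}$, and handling $p=2$ where the constraint $i_1 \in [0,p-2]$ degenerates. No step looks genuinely hard, since Proposition \ref{simples} and Lemma \ref{l11} already do all the categorical work; the theorem is essentially a combinatorial repackaging of the inductive description of simples via their projective covers.
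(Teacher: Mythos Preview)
Your proposal is correct and follows the same approach as the paper, which simply says ``By iterating Proposition \ref{simples}'' and leaves the index bookkeeping to the reader; you have filled in exactly those details, and your computation of $2p-2-i_n+pr = p^{n-1}-1+\overline{i_1 i_2^*\cdots i_n^*}$ is right.

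One small correction for the $\Ver_{p^n}^+$ statement when $p=2$: it is not true that every $\Bbb T_1^{[k]}$ has odd degree in $\Ver_{2^n}$. The embedding $H\colon \Ver_{2^{n-1}}\hookrightarrow \Ver_{2^n}$ sends $\Bbb T_1$ to the object $U$ sitting inside $\Bbb T_2$, which lies in the \emph{even} component; so only the last factor $\Bbb T_{i_n}^{[n]}$ contributes to the parity. This still gives the correct answer since $i\equiv i_n\pmod 2$ in base $2$. A cleaner uniform argument, avoiding any case split, is to note that $L_i\in \Ver_{p^n}^+$ iff its projective cover $\Bbb T_{s(i)}$ lies in $\Ver_{p^n}^+$, i.e.\ iff $s(i)$ is even, and then check directly from your formula that $s(i)\equiv i\pmod 2$ for all $p$.
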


Thus, Theorem \ref{tpt} gives a new way of labeling the simple objects 
of $\Ver_{p^n}$, and the transition between the two labelings is given by the formula
$L_i=\bold L_{s(i)}$. For example, for $p=2$ we have $i_1=0$, so $s(i)=2^n-2-i$, so $L_i=\bold L_{2^n-2-i}$ for $0\le i\le 2^{n-1}-1$. Also, $L_{p^{n-1}(p-2)}=\bold L_{p^n-2}$ is 
the generator of $\sVec\subset \Ver_{p^n}$. 

\begin{remark} For $p=2$, the simple objects $L_1^{[m]}$ were denoted in \cite{BE} by $X_{m-1}$, $2\le m\le n$. Thus for a subset $S\subset [1,n-1]$, the object $X_S\in \Ver_{2^n}=\C_{2n-2}$ from \cite{BE} is $L_i$ where $i=\sum_{k\in S}2^{n-1-k}$.  
\end{remark} 

\begin{corollary}\label{dims} 
(i) The categorical dimensions of simple objects of $\Ver_{p^n}$ are given by the formula
 $$
 \dim(L_i)=\prod_{k=1}^n (i_k+1)\in \k.
 $$
 
 (ii) The Frobenius-Perron dimensions of simple objects of $\Ver_{p^n}$ are given by the formula
 $$
 \FPdim(L_i)=\prod_{k=1}^n [i_k+1]_{q^{p^{n-k}}}.
$$
Thus $\Ver_{p^n}^+$ is a categorification of the ring $\mathcal{O}_{n,p}$ with basis 
$b_i=\prod_{k=1}^n [i_k+1]_{q^{p^{n-k}}}$ for even $i\in [0,p^{n-1}(p-1)-1]$. 
\end{corollary}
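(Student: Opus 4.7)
The plan is to exploit the tensor product factorization $L_i = \Bbb T^{[1]}_{i_1}\otimes\cdots\otimes\Bbb T^{[n]}_{i_n}$ from Theorem \ref{tpt}, together with the fact that both $\dim$ and $\FPdim$ are ring homomorphisms on the Grothendieck ring and hence multiplicative on tensor products. Both formulas therefore reduce to computing the invariants of each building block $\Bbb T^{[k]}_{i_k}$, which by Lemma \ref{l11} (applied inside $\Ver_{p^k}$) is the simple object $F(T_{i_k}) = F(S^{i_k}V)$. The embedding $\Ver_{p^k}\hookrightarrow\Ver_{p^n}$ of Theorem \ref{inclusion} and the tensor functor $F\colon\T_{k,p}\to\Ver_{p^k}$ both preserve $\dim$ and $\FPdim$, so I may carry out the computation inside $\Ver_{p^k}$.

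For (i), since $F$ is a tensor functor it preserves categorical dimension; and $T_{i_k} = S^{i_k}V$ has vector-space dimension $i_k+1$ as an $SL_2(\k)$-module. Hence $\dim(\Bbb T^{[k]}_{i_k}) = i_k+1 \in \k$, which multiplied over $k$ gives the formula. Note that when $i_k = p-1$ this factor is $0\in\k$, consistent with the fact that $\Bbb T^{[k]}_{p-1}$ is the image of the Steinberg module, which is projective with vanishing categorical dimension.

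For (ii), I would proceed by induction on $i_k$. The base case $\FPdim(\Bbb T^{[k]}_1) = 2\cos(\pi/p^k) = [2]_{q_k}$, where $q_k := q^{p^{n-k}}$ and $q = \exp(i\pi/p^n)$, is already contained in the proof of Theorem \ref{cyclot}(iv) via the relation $Q_{k,p}(\FPdim(V)) = 0$. Applying $\FPdim$ to the tilting recursion $V\otimes T_i = T_{i-1}\oplus T_{i+1}$ of Lemma \ref{smalltilt}(i), the sequence $d_i := \FPdim(\Bbb T^{[k]}_i)$ satisfies the three-term Chebyshev recursion $[2]_{q_k}\,d_i = d_{i-1}+d_{i+1}$ with $d_0=1$ and $d_1=[2]_{q_k}$, which forces $d_i = [i+1]_{q_k}$. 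Multiplying over $k$ yields the formula for $\FPdim(L_i)$.

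The final categorification statement is then immediate: by Theorem \ref{cyclot}(iv), $\FPdim$ is a ring isomorphism ${\rm Gr}(\Ver_{p^n}^+)\to\mO_{n,p}$, and by Theorem \ref{tpt} the $L_i$ with even $i$ form a $\ZZ_+$-basis of the left-hand side, so the $b_i$ form a $\ZZ$-basis of $\mO_{n,p}$. I do not expect any real obstacle here, as the heavy lifting has been done in Theorems \ref{tpt} and \ref{cyclot}; the only delicate point is the bookkeeping identity $q^{p^{n-k}} = \exp(i\pi/p^k)$ which is needed to reconcile the global root of unity $q = q_{n,p}$ appearing in the statement with the local root $q_{k,p}$ naturally occurring inside $\Ver_{p^k}$.
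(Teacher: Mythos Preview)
Your proposal is correct and follows the same approach as the paper. The paper's own proof is a single sentence---``This follows since $\dim(\Bbb T_j)=j+1$ and $\FPdim(\Bbb T_j)=[j+1]_q$ in $\Ver_{p^n}$''---which implicitly invokes exactly the tensor factorization from Theorem~\ref{tpt} and the Chebyshev recursion you spell out; your version simply makes explicit the passage through $\Ver_{p^k}$ and the identity $q^{p^{n-k}}=q_{k,p}$ that the paper leaves to the reader.
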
 

\begin{proof} 
This follows since $\dim \Bbb(T_j)=j+1$ and ${\rm FPdim}(\Bbb T_j)=[j+1]_q$ in $\Ver_{p^n}$. 
\end{proof} 

\begin{corollary}\label{frobact} The Frobenius functor acts on the simple objects of $\Ver_{p^n}$ as follows. Let $0\le r\le p-2$ and $0\le b\le p^{n-1}-1$. Then if $b\ge p^{n-1}-p^{n-2}$, we have 
$\Bbb F(L_{rp^{n-1}+b})=0$. Otherwise $\Bbb F(L_{rp^{n-1}+b})=L_b^{[n-1]}\boxtimes L_r$ if $r$ is even and $\Bbb F(L_{rp^{n-1}+b})=(L_{p-2}^{[1]}\otimes L_b^{[n-1]})\boxtimes L_{p-2-r}$ if $r$ is odd. 
\end{corollary}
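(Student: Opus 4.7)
The plan is to reduce the computation to Proposition \ref{frobsim} and the known action of $\Bbb F$ on $\Ver_p$ (from \cite[Example 3.8]{O}) by invoking the Steinberg-type decomposition of Theorem \ref{tpt} together with the monoidality of the Frobenius functor.

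First, I would write $i = rp^{n-1}+b$ in base $p$ as $\overline{r\,b_2\ldots b_n}$, so that $b = \overline{b_2\ldots b_n}$, and apply Theorem \ref{tpt} to obtain
\[
L_{rp^{n-1}+b} = \Bbb T_r^{[1]} \otimes \Bbb T_{b_2}^{[2]} \otimes \cdots \otimes \Bbb T_{b_n}^{[n]}.
\]
Since $\Bbb F\colon \Ver_{p^n} \to \Ver_{p^n}^{(1)} \boxtimes \Ver_p$ is a symmetric tensor functor, this yields
\[
\Bbb F(L_{rp^{n-1}+b}) = \Bbb F(\Bbb T_r^{[1]}) \otimes \Bbb F(\Bbb T_{b_2}^{[2]}) \otimes \cdots \otimes \Bbb F(\Bbb T_{b_n}^{[n]}).
\]

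Next, I would handle each factor with $k \ge 2$ via Proposition \ref{frobsim}. For $k \ge 3$ one always has $\Bbb F(\Bbb T_{b_k}^{[k]}) = \Bbb T_{b_k}^{[k-1]}$, placed in the first tensor factor $\Ver_{p^n}^{(1)}\boxtimes \one$; for $k=2$ the image vanishes precisely when $b_2 = p-1$. The condition $b_2 = p-1$ is equivalent to $b \ge (p-1)p^{n-2} = p^{n-1} - p^{n-2}$, which gives the first assertion $\Bbb F(L_{rp^{n-1}+b}) = 0$. If instead $b < p^{n-1} - p^{n-2}$, a second application of Theorem \ref{tpt} (now within $\Ver_{p^{n-1}}$) recombines the factors for $k \ge 2$ as
\[
\Bbb T_{b_2}^{[1]} \otimes \Bbb T_{b_3}^{[2]} \otimes \cdots \otimes \Bbb T_{b_n}^{[n-1]} = L_b^{[n-1]}.
\]

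It remains to compute $\Bbb F(\Bbb T_r^{[1]}) = \Bbb F(L_r^{[1]})$ inside $\Ver_p \boxtimes \Ver_p$. By the $n=1$ case treated in \cite[Example 3.8]{O}, this equals $\one \boxtimes L_r$ when $r$ is even and $L_{p-2}^{[1]} \boxtimes L_{p-2-r}$ when $r$ is odd (consistent with the Frobenius-Perron dimension check, since $L_{p-2}^{[1]}$ is the invertible fermion of dimension $-1$ in $\Ver_p$). Tensoring with the $L_b^{[n-1]} \boxtimes \one$ factor produced in the previous step, and using that $L_{p-2}^{[1]}$ is invertible so that $L_{p-2}^{[1]} \otimes L_b^{[n-1]}$ remains simple, delivers the two displayed formulas. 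The entire argument is bookkeeping of tensor factors; the only nontrivial ingredients are the already-established Theorem \ref{tpt}, Proposition \ref{frobsim}, and the $n=1$ computation from \cite{O}, so no step should present serious difficulty.
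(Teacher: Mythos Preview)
Your proposal is correct and follows precisely the same approach as the paper's proof, which simply cites Theorem~\ref{tpt}, Proposition~\ref{frobsim}, \cite[Example 3.8]{O}, and monoidality of $\Bbb F$; you have merely spelled out the bookkeeping that the paper leaves implicit.
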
 

\begin{proof} This follows from Proposition \ref{frobsim}, Theorem \ref{tpt}, \cite[Example 3.8]{O} and the fact that $\Bbb F$ is a monoidal functor, see \cite{EO}. 
\end{proof} 

\begin{remark} The following remark is a generalization to odd characteristic of 
\cite{BE}, Remark 3.14. The category $\Ver_{p^n}$ is filtered by Serre subcategories $\Ver_{p^n}^r$, $r\ge 0$, whose simple objects are those occurring in $V^{\otimes j}$, $j\le r$ (where we recall that $V=\Bbb T_1$). These categories are not closed under tensor product, but we have partial tensor products $\Ver_{p^n}^j\times \Ver_{p^n}^{r-j}\to \Ver_{p^n}^r$ with associativity isomorphisms satisfying the pentagon relation. Moreover, we claim that the Frobenius functor $\Bbb F$  maps $\Ver_{p^n}^r$ to $\Ver_{p^{n-1}}^r$ when $n$ is large enough compared to $r$, preserving this partial tensor product. Indeed, for simple objects this follows from Corollary \ref{frobact}, and for arbitrary objects it follows since for a filtered object $Y$ any composition factor of $\Bbb F(Y)$ is also one of $\Bbb F({\rm gr}(Y))$ (see \cite{EO}, Proposition 3.6). 
\end{remark}

\begin{example} \label{tptex}
Let $p\le a\le 2p-2$. Then by Theorem \ref{tpt} the projective cover of $\Bbb T^{[n-1]}_1\ot \Bbb T^{[n]}_{a-p}$ is $\Bbb T_{2p^{n-1}-2-a}$. We have a nonzero morphism $\Bbb T_{2p^{n-1}-2-a}\ot \Bbb T_a\to
\Bbb T_{2p^{n-1}-2}\to \one$, hence a nonzero morphism $\Bbb T_{2p^{n-1}-2-a}\to \Bbb T_a$.
Hence $\Bbb T^{[n-1]}_1\ot \Bbb T^{[n]}_{a-p}$ is a simple constituent of $\Bbb T_a$. Similarly,
using the nonzero morphism $\Bbb T_a\ot \Bbb T_{2p-2-a}\to \Bbb T_{2p-2}\to \one$, we see that the
simple object $\Bbb T_{2p-2-a}$ (see Lemma \ref{l11}) is both a subobject and a quotient object of $\Bbb T_a$.
By comparing the Frobenius-Perron dimensions we deduce that the object $\Bbb T_a$ has 
composition series $$\Bbb T_a=[\Bbb T_{2p-2-a}, \Bbb T^{[n-1]}_1\ot \Bbb T_{a-p}, \Bbb T_{2p-2-a}],$$ 
generalizing
Lemma \ref{l12}.
\end{example} 

\begin{corollary}\label{autoeq}
The only braided tensor functor $E: \Ver_{p^n}\to \Ver_{p^n}$ is isomorphic to the identity. In particular the group
of braided autoequivalences of the category $\Ver_{p^n}$ is trivial.
\end{corollary}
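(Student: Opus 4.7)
The plan is to use the universal property from Corollary~\ref{uni1}: a braided (hence symmetric, since $\Ver_{p^n}$ is symmetric) tensor functor $E\colon\Ver_{p^n}\to\Ver_{p^n}$ is determined up to isomorphism by the object $X:=E(V)$, which must be a symmetric Temperley--Lieb object of degree exactly $n$; in particular, $\wedge^2 X\cong\one$. It therefore suffices to show $X\cong V$.

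I will first prove $X$ is simple. Since $E$ is a tensor functor, it preserves Frobenius--Perron dimensions, so $\FPdim(X)=\FPdim(V)=2\cos(\pi/p^n)<2$ for $p^n>2$. By Lemma~\ref{lemm0}(i), if $X$ were not simple it would be an extension $[\chi,\psi]$ of invertibles with $\chi\otimes\psi\cong\one$, forcing $\FPdim(X)=2$, a contradiction. Hence $X$ is simple. For $p=2$, Theorem~\ref{cyclot}(iv) identifies $\mathrm{Gr}(\Ver_{2^n})$ with $\mathcal{O}_{n+1,2}$ via $\FPdim$, and since this isomorphism sends simples to a $\ZZ$-basis, the simple $X$ is determined by its Frobenius--Perron dimension, yielding $X\cong V$ at once.

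The interesting case is $p>2$. By Corollary~\ref{Supervec}, $\Ver_{p^n}=\Ver_{p^n}^+\boxtimes\sVec$, so simples come in pairs $\{Y,Y\otimes\psi_0\}$ with equal Frobenius--Perron dimension (where $\psi_0$ is the fermion generating $\sVec$). The simples with $\FPdim=[2]_q$ are therefore $V$ and $V\otimes\psi_0$, and the main obstacle is to rule out $X\cong V\otimes\psi_0$ --- this cannot be done via Frobenius--Perron dimensions alone and must use the full symmetric structure preserved by $E$. A direct computation using $c_{\psi_0,\psi_0}=-1$ shows $\wedge^2(V\otimes\psi_0)\cong S^2V$; but $\FPdim(S^2V)=[2]_q^2-1=[3]_q\ne 1$ whenever $p>2$ and $p^n\ge 3$, so $S^2V\not\cong\one$ and $V\otimes\psi_0$ fails the required condition $\wedge^2X\cong\one$. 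This forces $X\cong V$, and hence $E\cong\mathrm{id}$ by Corollary~\ref{uni1}. (The cases $p^n\le 3$ where $\Ver_{p^n}\in\{\Vec,\sVec\}$ are immediate.)
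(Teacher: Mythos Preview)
Your proof is correct and follows essentially the same route as the paper: both invoke the universal property (Corollary~\ref{uni1}), deduce that $X=E(V)$ is simple from $\FPdim(X)=[2]_q<2$, and then identify $X\cong V$. The paper compresses the last step into a one-line appeal to Corollary~\ref{dims}(ii), whereas you spell out the only genuine subtlety for $p>2$: there are exactly two simples with $\FPdim=[2]_q$, namely $V$ and $V\otimes\psi_0$, and you rule out the latter via the explicit computation $\wedge^2(V\otimes\psi_0)\cong S^2V\not\cong\one$.
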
 

\begin{proof} We may assume that $p^n>2$. By Corollary \ref{uni1}, braided (= symmetric) tensor functors $E: \Ver_{p^n}\to \Ver_{p^n}$  correspond to objects $X\in \Ver_{p^n}$ with $\wedge^2X\cong \one$, $\wedge^3X=0$ or $X$ odd invertible in characteristic $3$, and $Q_{n,p}^+(X)\cong Q_{n,p}^-(X)$ but 
$Q_{n-1,p}^+(X)\ne Q_{n-1,p}^-(X)$. Such an object $X$ must have Frobenius-Perron dimension $q+q^{-1}$, in particular, it is simple, since this number is less than $2$. Thus it follows from Corollary \ref{dims}(ii) that $X\cong L_1$. Thus there is a unique braided tensor functor $E$ up to an isomorphism, which must therefore be isomorphic to the identity. 
\end{proof} 

\subsection{Tensor product of simple objects} Let us now compute the tensor product of simple objects of $\Ver_{p^n}$ (in its Grothendieck ring). We start with the following result about the tensor product $\Bbb T_m\otimes \Bbb T_r$
for $0\le m,r\le p-1$. 

\begin{proposition}\label{tenprotilt1}  
The tensor product $\Bbb T_m\otimes \Bbb T_r$ for $1\le m,r\le p-1$ is given by the formula 
$$
\Bbb T_m\otimes \Bbb T_r=\coplus_{|m-r|\le k\le 2(p-2)-m-r}\Bbb T_k\oplus \coplus_{p-1\le k\le m+r}\Bbb T_k,
$$
if $m+r\ge p$, and 
$$
\Bbb T_m\otimes \Bbb T_r=\coplus_{|m-r|\le k\le m+r}\Bbb T_k
$$
if $m+r<p$, where the summation is over integers $k$ such that $m+r-k$ is even. 
\end{proposition}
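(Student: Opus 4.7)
The plan is to deduce the formula from the corresponding formula at the level of tilting modules via the monoidal functor $F$. By construction of $\Ver_{p^n}$, we have a monoidal functor $F\colon \T_{n,p}\to \Ver_{p^n}$ with $\Bbb T_i=F(T_i)$, so
\[
\Bbb T_m\otimes \Bbb T_r \;=\; F(T_m\otimes_{\T_{n,p}} T_r),
\]
and the tensor product in $\T_{n,p}$ is obtained from the tensor product in $\T_p$ by killing summands lying in the ideal $\mathcal{I}_n=\T_{\ge p^n-1}$.

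First, I would invoke Proposition \ref{tenprotilt} to obtain the decomposition of $T_m\otimes T_r$ in $\T_p$; this provides exactly the two sums appearing in the statement (or just the single sum in the case $m+r<p$), summed over integers $k$ with $m+r-k$ even.

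Next, I would pass to $\T_{n,p}$ by discarding summands $T_k$ with $k\ge p^n-1$. Since $1\le m,r\le p-1$, every index appearing satisfies $k\le m+r\le 2p-2$. For $n\ge 2$ we have $p^n-1\ge p^2-1>2p-2$, so every summand survives and the formula in $\T_{n,p}$ is literally the formula in $\T_p$. For $n=1$ we have $p^n-1=p-1$, so the second sum (whose indices satisfy $k\ge p-1$) is killed entirely, and the remaining first sum matches the familiar semisimple fusion rule in $\Ver_p$; this is consistent with the stated formula, interpreting $\Bbb T_k=0$ for $k\ge p^n-1$.

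Finally, applying $F$ (which by Theorem \ref{CartanFP0} is fully faithful on $\T_{n,p}$, and hence sends distinct indecomposables to distinct indecomposables) replaces each $T_k$ by $\Bbb T_k$ and yields the claimed decomposition. There is no serious obstacle here; the argument is essentially pure bookkeeping once Proposition \ref{tenprotilt} and the monoidality of $F$ are in hand.
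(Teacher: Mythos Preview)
Your proposal is correct and follows essentially the same approach as the paper: the paper's one-line proof (``This follows immediately from Lemma \ref{smalltilt}'') amounts to invoking the tilting-module formula of Proposition \ref{tenprotilt} and transporting it through the monoidal functor $F$, which is exactly what you do. Your explicit bookkeeping for the passage $\T_p\to\T_{n,p}\to\Ver_{p^n}$ (including the $n=1$ case) is more detailed than the paper but adds no new ideas.
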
 

\begin{proof} This follows immediately from Lemma \ref{smalltilt}.  
\end{proof} 

\begin{corollary}\label{multrule} The multiplication of simple objects in the Grothendieck ring ${\rm Gr}(\Ver_{p^n})$ 
is given by the following recursive rule. Let $n\ge 2$, $a=a'p+m$, $b=b'p+r$, where $0\le m,r\le p-1$ and $0\le a',b'\le p^{n-2}(p-1)-1$. 
Then for $m+r\ge p$ we have 
$$
L_aL_b=
$$
$$
\left(\sum_{|m-r|\le k\le 2(p-2)-m-r}L_k+\sum_{2(p-1)-m-r\le k\le p-1}(2-\delta_{k,p-1})L_k\right)(L_{a'}L_{b'})^{[n-1]}+
$$
$$
+\sum_{p\le k\le m+r}L_{k-p}(VL_{a'}L_{b'})^{[n-1]}. 
$$
 and for $m+r<p$ we have
$$
L_aL_b=
\left(\sum_{|m-r|\le k\le m+r}L_k\right) (L_{a'}L_{b'})^{[n-1]},
$$
where $V:=\Bbb T_1$ and the summation is over integers $k$ such that $m+r-k$ is even. 
\end{corollary}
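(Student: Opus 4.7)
\textbf{Proof plan for Corollary \ref{multrule}.} The plan is to reduce the computation of $L_a L_b$ to the computation of $\Bbb T_m \otimes \Bbb T_r$ for $m,r \in [0,p-1]$, using the Steinberg tensor product theorem to factor off the $\Ver_{p^{n-1}}$-part, and then read off the composition factors of each tilting summand using the results assembled in Subsection \ref{tenprot}. First I would invoke Theorem \ref{tpt} to write $L_a = H(L_{a'}^{[n-1]}) \otimes \Bbb T_m$ and $L_b = H(L_{b'}^{[n-1]}) \otimes \Bbb T_r$, where $H\colon \Ver_{p^{n-1}} \hookrightarrow \Ver_{p^n}$ is the inclusion of Theorem \ref{inclusion} (under which products and classes in the Grothendieck ring pull back, since $H$ is exact and monoidal). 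Using symmetry to rearrange the factors one obtains, in the Grothendieck ring,
\[
[L_a][L_b] \;=\; (L_{a'}L_{b'})^{[n-1]} \cdot [\Bbb T_m \otimes \Bbb T_r].
\]

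Next I would compute $\Bbb T_m \otimes \Bbb T_r$ as an object of $\Ver_{p^n}$. By Theorem \ref{CartanFP0}(i) the functor $F\colon \T_{n,p} \to \Ver_{p^n}$ is fully faithful and monoidal, so the direct sum decomposition of $T_m \otimes T_r$ in $\T_p$ given in Proposition \ref{tenprotilt1} transfers verbatim to a decomposition of $\Bbb T_m \otimes \Bbb T_r$ in $\Ver_{p^n}$ (with the same indexing, and summed over $k$ with $m+r-k$ even). In the case $m+r<p$ all indices satisfy $k<p$, so each summand $\Bbb T_k$ is already simple by Lemma \ref{l11} and equals $L_k$, which gives directly the second formula of the corollary.

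In the case $m+r\ge p$ I would split the summation into three ranges: $k \in [|m-r|,\,2(p-2)-m-r]$ where $k \le p-2$, the singleton $k = p-1$, and $k \in [p,\,m+r]$. For the first range each $\Bbb T_k$ is again simple and equals $L_k$. For $k=p-1$, Proposition \ref{simples} applied with $i=p-1$ and $L=\one \in \Ver_{p^{n-1}}$ shows that $\Bbb T_{p-1}$ is simple, namely $L_{p-1}$. For $k\in[p,m+r]\subseteq[p,2p-2]$, Example \ref{tptex} gives the three-term composition series $[\Bbb T_{2p-2-k},\, \Bbb T_1^{[n-1]}\otimes \Bbb T_{k-p},\, \Bbb T_{2p-2-k}]$, so in the Grothendieck ring
\[
[\Bbb T_k] \;=\; 2[L_{2p-2-k}] + [V^{[n-1]}]\cdot[L_{k-p}],
\]
using that $2p-2-k\le p-2$ and $k-p\le p-2$, hence both $\Bbb T_{2p-2-k}$ and $\Bbb T_{k-p}$ are simple, and that $\Bbb T_1^{[n-1]} = H(V)$ identifies with $V^{[n-1]}$.

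Finally I would assemble the contributions. The $V^{[n-1]}\cdot [L_{k-p}]$ pieces combine with $(L_{a'}L_{b'})^{[n-1]}$ into $L_{k-p}\cdot (V L_{a'}L_{b'})^{[n-1]}$, producing the third sum in the statement. The $2[L_{2p-2-k}]$ contributions, reindexed by $k'' := 2p-2-k \in [2p-2-m-r,\,p-2]$, together with the coefficient-$1$ contribution $[L_{p-1}]$ from the $k=p-1$ term, give exactly the sum $\sum_{2(p-1)-m-r\le k\le p-1}(2-\delta_{k,p-1})L_k$; the parity $k\equiv m+r\pmod 2$ is preserved under $k\mapsto 2p-2-k$, and it is straightforward to check that the two ranges $[|m-r|,\,2(p-2)-m-r]$ and $[2(p-1)-m-r,\,p-1]$ are disjoint. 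The main obstacle is purely combinatorial bookkeeping — matching the three parity-constrained index ranges in the corollary with the three sources of composition factors, and in particular accounting correctly for the asymmetric coefficient $2-\delta_{k,p-1}$ caused by $\Bbb T_{p-1}$ being simple while the higher $\Bbb T_k$ for $k\in[p,2p-2]$ contribute the simple $L_{2p-2-k}$ with multiplicity two.
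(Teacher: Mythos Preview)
Your proof is correct and follows essentially the same approach as the paper, which simply cites Proposition~\ref{tenprotilt1} and Example~\ref{tptex}; you have supplied the details the paper leaves implicit, namely the Steinberg factorisation of $L_a,L_b$ via Theorem~\ref{tpt}, the identification of $\Bbb T_k$ with $L_k$ for $k\le p-1$, and the combinatorial regrouping of the composition factors coming from Example~\ref{tptex}.
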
 

\begin{proof} This follows from Proposition \ref{tenprotilt1} and Example \ref{tptex}.
\end{proof} 

This allows one to quickly compute products in ${\rm Gr}(\Ver_{p^n})$, since 
the products $L_{a'}L_{b'}$ and $VL_{a'}L_{b'}$ are computed in the
smaller category $\Ver_{p^{n-1}}$. 

Here is a somewhat more elegant way to reformulate Corollary \ref{multrule}.  
Let $A$ be a unital commutative $\Bbb Z_+$-ring (\cite{EGNO}, Subsection 3.1).
By a {\it unital $\Bbb Z_+$-ring over $A$} we will mean a ring $B\supset A$ 
with a free $A$-basis $b_i,i\in I$ (where $b_0=1$ for some element $0\in I$), 
such that $A$ is central in $B$ and $b_ib_j=\sum_k N_{ij}^k b_k$, where 
$N_{ij}^k$ are {\it nonnegative} elements of $A$. In this case $B$ is a unital $\Bbb Z_+$-ring 
in the sense of \cite{EGNO}, Subsection 3.1, with basis $a_ib_j$, where $a_i$ is the basis of $A$. 

\begin{corollary}\label{multrule1} For $n>1$ the ring ${\rm Gr}(\Ver_{p^n})$ 
is a unital $\Bbb Z_+$-ring over ${\rm Gr}(\Ver_{p^{n-1}})$ with basis 
$L_m$, $0\le m\le p-1$, and multiplication given by 
$$
L_mL_r=
\sum_{|m-r|\le k\le 2(p-2)-m-r}L_k+\sum_{2(p-1)-m-r\le k\le p-1}(2-\delta_{k,p-1})L_k+\sum_{p\le k\le m+r}VL_{k-p}
$$
for $m+r\ge p$ and
$$
L_mL_r=
\sum_{|m-r|\le k\le m+r}L_k,
$$
for $m+r<p$, where $V=\Bbb T_1^{[n-1]}\in {\rm Gr}(\Ver_{p^{n-1}})$ and the summation is over integers $k$ such that $m+r-k$ is even. 
\end{corollary}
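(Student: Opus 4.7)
The plan is to observe that this is essentially a specialization of Corollary \ref{multrule}, combined with the Steinberg-type factorization provided by Theorem \ref{tpt}, so the proof should be quite short.

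First, I would invoke Theorem \ref{tpt} to set up the module structure. Writing any index $i\in[0,p^{n-1}(p-1)-1]$ in base $p$ as $i=i'p+i_n$ with $i'=\overline{i_1\ldots i_{n-1}}\in[0,p^{n-2}(p-1)-1]$ and $i_n\in[0,p-1]$, the theorem gives
$$L_i \;=\; L_{i'}^{[n-1]}\otimes \Bbb T_{i_n}^{[n]} \;=\; L_{i'}^{[n-1]}\cdot L_{i_n}$$
where $L_{i_n}=\Bbb T_{i_n}^{[n]}$ is one of the distinguished $p$ elements listed in the corollary. As $L_{i'}^{[n-1]}$ ranges over simple objects of $\Ver_{p^{n-1}}$ and $i_n$ over $\{0,\ldots,p-1\}$, we obtain every simple object of $\Ver_{p^n}$ exactly once; hence $\{L_m\}_{m=0}^{p-1}$ is a free basis of $\mathrm{Gr}(\Ver_{p^n})$ as a module over $\mathrm{Gr}(\Ver_{p^{n-1}})$, and centrality of $\mathrm{Gr}(\Ver_{p^{n-1}})$ is automatic because $\Ver_{p^n}$ is symmetric, so $\mathrm{Gr}(\Ver_{p^n})$ is commutative.

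Second, I would specialize Corollary \ref{multrule} to the case $a'=b'=0$, i.e.\ $a=m$, $b=r$ with $0\le m,r\le p-1$. Then $L_{a'}=L_{b'}=\one$, so $(L_{a'}L_{b'})^{[n-1]}=\one$ and $(VL_{a'}L_{b'})^{[n-1]}=V=\Bbb T_1^{[n-1]}$, and the two-case formula of Corollary \ref{multrule} collapses precisely to the formula displayed in Corollary \ref{multrule1}. This gives the product $L_mL_r$ expanded in the basis $\{L_k\}_{k=0}^{p-1}$ with coefficients in $\mathrm{Gr}(\Ver_{p^{n-1}})$: the coefficients appearing are $0$, $1$, $2-\delta_{k,p-1}$, and $V$, each of which is a nonnegative $\ZZ$-linear combination of simple objects of $\Ver_{p^{n-1}}$. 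This verifies the $\ZZ_+$-ring axioms over $\mathrm{Gr}(\Ver_{p^{n-1}})$.

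Finally, one should check consistency: the product of two arbitrary basis elements $L_a=L_{a'}^{[n-1]}L_m$ and $L_b=L_{b'}^{[n-1]}L_r$ computed using the $\mathrm{Gr}(\Ver_{p^{n-1}})$-bimodule structure together with the rule from Corollary \ref{multrule1} must agree with Corollary \ref{multrule}; but this amounts to $(L_{a'}^{[n-1]}L_{b'}^{[n-1]})(L_mL_r)$ reproducing the right-hand side of Corollary \ref{multrule} after using commutativity to move scalars across, which is immediate. There is no real obstacle here—the content lies in Corollary \ref{multrule}, and this corollary merely repackages it by isolating the ``new'' digit and pushing everything involving the lower digits into the coefficient ring $\mathrm{Gr}(\Ver_{p^{n-1}})$.
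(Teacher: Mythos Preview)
Your proposal is correct and takes essentially the same approach as the paper: the paper presents Corollary~\ref{multrule1} simply as ``a somewhat more elegant way to reformulate Corollary~\ref{multrule}'' and gives no separate proof, so your argument (invoking Theorem~\ref{tpt} for the free basis and specializing Corollary~\ref{multrule} to $a'=b'=0$) is precisely the intended unpacking of that reformulation.
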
 

\begin{example} Since $\Ver_3=\sVec$, the ring ${\rm Gr}(\Ver_3)$ is $\Bbb Z[V]/(V^2-1)$ with basis $1,V$. Thus by Corollary \ref{multrule1} the ring ${\rm Gr}(\Ver_9)$ has basis $1,L_1,L_2$ over ${\rm Gr}(\Ver_3)$ with multiplication given by 
$$
L_1^2=1+L_2,\ L_1L_2=2L_1+V,\ L_2^2=2+L_2+VL_1. 
$$
So the basis of ${\rm Gr}(\Ver_9^+)$ is $1,M_1=VL_1,M_2=L_2$ with multiplication given by 
$$
M_1^2=1+M_2,\ M_1M_2=1+2M_1,\ M_2^2=1+M_1+M_2. 
$$
This agrees with the data in Subsection \ref{psquared}. 
\end{example} 

\subsection{The blocks of $\Ver_{p^n}$}  

\begin{proposition} \label{blocks}
The objects $\Bbb T_i$ and $\Bbb T_j$ are in the same block if and only if
the following holds:

(a) $i$ and $j$ have the same parity;

(b) base $p$ expansions of $i+1$ and $j+1$ have the same number of zeros at the end;

(c) the last nonzero base $p$ digits of $i+1$ and $j+1$ are either equal or sum up to $p$.
\end{proposition}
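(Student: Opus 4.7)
The plan is to use Corollary~\ref{carmat1} to reformulate the block relation: $\Bbb T_i$ and $\Bbb T_j$ lie in the same block iff $a := i+1$ and $b := j+1$ are connected in the graph $G = G(n-1)$ on $[p^{n-1}, p^n-1]$ whose edges $a \sim b$ record that $a$ and $b$ share a common descendant (equivalently, a nonzero entry of the Cartan matrix). I prove both directions by induction on $n$. The base case $n = 1$ is immediate since $\Ver_p$ is semisimple, and conditions (a),(c) then force $a = b$ (for $p$ odd, $d$ and $p - d$ have opposite parities, ruling out the sum-to-$p$ subcase).

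For the direction ``block $\Rightarrow$ (a),(b),(c),'' it suffices to check that a single edge in $G$ preserves each condition, since together they define an equivalence relation. For (a): when $p$ is odd every $p^k$ is odd, so any descendant $d = \sum_k \varepsilon_k a_k p^{n-k}$ satisfies $d \equiv \sum_k a_k \equiv a \pmod 2$; a shared descendant therefore forces $a \equiv b \pmod 2$. For (b) and (c), I induct on $n$ using Corollary~\ref{Tro3}: if $a_n = b_n = 0$, the claim reduces to upper parts by induction; if $a_n = b_n \neq 0$, (b) and (c) are immediate; if $a_n + b_n = p$ with both nonzero, the last nonzero digits sum to $p$; otherwise $X_{ab}(n-1) = 0$ and no direct edge exists. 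The case $p = 2$ is analogous using Proposition~\ref{chara2}, with (c) vacuous and (a) equivalent to (b).

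For the direction ``(a),(b),(c) $\Rightarrow$ block,'' I first reduce to the case of no trailing zeros: if $a = p^k \tilde a$ and $b = p^k \tilde b$, then the descendants of $a$ are exactly $p^k$ times the descendants of $\tilde a$ (the trailing zero positions contribute nothing to any signed sum), so chains between $\tilde a,\tilde b$ in $G(n-k-1)$ lift term-by-term. So I may assume $a_n, b_n \in \{d, p-d\}$ for some $d \in [1, p-1]$, with $a \equiv b \pmod 2$. I then construct the chain by first building a ``mixed chain'' $\tilde a = \tilde c_0, \tilde c_1, \dots, \tilde c_r = \tilde b$ in $\Ver_{p^{n-1}}$ whose consecutive pairs are joined by $X(n-2)$- or $Z(n-2)$-edges, and lifting to $G(n-1)$ via the recipe: keep the last digit constant across $X$-steps, and toggle it between $d$ and $p-d$ across $Z$-steps. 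By Corollary~\ref{Tro3}, each lifted step is then a genuine edge of $G(n-1)$.

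The key input making the mixed chain exist is the observation that consecutive integers $m, m+1 \in [p^{n-2}, p^{n-1}-1]$ are always $Z$-adjacent: by the definition of $Y_{ab}$, one has $Y_{m+1,m}(n-2) \geq 1$ since $m+1$ is a descendant of $m+1$ and $m+1-1 = m$ is a descendant of $m$, so $Z_{m,m+1}(n-2) \neq 0$. Hence the mixed graph on $[p^{n-2},p^{n-1}-1]$ is connected via a $Z$-Hamiltonian path, so a mixed chain always exists. Its number of $Z$-steps has parity equal to the parity of $\tilde a - \tilde b$ (every $Z$-step flips vertex parity since descendants differ by one, while $X$-steps preserve it); by the hypothesis $a \equiv b \pmod 2$ this automatically matches the number of $d \leftrightarrow p-d$ toggles required to end with last digit $b_n$, so the lifted chain terminates correctly. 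The case $p = 2$ is simpler: $d = p-d = 1$, no toggles are needed, and the chain reduces to an $X$-only chain whose existence follows from a direct inductive argument using Proposition~\ref{chara2}. The main obstacle I expect is keeping the bookkeeping of the mixed-chain lift straight across the inductive recursion; the crucial insight that makes everything click is the automatic parity-matching between the required number of $Z$-steps and the required number of last-digit toggles.
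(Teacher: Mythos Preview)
Your argument is correct. The paper's own proof is the single sentence ``This follows from Proposition~\ref{TWquiver},'' leaving the reader to extract the block structure from the descendant combinatorics; what you have written is one legitimate way to carry that out in full.

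A few comparative remarks. For the forward direction, routing through the $X,Z$ recursion of Corollary~\ref{Tro3} is valid but heavier than needed: all three invariants can be read off directly from the descendant formula $d = a_1 p^{n-1} \pm a_2 p^{n-2} \pm \cdots \pm a_n$. For $p$ odd each term is $\equiv a_k \pmod 2$, giving (a); reducing modulo $p$ shows $d \equiv \pm a_n$, so an easy induction gives $v_p(d)=v_p(a)$, which is (b); and the last nonzero base-$p$ digit of $d$ then lies in $\{c,\,p-c\}$ where $c$ is the last nonzero digit of $a$, which is (c). This is presumably what the paper has in mind and avoids invoking the recursion at all.

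For the converse, your $Z$-Hamiltonian-path construction with parity-matched last-digit toggles is a clean and correct argument, and the paper gives no hint of anything slicker. The two ingredients you isolate --- that $Y_{m+1,m}\ge 1$ via the trivial descendant, and that for $p$ odd every $Z$-edge flips parity while every $X$-edge preserves it, forcing the toggle count to come out right --- are exactly what is needed.

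Your treatment of $p=2$ is sketchy but salvageable with one line: for $p=2$ and $a_0=b_0=1$ the two cases of the recursion coincide and give $X_{ab}(n-1)=2X_{a'b'}(n-2)+Z_{a'b'}(n-2)$, so any $Z$-chain at level $n-2$ lifts with constant last digit $1$ (no toggling required), and the argument goes through unchanged.
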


\begin{proof} This follows from Proposition \ref{TWquiver}.
\end{proof}

It now follows from Proposition \ref{blocks} that  $\Ver_{p^n}$ has the following block structure. 

\begin{proposition}\label{blockstr} The category $\Ver_{p^n}$ has 
$n(p-1)$ blocks. More specifically, it has $p-1$ blocks of size $1$ formed by projective (simple) objects $\Bbb T_i$ with $i+1$ divisible by $p^{n-1}$, $p-1$ blocks
of size $p-1$ formed by projective objects $\Bbb T_i$ with $i+1$ divisible by $p^{n-2}$ but not $p^{n-1}$, $p-1$ blocks
of size $p^2-p$ formed by projective objects $\Bbb T_i$ with $i+1$ divisible by $p^{n-3}$ but not $p^{n-2}$, etc. In particular it has $p-1$ blocks of maximal size $p^{n-1}-p^{n-2}$ formed by projective objects $\Bbb T_i$ with $i+1$ not divisible by $p$; all these blocks are equivalent to each other by the translation principle, see \cite{Ja}. 
\end{proposition}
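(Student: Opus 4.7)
My plan is to read off the block decomposition directly from the Ext-criterion of Proposition \ref{blocks} by a combinatorial count, and then invoke the Jantzen translation principle \cite{Ja} for the equivalence of blocks of the same size. Fix $k \in \{0, \ldots, n-1\}$ and let $I_k \subset [p^{n-1}-1, p^n-2]$ be the set of indices $i$ for which $i+1$ has exactly $k$ trailing zeros in base $p$ (equivalently, $p^k \mid (i+1)$ and $p^{k+1} \nmid (i+1)$). Writing $i+1 = \overline{a_1 \cdots a_{n-k-1}\, b\, 0 \cdots 0}$ with $a_1, b \in \{1,\ldots,p-1\}$ and $a_2,\ldots,a_{n-k-1} \in \{0,\ldots,p-1\}$, one has $|I_{n-1}| = p-1$ and $|I_k| = (p-1)^2 p^{n-k-2}$ for $k \le n-2$. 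By condition (b) of Proposition \ref{blocks}, distinct $I_k$'s are disjoint unions of distinct blocks, so it suffices to decompose each $I_k$ separately.

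For $k = n-1$ each of the $p-1$ indices $i+1 = b\,p^{n-1}$ is its own block: the only candidate partner $(p-b)p^{n-1}$ under condition (c) is forbidden by condition (a) for $p$ odd (as $b$ and $p-b$ have opposite parity when $p$ is odd) and coincides with $b\,p^{n-1}$ when $p = 2$. For $0 \le k \le n-2$ and $p$ odd, I group $I_k$ by the unordered pair $\{b, p-b\}$ of last nonzero digits, obtaining $(p-1)/2$ groups each of $2(p-1)p^{n-k-2}$ tuples. Since $p$ is odd, switching $b$ with $p-b$ flips the parity of $i+1 \equiv \sum_j a_j + b \pmod 2$, so the block through $(a_1,\ldots,a_{n-k-1},b)$ consists of those $(a_1',\ldots,a_{n-k-1}',b)$ with $\sum_j a_j' \equiv \sum_j a_j \pmod 2$ together with those $(a_1',\ldots,a_{n-k-1}',p-b)$ with $\sum_j a_j' \not\equiv \sum_j a_j \pmod 2$, for a total of $(p-1)p^{n-k-2}$ elements. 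Each pair thus contributes exactly $2$ blocks, so $I_k$ splits into $p-1$ blocks of size $(p-1)p^{n-k-2}$. For $p = 2$ the only choice is $b = 1$, condition (c) is vacuous, and $i+1 \equiv 2^k \pmod 2$ depends only on $k$; so $I_k$ is a single block of size $2^{n-k-2}$ (or $1$ for $k = n-1$), matching the statement with $p - 1 = 1$. Summing over $k$ gives $n(p-1)$ blocks of the prescribed sizes and accounts for all $p^{n-1}(p-1)$ simple objects.

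For the equivalence of blocks of equal size, I would apply the translation principle \cite{Ja}: tensoring in $\T_p$ with an appropriate low-weight tilting and projecting onto a fixed linkage class yields mutually inverse exact functors between blocks indexed by weights lying in the same facet of the affine Weyl group of $SL_2(\k)$. These functors preserve the thick ideal $\mathcal{I}_n$, hence descend through $\T_{n,p}$ to equivalences between the corresponding blocks of $\Ver_{p^n}$ via the abelian envelope property (Theorem \ref{CartanFP0}) together with exactness of the tensor product in $\Ver_{p^n}$ (Proposition \ref{mainpro1a}). The likely main obstacle is the bookkeeping required to check that the combinatorial block labels (number of trailing zeros, pair of last nonzero digits, parity class) are correctly permuted by the translation functors; this reduces to an explicit calculation on highest weights using the base-$p$ digit description identified above.
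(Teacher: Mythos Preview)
Your proposal is correct and follows essentially the same route as the paper: the paper's proof consists of the single sentence ``This follows from Proposition~\ref{blocks},'' and you have carried out in detail the combinatorial count that this sentence leaves to the reader. Your observation that, within each pair $\{b,p-b\}$ of last nonzero digits, the block picks up \emph{all} $(a')$-tuples (those of one parity paired with $b$ and those of the other parity paired with $p-b$), giving size exactly $(p-1)p^{n-k-2}$ regardless of how the parity splits the $(a')$-tuples, is exactly the right point.
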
 

Observe that by Proposition \ref{not maximal} the functor $T\mapsto \St \ot T^{(1)}$ from
$\T_{\le p^{n-1}-1}$ to $\T_{\le p^n-1}$ is fully faithful; its image is generated by the objects $T_i$ such
that $i+1$ is divisible by $p$, i.e., by the objects lying in the blocks not of maximal size. Thus we get
the following statement:

\begin{proposition} \label{samecar}
The blocks of the same size in the categories $\Ver_{p^n}$ (where $n$ is allowed to vary) are all equivalent to each other; in particular they have the same Cartan matrices.
\end{proposition}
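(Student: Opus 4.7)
The plan is to apply Proposition~\ref{not maximal} iteratively so as to reduce the statement to the case of maximal-size blocks within a single $\Ver_{p^m}$, and then invoke the translation principle already cited in Proposition~\ref{blockstr}.

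Proposition~\ref{not maximal} provides an exact fully faithful functor $\Phi_n\colon X\mapsto H(X)\otimes\Bbb T_{p-1}$ realising $\Ver_{p^{n-1}}$ as a direct summand of $\Ver_{p^n}$. By \eqref{Frobenii}, $\Phi_n$ sends the indecomposable projective $\Bbb T_b\in\Ver_{p^{n-1}}$ to $\Bbb T_{p-1+pb}\in\Ver_{p^n}$, and since $(p-1+pb)+1=p(b+1)$, these are precisely the projectives $\Bbb T_i$ with $p\mid(i+1)$, which by Proposition~\ref{blockstr} form the union of the non-maximal blocks of $\Ver_{p^n}$. The substitution $b+1\mapsto p(b+1)$ increments the number of trailing base-$p$ zeros by one, so by Proposition~\ref{blocks} the functor $\Phi_n$ is block-preserving and sends each block of $\Ver_{p^{n-1}}$ to a block of $\Ver_{p^n}$ of the same size.

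Iterating this, any non-maximal block of $\Ver_{p^n}$ of size $p^k(p-1)$ with $0\le k\le n-3$, or of size $1$, is equivalent to a block of the same size in $\Ver_{p^{k+2}}$, respectively in $\Ver_p$. In $\Ver_p$ all size-$1$ blocks are trivially equivalent (the category is semisimple of dimension one over itself), while in $\Ver_{p^{k+2}}$ the size $p^k(p-1)$ is \emph{maximal}. It therefore suffices to prove that within a single $\Ver_{p^m}$, $m\ge 2$, the $p-1$ blocks of maximal size $p^{m-2}(p-1)$ are pairwise equivalent.

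This last claim is precisely the content of the final sentence of Proposition~\ref{blockstr}: it follows from the translation principle for $SL_2(\k)$-tilting modules~\cite{Ja}, applied inside $\T_p$, observing that the translation functors preserve the thick ideal $\mathcal{I}_m$ and hence descend to $\T_{m,p}$, and then extend to block equivalences in $\Ver_{p^m}$ through the universal property of the abelian envelope (Theorem~\ref{main1}). The equality of Cartan matrices then follows at once by transport of structure. The main technical obstacle is verifying that the translation functors descend compatibly to $\T_{m,p}=\T_p/\mathcal{I}_m$ and lift through the abelian envelope in a block-preserving way; this is implicit in the reference to~\cite{Ja} and proceeds by the standard explicit description of translation via tensoring with suitable tilting modules and projection onto linkage classes.
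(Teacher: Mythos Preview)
Your proof is correct and follows essentially the same approach as the paper: use Proposition~\ref{not maximal} to identify the non-maximal blocks of $\Ver_{p^n}$ with all the blocks of $\Ver_{p^{n-1}}$, iterate down until the given block becomes maximal in some $\Ver_{p^m}$, and then invoke the translation principle (already recorded in Proposition~\ref{blockstr}) to conclude that the maximal blocks of a fixed $\Ver_{p^m}$ are pairwise equivalent. The paper's proof is just a one-sentence observation preceding the proposition, whereas you have spelled out the iteration and the block-matching via base-$p$ digits more carefully; the content is the same.
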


\begin{remark} Let $p=2$ and let $\Ver^-_{2^n}\subset \Ver_{2^n}$ be the additive subcategory 
generated by $\Bbb T_i$ with odd $i$. Then  $\Ver^+_{2^n}$ acts on $\Ver^-_{2^n}$ by left multiplication. At the same time the category $\Ver_{2^{n-1}}$ acts on $\Ver^-_{2^n}$ by right
multiplication: $X,Y\mapsto X\ot H(Y)$; clearly these two actions commute. It follows from
the discussion above that $\Ver^-_{2^n}$ considered as a module category over $\Ver_{2^{n-1}}$ is 
equivalent to the regular module (i.e., to $\Ver_{2^{n-1}}$ considered as a module over itself). This
implies that there exists a tensor (but not braided) functor $\Ver^+_{2^n}\to \Ver_{2^{n-1}}$; thus
the category $\Ver^+_{2^n}$ can be described as the representation category of some triangular Hopf algebra in $\Ver_{2^{n-1}}$. This can be considered as an a posteriori explanation of the construction in \cite{BE}.
\end{remark}

\subsection{The principal block} 
One of the blocks of the category $\Ver_{p^n}$ of maximal size is the {\em principal block
} containing the unit object. For $p>2$ and $n>1$ the projective objects in this block are $\Bbb T_i$ with even $i$ such that $p^n-1>i\ge p^{n-1}-1$ and 
the last base $p$ digit of $i$ is $0$ or $p-2$. That is, $i\in R_n$, where
\begin{equation}\label{principal block}
R_n=\{ p^{n-1}+p-2, p^{n-1}+p, p^{n-1}+3p-2, p^{n-1}+3p,
\ldots , p^n-p-2, p^n-p\} .
\end{equation}

We are going to describe this block in terms of the category $\Ver_{p^{n-1}}$. Let $\Lambda$ be the
image of the exterior algebra $\wedge^\bullet (T_1)\in \T_p$ in $\T_{p,n-1}\subset \Ver_{p^{n-1}}$
(we have $\Lambda =\wedge^\bullet ({\Bbb T}_1)$ except when $p=3$ and $n=2$). 

\begin{proposition}\label{prinbl} The principal block of the category $\Ver_{p^n}$ is equivalent to the category
$(\Ver_{p^{n-1}})_{\Lambda}$ of right $\Lambda-$modules in $\Ver_{p^{n-1}}$ as an abelian category.
\end{proposition}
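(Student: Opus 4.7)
The plan is to exhibit an equivalence of abelian categories between the principal block $B\subset \Ver_{p^n}$ and $(\Ver_{p^{n-1}})_{\Lambda}$ by matching up progenerators and identifying their endomorphism algebras.

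First I will describe the indecomposable projectives on both sides. On the $B$-side, Proposition~\ref{blocks} together with the explicit list \eqref{principal block} identifies them as $\Bbb T_j$ with $j\in R_n$; combining the Donkin decomposition \eqref{Donkin alg} with the Frobenius tensor formula \eqref{Frobenii} writes each such $\Bbb T_j$ uniquely as $\Bbb T_{a(b)}\ot H(\Bbb T_b)$, where $b$ runs over $[p^{n-2}-1,p^{n-1}-2]$ (the indices of indecomposable projectives of $\Ver_{p^{n-1}}$), $a(b)=2p-2$ when $b$ is even and $a(b)=p$ when $b$ is odd. On the $\Lambda$-side, the augmentation ideal of $\Lambda$ is nilpotent, so simple right $\Lambda$-modules are just the simples $L$ of $\Ver_{p^{n-1}}$ endowed with trivial $\Lambda$-action, and their projective covers are the induced modules $L\ot \Lambda$, again parametrized by $b\in[p^{n-2}-1,p^{n-1}-2]$ via Lemma~\ref{l11}. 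Thus both categories have $p^{n-2}(p-1)$ indecomposable projectives.

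Next I would compare the endomorphism algebras of the progenerators $P_B:=\coplus_b \Bbb T_{a(b)}\ot H(\Bbb T_b)$ of $B$ and $P_\Lambda:=\coplus_b \Bbb T_b\ot \Lambda$ of $(\Ver_{p^{n-1}})_{\Lambda}$. Using the induction-restriction adjunction together with the decomposition $\Lambda=\one\oplus \Bbb T_1\oplus \one$,
\[
\End_\Lambda(P_\Lambda)\;=\;\coplus_{b,b'}\Hom_{\Ver_{p^{n-1}}}(\Bbb T_b,\Bbb T_{b'}\ot \Lambda).
\]
On the $B$-side, Theorem~\ref{CartanFP0} and Proposition~\ref{fullfaith} reduce $\End_B(P_B)$ to the Hom between the corresponding tilting modules in $\T_p$; using the self-duality of tilting modules and \eqref{Frobenii}, this rewrites as
\[
\coplus_{b,b'}\Hom_{\Ver_{p^n}}\bigl(\Bbb T_{a(b)}\ot \Bbb T_{a(b')},\;H(\Bbb T_b\ot \Bbb T_{b'})\bigr).
\]

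The core calculation will be to show that $\Bbb T_{a(b)}\ot \Bbb T_{a(b')}$ admits, modulo the kernel of the natural map into $H(\Ver_{p^{n-1}})$, a three-step filtration with graded pieces $\one,\,\Bbb T_1^{[n-1]},\,\one$; this comes from the composition series $[\one,\,\Bbb T_1^{[n-1]}\ot \Bbb T_{p-2},\,\one]$ of $\Bbb T_{2p-2}$ and $[\Bbb T_{p-2},\,\Bbb T_1^{[n-1]},\,\Bbb T_{p-2}]$ of $\Bbb T_p$ from Example~\ref{tptex}, combined with the tensor-product rules of Proposition~\ref{tenprotilt1}. This identification should produce a $\k$-algebra isomorphism $\End_B(P_B)^{\op}\cong \End_\Lambda(P_\Lambda)^{\op}$, and standard Morita theory will then give the desired equivalence $B\simeq (\Ver_{p^{n-1}})_\Lambda$ of abelian categories. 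The main obstacle is this final filtration step: not only must the Hom-space dimensions match, but the composition of morphisms must correspond to the multiplication in $\Lambda$. Care is needed to handle the parity-dependent choice between $\Bbb T_{2p-2}$ and $\Bbb T_p$ (which reflects the $\ZZ/2$-grading of $\Ver_{p^n}$ coming from $\sVec\subset \Ver_{p^n}$), and to treat separately the exceptional case $p=3,\,n=2$, where $\Lambda$ differs from $\wedge^\bullet \Bbb T_1$ in $\Ver_{p^{n-1}}=\sVec$.
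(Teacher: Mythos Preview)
Your Morita strategy—match progenerators and identify their endomorphism algebras—is the same as the paper's, and your treatment of the $\Lambda$-side is identical. The divergence is in how you compute $\End_B(P_B)$. The paper does \emph{not} pass through \eqref{Frobenii} and $H$: it stays with tilting $SL_2(\k)$-modules via Proposition~\ref{fullfaith}, writes $T_r=T_a\ot T_i^{(1)}$ using \eqref{Donkin alg}, and reduces $\Hom(T_r,T_s)$ to $\Hom(T_i^{(1)},T_j^{(1)}\ot T_a^*\ot T_{a'})$, which it computes in two stages, first taking invariants under the Frobenius kernel $SL_2(\k)_1$. For $a,a'\in\{p,2p-2\}$ the $SL_2(\k)_1$-invariants of $T_a^*\ot T_{a'}$ are read off from the known Loewy series: $\k[x]/(x^2)$ when $a=a'$ and $T_1^{(1)}$ when $a\ne a'$. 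This matches $\Lambda=\Lambda_0\oplus\Bbb T_1$ on the nose, and the compositions visibly agree with the multiplication in $\Lambda$.

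Your alternative—analyzing $\Hom_{\Ver_{p^n}}(\Bbb T_a\ot\Bbb T_{a'},H(Z))$ through a filtration of $\Bbb T_a\ot\Bbb T_{a'}$ inside $\Ver_{p^n}$—has a genuine gap precisely where you flag the ``main obstacle.'' The phrase ``kernel of the natural map into $H(\Ver_{p^{n-1}})$'' is not well defined; presumably you mean the maximal quotient lying in the Serre subcategory $\Ver_{p^{n-1}}\subset\Ver_{p^n}$ (Proposition~\ref{serre}), but you have not computed that quotient for any of the four pairs $(a,a')$, let alone verified that the induced composition on Homs reproduces multiplication in $\Lambda$. The objects $\Bbb T_{2p-2}^{\ot 2}$, $\Bbb T_p^{\ot 2}$, $\Bbb T_p\ot\Bbb T_{2p-2}$ are genuinely complicated in $\Ver_{p^n}$, and extracting the relevant three-step filtration from Example~\ref{tptex} and Proposition~\ref{tenprotilt1} is not straightforward (note Proposition~\ref{tenprotilt1} only covers $m,r\le p-1$). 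The paper's Frobenius-kernel computation sidesteps all of this: working with $SL_2$-modules rather than in $\Ver_{p^n}$, the analogue of your ``maximal quotient in $\Ver_{p^{n-1}}$'' becomes ``$SL_2(\k)_1$-invariants,'' which is a two-line check.
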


\begin{proof} The case $p=2$ is known from \cite{BE}, so we will assume $p>2$.
We are going to show that the endomorphism algebras of the projective generators (with every indecomposable projective appearing with multiplicity 1) of both categories are isomorphic.

Let us start with the category $(\Ver_{p^{n-1}})_{\Lambda}$. Its projective objects are free modules
$P\ot \Lambda$ where $P$ is a projective object of $\Ver_{p^{n-1}}$. We have
$$
\Hom_{\Lambda}(P_1\ot \Lambda ,P_2\ot \Lambda)=\Hom(P_1, P_2\ot \Lambda)
$$
where the composition is induced by the multiplication in the algebra $\Lambda$. Let
$\Lambda_0\subset \Lambda$ be the isotypic component corresponding to $\one$; it is clear
that $\Lambda_0=\k[x]/(x^2)$ and $\Lambda =\Lambda_0\oplus \Bbb T_1$. The multiplication of $x$
and $\Bbb T_1$ is zero and the multiplication $\Bbb T_1\ot \Bbb T_1\to \k x\subset \Lambda_0$ is the unique up to scaling nonzero map $\Bbb T_1\ot \Bbb T_1\to \one$. Thus
$$
\Hom(\Bbb T_i, \Bbb T_j\ot \Lambda)=\left\{ \begin{array}{cc} \Hom(\Bbb T_i, \Bbb T_j)\ot \k[x]/(x^2)&\mbox{if}\; \; i\equiv j\pmod{2}\\ \Hom(\Bbb T_i, \Bbb T_j\ot \Bbb T_1)&\mbox{if}\; \; i\not \equiv j\pmod{2}
\end{array}\right.
$$
and the compositions are described by the multiplication rules above.

We now consider the principal block of the category $\Ver_{p^n}$. By Proposition \ref{fullfaith} we need 
to compute $\Hom(T_r,T_s)$ with $r,s\in R_n$, see \eqref{principal block}. Note that each $r\in R_n$
can be uniquely written as $r=a+pi$ where $a=p$ or $a=2p-2$ and $p^{n-2}-1\le i\le p^{n-1}-2$. 
Thus by \eqref{Donkin alg} we have $T_r=T_a\ot T_i^{(1)}$ and $T_s=T_{a'}\ot T_j^{(1)}$ where
$a$ and $a'$ are $p$ or $2p-2$. It is clear that $a=a'$ if and only if $i\equiv j\pmod{2}$. We have
$$
\Hom(T_r,T_s)=\Hom(T_a\ot T_i^{(1)},T_{a'}\ot T_j^{(1)})=\Hom(T_i^{(1)},T_j^{(1)}\ot T_a^*\ot T_{a'}).
$$
We can compute the last $\Hom$ is stages: first over the Frobenius kernel of $SL_2(\k)_1$ of $SL_2(\k)$ and then
over the quotient $SL_2(\k)^{(1)}$ of  $SL_2(\k)$ by the Frobenius kernel. Thus we can replace the tensor factor $T_a^*\ot T_{a'}$
by the invariants under the Frobenius kernel. Using the Jordan-H\"older series of $T_p$ and
$T_{2p-2}$ (see e.g. \cite[E.1]{Ja}) it is easy to compute that these invariants are
$\k[x]/(x^2)$ if $a=a'$ and $T_1^{(1)}$ if $a\ne a'$. Thus
$$
\Hom(T_r, T_s)=\left\{ \begin{array}{cc} \Hom(T_i, T_j)\ot \k[x]/(x^2)&\mbox{if}\; \; i\equiv j\pmod{2}\\ \Hom(T_i, T_j\ot T_1)&\mbox{if}\; \; i\not \equiv j\pmod{2}
\end{array}\right.
$$
In view of Proposition \ref{fullfaith} this matches perfectly with the above description for 
$(\Ver_{p^{n-1}})_{\Lambda}$. Moreover, it is easy to check that the compositions are described
in the same way. The result follows.
\end{proof}

\begin{remark} 1. Note that the proof of Proposition \ref{prinbl} actually constructs a specific equivalence of categories, uniquely defined up to an isomorphism of tensor functors. This equivalence
is determined by a choice of a generic copy of the algebra $\Lambda$ inside 
${\rm End}(T_p\oplus T_{2p-2})$ where ${\rm End}$ denotes the endomorphism algebra over the Frobenius kernel. Here ``generic'' means that $T_1\subset \Lambda$ does not act by zero on either
direct summand (thus there are exactly $2$ non-generic choices). Also all generic
choices are related by the action of ${\rm Aut}(T_p\oplus T_{2p-2})$, which shows that the constructed equivalence is uniquely defined up to an isomorphism. 

2. Observe that the algebra $\Lambda$ has a natural structure of a supercocommutative Hopf superalgebra. Thus the equivalent categories $(\Ver_{p^{n-1}})_{\Lambda}$ and the principal block of $\Ver_{p^n}$ are finite symmetric tensor categories in a natural way, equipped with a symmetric tensor functor $(\Ver_{p^n})_\Lambda\to \Ver_{p^{n}}$. For $p>2$ this tensor product on the principal block is different from the tensor product in $\Ver_{p^n}$, however (and the principal block is not closed under the latter); on the other hand, for $p=2$ the principal block coincides with $\Ver_{2^n}^+$ and the tensor products are the same, but the braiding is different, see \cite{BE}.
\end{remark}

\subsection{$\Ver_{p^{n}}$ is a Serre subcategory in $\Ver_{p^{n+k}}$}

\begin{proposition}\label{serre} Let $p>2$. Then 

(i) Any tensor subcategory $\C\subset \Ver_{p^{n+1}}$ containing $\Ver_{p^n}$ as a full subcategory coincides with $\Ver_{p^n}$ or with $\Ver_{p^{n+1}}$. 

(ii) The category $\Ver_{p^{n}}$ is a Serre subcategory in $\Ver_{p^{n+k}}$. 
\end{proposition}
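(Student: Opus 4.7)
My plan is to prove (i) first and deduce (ii) by induction on $k$.

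For (i), suppose $\Ver_{p^n} \subsetneq \C$. The Steinberg tensor product theorem (Theorem \ref{tpt}) identifies the simple objects of $\Ver_{p^{n+1}}$ with tuples $(i_1,\ldots,i_{n+1})$ via $L_i = \Bbb T^{[1]}_{i_1} \otimes \cdots \otimes \Bbb T^{[n+1]}_{i_{n+1}}$, and those lying in $\Ver_{p^n}$ are precisely those with $i_{n+1} = 0$. Pick $L_i \in \C \setminus \Ver_{p^n}$, and set $j := i_{n+1} \ge 1$ and $M := \Bbb T^{[1]}_{i_1} \otimes \cdots \otimes \Bbb T^{[n]}_{i_n} \in \Ver_{p^n}$. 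Then $M^* \otimes L_i = (M^* \otimes M) \otimes \Bbb T^{[n+1]}_j \in \C$, and $\Bbb T^{[n+1]}_j$ appears as a composition factor (since $\End(M) = \k$ makes $\one$ a subquotient of $M^* \otimes M$); subquotient-closure gives $\Bbb T^{[n+1]}_j \in \C$.

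The crucial step is to upgrade this to $V := \Bbb T^{[n+1]}_1 \in \C$. I would iterate tensor products of $\Bbb T^{[n+1]}_j$ using Proposition \ref{tenprotilt1}: in the ``semisimple range'' $a+b<p$, the product $\Bbb T^{[n+1]}_a \otimes \Bbb T^{[n+1]}_b$ decomposes as a sum of simples $\Bbb T^{[n+1]}_k$ of parity $a+b$, and iterated doublings eventually produce a pair $a,b \in \C$ with $a+b \ge p+1$ of the right parity. By Proposition \ref{tenprotilt1}, the non-simple tilting $\Bbb T_{p+1}$ then appears as a summand of $\Bbb T^{[n+1]}_a \otimes \Bbb T^{[n+1]}_b \in \C$. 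By Example \ref{tptex}, $\Bbb T_{p+1}$ has composition series $[\Bbb T^{[n+1]}_{p-3},\, \Bbb T^{[n]}_1 \otimes \Bbb T^{[n+1]}_1,\, \Bbb T^{[n+1]}_{p-3}]$, so subquotient-closure gives $\Bbb T^{[n]}_1 \otimes \Bbb T^{[n+1]}_1 \in \C$. Tensoring with $(\Bbb T^{[n]}_1)^* \in \Ver_{p^n} \subset \C$ and extracting composition factors then yields $V \in \C$ (because $\one$ is a subquotient of $(\Bbb T^{[n]}_1)^* \otimes \Bbb T^{[n]}_1$). Once $V \in \C$, every indecomposable tilting $T_s \in \T_p$ is a direct summand of $T_1^{\otimes s}$, so its image $\Bbb T_s$ is a summand of $V^{\otimes s} \in \C$; in particular $\C$ contains every indecomposable projective of $\Ver_{p^{n+1}}$. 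Since every object of $\Ver_{p^{n+1}}$ is a quotient of such a projective, subquotient-closure gives $\C = \Ver_{p^{n+1}}$.

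For (ii), I would induct on $k$, with $k = 0$ trivial. Given $Y \in \Ver_{p^{n+k}}$ all of whose composition factors lie in $\Ver_{p^n}$, let $\C$ denote the tensor subcategory of $\Ver_{p^{n+k}}$ generated by $Y$ and $\Ver_{p^{n+k-1}}$. Its simples, being composition factors of iterated tensor products of $Y$ and objects of $\Ver_{p^{n+k-1}}$, all lie in $\Ver_{p^{n+k-1}}$ (since $Y$'s composition factors lie in $\Ver_{p^n} \subset \Ver_{p^{n+k-1}}$ and the latter is closed under tensor product). Applying part (i) to the pair $(\Ver_{p^{n+k-1}}, \Ver_{p^{n+k}})$, and noting that $\C$ has strictly fewer simples than $\Ver_{p^{n+k}}$, we conclude $\C = \Ver_{p^{n+k-1}}$; hence $Y \in \Ver_{p^{n+k-1}}$, and the inductive hypothesis finishes the proof.

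The main obstacle is the reachability assertion in the second paragraph: for any initial $j \in [1, p-1]$, iterated semisimple Clebsch-Gordan must genuinely produce an admissible pair $a,b$ with $a+b \ge p+1$ of parity matching $p+1$. A case split on the parity of $j$ handles this---for $j$ odd, $\Bbb T^{[n+1]}_j \otimes \Bbb T^{[n+1]}_{j-1}$ already produces $V$ in the semisimple range (for $j \ge 3$; the case $j=1$ is trivial), while for $j$ even, iterated doublings $j, 2j, 4j, \ldots$ force $2^m j \ge p+1$ after finitely many steps---but the book-keeping requires a little care.
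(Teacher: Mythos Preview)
Your approach to (i) is genuinely different from the paper's, and essentially correct, though the reachability step needs more care than you give it. The paper does not chase simples at all: it fixes the Steinberg object $X=\Bbb T_{p^{n-1}-1}$, which is simple projective in $\Ver_{p^n}$, and observes (Lemma~\ref{mult2}) that its projective cover $P_X$ in $\Ver_{p^{n+1}}$ satisfies $[P_X:X]=2$. Since the projective cover $Q_X$ of $X$ in $\C$ is a self-dual quotient of $P_X$, this forces $Q_X\in\{X,P_X\}$. In the first case $X$ is projective in $\C$, so all projectives of $\Ver_{p^n}$ are projective in $\C$ and one concludes $\C=\Ver_{p^n}$; in the second, $P_X$ (hence some projective of $\Ver_{p^{n+1}}$) lies in $\C$, and Corollary~\ref{multrule1} then forces $\C=\Ver_{p^{n+1}}$. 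This dichotomy argument is shorter and avoids any combinatorics in the Clebsch--Gordan rule; your argument is more elementary (no projective-cover theory beyond subquotient closure) but pays for it with casework. Your treatment of (ii) matches the paper's.

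The one genuine soft spot in your sketch is the odd-$j$ case: you invoke $\Bbb T^{[n+1]}_j\otimes\Bbb T^{[n+1]}_{j-1}$, but $\Bbb T^{[n+1]}_{j-1}$ is not yet known to lie in $\C$, and for $2j\ge p$ it need not occur as a summand of $\Bbb T_j^{\otimes 2}$ (e.g.\ $p=5$, $j=3$: $\Bbb T_3^{\otimes 2}=\Bbb T_0\oplus\Bbb T_4\oplus\Bbb T_6$). A clean fix is a descent: from any $j\ge 1$, the summand $\Bbb T_{2j}$ of $\Bbb T_j^{\otimes 2}$ is either simple (if $2j\le p-1$, continue doubling) or, when $p+1\le 2j\le 2p-2$, has middle composition factor $\Bbb T^{[n]}_1\otimes\Bbb T^{[n+1]}_{2j-p}$ with $2j-p$ odd and $1\le 2j-p<j$, so extract $\Bbb T^{[n+1]}_{2j-p}\in\C$ and recurse on a strictly smaller odd index. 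This terminates at $j=1$ in finitely many steps and replaces your informal ``doubling'' and parity discussion.
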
  

\begin{proof} 
(i) Let $X=\Bbb T_{p^{n-1}-1}\in \Ver_{p^{n}}$. 
This is a simple projective object. On the other hand, in $\Ver_{p^{n+1}}$ it is not projective and 
has projective cover $P_X$. 

\begin{lemma}\label{mult2} 
We have $[P_X:X]=2$. 
\end{lemma} 

\begin{proof}
By Corollary \ref{covers} we have $P_X=\Bbb T_{p^n+p-2}$. Now $[P_X:X]=\dim \Hom(P_X,P_X)=2$ by Proposition \ref{TWquiver}.
\end{proof} 

Now suppose $\Ver_{p^n}\subset \C\subset \Ver_{p^{n+1}}$. Let $Q_X$ be the projective cover of $X$ in $\C$. 
Then we have a diagram of surjective morphisms $P_X\to Q_X\to X$. It is clear that $Q_X$ is self-dual, so by Lemma \ref{mult2} either $Q_X=P_X$ or $Q_X=X$. In the first case every indecomposable projective in $\C$, being a direct summand in $Y\otimes Q_X$ for some simple $Y\in \C$, is projective in $\Ver_{p^{n+1}}$, which implies that 
$\C=\Ver_{p^{n+1}}$ (using the multiplication rule for simple objects from Corollary \ref{multrule1}). On the other hand, if $Q_X=X$ then 
similarly all projectives of $\Ver_{p^n}$ are projective in $\C$, so we get $\C=\Ver_{p^n}$. 

(ii) It suffices to prove the statement for $k=1$, in which case it follows from (i). 
\end{proof} 

\begin{corollary}\label{subcat} The only tensor subcategories of $\Ver_{p^n}$ are 
$\Ver_{p^m}$ and $\Ver_{p^m}^+$ for $m\le n$. 
\end{corollary}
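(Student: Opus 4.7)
The plan is to proceed by induction on $n$, with Proposition \ref{serre}(i) as the main engine. For the base case $n=1$, the category $\Ver_p$ is the semisimple Verlinde fusion category whose Grothendieck ring is the fusion algebra of $\widehat{\mathfrak{sl}}_2$ at level $p-2$, and a standard analysis shows its only $\Bbb Z_+$-subrings are $\Bbb Z$, the even subring, and the full ring, corresponding respectively to $\Vec$, $\Ver_p^+$, and $\Ver_p$. (The case $p=2$ is handled via the results of \cite{BE}, so throughout the inductive step we restrict to $p$ odd.)

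For the inductive step, given a tensor subcategory $\C\subset\Ver_{p^n}$ I form the tensor subcategory $\widetilde\C\subset\Ver_{p^n}$ generated by $\C\cup\Ver_{p^{n-1}}$. Since $\Ver_{p^{n-1}}\subset\widetilde\C\subset\Ver_{p^n}$, Proposition \ref{serre}(i) forces $\widetilde\C\in\{\Ver_{p^{n-1}},\Ver_{p^n}\}$. If $\widetilde\C=\Ver_{p^{n-1}}$, then $\C\subset\Ver_{p^{n-1}}$ and the inductive hypothesis concludes. If $\widetilde\C=\Ver_{p^n}$, the argument proceeds via the $\Bbb Z/2$-grading coming from $\Ver_{p^n}=\Ver_{p^n}^+\boxtimes\sVec$ (Corollary \ref{Supervec}). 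The first step is a dichotomy: either the fermion $g\in\C$, or $\C\subset\Ver_{p^n}^+$. To prove it, I show that if $X=gL_i\in\C$ is any odd simple with $L_i$ a nontrivial even simple (using that all simples are self-dual by Theorem \ref{cyclot}(ii)), then using the explicit tensor product rule of Corollary \ref{multrule} one extracts a copy of $g$ from a suitable tensor product of $X$ with an even constituent of $X\otimes X\in\C$.

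In either sub-case, the final step invokes the ``$+$''-analog of Proposition \ref{serre}(i): any tensor subcategory of $\Ver_{p^n}^+$ containing $\Ver_{p^{n-1}}^+$ equals $\Ver_{p^{n-1}}^+$ or $\Ver_{p^n}^+$. This analog follows directly from Proposition \ref{serre}(i) by applying it to $\D\boxtimes\sVec\subset\Ver_{p^n}$ and taking ``$+$''-parts. Applied to our situation---either $\C^+\cdot\Ver_{p^{n-1}}^+=\Ver_{p^n}^+$ in sub-case (a) with $g\in\C$ and $\C=\C^+\boxtimes\sVec$, or $\C\cdot\Ver_{p^{n-1}}^+=\Ver_{p^n}^+$ in sub-case (b) with $\C\subset\Ver_{p^n}^+$---it forces $\C^+=\Ver_{p^n}^+$ respectively $\C=\Ver_{p^n}^+$, concluding that $\C=\Ver_{p^n}$ or $\Ver_{p^n}^+$. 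The main obstacle will be verifying the preliminary dichotomy: showing via Corollary \ref{multrule} and the self-duality of simples that a mixed-parity subcategory must contain the fermion, and in particular handling the fact that $L_i$ itself need not lie in $\C$, so one must iterate tensor products carefully to isolate a constituent equal to $g$.
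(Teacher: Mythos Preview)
Your inductive scheme has a genuine gap in the final step. In the case $\widetilde\C=\langle\C,\Ver_{p^{n-1}}\rangle=\Ver_{p^n}$, you invoke the $+$-analog of Proposition~\ref{serre}(i) and conclude that ``it forces $\C^+=\Ver_{p^n}^+$'' (resp.\ $\C=\Ver_{p^n}^+$). But that proposition only constrains tensor subcategories that \emph{contain} $\Ver_{p^{n-1}}^+$; there is no reason $\C^+$ itself contains $\Ver_{p^{n-1}}^+$. What you actually know is that the \emph{generated} category $\langle\C^+,\Ver_{p^{n-1}}^+\rangle$ equals $\Ver_{p^n}^+$, and Proposition~\ref{serre}(i) applied to that generated category tells you nothing new---you already knew it equals $\Ver_{p^n}^+$. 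To pass from $\langle\C^+,\Ver_{p^{n-1}}^+\rangle=\Ver_{p^n}^+$ to $\C^+=\Ver_{p^n}^+$ you would need to already know that $\C^+$ is one of the $\Ver_{p^k}^+$, which is the very statement you are trying to prove. So the argument is circular at this point.

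The paper's proof avoids this entirely by a different and shorter route. After reducing to the $+$ case (the dichotomy you worry about is absorbed into this reduction, since $\Ver_{p^n}=\Ver_{p^n}^+\boxtimes\sVec$), it takes $m$ maximal with all simples of $\Ver_{p^m}^+$ lying in $\C$, and then uses the explicit multiplication rule (Corollary~\ref{multrule}) to show directly that $\C$ can contain no further simples: any additional simple, tensored with the simples of $\Ver_{p^m}^+$ already present, would produce all simples of $\Ver_{p^{m+1}}^+$, contradicting maximality of $m$. Then Proposition~\ref{serre}(ii) (the Serre property, not part (i)) pins $\C$ down as exactly $\Ver_{p^m}^+$. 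This argument never needs to compare $\C$ with a larger generated category, which is precisely where your approach gets stuck.
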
  

\begin{proof} For $p=2$ this is proved in \cite{BE}, Corollary 2.6, so let us consider the case $p>2$. 
It suffices to show that the only tensor subcategories of $\Ver_{p^n}^+$ are $\Ver_{p^m}^+$ for $m\le n$. Let $\C\subset \Ver_{p^n}^+$ be a tensor subcategory. 
Let $m$ be the largest integer such that $\C$ contains all simple objects 
of $\Ver_m^+$. Then by Corollary \ref{multrule}, $\C$ cannot contain 
any other simple objects. Thus $\C$ is contained in the Serre closure of $\Ver_{p^m}^+$ 
inside $\Ver_{p^n}^+$. So it follows from Proposition \ref{serre} that $\C=\Ver_{p^m}^+$. 
\end{proof} 

\subsection{${\rm Ext}^1$ between simple objects}

Let us describe a recursive procedure for computing ${\rm Ext}^1$ between simple objects of $\Ver_{p^n}$. We will focus on the case $p>2$ since the case $p=2$ is addressed in \cite{BE}, Subsection 4.5. 
Recall that by Theorem \ref{tpt} the simple objects of $\Ver_{p^n}$ look like $X\otimes \Bbb T_i$ where $X\in \Ver_{p^{n-1}}$ and $0\le i\le p-1$. Also by block considerations for $X,Y\in \Ver_{p^{n-1}}$ 
we have 
$$
{\rm Ext}^m_{\Ver_{p^n}}(X\otimes \Bbb T_i,Y\otimes \Bbb T_j)=0,\ m\ge 0
$$ 
unless $i=j$ or $i=p-2-j$. 
Consider first the case $i=j$. 

\begin{lemma}\label{donk} For each $m$ we have a canonical isomorphism ${\rm Ext}^m_{{\rm Ver}_{p^n}}(X\otimes \Bbb T_{p-1},Y\otimes \Bbb T_{p-1})\cong {\rm Ext}^m_{{\rm Ver}_{p^{n-1}}}(X,Y)$. 
\end{lemma}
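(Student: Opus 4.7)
The plan is to show the isomorphism by exhibiting a projective resolution of $X \otimes \Bbb T_{p-1}$ in $\Ver_{p^n}$ coming from one of $X$ in $\Ver_{p^{n-1}}$, and then invoking Proposition \ref{not maximal} to match Hom spaces.

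First, I would choose a projective resolution $P_\bullet \to X$ of $X$ in $\Ver_{p^{n-1}}$. Applying the functor $G(Z) := H(Z) \otimes \Bbb T_{p-1}$ to this resolution, I get a complex $G(P_\bullet) \to G(X) = X \otimes \Bbb T_{p-1}$ in $\Ver_{p^n}$. This complex is exact because $G$ is an exact functor: $H$ is a tensor functor (hence exact), and tensoring with the rigid (indeed projective) object $\Bbb T_{p-1}$ is exact in the multitensor category $\Ver_{p^n}$. Moreover, each $G(P_i)$ is projective in $\Ver_{p^n}$ by Proposition \ref{not maximal}. Thus $G(P_\bullet) \to X \otimes \Bbb T_{p-1}$ is a genuine projective resolution in $\Ver_{p^n}$.

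Next, I would compute
\[
\Ext^m_{\Ver_{p^n}}(X \otimes \Bbb T_{p-1}, Y \otimes \Bbb T_{p-1}) = H^m\bigl(\Hom_{\Ver_{p^n}}(G(P_\bullet), G(Y))\bigr).
\]
Here I invoke Proposition \ref{not maximal}, which states that $G$ is an (additive) equivalence of $\Ver_{p^{n-1}}$ onto a direct summand of $\Ver_{p^n}$. In particular, $G$ is fully faithful, so the natural map
\[
\Hom_{\Ver_{p^{n-1}}}(P_i, Y) \xrightarrow{\;\sim\;} \Hom_{\Ver_{p^n}}(G(P_i), G(Y))
\]
is an isomorphism for every $i$, compatible with differentials. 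Taking $m$-th cohomology then identifies the right hand side with $\Ext^m_{\Ver_{p^{n-1}}}(X, Y)$, as desired.

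The only subtle point is Proposition \ref{not maximal}; its statement only explicitly asserts fullness on projectives, but combined with the exactness and faithfulness of $G$ and with the existence of enough projectives in $\Ver_{p^{n-1}}$, a standard two-step diagram chase (first resolve the source, then the target by projectives and use right-exactness of $\Hom(P, -)$ applied via cokernels) upgrades this to fullness on all objects. Once this fully faithful property is in hand, the argument is just the usual transfer of $\Ext$ along a fully faithful exact functor between abelian categories with enough projectives that preserves projectives. I would therefore expect the main obstacle to be only the verification that the hypothesis of Proposition \ref{not maximal} really yields full faithfulness globally on $\Ver_{p^{n-1}}$, but this is a routine consequence of the properties already established there.
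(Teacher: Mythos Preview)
Your proof is correct and follows essentially the same route as the paper, which simply cites Proposition \ref{not maximal}. Your concern in the last paragraph is unnecessary: Proposition \ref{not maximal} already asserts that $G$ is an \emph{equivalence} of $\Ver_{p^{n-1}}$ with a direct summand of $\Ver_{p^n}$, not merely that it is full on projectives (the latter is only a step in its proof). An equivalence onto a direct summand of an abelian category automatically preserves $\Ext$ in all degrees, so no additional diagram chase is needed.
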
 

\begin{proof}
This is an immediate consequence of Proposition \ref{not maximal}.
\end{proof} 

\begin{lemma} (i)  If $0\le i\le p-2$ then we have a canonical isomorphism 
$${\rm Ext}^m_{{\rm Ver}_{p^n}}(X\otimes \Bbb T_i,Y\otimes \Bbb T_i)\cong {\rm Ext}^m_{{\rm Ver}_{p^n}}(X,Y).$$

(ii) If $p>2$ and $0\le i\le p-2$ then we have a canonical isomorphism $${\rm Ext}^1_{{\rm Ver}_{p^n}}(X\otimes \Bbb T_i,Y\otimes \Bbb T_i)\cong {\rm Ext}^1_{{\rm Ver}_{p^{n-1}}}(X,Y).$$
\end{lemma}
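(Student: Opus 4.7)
My plan is to prove (i) by rigidity plus the block vanishing stated just before the lemma, and then to deduce (ii) from (i) using that $\Ver_{p^{n-1}}$ is a Serre subcategory of $\Ver_{p^n}$.

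For (i), the first step is to invoke the rigid adjunction and self-duality of $\Bbb T_i$ (which follows from Lemma \ref{l11}, since $\Bbb T_i$ is simple for $0\le i\le p-1$) to get
$$\Ext^m_{\Ver_{p^n}}(X\otimes \Bbb T_i, Y\otimes \Bbb T_i) \cong \Ext^m_{\Ver_{p^n}}(X, Y\otimes \Bbb T_i^{\otimes 2}).$$
Next I will decompose $\Bbb T_i^{\otimes 2}$ via Proposition \ref{tenprotilt1}: the summands are $\Bbb T_k$ with $k$ running over even integers (at most $2i\le 2p-4$), and $\Bbb T_0=\one$ occurs with multiplicity one and contributes exactly $\Ext^m(X,Y)$. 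It therefore suffices to show that $\Ext^m_{\Ver_{p^n}}(X,Y\otimes \Bbb T_k)=0$ for every other summand.

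For the summands with $0<k\le p-1$, the object $\Bbb T_k$ is simple, and applying the block vanishing displayed just before the lemma (with $i_0=0$, $j_0=k$) shows that the Ext vanishes unless $k=0$ or $k=p-2$. Since $k$ is positive and even while $p-2$ is odd for $p>2$ (and the case $p=2$ only leaves $i=0$, where the statement is trivial), this rules out the remaining simple summands. For the summands with $p\le k\le 2p-4$ (which only occur when $2i\ge p$), I will use the three-step composition series $\Bbb T_k=[\Bbb T_{2p-2-k},\Bbb T^{[n-1]}_1\otimes \Bbb T_{k-p},\Bbb T_{2p-2-k}]$ from Example \ref{tptex} together with long exact sequences to reduce to Ext groups into each composition factor; each such factor is of the form $Z\otimes \Bbb T^{[n]}_j$ with $Z\in\Ver_{p^{n-1}}$ and $j\in\{2p-2-k,\,k-p\}$, and a direct numerical check (using $k$ even, $p$ odd, and $k\le 2p-4$) rules out $j\in\{0,p-2\}$, so block vanishing again finishes the job.

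Part (ii) is then a short corollary: part (i) gives $\Ext^1_{\Ver_{p^n}}(X\otimes \Bbb T_i, Y\otimes \Bbb T_i)\cong \Ext^1_{\Ver_{p^n}}(X,Y)$, and by Proposition \ref{serre}(ii) the subcategory $\Ver_{p^{n-1}}\subset \Ver_{p^n}$ is Serre for $p>2$, so every extension in $\Ver_{p^n}$ between objects of $\Ver_{p^{n-1}}$ has middle term still in $\Ver_{p^{n-1}}$; hence $\Ext^1_{\Ver_{p^n}}(X,Y)\cong \Ext^1_{\Ver_{p^{n-1}}}(X,Y)$. The main obstacle is the handling of the non-simple summands $\Bbb T_k$ with $p\le k\le 2p-4$ in part (i), where one must combine the explicit composition series with the block vanishing and carefully verify that no composition factor lands in the surviving block labeled by $j=p-2$.
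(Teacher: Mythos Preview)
Your proof is correct and follows essentially the same strategy as the paper's. The paper's argument for (i) is simply the rigidity adjunction followed by the remark that ``by block considerations the only direct summand of $\Bbb T_i\otimes\Bbb T_i$ which contributes to $\Ext^\bullet$ is $\one$''; you have unpacked this terse phrase by explicitly splitting into the simple summands $\Bbb T_k$ with $k\le p-1$ and the length-three summands $\Bbb T_k$ with $p\le k\le 2p-4$, and your numerical checks are right. (A slightly shorter variant: each indecomposable $\Bbb T_k$ lies in a single block, and one can read off from Proposition~\ref{blocks} directly that for even $0<k\le 2p-4$ this block never meets the blocks of the simples coming from $\Ver_{p^{n-1}}$, without passing to composition series.) Your deduction of (ii) from (i) via Proposition~\ref{serre}(ii) is exactly what the paper does.
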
 

\begin{proof} (i) We have 
$$
{\rm Ext}^m_{{\rm Ver}_{p^n}}(X\otimes \Bbb T_i,Y\otimes \Bbb T_i)={\rm Ext}^m_{{\rm Ver}_{p^n}}(X, Y\otimes \Bbb T_i\otimes \Bbb T_i). 
$$
Now, $\Bbb T_i\otimes \Bbb T_i$ can be written as a direct sum of indecomposables, and by block considerations the only 
direct summand which contributes to ${\rm Ext}^\bullet$ is $\be$. This implies the statement. 

(ii) This follows from (i) and Proposition \ref{serre}(ii). 
\end{proof} 

Now consider the case $j=p-2-i$. In this case we can again write 
$$
{\rm Ext}^m_{{\rm Ver}_{p^n}}(X\otimes \Bbb T_i,Y\otimes \Bbb T_j)={\rm Ext}^m_{{\rm Ver}_{p^n}}(X,Y\otimes \Bbb T_i\otimes \Bbb T_j), 
$$
and by block considerations the only indecomposable direct summand in $\Bbb T_i\otimes \Bbb T_j$ that contributes to ${\rm Ext}^\bullet$ is $\Bbb T_{p-2}$. Thus, it suffices to compute 
$$
{\rm Ext}^1_{{\rm Ver}_{p^n}}(X,Y\otimes \Bbb T_{p-2}).
$$
To this end, note that $\Bbb T_1\otimes \Bbb T_{p-1}=[\Bbb T_{p-2},\Bbb T_1^{[n-1]},\Bbb T_{p-2}]$ (Lemma \ref{l12}). 
Thus, 
$$
{\rm Ext}^m_{{\rm Ver}_{p^n}}(X,Y\otimes [\Bbb T_{p-2},\Bbb T_1^{[n-1]},\Bbb T_{p-2}])=
$$
$$
={\rm Ext}^m_{{\rm Ver}_{p^n}}(X,Y\otimes \Bbb T_1\otimes \Bbb T_{p-1})={\rm Ext}^m_{{\rm Ver}_{p^n}}(X\otimes \Bbb T_{p-1},Y\otimes \Bbb T_1)=0
$$
by block considerations. From the long exact sequence it then follows that 
$$
{\rm Ext}^1_{{\rm Ver}_{p^n}}(X,Y\otimes \Bbb T_{p-2})\cong \Hom_{\Ver_{p^n}}(X,Y\otimes [\Bbb T_{p-2},\Bbb T_1^{[n-1]}])\cong 
{\rm Hom}_{{\rm Ver}_{p^{n-1}}}(X,Y\otimes V). 
$$
Finally, this Hom can be computed using the following proposition. 

\begin{proposition} Let $X,Y\in \Ver_{p^{n-1}}$ and $0\le i,j\le p-2$. 
Then ${\rm Hom}(X\otimes L_i, Y\otimes L_j\otimes V)=0$ 
unless $|i-j|=1$. In the latter case, 
${\rm Hom}(X\otimes L_i, Y\otimes L_j\otimes V)=\Hom(X,Y)$. 
\end{proposition}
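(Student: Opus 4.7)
The plan is to combine three ingredients: rigidity, a Clebsch--Gordan computation in the innermost Verlinde layer, and the Serre subcategory property from Proposition~\ref{serre}(ii). First, using rigidity together with the self-duality of $L_i$, $L_j$, $V$ (each being the image under $F$ of a self-dual tilting module for $SL_2(\k)$), I would rewrite
\[
\Hom(X\otimes L_i,\,Y\otimes L_j\otimes V)\;=\;\Hom(\one,\,X^*\otimes Y\otimes L_i\otimes L_j\otimes V).
\]

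Next, I would decompose $L_i\otimes L_j\otimes V$ inside the innermost Verlinde layer. Via $F$ (Theorem~\ref{CartanFP0}) and Proposition~\ref{tenprotilt1}, one has $L_j\otimes V=L_{j-1}\oplus L_{j+1}$ subject to the boundary conventions $L_{-1}=0$ and (for $j=p-2$) $L_{p-1}=\St^{[n-1]}$. A short self-duality calculation,
\[
\Hom(\one,\,L_i\otimes L_j\otimes V)\;=\;\Hom(L_i,\,L_j\otimes V),
\]
then shows that $\one$ appears in $L_i\otimes L_j\otimes V$ with multiplicity exactly $\delta_{|i-j|,1}$; every other direct summand is a simple of the form $\Bbb T_k^{[n-1]}$ with $k\ge 1$.

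For each such simple summand $\Bbb T_k^{[n-1]}$, I would check that $\Hom(\one,\,X^*\otimes Y\otimes \Bbb T_k^{[n-1]})=0$. By rigidity this equals $\Hom(X\otimes \Bbb T_k^{[n-1]},\,Y)$, and Theorem~\ref{tpt} tells us that every simple constituent of $X\otimes \Bbb T_k^{[n-1]}$ has nonzero innermost digit $k$, hence lies outside the Serre subcategory containing $Y$. Proposition~\ref{serre}(ii) then forces this Hom to vanish. Assembling the pieces, only the unit summand contributes, yielding
\[
\Hom(X\otimes L_i,\,Y\otimes L_j\otimes V)\;=\;\delta_{|i-j|,1}\cdot\Hom(\one,\,X^*\otimes Y)\;=\;\delta_{|i-j|,1}\cdot\Hom(X,Y).
\]

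The main obstacle will be the bookkeeping in the boundary case $j=p-2$, where $L_j\otimes V$ produces the Steinberg summand $\St^{[n-1]}$, which is simultaneously simple and projective in $\Ver_{p^{n-1}}$. One has to verify separately that $\St^{[n-1]}$ still lies outside the relevant Serre subcategory---it does, since its innermost digit is $p-1\ne 0$---so that the vanishing argument goes through uniformly across all cases.
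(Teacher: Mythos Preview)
Your overall strategy---use rigidity to move everything to one side, decompose $L_i\otimes L_j\otimes V$ into indecomposable tiltings, and show that only the summand $\one$ contributes---is the same as the paper's. However, there is a genuine gap, plus some indexing confusion.

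First the indexing: in this proposition $L_i=\Bbb T_i=\Bbb T_i^{[n]}$ and $V=\Bbb T_1=\Bbb T_1^{[n]}$ live at level $n$ (the outermost layer), not $n-1$. Your superscripts $[n-1]$ and the word ``innermost'' are reversed throughout. This is cosmetic.

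The real gap is your claim that ``every other direct summand is a simple of the form $\Bbb T_k$ with $k\ge 1$.'' The summands of $L_i\otimes L_j\otimes V$ are the tiltings $\Bbb T_k$ with $0\le k\le i+j+1\le 2p-3$, and for $p\le k\le 2p-3$ these are \emph{not} simple: by Example~\ref{tptex} the Loewy series of $\Bbb T_k$ is $[\Bbb T_{2p-2-k},\ \Bbb T_1^{[n-1]}\otimes\Bbb T_{k-p},\ \Bbb T_{2p-2-k}]$. In particular, for $k=p$ the middle factor is $\Bbb T_1^{[n-1]}\in\Ver_{p^{n-1}}$, so your assertion ``every simple constituent of $X\otimes\Bbb T_k$ has nonzero last digit'' fails, and the Serre-subcategory argument as you wrote it breaks down. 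Such non-simple summands really do occur: for $p=5$, $i=j=3$, one has $L_3\otimes L_3\otimes V=\Bbb T_1\oplus 2\Bbb T_5\oplus\Bbb T_7$.

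The paper sidesteps this by invoking block considerations directly: rewriting the Hom as $\Hom(\Bbb T_k,\,Y\otimes X^*)$ with $Y\otimes X^*\in\Ver_{p^{n-1}}$, one checks via Proposition~\ref{blocks} that $\Hom(\Bbb T_k,Z)=0$ for every $Z\in\Ver_{p^{n-1}}$ unless $k=0$ or $k=2p-2$, and the latter cannot appear since $i+j+1\le 2p-3$. Your route can also be repaired without blocks: for $1\le k\le 2p-3$ the simple head of $\Bbb T_k$ is $\Bbb T_k$ (if $k\le p-1$) or $\Bbb T_{2p-2-k}$ (if $k\ge p$), neither of which lies in $\Ver_{p^{n-1}}$; since $\Ver_{p^{n-1}}$ is a Serre subcategory, the image of any nonzero map $\Bbb T_k\to Z$ would be a quotient of $\Bbb T_k$ lying in $\Ver_{p^{n-1}}$, forcing the head into $\Ver_{p^{n-1}}$---a contradiction. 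But this extra step is missing from your proposal.
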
 

\begin{proof} We have 
$$
\Hom(X\otimes L_i, Y\otimes L_j\otimes V)=\Hom(L_i\otimes L_j\otimes V,Y\otimes X^*). 
$$ 
Now, the product $L_i\otimes L_j\otimes V$ decomposes as a direct sum of $\Bbb T_j$ 
for $0\le j\le 2p-1$. But by block considerations for every object $Z\in \Ver_{p^{n-1}}\subset \Ver_{p^n}$, we have $\Hom(\Bbb T_j,Z)=0$ unless $j=0$ or $j=2p-2$, and 
$$
\Hom(\Bbb T_{2p-2},Z)=\Hom(\one,Z).
$$
This implies the statement. 
\end{proof} 

Thus, we obtain the following proposition. 

\begin{proposition}\label{ext1formula} Let $n\ge 2$, $p>2$, $a=a'p+m$, $b=b'p+r$, where $0\le m,r\le p-1$ and $0\le a',b'\le p^{n-2}(p-1)-1$. Then the following hold for 
${\rm Ext}^1$ between simple objects of $\Ver_{p^n}$. 

(i)  ${\rm Ext}^1_{\Ver_{p^n}}(L_a,L_b)=0$ unless $r=m$ or $r=p-2-m$. 

(ii) If $r=m$ then ${\rm Ext}^1_{\Ver_{p^n}}(L_a,L_b)={\rm Ext}^1_{\Ver_{p^{n-1}}}(L_{a'},L_{b'})$. 

(iii) If $r=p-2-m$ then ${\rm Ext}^1_{\Ver_{p^n}}(L_a,L_b)=0$ unless the last digit of $a'$ is $<p-1$ and $b'=a'+1$ or the last digit of $a'$ is $>0$ and $b'=a'-1$. In the latter two cases, ${\rm Ext}^1_{\Ver_{p^n}}(L_a,L_b)=\k$. 

In other words, ${\rm Ext}^1_{\Ver_{p^n}}(L_a,L_b)\ne 0$ if and only if $a$ and $b$ differ only in two 
consecutive digits, of which the first ones differ by $1$ and
the second ones add up to $p-2$, and in this case  ${\rm Ext}^1_{\Ver_{p^n}}(L_a,L_b)=\k$. 
\end{proposition}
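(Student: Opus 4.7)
The plan is to combine the tensor-decomposition structure of simple objects from Theorem~\ref{tpt} with the block criterion of Proposition~\ref{blocks} and the preparatory lemmas stated just before the proposition. Writing $L_a = L_{a'}\otimes \Bbb T_m^{[n]}$ and $L_b = L_{b'}\otimes \Bbb T_r^{[n]}$ with $L_{a'}, L_{b'}\in \Ver_{p^{n-1}}$, the translation of Proposition~\ref{blocks} via the projective-cover map $a\mapsto s(a)$ shows that $L_a$ and $L_b$ lie in different blocks --- forcing $\Ext^{\bullet}_{\Ver_{p^n}}(L_a,L_b) = 0$ --- unless $m = r$ or $m + r = p-2$, which establishes (i).

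For (ii), with $m = r$, I rewrite $\Ext^1(L_a, L_b) \cong \Ext^1(L_{a'}, L_{b'}\otimes \Bbb T_m\otimes \Bbb T_m)$ via self-duality of $\Bbb T_m$, decompose $\Bbb T_m\otimes \Bbb T_m$ by Proposition~\ref{tenprotilt1}, and discard every summand except $\one$ by block considerations (valid when $0\le m\le p-2$); the remaining subcase $m = p-1$ follows from Lemma~\ref{donk}. This gives $\Ext^1_{\Ver_{p^n}}(L_a, L_b)\cong \Ext^1_{\Ver_{p^n}}(L_{a'}, L_{b'})$, and Proposition~\ref{serre} identifies the right-hand side with $\Ext^1_{\Ver_{p^{n-1}}}(L_{a'}, L_{b'})$.

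For (iii), with $r = p-2-m$, the same self-duality together with Proposition~\ref{tenprotilt1} and block considerations reduces the problem to computing $\Ext^1(L_{a'}, L_{b'}\otimes \Bbb T_{p-2}^{[n]})$, since $\Bbb T_{p-2}^{[n]}$ is the only summand of $\Bbb T_m\otimes \Bbb T_{p-2-m}$ sharing a block with the image of $\Ver_{p^{n-1}}$. I then use the Loewy filtration $[\Bbb T_{p-2}, \Bbb T_1^{[n-1]}, \Bbb T_{p-2}]$ of $\Bbb T_1\otimes \Bbb T_{p-1}$ from Lemma~\ref{l12} and run two successive long exact sequences, leveraging the block vanishings $\Ext^{\bullet}(L_{a'}, L_{b'}\otimes \Bbb T_1\otimes \Bbb T_{p-1}) = 0$ and $\Hom(L_{a'}, L_{b'}\otimes \Bbb T_{p-2}) = 0$, to collapse this to
$$\Ext^1_{\Ver_{p^n}}(L_a, L_b)\cong \Hom_{\Ver_{p^{n-1}}}(L_{a'}, L_{b'}\otimes V^{[n-1]}).$$
Finally, the auxiliary Proposition preceding~\ref{ext1formula}, applied after the further factorization $L_{a'} = L_{a''}\otimes \Bbb T_{a_{n-1}}^{[n-1]}$, $L_{b'} = L_{b''}\otimes \Bbb T_{b_{n-1}}^{[n-1]}$ (Theorem~\ref{tpt}), evaluates this $\Hom$: it is nonzero, and equal to $\k$, if and only if $a'' = b''$ and $|a_{n-1} - b_{n-1}| = 1$ --- precisely the dichotomy ``$b' = a' + 1$ with last digit of $a'$ less than $p-1$'' or ``$b' = a' - 1$ with last digit of $a'$ greater than $0$.''

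The main technical step I expect to be delicate is the extraction of the $\Hom$ identification in (iii) from the Loewy filtration of $\Bbb T_1\otimes \Bbb T_{p-1}$, which requires correctly identifying the submodule and quotient structure in each short exact sequence and confirming that the relevant $\Ext^{\bullet}$ and $\Hom$ groups vanish via the block criterion. A secondary issue is the boundary cases $a_{n-1}\in\{0, p-1\}$, which fall outside the hypotheses of the auxiliary Proposition but are handled by a direct block analysis using Proposition~\ref{tenprotilt1} yielding the same answer; the remaining digit bookkeeping is routine.
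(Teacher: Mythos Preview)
Your proposal is correct and follows essentially the same route as the paper: the block reduction for (i), the self-duality/tensor-decomposition argument plus Lemma~\ref{donk} and Proposition~\ref{serre} for (ii), and for (iii) the reduction to $\Bbb T_{p-2}$, the long exact sequence coming from the Loewy filtration of $\Bbb T_1\otimes\Bbb T_{p-1}$, and the final $\Hom$ computation via the auxiliary proposition are exactly what the paper does. Your flagging of the boundary case $a_{n-1}\in\{0,p-1\}$ is in fact slightly more careful than the paper, which states the auxiliary proposition only for $0\le i,j\le p-2$ and leaves that edge case implicit.
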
 

\begin{remark} For $p=2$ the fact that for any $a,b$, $\Ext^1_{\Ver_{p^n}}(L_a,L_b)$ is either $0$ or $\k$ is proved in \cite{BE}, Proposition 4.12. 
\end{remark} 

\begin{example} 1. If $p>2$ then ${\rm Ext}^1_{\Ver_{p^n}}(L_a,L_a)=0$ for all $a$. 

2. Let $n=2$, $p>2$. Then ${\rm Ext}^1_{\Ver_{p^n}}(L_a,L_b)=0$ unless 
$a=kp+c-1$ and $b=(k+2)p-c-1$ or $b=kp+c-1$ and $a=(k+2)p-c-1$ or 
$a=kp+c-1$ and $b=kp-c-1$ or $b=kp+c-1$ and $a=kp-c-1$ for some $1\le c\le p-1$. 
In these cases, ${\rm Ext}^1_{\Ver_{p^n}}(L_a,L_b)=\k$. Note that this agrees with the data in Subsection \ref{psquared}. 
\end{example} 

\subsection{The Grothendieck ring of the stable category}
Given a finite tensor category $\C$, one may consider the Grothendieck ring
${\rm GrStab}(\C)$ of the stable category ${\rm Stab}(\C)$, which by definition is the 
quotient of ${\rm Gr}(\C)$ by the ideal of classes of projective objects. It is clear that as an abelian group, 
${\rm GrStab}(\C)$ is the cokernel of the Cartan matrix $C$ of $\C$. Thus if $C$ is invertible then this 
ring is finite of order $|{\rm det}(C)|$. 

In this section we compute the ring ${\rm GrStab}(\C)$ for $\C=\Ver_{p^n}$ and $\C=\Ver_{p^n}^+$ for $n\ge 2$.\footnote{We thank R. Rouquier for asking this question.} 
Note that for $p>2$ we have ${\rm GrStab}(\Ver_{p^n})={\rm GrStab}(\Ver_{p^n}^+)[u]/(u^2-1)$, so it suffices to compute 
${\rm GrStab}(\Ver_{p^n}^+)$. 

Recall that $q=e^{\pi i/p^n}$ and $[a]_q=\frac{q^a-q^{-a}}{q-q^{-1}}$. 

\begin{proposition}\label{stabrings} (i) For $p>2$ the ring ${\rm GrStab}(\Ver_{p^n}^+)$ is isomorphic to 
$\mathcal{O}_{n,p}/([p^{n-1}]_q)$. 

(ii) The ring ${\rm GrStab}(\Ver_{2^n})$ is isomorphic to 
$\mathcal{O}_{n+1,2}/([2^{n-1}]_q)$.  

(iii) The ring ${\rm GrStab}(\Ver_{2^{n}}^+)$ is isomorphic to 
$\mathcal{O}_{n,2}/((q+q^{-1})[2^{n-1}]_q)$.  
\end{proposition}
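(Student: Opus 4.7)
The plan is to identify, in each of the three cases, the ideal $I$ of classes of projective objects inside the Grothendieck ring as a principal ideal and to compute its generator explicitly as an element of the corresponding cyclotomic ring supplied by Theorem~\ref{cyclot}(iv).

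First I would show that $I$ is a principal ideal generated by the class of a single well-chosen projective. Since the ideal of splitting objects $\S_{n,p}\subset \T_{n,p}$ is generated as a thick ideal by $\St_{n-1}=T_{p^{n-1}-1}$ (Propositions~\ref{idspli} and~\ref{uniqmin}), every projective indecomposable $Q$ of $\Ver_{p^n}$ is a direct summand of $F(\St_{n-1})\otimes F(Y)$ for some $Y\in\T_{n,p}$. Passing to the (abelian) Grothendieck ring one gets $[Q]=[F(\St_{n-1})]\cdot[F(Y)]-[Q']$ with $Q'$ again projective, and a straightforward induction on the highest weight gives $I=([F(\St_{n-1})])$ inside $\mathrm{Gr}(\Ver_{p^n})$. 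The argument restricts to the even subcategory in the obvious way, with the caveat that in case~(iii) the object $\St_{n-1}$ itself is not in $\Ver_{2^n}^+$, so a different choice of generator is required there.

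Next I would compute the generator in each of the three rings. The Steinberg module $\St_{n-1}$ coincides with its Weyl module, so its character is $[p^{n-1}]_x$; applying the Frobenius--Perron dimension homomorphism (which sends $[T_1]\mapsto q+q^{-1}$ with $q=e^{\pi i/p^n}$) gives $\mathrm{FPdim}(F(\St_{n-1}))=[p^{n-1}]_q$. For case~(i) with $p>2$, the index $p^{n-1}-1$ is even so $F(\St_{n-1})\in\Ver_{p^n}^+$, and since $p^{n-1}$ is odd the polynomial $[p^{n-1}]_q$ involves only even powers of $q$, hence lies in $\mathbb{Z}[q^2+q^{-2}]=\mathcal{O}_{n,p}$; this yields $\mathrm{GrStab}(\Ver_{p^n}^+)\cong\mathcal{O}_{n,p}/([p^{n-1}]_q)$. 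Case~(ii) is analogous, directly in $\Ver_{2^n}$: the same class $[2^{n-1}]_q$ lies in $\mathbb{Z}[q+q^{-1}]=\mathcal{O}_{n+1,2}$. For case~(iii), $\St_{n-1}$ is odd-indexed, so I use instead $T_{2^{n-1}}=V\otimes \St_{n-1}$: this follows from Donkin's recursive formula $T_{2+2b}=T_2\otimes T_b^{(1)}$ (Proposition~\ref{standfacts}(iii)) applied with $b=2^{n-2}-1$, combined with $\St_{n-1}=V\otimes\St_{n-2}^{(1)}$ and a dimension check (both sides have dimension $2^n$). Then $F(T_{2^{n-1}})\in \Ver_{2^n}^+$, and its Frobenius--Perron dimension is $[2]_q[2^{n-1}]_q=(q+q^{-1})[2^{n-1}]_q$, a polynomial in $q^2+q^{-2}$ and hence an element of $\mathcal{O}_{n,2}$.

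The main obstacle I expect is verifying, in case~(iii), that $F(T_{2^{n-1}})$ really generates the full ideal of projectives inside $\Ver_{2^n}^+$ and not a proper subideal; the key point is that for every even $i\in[2^{n-1},2^n-2]$ the tilting module $T_i$ is the top Clebsch--Gordan summand of $T_{2^{n-1}}\otimes T_{i-2^{n-1}}$, whose tensor partner is even since $i-2^{n-1}$ is even. A secondary technical point is confirming that the inductive argument in the first step really reduces every projective to $F(\St_{n-1})$ (respectively $F(T_{2^{n-1}})$): this uses that the complementary summand $Q'$ appearing in $[F(\St_{n-1})]\cdot[F(Y)]=[Q]+[Q']$ is itself projective, as a direct summand of a projective, so the induction closes on smaller projectives.
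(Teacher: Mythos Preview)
Your proposal is correct and follows essentially the same approach as the paper: identify the Grothendieck ring with the appropriate cyclotomic ring via $\FPdim$, show the ideal of projectives is principal generated by $[F(\St_{n-1})]$ (respectively $[F(T_{2^{n-1}})]=[F(T_1\otimes\St_{n-1})]$ in case~(iii)), and compute the generator's Frobenius--Perron dimension. The paper makes the induction explicit via the formula $\Bbb T_1^{\otimes m}\otimes \Bbb T_{p^{n-1}-1}=\Bbb T_{p^{n-1}+m-1}\oplus(\text{lower projectives})$, which is precisely the specific choice of $Y$ that makes your ``induction on highest weight'' close; your version leaves this choice implicit but is otherwise the same argument.
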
  

\begin{proof} (i) Recall that ${\rm Gr}(\Ver_{p^n}^+)\cong \mathcal{O}_{n,p}$. 
Also, since 
$$
\Bbb T_1^{\otimes m}\otimes \Bbb T_{p^{n-1}-1}=\Bbb T_{p^{n-1}+m-1}\oplus \coplus_{i=0}^{m-1}b_m\Bbb T_{p^{n-1}-1+i},
$$
the ideal of projectives is generated by $T_{p^{n-1}-1}$. Therefore, the corresponding ideal in $\mathcal{O}_{n,p}$ is generated
by the Frobenius-Perron dimension of $T_{p^{n-1}-1}$, which equals $[p^{n-1}]_q$. This implies the statement. 
 
(ii) The proof is the same as (i), using that ${\rm Gr}(\Ver_{2^n})\cong \mathcal{O}_{n+1,2}$.

(iii) The proof is the same as (ii), using that ${\rm Gr}(\Ver_{2^n}^+)\cong \mathcal{O}_{n,2}$ and replacing $T_{2^{n-1}-1}$ by $T_{2^{n-1}}=T_1\otimes T_{2^{n-1}-1}$ (as 
$T_{2^{n-1}-1}\notin \Ver_{2^{n}}^+$). 
\end{proof} 

Now we would like to compute these rings more explicitly, which is now a problem in elementary number theory. 
First of all, note that $[p^{n-1}]_q=\frac{q^{p^{n-1}}-q^{-p^{n-1}}}{q-q^{-1}}$ divides $p$, since 
already $q^{p^{n-1}}-q^{-p^{n-1}}=e^{\pi i/p}-e^{-\pi i/p}$ divides $p$. Indeed, for  $p>2$ the norm 
of $e^{\pi i/p}-e^{-\pi i/p}$ equals $-p$, while 
$e^{\pi i/2}-e^{-\pi i/2}=2i$ obviously divides $2$. 

Moreover, for $p=2$ the number $(q+q^{-1})[2^{n-1}]_q$ divides $2$. 
Indeed, $\frac{2}{(q+q^{-1})[2^{n-1}]_q}=-i\frac{q^2-1}{q^2+1}$. But the number $x=\frac{q^2-1}{q^2+1}$ is a unit, since 
$q^2=\frac{1+x}{1-x}$, thus $(1+x)^{2^{n-1}}=-(1-x)^{2^{n-1}}$, i.e., 
$P(x):=\frac{1}{2}((1+x)^{2^{n-1}}+(1-x)^{2^{n-1}})=0$, while $P$ is a monic polynomial with integer coefficients and constant term $1$. 

Thus, the above rings are $\Bbb F_p$-algebras. So in Proposition \ref{stabrings}(i) the ring $R$ in question is 
the quotient of $\Bbb F_p[q^2+q^{-2}]$ by the relations 
$$
(q-q^{-1})^{p^{n-1}-1}=0,\ q^{-(p-1)p^{n-1}}(1+q^{2p^{n-1}}+...+q^{2(p-1)p^{n-1}})=0.
$$
The second relation can be rewritten in characteristic $p$ as $(q-q^{-1})^{(p-1)p^{n-1}}=0$, hence it follows from the first relation. Thus 
$$
R=\Bbb F_p[z]/z^{\frac{p^{n-1}-1}{2}}
$$ 
with $z:=(q-q^{-1})^2$. 

In Proposition \ref{stabrings}(ii) we similarly get 
$$
R=\Bbb F_p[w]/(w^{2^{n-1}-1},\ w^{2^{n-1}})=\Bbb F_p[w]/w^{2^{n-1}-1},
$$ 
where $w=q+q^{-1}$. 

Finally, in Proposition \ref{stabrings}(iii) we get 
$$
R=\Bbb F_p[z]/(z^{2^{n-1}},z^{2^{n-2}})=\Bbb F_p[z]/z^{2^{n-2}} 
$$
with $z:=(q-q^{-1})^2=q^2+q^{-2}$. 

Thus we obtain 

\begin{proposition}\label{stabrings1} (i) For $p>2$ the ring ${\rm GrStab}(\Ver_{p^n}^+)$ is isomorphic to $\Bbb F_p[z]/z^{\frac{p^{n-1}-1}{2}}$ and the ring 
${\rm GrStab}(\Ver_{p^n})$ is isomorphic to $\Bbb F_p[z,g]/(z^{\frac{p^{n-1}-1}{2}},g^2-1)$. 

(ii) The ring ${\rm GrStab}(\Ver_{2^n})$ is isomorphic to 
$\Bbb F_p[z]/z^{2^{n-1}-1}$.  

(iii) The ring ${\rm GrStab}(\Ver_{2^{n}}^+)$ is isomorphic to 
$\Bbb F_p[z]/z^{2^{n-2}}$.  
\end{proposition}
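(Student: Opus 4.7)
The plan is to start from Proposition \ref{stabrings}, which identifies the three rings in question with explicit quotients of the cyclotomic rings $\mathcal{O}_{n,p}=\ZZ[q+q^{-1}]$. So the task reduces to a purely number-theoretic computation: unpack the quotients
$\mathcal{O}_{n,p}/([p^{n-1}]_q)$ (for $p>2$), $\mathcal{O}_{n+1,2}/([2^{n-1}]_q)$, and $\mathcal{O}_{n,2}/((q+q^{-1})[2^{n-1}]_q)$ in terms of a convenient generator of the maximal ideal of $\mathcal{O}_{n,p}$ over $\ZZ_{(p)}$.

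First I would verify that each of these rings is an $\Bbb F_p$-algebra, i.e.\ that $p$ lies in the relevant ideal. The key observation is that $[p^{n-1}]_q=\frac{q^{p^{n-1}}-q^{-p^{n-1}}}{q-q^{-1}}$ divides $p$ in $\mathcal{O}_{n,p}$: for $p>2$ this follows because $q^{p^{n-1}}=\exp(\pi i/p)$ is a primitive $2p$-th root of unity, and $e^{\pi i/p}-e^{-\pi i/p}$ has norm $\pm p$ in $\ZZ[e^{\pi i/p}]$; for $p=2$ one notes directly that $e^{\pi i/2}-e^{-\pi i/2}=2i$ divides $2$. For the third ring one argues that $(q+q^{-1})[2^{n-1}]_q$ divides $2$: using $x=(q^2-1)/(q^2+1)$ one has $2/((q+q^{-1})[2^{n-1}]_q)=-ix$, and $x$ is a unit because $q^2=(1+x)/(1-x)$ forces the monic polynomial $P(x)=\tfrac12((1+x)^{2^{n-1}}+(1-x)^{2^{n-1}})$ to vanish, and $P$ has constant term $1$.

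Next I would pass to coordinates adapted to each case and compute the ideal modulo $p$. For Proposition \ref{stabrings1}(i) with $p>2$ I set $z=(q-q^{-1})^2\in\mathcal{O}_{n,p}$ and note that over $\Bbb F_p$ the ideal $([p^{n-1}]_q)$ is generated by the single relation $(q-q^{-1})^{p^{n-1}-1}=0$; the apparent second relation $q^{-(p-1)p^{n-1}}(1+q^{2p^{n-1}}+\cdots+q^{2(p-1)p^{n-1}})=0$ becomes $(q-q^{-1})^{(p-1)p^{n-1}}=0$ in characteristic $p$ (via the Frobenius applied to $1+q^2+\cdots+q^{2(p-1)}$ after scaling) and so is redundant. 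Since $\mathcal{O}_{n,p}/p$ is a local ring in which $z$ generates the maximal ideal and has nilpotence index exactly $\tfrac{p^{n-1}-1}{2}+1$ or so, one gets $\Bbb F_p[z]/z^{(p^{n-1}-1)/2}$, and the $\ZZ/2$-grading from Corollary \ref{Supervec} adjoins a square-root-of-unity generator $g$. The $p=2$ cases are analogous with $w=q+q^{-1}$ (for (ii)) and $z=q^2+q^{-2}=(q-q^{-1})^2$ (for (iii)); the nilpotence exponents $2^{n-1}-1$ and $2^{n-2}$ appear as the minimum of the two exponents from the two generators of the ideal.

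The main obstacle, though a mild one, is the bookkeeping in checking that the two generators of the ideal in $\mathcal{O}_{n,p}/p$ really do reduce to a single power of the uniformizer with the stated exponent — i.e.\ verifying the divisibility hierarchies between $[p^{n-1}]_q$, $(q+q^{-1})[2^{n-1}]_q$, and the powers of $q-q^{-1}$ (or $q+q^{-1}$) in characteristic $p$. Once this is done, the isomorphisms follow by matching dimensions over $\Bbb F_p$ with the known ranks of $\mathcal{O}_{n,p}/p$, and assembling (ii) for $\Ver_{p^n}$ in the case $p>2$ from the $\ZZ/2$-grading of Corollary \ref{Supervec}.
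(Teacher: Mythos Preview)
Your proposal is correct and follows essentially the same route as the paper: reduce via Proposition \ref{stabrings} to an explicit cyclotomic quotient, show $p$ lies in the ideal by the norm/unit arguments you give, then rewrite the generator(s) in characteristic $p$ as powers of the uniformizer $z=(q-q^{-1})^2$ (respectively $w=q+q^{-1}$) and observe that the cyclotomic relation becomes redundant. The only cosmetic point is that your ``second relation'' is not a second generator of the ideal but the defining relation of $\mathcal{O}_{n,p}$ itself (the $p^n$-th cyclotomic polynomial evaluated at $q^2$), which is why it is automatically implied once you work modulo $z^{(p^{n-1}-1)/2}$; the paper makes this explicit but the argument is the same.
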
  

\begin{proposition}\label{detcar} The determinant of the Cartan matrix of 
any block of $\Ver_{p^n}$ of size $p^{m-1}(p-1)$ equals $p^{p^{m-1}}$. 
\end{proposition}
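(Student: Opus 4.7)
The plan is to extract the block determinants from the global determinant $\det(C_{n,p})$, exploiting Proposition \ref{samecar}, which guarantees that all blocks of a given size are equivalent (hence have equal Cartan matrices and in particular equal determinants).

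The first step will be to compute $\det(C_{n,p})$ globally. For any finite $\k$-linear abelian category $\C$ with nondegenerate Cartan matrix $C$, the classes $[P_i]$ of indecomposable projectives span the row lattice of $C$ inside the free $\BZ$-module on simple objects, so the abelian group $\text{GrStab}(\C)=\text{Gr}(\C)/\langle[P_i]\rangle$ is the cokernel of $C^T$ and therefore has order $|\det C|$. Applying this to $\Ver_{p^n}$ and combining with Proposition \ref{stabrings1} (which identifies $\text{GrStab}(\Ver_{p^n})$ with an $\Bbb F_p$-algebra of $\Bbb F_p$-dimension $p^{n-1}-1$), I obtain $|\det(C_{n,p})|=p^{p^{n-1}-1}$. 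The sign is fixed by the positive-definiteness of $C_{n,p}$ established in Proposition \ref{Cartan nondeg}(iv).

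Next, let $d_r$ denote the determinant of the Cartan matrix of any block of size $p^r(p-1)$; this is well-defined by Proposition \ref{samecar}. By Proposition \ref{blockstr}, $\Ver_{p^n}$ has $p-1$ simple-projective blocks (each with Cartan matrix $(1)$ of determinant $1$) together with exactly $p-1$ blocks of each size $p^r(p-1)$ for $r=0,\dots,n-2$, and $C_{n,p}$ is block-diagonal in this decomposition. Hence
\[
p^{p^{n-1}-1}\;=\;\det(C_{n,p})\;=\;\prod_{r=0}^{n-2}d_r^{\,p-1}.
\]

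Finally, I solve for the $d_r$ by induction on $r$. For the base case, the equation for $n=2$ gives $d_0^{\,p-1}=p^{p-1}$, so $d_0=p=p^{p^0}$. For the inductive step, assuming $d_r=p^{p^r}$ for $r<s$, the identity for $n=s+2$ yields
\[
d_s^{\,p-1}\;=\;\frac{p^{p^{s+1}-1}}{\prod_{r=0}^{s-1}p^{p^r(p-1)}}\;=\;\frac{p^{p^{s+1}-1}}{p^{p^s-1}}\;=\;p^{p^s(p-1)},
\]
whence $d_s=p^{p^s}$. Setting $s=m-1$ gives the claim. There is no serious obstacle: the whole argument is combinatorial accounting once Proposition \ref{samecar} and Proposition \ref{stabrings1} are in hand. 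The essential point worth emphasising is that without the equivalence of same-size blocks one could only recover the product $\prod_r d_r^{\,p-1}$, rather than each $d_r$ individually.
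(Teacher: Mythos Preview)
Your proof is correct and follows essentially the same route as the paper: both use Proposition \ref{samecar} to reduce to a single determinant $d_r$ per block size, read off the global determinant $\det C_{n,p}=p^{p^{n-1}-1}$ from Proposition \ref{stabrings1} (via $|\mathrm{GrStab}|=|\det C|$) and Proposition \ref{Cartan nondeg}(iv), and then recover each $d_r$ by comparing the product formulas for successive values of $n$. The only cosmetic difference is that the paper phrases the last step as a single division whereas you write it out as an induction.
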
 

\begin{proof} By Proposition \ref{samecar}, all blocks of the same size have the same Cartan matrices. So the determinant in question depends only on $m$; let us denote it by $D_m$. Then Proposition \ref{stabrings1} and Proposition \ref{blockstr} imply that 
$$
(\prod_{m=1}^{n-1}D_m)^{p-1}=p^{p^{n-1}-1}.
$$
Thus $D_m^{p-1}=p^{p^{m-1}(p-1)}$. Also since $C_{n,p}$ is positive definite (Proposition \ref{Cartan nondeg}(iv)), $D_m$ is a positive integer. 
Thus $D_m=p^{p^{m-1}}$, as claimed. 
\end{proof}

\subsection{Incompressibility} Let $\D$ be a (not necessarily braided) tensor category. 

\begin{theorem} \label{incompressible}
(i) Assume $p>2$. Let 
$E: \Ver_{p^n}^+\to \D$ be a (not necessarily braided) tensor functor. Then $E$ is an embedding (i.e., it is fully faithful).

(ii) Let $\D$ be braided and $E: \Ver_{p^n}\to \D$ be a braided tensor functor. Then $E$ is an embedding. 
\end{theorem}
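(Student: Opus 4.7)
The plan is to reduce fully faithfulness of $E$ to a Frobenius--Perron dimension comparison between $\Ver_{p^n}^+$ (resp.~$\Ver_{p^n}$) and the essential image $\C\subset\D$. Since any exact tensor functor between tensor categories is faithful (\cite{EGNO}, Remark 4.3.10), $E$ is faithful, and since tensor functors preserve Frobenius--Perron dimensions (by uniqueness of the positive unital ring homomorphism ${\rm Gr}\to\mathbb R$), it suffices to show $\FPdim(\C)=\FPdim(\Ver_{p^n}^+)$ (resp.~$\FPdim(\Ver_{p^n})$); indeed a surjective tensor functor between finite tensor categories of equal FP dimension is automatically an equivalence. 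Let $\C:=\langle E(\Ver_{p^n}^+)\rangle$ in case (i), respectively $\C:=\langle E(\Ver_{p^n})\rangle$ in case (ii); in case (ii), $\C$ inherits a symmetric braiding from $\Ver_{p^n}$ even if $\D$ itself is not symmetric.

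The key algebraic step is to show that the induced map $E_\ast:{\rm Gr}(\Ver_{p^n}^+)\to{\rm Gr}(\D)$ is \emph{injective}. For part (i), ${\rm Gr}(\Ver_{p^n}^+)\cong\mO_{n,p}$ (Theorem \ref{cyclot}(iv)) is a Dedekind domain, so every nonzero prime ideal has finite residue field; since ${\rm Gr}(\D)$ is free as an abelian group on simple isomorphism classes and hence torsion-free, $E_\ast$ cannot factor through a finite quotient (the image would be finite but must contain the torsion-free class $[\one]$). Hence $\ker E_\ast=0$. For part (ii), ${\rm Gr}(\Ver_{p^n})\cong\mO_{n,p}[g]/(g^2-1)\cong\mO_{n,p}\times\mO_{n,p}$ (Corollary \ref{Supervec}); the same torsion-free argument rules out kernels supported at a finite residue field, while the remaining possibilities $(g\mp 1)\cdot\mO_{n,p}$ are excluded because $E_\ast(g)=[E(\psi)]$ is a simple class in ${\rm Gr}(\D)$ distinct from $\pm 1$ (the fermion $\psi$ cannot map to $\one$ by faithfulness of $E$). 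For $p=2$, ${\rm Gr}(\Ver_{2^n})\cong\mO_{n+1,2}$ is itself a Dedekind domain and the argument of case (i) applies directly.

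To finish, I plan to argue by induction on $n$ that $\bar E$ sends simple objects to simple objects and that distinct simples go to distinct simples, using the Serre inclusion $\Ver_{p^{n-1}}\hookrightarrow\Ver_{p^n}$ (Proposition \ref{serre}), the Steinberg-type factorization $L_i=\Bbb T_{i_1}^{[1]}\otimes\cdots\otimes\Bbb T_{i_n}^{[n]}$ (Theorem \ref{tpt}), and the matching factorization of Frobenius--Perron dimensions $\FPdim(L_i)=\prod_k[i_k+1]_{q^{p^{n-k}}}$ (Corollary \ref{dims}(ii)). The base case $n=1$ is standard, since $\Ver_p^+$ is a fusion subcategory of the fusion category $\Ver_p$ and tensor functors out of fusion categories are embeddings onto fusion subcategories. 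Once simples map bijectively to simples, Proposition \ref{CartanFP}(ii) identifies the Cartan matrix of $\C$ with that of $\Ver_{p^n}^+$, giving $\FPdim(\C)=\FPdim(\Ver_{p^n}^+)$, which closes the argument.

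The main obstacle is this inductive preservation of simplicity: Frobenius--Perron dimension matching alone does not rule out a decomposition $\bar E(L_i)=\bigoplus X_j$ with $\sum\FPdim(X_j)=\FPdim(L_i)$, so one must combine the injectivity of $E_\ast$ with the multiplication rule of Corollary \ref{multrule1} to derive a contradiction from any nontrivial splitting. Concretely, a proper decomposition of $\bar E(L_i)$ together with the tensor factorization of Theorem \ref{tpt} would produce relations among the classes $[\bar E(\Bbb T_{i_k}^{[k]})]$ in ${\rm Gr}(\D)$ that pull back, via the injective $E_\ast$, to relations in $\mO_{n,p}$ not holding in $\Ver_{p^n}^+$; careful bookkeeping at each inductive step should yield the desired contradiction.
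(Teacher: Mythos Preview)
Your approach has several substantive gaps that would prevent it from going through as stated.

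\textbf{The base case is not justified.} You write that ``tensor functors out of fusion categories are embeddings onto fusion subcategories,'' but this is false: the forgetful functor $\Rep(G)\to\Vec$ for any nontrivial finite group, or the non-braided functor $\sVec\to\Vec$, are counterexamples. The incompressibility of $\Ver_p^+$ (as a not-necessarily-braided tensor category) is itself a nontrivial statement requiring proof; the paper handles it via the explicit Frobenius--Perron estimates in Lemma~\ref{FP tricks}(i).

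\textbf{Grothendieck-ring injectivity does not yield simplicity preservation.} Knowing that $E_\ast$ is injective tells you the classes $E_\ast[L_i]$ are linearly independent in ${\rm Gr}(\C)$, but this does not rule out $E(L_i)$ decomposing into simples of $\C$ that happen not to lie in the $\ZZ$-span of the $E_\ast[L_j]$. Surjectivity of $E$ only guarantees every simple of $\C$ is a \emph{composition factor} of some $E(X)$, not that it equals some $E(L_j)$. Your own final paragraph acknowledges this, but ``careful bookkeeping should yield the desired contradiction'' is not a proof; this is exactly the hard step, and the paper addresses it with concrete FP-dimension inequalities (e.g.\ bounding $\FPdim$ of a putative proper quotient of $R(\Bbb T_{i+1})$).

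\textbf{Simples-to-simples does not give equality of $\FPdim$.} Even granting a bijection on simples, Proposition~\ref{CartanFP}(ii) computes the Cartan matrix of $\Ver_{p^n}$, not of $\C$; the Cartan matrix of $\C$ is extra structure not determined by ${\rm Gr}(\C)$. Concretely, $E(P_i)$ is projective in $\C$ but could split, so the projective cover of $E(L_i)$ in $\C$ might have strictly smaller FP dimension than $P_i$, giving only $\FPdim(\C)\le\FPdim(\Ver_{p^n}^+)$, the wrong inequality. The paper instead proves directly that $E$ is \emph{full on projectives} by computing $\Hom(R(\Bbb T_r),\one)$ via the factorization $\Bbb T_r=\Bbb T_a\otimes\Bbb F(\Bbb T_b)$ and Lemma~\ref{FP tricks}(ii); this is the real mechanism that forces indecomposable projectives to stay indecomposable.

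A minor point: $\mO_{n,p}[g]/(g^2-1)$ is not isomorphic to $\mO_{n,p}\times\mO_{n,p}$ as rings (the map $(a,b)\mapsto(a+b,a-b)$ has index a power of $2$), so your ideal analysis in part~(ii) would need adjustment.
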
 

\begin{remark} \label{incompress rem}
(i) In the language of \cite[Definition 4.3]{BE} Theorem \ref{incompressible} says that the category $\Ver_{p^n}^+$ is {\em incompressible} as a tensor category, and $\Ver_{p^n}$ is incompressible as a symmetric (or braided) category.

(ii) Theorem \ref{incompressible}(i) is not true if $\Ver_{p^n}^+$ is replaced by $\Ver_{p^n}$: since there exists a tensor functor $\sVec\to {\rm \Vec}$, we have a non-injective tensor functor $\Ver_{p^n}=\Ver_{p^n}^+\boxtimes \sVec\to \Ver_{p^n}^+$, see Corollary \ref{Supervec}.

(iii) See \cite[Theorem 4.4]{BE} for the case $p=2$. Note that in this case $\Ver_{2^n}$ is incompressible as a tensor category while $\Ver_{2^n}^+$ is not. However, both are incompressible as symmetric (or braided) categories. 
\end{remark} 

\begin{proof} (ii) easily follows from (i), so it suffices to prove (i). 
By replacing $\D$ with the image of $E$ we can assume that $\D$ is finite
and $E$ is surjective. Moreover by Corollary \ref{Supervec} we can extend $E$ to a tensor functor $R: \Ver_{p^n}\to \D \boxtimes \sVec$.  

\begin{lemma} \label{FP tricks}
(i) Assume $i\in [0,p-1]$. Then $R(\Bbb T_i)$ is simple and for $i>0$ it is not isomorphic
to an object from $R(\Ver_{p^{n-1}})$.

(ii) Let $a\in [p-1,2p-2]$ and let $L$ be a simple object from $R(\Ver_{p^{n-1}})\subset \D$. 
Assume $\Hom(R(\Bbb T_a),L)\ne 0$. Then $a=2p-2$, $L=\one$ and  ${\rm Hom}(R(\Bbb T_a),L)$
is one-dimensional.
\end{lemma}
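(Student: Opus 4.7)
The plan is to prove part (i) by induction on $i$ and then to deduce (ii) from (i) using the filtration of $\Bbb T_a$ from Example \ref{tptex}.

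For (i), the case $i = 0$ is trivial since $R(\Bbb T_0) = \one$. For $i = 1$: since $V \otimes V = \one \oplus \Bbb T_2$ in $\Ver_{p^n}$ and $R$ preserves tensor products and direct sums, we find $\FPdim R(V) = [2]_q = 2\cos(\pi/p^n) < 2$; as every simple object in a tensor category has Frobenius--Perron dimension at least $1$, the nonzero object $R(V)$ admits at most one composition factor, and hence is simple. For the second assertion, I would argue by contradiction: if $R(V) \in R(\Ver_{p^{n-1}})$, then since $V$ tensor-generates $\Ver_{p^n}$ and $R$ is surjective onto $\D \boxtimes \sVec$, the Serre-closed tensor subcategory $R(\Ver_{p^{n-1}})$ must coincide with $\D \boxtimes \sVec$. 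Combined with the inductive hypothesis of Theorem \ref{incompressible} applied to $R|_{\Ver_{p^{n-1}}^+}$ (which is then an embedding), this forces $\D \cong \Ver_{p^{n-1}}^+$ and yields a tensor functor $E : \Ver_{p^n}^+ \to \Ver_{p^{n-1}}^+$, contradicting Theorem \ref{cyclot}(v).

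For the inductive step $2 \le i \le p-1$ of (i), I would apply $R$ to the identity $V \otimes \Bbb T_{i-1} = \Bbb T_{i-2} \oplus \Bbb T_i$. Using the strong inductive hypothesis that $R(\Bbb T_j)$ is simple and not isomorphic to $\one$ for $1 \le j \le i-1$, one computes $\End(R(\Bbb T_i)) = \Hom(\one, R(\Bbb T_i \otimes \Bbb T_i)) = \sum_k \dim \Hom(\one, R(\Bbb T_{2k}))$, where the sum runs over the summands in the decomposition of $\Bbb T_i \otimes \Bbb T_i$ in $\Ver_{p^n}$, and shows it equals $\k$ by arguing that only the $\Bbb T_0 = \one$ summand contributes. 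This forces $R(\Bbb T_i)$ to be simple. The assertion $R(\Bbb T_i) \notin R(\Ver_{p^{n-1}})$ then follows from a Frobenius--Perron-dimension comparison: FP dimensions of objects in $R(\Ver_{p^{n-1}}) \cong \Ver_{p^{n-1}}$ live in $\mathcal{O}_{n-1,p}$, whereas $[i+1]_q$ generates an extension of strictly larger $\Bbb Z$-rank for $i \in [1, p-1]$.

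For (ii), the case $a = p-1$ follows immediately from (i): $R(\St) = R(\Bbb T_{p-1})$ is simple and not in $R(\Ver_{p^{n-1}})$, so the only simple $L$ with $\Hom(R(\St), L) \ne 0$ is $L \cong R(\St) \notin R(\Ver_{p^{n-1}})$. For $a \in [p, 2p-2]$, the composition series $[\Bbb T_{2p-2-a}, \Bbb T_1^{[n-1]} \otimes \Bbb T_{a-p}, \Bbb T_{2p-2-a}]$ from Example \ref{tptex} gives, under the exact functor $R$, a surjection $R(\Bbb T_a) \twoheadrightarrow R(\Bbb T_{2p-2-a})$ with simple target (by (i)); any simple quotient $L \in R(\Ver_{p^{n-1}})$ of $R(\Bbb T_a)$ must be $\cong R(\Bbb T_{2p-2-a})$, because the middle composition factor $R(\Bbb T_1^{[n-1]}) \otimes R(\Bbb T_{a-p})$ would otherwise, via tensoring with $R(\Bbb T_1^{[n-1]})^*$, force $R(\Bbb T_{a-p}) \in R(\Ver_{p^{n-1}})$, contradicting (i) for $a - p > 0$. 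Part (i) then gives $2p-2-a = 0$, so $a = 2p-2$ and $L = \one$; the $1$-dimensionality comes from the $1$-dimensional space $\Hom(\Bbb T_{2p-2}, \one)$ via faithfulness of $R$ together with the simple-top bound.

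The hard part will be establishing the simplicity of $R(\Bbb T_i)$ in the inductive step of (i): since $E$, and hence $R$, is only assumed to be a tensor functor and not a braided one, one cannot identify $R(\Bbb T_i)$ with a symmetric power $S^i R(V)$, and the argument must proceed purely through the decomposition $V \otimes \Bbb T_{i-1} = \Bbb T_{i-2} \oplus \Bbb T_i$ and the $\End$-space computation using the inductive hypothesis.
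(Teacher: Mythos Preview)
Your plan for part~(ii) is essentially the paper's argument: one uses the composition series from Example~\ref{tptex} and part~(i) to reduce any nonzero map $R(\Bbb T_a)\to L$ with $L\in R(\Ver_{p^{n-1}})$ to a map $R(\Bbb T_{a-p})\to R(\Bbb T_1^{[n-1]})^*\otimes L$, which for $a>p$ contradicts~(i). You correctly flag that $a=p$ is the residual case; the paper handles it by the identity $\Bbb T_p=\Bbb T_1\otimes\Bbb T_{p-1}$ together with a Frobenius--Perron inequality, and obtains the $1$-dimensionality for $a=2p-2$ by embedding $\Hom(R(\Bbb T_{2p-2}),\one)$ into $\End(R(\Bbb T_{p-1}))=\k$.

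The genuine gap is in your inductive step for the simplicity of $R(\Bbb T_i)$ in part~(i). Two problems:
\begin{itemize}
\item The computation of $\End(R(\Bbb T_i))$ via $\Bbb T_i\otimes\Bbb T_i=\bigoplus_k\Bbb T_{2k}$ requires knowing $\Hom(\one,R(\Bbb T_{2k}))=0$ for $2k$ ranging up to $2i$ (and even up to $2p-2$ once $2i\ge p$, where some $\Bbb T_{2k}$ are no longer simple). Your strong induction hypothesis only covers indices $\le i-1$, so you have no control over $R(\Bbb T_{2k})$ for $2k\ge i$; the recursion you set up does not close.
\item Even if you could show $\End(R(\Bbb T_i))=\k$, this does not force simplicity: a self-dual indecomposable of shape $[L,L^*]$ with $L\not\cong L^*$ is a brick but not simple. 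So the conclusion ``this forces $R(\Bbb T_i)$ to be simple'' is unjustified.
\end{itemize}

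The paper's argument for~(i) is completely different and avoids both issues. One supposes $R(\Bbb T_{i+1})$ is not simple, uses self-duality to find a proper quotient $M$ with $\FPdim(M)\le\tfrac12[i+2]_q$, and from the nonzero map $R(\Bbb T_1)\otimes R(\Bbb T_i)\to M$ and simplicity of $R(\Bbb T_i)$ obtains an embedding $R(\Bbb T_i)\hookrightarrow R(\Bbb T_1)\otimes M$. A short computation then forces $\FPdim(M)=[i+1]_q/[2]_q$, and a Galois-conjugate estimate shows that this number has a conjugate of strictly larger absolute value, so it cannot be the Frobenius--Perron dimension of any object---a contradiction. This is the key idea you are missing.

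Your Frobenius--Perron argument for ``$R(\Bbb T_i)\notin R(\Ver_{p^{n-1}})$ for $i>0$'' is fine (and in fact makes explicit a step the paper leaves to the reader): $\FPdim R(\Bbb T_i)=[i+1]_q$ lies in $\ZZ[2\cos(\pi/p^n)]$ but, for $1\le i\le p-1$, not in the subring $\ZZ[2\cos(\pi/p^{n-1})]$ that contains all Frobenius--Perron dimensions of objects of $R(\Ver_{p^{n-1}})$. Your alternative argument for $i=1$ invoking ``the inductive hypothesis of Theorem~\ref{incompressible}'' is out of order, since the lemma is established \emph{before} that induction is set up; stick with the dimension argument.
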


\begin{proof} (i) We employ induction in $i$. The base case $i=0$ is clear. Assume $R(\Bbb T_{i+1})$ 
is not simple. Thus it has a nontrivial subobject $N$; clearly either $\FPdim(N)\le \frac12\FPdim(\Bbb T_{i+1})$ or $\FPdim(R(\Bbb T_{i+1})/N)\le \frac12\FPdim(\Bbb T_{i+1})$; since $R(\Bbb T_{i+1})$
is self-dual, in either case we will have a quotient $M$ with $\FPdim(M)\le \frac12\FPdim(\Bbb T_{i+1})$.
By \eqref{eq2} we have a nonzero morphism $R(\Bbb T_{1})\ot R(\Bbb T_{i})\to M$, so we have a 
nonzero morphism $R(\Bbb T_{i})\to R(\Bbb T_{1})\ot M$. This must be an embedding as 
$R(\Bbb T_{i})$ is simple by the induction assumption. We have
$$\FPdim(R(\Bbb T_{1})\ot M/R(\Bbb T_{i}))=
$$
$$
=[2]_q\FPdim(M)-[i+1]_q\le \frac{[2]_q[i+2]_q}2-[i+1]_q=\frac{[i+3]_q-[i+1]_q}2<1,$$
so it must vanish and $\FPdim(M)=\frac{[i+1]_q}{[2]_q}$. 
But we claim that the number $\alpha:=\frac{[i+1]_q}{[2]_q}=\frac{q^{m+2}-q^{-m-2}}{q^2-q^{-2}}$ cannot be the Frobenius-Perron dimension of any object. 
Indeed, let $g$ be the Galois group element such that $g(q^4)=q^2$ (it exists since the order of $q^2$ is a power of $p$). Then  
$g(q^{2m+4})=(-1)^mq^{m+2}$, so we have 
$$
|g(\alpha)|=\left|\frac{(-1)^mq^{m+2}-1}{q^2-1}\right|=|\alpha|\cdot \left|\frac{q^2+1}{(-1)^mq^{m+2}+1}\right|>|\alpha|,
$$
as desired. Thus $R(\Bbb T_{i+1})$ must be simple and (i) is proved.

(ii) We already know that the statement holds for $a=p-1$ by (i). For $a\in [p,2p-2]$ the object
$\Bbb T_a$ has 3 simple components: $\Bbb T_{2p-2-a}$ with multiplicity 2 and $\Bbb T_1^{[n-1]}\ot
\Bbb T_{a-p}$, see Example \ref{tptex}. If $L\ne \one$ or $a\ne 2p-2$ then $\Hom(R(\Bbb T_{2p-2-a}),L)=0$, so we must
have $\Hom(R(\Bbb T_1^{[n-1]}\ot \Bbb T_{a-p}),L)\ne 0$ which implies $\Hom(R(\Bbb T_{a-p}),R(\Bbb T_1^{[n-1]})\ot L)\ne 0$ which is impossible for $a\ne p$ by (i) as $R(\Bbb T_1^{[n-1]})\ot L\in R(\Ver_{p^{n-1}})$.

It remains to show that $\Hom(R(\Bbb T_p),R(\Bbb T_1^{[n-1]}))=0$ and $\dim\Hom(R(\Bbb T_{2p-2}),\one)=1$. For the former we observe that $\Bbb T_p=\Bbb T_1\ot \Bbb T_{p-1}$ by \eqref{eq1},
so $\Hom(R(\Bbb T_p),R(\Bbb T_1^{[n-1]}))\ne 0$ would imply that $R(\Bbb T_{p-1})\subset R(\Bbb T_1\ot \Bbb T_1^{[n-1]})$. This is impossible for $p\ge 5$ since 
$$\FPdim(\Bbb T_{p-1})>\FPdim(\Bbb T_1\ot \Bbb T_1^{[n-1]});$$
 it is also impossible for $p=3$ since $0<\FPdim(\Bbb T_1\ot \Bbb T_1^{[n-1]})-
\FPdim(\Bbb T_{p-1})<1$ in this case. Finally, $\Hom(R(\Bbb T_{2p-2}),\one)$ is a direct summand
of $\Hom(R(\Bbb T_{p-1})\ot R(\Bbb T_{p-1})^*,\one)=\Hom(R(\Bbb T_{p-1}), R(\Bbb T_{p-1})),$ so it
is one-dimensional by (i).
\end{proof}

Lemma \ref{FP tricks}(i) implies that the restriction of $R$
to $\Ver_p\subset \Ver_{p^n}$ is an embedding.
Thus Theorem \ref{incompressible} is implied by the following result.

\begin{proposition}\label{incompress2}
 Let $R: \Ver_{p^n}\to \D$ be a surjective tensor functor such that
its restriction to $\Ver_p\subset \Ver_{p^n}$ is an embedding. Then $R$ is an equivalence.
\end{proposition}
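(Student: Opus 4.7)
The plan is to argue by induction on $n$, with the base case $n=1$ following from the hypothesis. For the inductive step ($n\ge 2$), I first restrict $R$ to the Serre subcategory $\Ver_{p^{n-1}}\subset \Ver_{p^n}$ (Proposition~\ref{serre}); this is a surjective tensor functor onto its image $\D':=R(\Ver_{p^{n-1}})\subset \D$ whose restriction to $\Ver_p$ is still an embedding. By the inductive hypothesis, $R|_{\Ver_{p^{n-1}}}$ is fully faithful, so $\D'\cong \Ver_{p^{n-1}}$ as tensor categories.

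The remaining task is to promote this to fully faithfulness of $R$ itself. Since $R$ is a surjective tensor functor between finite tensor categories and automatically preserves Frobenius-Perron dimensions on objects, it suffices to prove $\FPdim(\D)=\FPdim(\Ver_{p^n})$, for then the standard criterion for surjective tensor functors forces $R$ to be an equivalence. I plan to do this by producing, for each simple object of $\Ver_{p^n}$, a corresponding simple object of $\D$ with the same Frobenius-Perron dimension, and by showing that distinct simples of $\Ver_{p^n}$ map to distinct simples of $\D$. Using the parametrization of Proposition~\ref{simples}, every simple of $\Ver_{p^n}$ has the unique form $M_{L,i}:=L\otimes \Bbb T_i^{[n]}$, where $L$ is a simple of $\Ver_{p^{n-1}}$ with projective cover $\Bbb T_r$ and $i\in [0,p-1]$; its indecomposable projective cover in $\Ver_{p^n}$ is $\Bbb T_{2p-2-i+pr}$.

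The central technical step is to show that each $R(M_{L,i})=R(L)\otimes R(\Bbb T_i^{[n]})$ is simple in $\D$. Since surjective tensor functors between finite tensor categories send projective objects to projective objects (via the exactness of their right adjoint), $R(\Bbb T_{2p-2-i+pr})$ is projective in $\D$, and the projective-cover map in $\Ver_{p^n}$ pushes forward to a surjection onto $R(M_{L,i})$. I would then use Lemma~\ref{FP tricks}(ii), together with the translation principle of Proposition~\ref{samecar} which identifies the block containing $\Bbb T_{2p-2-i+pr}$ with a block of the type handled by that lemma, to conclude that this projective has exactly one simple quotient; combined with $\FPdim(R(M_{L,i}))=\FPdim(M_{L,i})$, this forces $R(M_{L,i})$ to be simple. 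Distinctness for distinct pairs $(L,i)$ follows by comparing Frobenius-Perron dimensions and invoking Lemma~\ref{FP tricks}(i): for $i>0$ the simples $R(M_{L,i})$ lie outside $\D'$, while for $i=0$ they lie in $\D'$, which together with the FPdim comparison separates the pairs.

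Summing Frobenius-Perron dimensions over the $p^{n-1}(p-1)$ simples thus constructed yields $\FPdim(\D)\ge \FPdim(\Ver_{p^n})$; combined with the automatic reverse inequality for surjective tensor functors, equality holds, and $R$ is an equivalence. The main obstacle is the simplicity claim for $R(L)\otimes R(\Bbb T_i^{[n]})$: in a non-braided, non-semisimple tensor category, tensor products of simple objects can decompose into many composition factors, and the argument must carefully leverage the projective-cover uniqueness from Proposition~\ref{simples} and the Hom-vanishing from Lemma~\ref{FP tricks}(ii) to rule out extraneous subquotients.
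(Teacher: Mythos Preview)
Your overall induction scheme and the reduction to a Frobenius--Perron dimension count are reasonable, and they parallel the paper's strategy. However, the ``central technical step'' has a genuine gap.

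You claim that $R(\Bbb T_{2p-2-i+pr})$ has a unique simple quotient by invoking Lemma~\ref{FP tricks}(ii) ``together with the translation principle of Proposition~\ref{samecar}''. But Proposition~\ref{samecar} is only an abelian equivalence between blocks of the various $\Ver_{p^m}$; it says nothing about the image of those blocks under an arbitrary tensor functor $R$ into an unknown target $\D$, so it cannot be used to transport Lemma~\ref{FP tricks}(ii) from the range $a\in[p-1,2p-2]$ to a general projective $\Bbb T_r$ with $r\in[p^{n-1}-1,p^n-2]$. What actually bridges this gap is the tensor factorization $\Bbb T_{a+pb}=\Bbb T_a\otimes \Bbb F(\Bbb T_b)$ of \eqref{Frobenii}: writing $r=a+pb$ with $a\in[p-1,2p-2]$ gives
\[
\Hom_{\D}(R(\Bbb T_r),\one)=\Hom_{\D}\bigl(R(\Bbb T_a),\,R(\Bbb F(\Bbb T_b))^*\bigr),
\]
where $R(\Bbb F(\Bbb T_b))\in R(\Ver_{p^{n-1}})$; now Lemma~\ref{FP tricks}(ii) applies and forces $a=2p-2$ with the morphism factoring through $\one$, after which the inductive hypothesis on $\Ver_{p^{n-1}}$ pins down $b$. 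This is precisely the paper's argument, and it directly yields fullness of $R$ on projectives (hence the equivalence), bypassing any separate ``simplicity of $R(M_{L,i})$'' statement.

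There is also a secondary gap in your distinctness argument: Frobenius--Perron dimension does not separate simples of $\Ver_{p^n}$ (for $p>2$ tensoring by the fermion preserves $\FPdim$), and Lemma~\ref{FP tricks}(i) only tells you that $R(\Bbb T_i)$ for $i>0$ lies outside $R(\Ver_{p^{n-1}})$, not that the products $R(L)\otimes R(\Bbb T_i)$ are pairwise non-isomorphic as $(L,i)$ varies. Once you have fullness on projectives this issue evaporates, but as written the argument is incomplete.
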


\begin{proof} We proceed by induction in $n$. The base case $n=1$ is immediate. By applying
the inductive assumption to the image of the restriction of $R$ to $\Ver_{p^{n-1}}$ we can
assume that $R$ restricted to $\Ver_{p^{n-1}}$ is an embedding.

It is known that the functor $R$ sends projective objects to projective objects, see \cite[Theorem 6.1.16]{EGNO}.
To prove the proposition it remains to show that $R$ is fully faithful on projectives. Note
that it is faithful automatically by \cite[Remark 4.3.10]{EGNO}, so let us 
show that it is full on projectives. Since $\Hom(Q_1,Q_2)=\Hom(Q_1\ot Q_2^*,\one)$, it
is sufficient to show that for any projective $P\in \Ver_{p^n}$ the map $\Hom(P,\one)\to \Hom(R(P),\one)$
is surjective. We can assume that $P$ is indecomposable. Thus it is enough to show that for
$p^{n-1}-1\le r\le p^n-2$ we have $\Hom(R(\Bbb T_r),\one)=0$ except for the case $r=2p^{n-1}-2$
when $\Hom(R(\Bbb T_r),\one)$ should be one-dimensional, see Lemma \ref{lem2}. Let us write
$r=a+pb$ where $p-1\le 2p-2$ and $p^{n-2}-1\le b\le p^{n-1}-2$. By \eqref{Frobenii} we have
$\Hom(R(\Bbb T_r),\one)=\Hom(R(\Bbb T_a),R(\Bbb F(\Bbb T_b))^*)$ where $R(\Bbb F(\Bbb T_b))\in
R(\Ver_{p^{n-1}})$ by definition of the functor $H$, see Theorem \ref{inclusion}. Thus by Lemma 
\ref{FP tricks} $\Hom(R(\Bbb T_a),R(\Bbb F(\Bbb T_b))^*)\ne 0$ implies that $a=2p-2$ and any
nonzero morphism factorizes through $\one$. Using the inductive assumption we see that $b=2p^{n-2}-2$
and the nonzero Hom space is one-dimensional. Thus the functor $R$ is full on projectives, which
proves Proposition \ref{incompress2}. 
\end{proof}
Theorem \ref{incompressible} is proved. 
\end{proof}

\subsection{Classification of symmetric tensor categories generated by an object with invertible exterior square} 

The goal of this subsection is to classify symmetric tensor categories $\C$ over an algebraically closed field $\k$ of any characteristic $p\ge 0$ generated by an object $X$ with invertible $\wedge^2X$. 

\begin{proposition}\label{prop3} Let $X$ be a symmetric TL object in $\C$ of degree $n$, and assume that $\C$ is tensor generated by $X$. Then $\C\cong \Ver_{p^n}$ if $n<\infty$ (with $X=\Bbb T_1$ and $p^n>2$) and $\C\cong {\rm Rep}(\Gamma)$ for a closed subgroup scheme $\Gamma\subset SL_2(\k)$ if $n=\infty$ (with $X$ being the tautological representation). This gives a classification of such pairs $(\C,X)$. 
\end{proposition}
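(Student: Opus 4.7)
The plan is to encode the data $(\C, X)$ as a faithful symmetric monoidal functor $F: \T_{n,p} \to \C$ (using the convention $\T_{\infty,p} := \T_p$), then extend $F$ through an abelian envelope, and finally identify $\C$ using either incompressibility or Tannakian reconstruction. The first step uses Proposition~\ref{uni11}: a symmetric TL object $X \in \C$ of degree $n$ (for $1 \le n \le \infty$) is the same data as a faithful symmetric monoidal $F: \T_{n,p} \to \C$ with $F(V_n) = X$, and the hypothesis that $\C$ is tensor generated by $X$ is equivalent to the image of $F$ tensor generating $\C$.

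For $n < \infty$ (with $p^n > 2$), Theorem~\ref{CartanFP0}(ii) identifies $\Ver_{p^n}$ as the abelian envelope of $\T_{n,p}$, so the universal property of Theorem~\ref{main1}(ii) extends $F$ uniquely to a symmetric tensor functor $\widetilde F: \Ver_{p^n} \to \C$. Its image is a tensor subcategory containing $X$, hence equal to all of $\C$, so $\widetilde F$ is surjective. By Theorem~\ref{incompressible}(ii), every braided tensor functor out of $\Ver_{p^n}$ is an embedding, so $\widetilde F$ is also fully faithful, hence an equivalence, giving $\C \cong \Ver_{p^n}$.

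For $n = \infty$ I would invoke Example~\ref{mainex}(4) (which cites \cite{CEH}, Theorem 3.3.1), identifying $\Rep(SL_2(\k))$ as the abelian envelope of $\T_p$. The universal property extends $F$ to a surjective symmetric tensor functor $\widetilde F: \Rep(SL_2(\k)) \to \C$ with $\widetilde F(V) = X$. Then standard Tannakian reconstruction for affine group schemes---namely, that surjective symmetric tensor functors out of $\Rep(G)$ correspond to closed subgroup schemes $\Gamma \hookrightarrow G$ via the restriction $\Rep(G) \to \Rep(\Gamma)$---identifies $\C$ with $\Rep(\Gamma)$ for a unique closed subgroup scheme $\Gamma \subset SL_2(\k)$.

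The main obstacle will be the Tannakian step in the $n = \infty$ case, since the paper itself does not develop this aspect: one must push the standard forgetful fiber functor on $\Rep(SL_2(\k))$ forward along the (generally non-full) $\widetilde F$ to obtain a fiber functor on $\C$, verifying well-definedness on subquotients, exactness, and monoidality, and then recover $\Gamma \subset SL_2(\k)$ as the Tannakian automorphism group of this fiber functor. This is a standard application of Deligne's Tannakian formalism but requires care. The reverse direction of the classification---that each pair $(\Ver_{p^n}, \Bbb T_1)$ and each pair $(\Rep(\Gamma), V|_\Gamma)$ satisfies the hypotheses, and that the two families are disjoint---is then straightforward from the constructions.
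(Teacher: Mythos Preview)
Your proposal is correct and follows essentially the same route as the paper: obtain a faithful symmetric monoidal functor from $\T_{n,p}$ via Proposition~\ref{uni11}, extend through the abelian envelope (Theorem~\ref{CartanFP0}(ii) for $n<\infty$, \cite{CEH} for $n=\infty$), and conclude by incompressibility or Tannakian reconstruction. The only difference is in the $n=\infty$ step: rather than pushing a fiber functor along $\widetilde F$ by hand, the paper simply cites \cite{Bez}, Proposition~1, which states directly that any tensor category admitting a surjective tensor functor from $\Rep(G)$ for an affine group scheme $G$ is of the form $\Rep(\Gamma)$ for a closed subgroup scheme $\Gamma\subset G$; this is exactly the ``standard Tannakian reconstruction'' you allude to, and invoking it removes the obstacle you flagged.
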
 

\begin{proof} Since $\Ver_{p^n}$ is the abelian envelope of $\T_{n,p}$ (Theorem \ref{CartanFP0}) 
and ${\rm Rep}(SL_2(\k))$ is the abelian envelope of 
$\T_{\infty,p}$ (\cite{CEH}, Theorem 3.3.1), 
we have a surjective symmetric tensor functor
$F: \Ver_{p^n}\to \C$ if $n<\infty$ and ${\rm Rep}(SL_2(\k))\to \C$ 
if $n=\infty$. In the first case, since $\Ver_{p^n}$ is incompressible (Theorem \ref{incompressible}), 
$F$ is an equivalence and we are done. In the case $n=\infty$, 
$\C$ is Tannakian (\cite{Bez}, Proposition 1), and thus has to be of the form ${\rm Rep}(\Gamma)$ where $\Gamma$ is a closed subgroup scheme of $SL_2(\k)$, as claimed. 
\end{proof} 

\begin{corollary} \label{class1}
Let $\C$ be a symmetric tensor category over $\k$ tensor generated by an object $X$ with $\wedge^2X=\one$. Then $(\C,X)$ is exactly one of the following: 

(i) $\C=\Ver_{p^n}$, $p^n>2$, $X=\Bbb T_1$; 

(ii) $\C={\rm sVec}$, $X$ the fermion ($p\ne 2,3$); 

(iii) $\C={\rm Rep}(\Gamma)$ where $\Gamma$ is a closed subgroup scheme 
of $SL_2(\k)$, $X$ the tautological representation. 

This gives a classification of such pairs $(\C,X)$.
\end{corollary}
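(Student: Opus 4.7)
The plan is to reduce this classification to Proposition \ref{prop3} by first deciding whether the generator $X$ is a symmetric TL object.

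First I would apply Propositions \ref{prop1} and \ref{prop2} to the hypothesis $\wedge^2 X \cong \one$. Proposition \ref{prop1} gives a dichotomy: either $\wedge^3 X = 0$ or $X$ is a fermion (with $p \ne 2$). Proposition \ref{prop2} then identifies exactly one situation in which $X$ is not a symmetric TL object, namely when $X$ is a fermion with $p \ne 3$. The remaining exceptional case in Proposition \ref{prop2}, namely $X = 0$ in characteristic $2$, is ruled out by $\wedge^2 X \cong \one$.

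In the non-TL case I would argue directly: $X \otimes X \cong \one$ with braiding $-1$ means the rigid symmetric monoidal subcategory generated by $X$ is already $\sVec$, so the hypothesis that $\C$ is tensor-generated by $X$ forces $\C = \sVec$, giving case (ii). Otherwise $X$ is a symmetric TL object and Proposition \ref{prop3} applies. If $X$ has finite degree $n$, it yields $\C \cong \Ver_{p^n}$ with $X = \Bbb T_1$; the value $p^n = 2$ must be excluded, which I would do by noting that in characteristic $2$ the equality $Q^+_{1,2}(X) \cong Q^-_{1,2}(X)$ reduces to $X \cong 0$, contradicting $\wedge^2 X \cong \one$. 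Hence $p^n \ge 3$ and we land in case (i). If instead the degree of $X$ is infinite, Proposition \ref{prop3} directly gives case (iii).

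Finally I would verify mutual exclusivity by comparing Frobenius--Perron dimensions of $X$: $\FPdim(X) = 1$ in case (ii) (fermion invertible), $\FPdim(X) = 2$ in case (iii) (tautological representation of dimension $2$), and $\FPdim(X) = 2\cos(\pi/p^n) \in [1,2)$ in case (i), with the extremal value $1$ occurring only at $p^n = 3$ where $\Ver_3 = \sVec$; this overlap between cases (i) and (ii) is then excluded by the characteristic, since case (ii) requires $p \ne 2,3$. There is no serious obstacle here—the corollary is essentially a direct packaging of Propositions \ref{prop1}--\ref{prop3}, with the only subtle point being the careful enumeration of the characteristic-specific exceptions in Proposition \ref{prop2}.
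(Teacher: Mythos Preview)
Your proposal is correct and follows essentially the same approach as the paper, which simply states that the corollary follows from Propositions \ref{prop1}, \ref{prop2}, and \ref{prop3}. Your write-up expands this one-line citation into the natural case analysis and adds a verification of mutual exclusivity via Frobenius--Perron dimensions, which the paper omits.
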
 

\begin{proof} This follows from Propositions \ref{prop1}, \ref{prop2} and \ref{prop3}.  
\end{proof} 

Let us now generalize this to the case when $\wedge^2 Y=L$ is an invertible object. 

\begin{lemma}\label{lemm3} $L$ is even, i.e., $c_{L,L}=1$. 
\end{lemma}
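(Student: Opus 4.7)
The strategy rests on two standard facts about invertible objects in a symmetric tensor category: the multiplicativity of parity under tensor product, and the coincidence of categorical dimension with parity. The claim is trivial in characteristic $2$ (since $c_{L,L}^2 = \mathrm{id}$ forces $c_{L,L} = \mathrm{id}$), so I assume $p \ne 2$ throughout and split according to Lemma \ref{lemm0}(i): either $Y = [\chi, \psi]$ is an extension of two even invertibles with $L \cong \chi \otimes \psi$, or $Y$ is simple.

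In the extension case, I plan to invoke the fact that the parity map $\epsilon\colon X \mapsto c_{X,X}$ on the Picard group of $\C$ is a group homomorphism into $\{\pm 1\}$. This follows from a direct hexagon computation: for invertible $X, Y$, decomposing the block-swap $c_{X \otimes Y,\, X \otimes Y}$ into four elementary braidings, the outer ``crossing'' factors $c_{X,Y}$ and $c_{Y,X}$ cancel by symmetry, yielding $\epsilon(X \otimes Y) = \epsilon(X)\epsilon(Y)$. Therefore $\epsilon(L) = \epsilon(\chi)\epsilon(\psi) = 1$, so $L$ is even.

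For the simple case, I use a categorical dimension argument. The antisymmetrization $\omega\colon Y \otimes Y \twoheadrightarrow L$ is nonzero, and under the rigidity adjunction it corresponds to a nonzero morphism $\widetilde{\omega}\colon Y \to L \otimes Y^*$. Since $Y$ is simple and $L \otimes Y^*$ is simple (as $L$ is invertible and $Y^*$ is simple), $\widetilde{\omega}$ must be an isomorphism, so $Y \cong L \otimes Y^*$. Taking categorical dimensions and using $\dim Y^* = \dim Y$ in any symmetric tensor category yields
\[
\dim Y = \dim L \cdot \dim Y.
\]
Since for any invertible object $L$ in a symmetric tensor category the categorical dimension equals the parity ($\dim L = \epsilon(L) \in \{\pm 1\}$), the assumption $\epsilon(L) = -1$ would give $2 \dim Y = 0$ in $\k$, hence $\dim Y = 0$; but then
\[
\dim L = \dim(\wedge^2 Y) = \tfrac{1}{2}\dim(Y)(\dim(Y) - 1) = 0,
\]
contradicting $\dim L = -1$. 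So $\epsilon(L) = +1$ and $L$ is even. The conceptually tricky step is choosing the right invariant: rather than analyzing $c_{L,L}$ on the embedding $L \otimes L \hookrightarrow Y^{\otimes 4}$ directly (which would require delicate permutation-algebra computations in $\k[S_4]$), one reads off $\epsilon(L)$ from the dimensional identity coming from $Y \cong L \otimes Y^*$.
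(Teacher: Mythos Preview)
Your proof is correct, and in the simple case it is cleaner than the paper's. Both arguments handle the extension case $Y=[\chi,\psi]$ identically via multiplicativity of parity. In the simple case, however, the paper proceeds by a further case split on whether $\wedge^3 Y$ vanishes, combined with an ad hoc elimination of $p=3$ from the equation $d(d-1)/2=-1$; each branch is then ruled out by comparing the dimension formulas for $\wedge^2 Y$ and $\wedge^3 Y$. Your route via the isomorphism $Y\cong L\otimes Y^*$ (forced by simplicity of both sides) collapses this to a single line: $\dim Y = \dim L\cdot\dim Y$, so $\dim L=-1$ would force $\dim Y=0$ and hence $\dim L=\dim\wedge^2 Y=0$, a contradiction. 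This avoids touching $\wedge^3 Y$ altogether and needs no separate treatment of $p=3$. The trade-off is that you are relying on the standard identities $\dim X^*=\dim X$ and $\dim L=\epsilon(L)$ for invertible $L$, which are routine in a symmetric tensor category but are invoked without proof; the paper's argument stays closer to explicit polynomial identities in $d$.
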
 

\begin{proof} Assume $L$ is odd, so $p\ne 2$. 
Let $\dim Y=d$. We have $\dim \wedge^2 Y=d(d-1)/2$, so $d(d-1)/2=-1$. This equation has no solutions mod $3$, which means that $p\ne 3$. 

By Lemma \ref{lemm0}, $Y$ is simple or an extension $[\chi,\psi]$ between even invertible objects such that $\chi\otimes \psi=L$. In the latter case $L$ is even as well, so this case is impossible. 

It remains to consider the case when $Y$ is simple. In this case, if 
$\wedge^3Y=0$ then we have $\dim \wedge^3Y=d(d-1)(d-2)/6=0$, so 
$d=0,1$ or $2$. But then $d(d-1)/2$ equals $0$ or $1$, so never to $-1$, i.e., this case is ruled out. Thus $\wedge^3Y\ne 0$. Since $\wedge^3Y$ is a subobject of the simple object $\wedge^2Y\otimes Y=L\otimes Y$, 
we have $\wedge^3Y=L\otimes Y$. 
Thus $d(d-1)(d-2)/6=-d$, i.e., $d(d+1)(d+2)=0$. So $d=0,-1$ or $-2$, i.e., $d(d-1)/2=0,1,3$. This never equals $-1$, which is a contradiction. 
\end{proof} 

\begin{example} \label{cyclic twist}
For an abelian group $A$ let $\Vec_A$ be the corresponding pointed symmetric tensor category with the trivial associator and braiding (i.e., the category of $A$-graded vector spaces). Assume that $A$ is cyclic with a generator $g$. Let $(\C,X)$ be as in Corollary \ref{class1}(i) or (ii). Then
the object $Y:=X\boxtimes g\in \C \boxtimes \Vec_A$ satisfies $\wedge^2 Y=\one \boxtimes g^2$, so
$\wedge^2 Y$ is invertible.

Let $\C'=\C'_A$ be the subcategory of $\C \boxtimes \Vec_A$ generated by $Y$. Thus $\C'=\C \boxtimes \Vec_A$ if and only if $A$ is finite of odd order. Furthermore, the pairs $(\C'_A,Y)$ and $(\C'_B,Y)$ are
equivalent if and only if they come from the same $(\C,X)$ and either $A\simeq B$ or one of these groups has odd order and index $2$ in the other. 
\end{example}

\begin{theorem} \label{class2}
Let $\C$ be a symmetric tensor category tensor generated by an object $Y$ with invertible $\wedge^2Y$. Then $(\C,Y)$ is either one of $(\C'_A,Y)$ constructed in Example \ref{cyclic twist} or $\C={\rm Rep}(\Gamma)$ where $\Gamma$ is a closed subgroup scheme 
of $GL_2(\k)$, $Y$ the tautological representation. This gives a classification of such pairs $(\C,Y)$. 
\end{theorem}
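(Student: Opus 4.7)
The proof reduces Theorem \ref{class2} to Corollary \ref{class1} by passing to a larger symmetric tensor category in which $L := \wedge^2 Y$ acquires a square root. By Lemma \ref{lemm3}, $L$ is an even invertible object of $\C$. Let $B = \langle L\rangle \subset \mathrm{Pic}(\C)$ be the cyclic subgroup it generates; the corresponding tensor subcategory $\Vec_B \subset \C$ is central and Tannakian.

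First I would construct an enlargement $\widetilde\C\supset \C$ in which $L$ has a square root $M$. If $B$ has odd order $n$, then $M := L^{(n+1)/2} \in \C$ already satisfies $M^{\otimes 2} = L$, and we take $\widetilde\C = \C$. Otherwise, form the pushout $\widetilde\C := \C \sqcup_{\Vec_B}\Vec_{\widetilde B}$ along a cyclic overgroup $\widetilde B \supset B$ in which $L$ admits a square root $M$. This is a standard equivariantization-type construction producing a symmetric tensor category with $\C$ sitting fully faithfully inside; the evenness of $L$ is essential for the new braiding to remain symmetric.

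Next, set $X := Y\otimes M^{-1}\in \widetilde\C$, so that $\wedge^2 X = L\otimes M^{\otimes -2} = \one$. Let $\C_0 \subset \widetilde\C$ be the tensor subcategory generated by $X$. By Corollary \ref{class1}, $(\C_0, X)$ is one of: (i) $(\Ver_{p^n}, \Bbb T_1)$ with $p^n > 2$, (ii) $(\sVec, \text{fermion})$ in characteristic $\ne 2, 3$, or (iii) $(\Rep(\Gamma_0), \text{tautological})$ with $\Gamma_0 \subset SL_2(\k)$ a closed subgroup scheme. Since $Y = X\otimes M$, the pair $(\C, Y)$ is realized as the tensor subcategory of $\C_0\boxtimes \Vec_A$ generated by $X\boxtimes g$, where $A = \langle M\rangle$ is cyclic with generator $g$ corresponding to $M$. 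In cases (i) and (ii), this is precisely the construction $(\C'_A, X\boxtimes g)$ of Example \ref{cyclic twist}. In case (iii), $\C_0 \boxtimes \Vec_A = \Rep(\Gamma_0\times D(A))$ where $D(A)$ is the Cartier dual (equal to $\mathbb{G}_m$ if $A = \ZZ$, to $\mu_m$ if $A = \ZZ/m$); by Tannakian duality, the tensor subcategory generated by $X\boxtimes g$ is $\Rep(\Gamma)$, where $\Gamma \subset GL_2(\k)$ is the image of $\Gamma_0\times D(A)$ under the homomorphism $(\gamma,\lambda)\mapsto \lambda\gamma$ corresponding to $Y$. The redundancy among different $(\C'_A, Y)$ noted in Example \ref{cyclic twist} arises precisely from the ambiguity in the choice of $\widetilde B$ (and hence $A$) when $B$ has odd order.

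The main technical obstacle is the rigorous construction of the pushout $\widetilde\C$ and the verification that $\C\hookrightarrow \widetilde\C$ is fully faithful. In the Tannakian case this amounts to base change for affine group schemes, but in the general symmetric setting (e.g., when $\C$ contains a copy of $\Ver_{p^n}$), one must either use the theory of module categories over $\Vec_B$ to realize $\widetilde\C$ as the category of $\Vec_{\widetilde B/B}$-equivariant objects in a suitable enlargement of $\C$, or else avoid the pushout entirely by a direct analysis of $Y$ as a $B$-graded object in $\C$ together with the central Tannakian subcategory $\Vec_B$.
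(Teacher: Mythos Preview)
Your overall strategy coincides with the paper's: enlarge $\C$ so that $L=\wedge^2Y$ acquires an even square root, twist $Y$ by its inverse to reduce to $\wedge^2X=\one$, and then invoke Corollary \ref{class1}. The two places where your argument is incomplete are precisely the points you flag, and the paper resolves both cleanly.

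For the enlargement, the paper avoids your pushout and your parity case split on $|B|$ altogether. It sets $\D=\C\boxtimes\Vec_{\ZZ}$ with generator $s$, observes that $\chi:=\wedge^2Y\boxtimes s^{-2}$ is even invertible (by Lemma \ref{lemm3}) and generates a Tannakian copy of $\Vec_\ZZ=\Rep(\mathbb G_m)$ inside $\D$, and then de-equivariantizes: $\widetilde\C$ is the category of finitely generated modules over the algebra $\bigoplus_{i\in\ZZ}\chi^{\otimes i}$ in ${\rm Ind}(\D)$. This is a standard, well-defined construction in symmetric tensor categories and comes with a fully faithful embedding $\C\hookrightarrow\widetilde\C$; in $\widetilde\C$ one has $\wedge^2Y\cong s^{\otimes 2}$ by construction, so $Z:=Y\otimes s^{-1}$ satisfies $\wedge^2Z=\one$. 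This is exactly your $X$ and $M=s$, but obtained without any ad hoc pushout.

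For the final step, you assert that $(\C,Y)$ sits inside $\C_0\boxtimes\Vec_A$ with $A=\langle M\rangle$, but you do not justify that the surjection $\langle Z\rangle\boxtimes\Vec_A\twoheadrightarrow\widetilde\C$ is an equivalence in cases (i) and (ii). The paper supplies the missing observation: when $\langle Z\rangle$ is $\Ver_{p^n}$ or $\sVec$, it contains no nontrivial \emph{even} invertible object, so the even invertible $s$ cannot become trivial upon restriction and the surjection is forced to be an equivalence. In case (iii) the paper simply notes that $\widetilde\C$, hence $\C$, is Tannakian and finishes via \cite{Bez}. Your Tannakian treatment via $\Gamma_0\times D(A)\to GL_2$ is a correct and slightly more explicit version of the same conclusion.
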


\begin{proof}  We will show first that $\C$ embeds into a category $\widetilde \C$ which contains an even
object which is a ``square root'' of $\wedge^2Y$. Namely let $\D=\C \boxtimes \Vec_{\Bbb Z}$ where
$\Bbb Z$ is the infinite cyclic group with generator $s=1$. The category $\D$ contains a subcategory
generated by the invertible object $\chi=\wedge^2Y\boxtimes s^{-2}$. In view of Lemma \ref{lemm3}
this subcategory is equivalent to $\Vec_\BZ=\Rep(\Bbb G_m)$. Now let $\widetilde \C$ be the de-equivariantization of
$\D$ with respect to this subcategory (i.e., with respect to $\Bbb G_m$); in other words, 
$\widetilde\C$ is the category of finitely generated modules over the algebra $\mathcal A:=\oplus_{i\in \Bbb Z}\chi^{\otimes i}$ in ${\rm Ind}(\C)$. We have an obvious embedding functor $\C \hookrightarrow \widetilde \C$;
also it is clear that $\wedge^2Y\simeq s^2$ in the category $\widetilde \C$. The categories
$\D$ and $\widetilde \C$ are clearly generated by $Y$ and $s$.

Consider the object $Z:=Y\ot s^{-1}\in \widetilde \C$. Then, since the object $s$ is even, 
we have $\wedge^2Z=\one$. Thus the subcategory $\langle Z\rangle $ tensor generated by $Z$ 
is one of the categories described in Corollary \ref{class1}. Also the category $\widetilde \C$ is generated
by $\langle Z\rangle $ and by $s$, that is, we have a surjective symmetric tensor functor $\langle Z\rangle \boxtimes \Vec_A\to \widetilde \C$ where $A=\langle s\rangle$ is a cyclic group. If the category $\langle Z\rangle $ is as in Corollary \ref{class1} (iii) then the categories $\widetilde \C$ and $\C$ are Tannakian and we are done by \cite{Bez}, Proposition 1. On the other hand, 
if the category $\langle Z\rangle $ is as in Corollary \ref{class1} (i) or (ii) then we have an equivalence $\widetilde \C \simeq \langle Z\rangle \boxtimes \Vec_A$ since the category
$\langle Z\rangle$ has no even invertible objects. Thus we are in the situation of Example \ref{cyclic twist} and again we are done.
\end{proof}

%\newpage

\section{Examples}\label{s5}

\subsection{The category $\Ver_{p^2}$}\label{psquared}

The category $\Ver_p$, as already mentioned, is semisimple. The simple objects
are $L_0,\dots,L_{p-2}$, where $L_0=\one$ is the tensor identity.\footnote{Note that this labeling differs by a shift from the one in some other papers, for example \cite{EO}, where the simple objects of $\Ver_p$ are denoted $L_1=\one,L_2,...,L_{p-1}$.} The tensor
products are determined by the formula 
$ L_1 \otimes L_m = \begin{cases} L_1 & m=0 \\
L_{m-1}\oplus L_{m+1} & 0 < m < p-2. \\
L_{p-3} & m=p-2 \end{cases} $

The Frobenius-Perron dimension of $L_m$ is $\frac{\sin(\pi (m+1) /p)}{\sin(\pi/p)}$.

The category $\Ver_{p^2}$ is not semisimple, but has finite representation type.
There are $p-1$ semisimple blocks, corresponding to the projective simple
modules $L_{p-1},L_{2p-1},\dots,L_{p^2-1}$,
and $p-1$ non-semisimple blocks, each of whose basic algebra is a Brauer
tree algebra with inertial index $p-1$, so that there is no exceptional vertex. 
The tree is a straight line with $p$ vertices. Thus we obtain 
\begin{proposition}\label{blocksp2} The non-semisimple blocks of $\Ver_{p^2}$ are 
equivalent to the non-semisimple block of the group algebra of the 
symmetric group of degree $p$. In particular, such a block has $p^2-p$ indecomposable objects.
\end{proposition}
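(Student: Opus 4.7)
The plan is to identify both blocks with the same Brauer tree algebra and then count indecomposables by a standard appeal to the theory of Brauer tree algebras.

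The paragraph preceding the proposition already asserts that the basic algebra of a non-semisimple block of $\Ver_{p^2}$ is a Brauer tree algebra whose Brauer tree is a straight line with $p$ vertices and whose inertial index is $p-1$, so that there is no exceptional vertex. This structural claim is the main obstacle; I take it as given, noting that it can be read off from the Cartan matrix of the block (tridiagonal with $2$'s on the diagonal and $1$'s on the off-diagonals, by Corollary \ref{carmat1}), from Proposition \ref{ext1formula} (which shows that the $\Ext^1$-quiver is of doubled $A_{p-1}$-type), and from Proposition \ref{blockstr} describing the projective covers in each block.

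I then invoke classical modular representation theory. Since a Sylow $p$-subgroup of $S_p$ is the cyclic group generated by a $p$-cycle, the principal (and unique non-semisimple) block of $\k S_p$ has cyclic defect group of order $p$, and hence by Brauer--Dade is a Brauer tree algebra. Its inertial index equals $|N_{S_p}(\ZZ/p)/\ZZ/p|=p-1$, so the tree has $p-1$ edges and no exceptional vertex; and classically the tree itself is the straight line with $p$ vertices. The Morita classification of Brauer tree algebras over an algebraically closed field---which are determined up to Morita equivalence by the Brauer tree together with the multiplicity function---then yields a Morita equivalence between the two algebras, so their module categories are equivalent as abelian categories.

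For the count, I use that by Rickard's theorem any two Brauer tree algebras with the same number of edges and the same exceptional multiplicity are derived equivalent, and that for self-injective algebras derived equivalence implies stable equivalence, which preserves the number of non-projective indecomposable modules. The Brauer \emph{star} with $e=p-1$ edges and trivial multiplicity is a cyclic Nakayama algebra with $e$ simple modules and all projectives of Loewy length $e+1$, which is easily seen to have $e(e+1)$ indecomposable modules in total, of which $e$ are projective. Transferring the count $e^2$ of non-projective indecomposables to our Brauer line and adding the $e$ projectives yields $e(e+1)=p(p-1)=p^2-p$ indecomposable objects, as asserted.
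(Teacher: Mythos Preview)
Your proof is correct and follows essentially the same approach as the paper. The paper's own proof is the single sentence ``These results can be obtained from Proposition~\ref{standfacts} and the results of Subsection~\ref{Donkin1},'' which is just a pointer to the tilting-module computations that justify the Brauer-line description in the preceding paragraph; everything else (the identification with $\k S_p$ and the count) is left implicit. Your write-up is a faithful fleshing-out of that sketch.

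One small remark on the counting: the detour through Rickard's derived equivalence and stable equivalence is valid but heavier than needed. Once you know the block is a Brauer tree algebra with $e=p-1$ edges and trivial multiplicity, the classical Janusz--Kupisch classification of indecomposables over Brauer tree algebras gives the number directly as $e(em+1)=(p-1)p=p^2-p$; alternatively, since you have already established the Morita equivalence with the principal block of $\k S_p$, the count is just the standard cyclic-defect count for that block.
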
 
These results can be obtained from Proposition \ref{standfacts} and the results of Subsection \ref{Donkin1}.

Here is the structure of the tilting modules for $p=3$ and $p=5$, with $n=2$. 
The ones we need to use
are $T_m$ with $p^{n-1}-1 \le m \le p^n-2$. These 
are the ones below the double line. In these diagrams, we denote by $[m]$
the simple module with highest weight $m$. This information can
be read off from the paper of Doty and Henke~\cite{DoH}. 

{\tiny
\[ p=3:\qquad
\begin{array}{|c|c|c|}
\hline && \\
& T_0=[0] & T_1=[1]
\\ && \\ \hline\hline && \\
T_2=[2] &
T_3=\begin{array}{c}[1]\\{}[3]\\{}[1]\end{array} &
T_4=\begin{array}{c}[0]\\{}[4]\\{}[0]\end{array} 
\\ && \\ \hline && \\
T_5=[5] &
T_6=\begin{array}{c}[4]\\{}[0]\ [6]\\{}[4]\end{array} &
T_7=\begin{array}{c}[3]\\{}[1]\ [7]\\{}[3]\end{array}
\\ && \\ \hline
\end{array} \]
}%end tiny

{\tiny
\[ p=5:\qquad
\begin{array}{|c|c|c|c|c|} 
\hline &&&& \\ 
& T_0=[0] & T_1 = [1] & T_2 = [2] & T_3 = [3] 
\\ &&&& \\ \hline\hline &&&& \\
T_4 = [4] &
T_5=\begin{array}{c}[3]\\{}[5]\\{}[3]\end{array} &
T_6=\begin{array}{c}[2]\\{}[6]\\{}[2]\end{array} &
T_7=\begin{array}{c}[1]\\{}[7]\\{}[1]\end{array} &
T_8=\begin{array}{c}[0]\\{}[8]\\{}[0]\end{array} 
\\ &&&& \\ \hline &&&& \\
T_9=[9] &
T_{10}=\begin{array}{c}[8]\\{}[0]\ [10]\\{}[8]\end{array} &
T_{11}=\begin{array}{c}[7]\\{}[1]\ [11]\\{}[7]\end{array} &
T_{12}=\begin{array}{c}[6]\\{}[2]\ [12]\\{}[6]\end{array} &
T_{13}=\begin{array}{c}[5]\\{}[3]\ [13]\\{}[5]\end{array} 
\\ &&&& \\ \hline &&&& \\
T_{14}=[14] &
T_{15}=\begin{array}{c}[13]\\{}[5]\ [15]\\{}[13]\end{array} &
T_{16}=\begin{array}{c}[12]\\{}[6]\ [16]\\{}[12]\end{array} &
T_{17}=\begin{array}{c}[11]\\{}[7]\ [17]\\{}[11]\end{array} &
T_{18}=\begin{array}{c}[10]\\{}[8]\ [18]\\{}[10]\end{array} 
\\ &&&& \\ \hline &&&& \\
T_{19}=[19] &
T_{20}=\begin{array}{c}[18]\\{}[10]\ [20]\\{}[18]\end{array} &
T_{21}=\begin{array}{c}[17]\\{}[11]\ [21]\\{}[17]\end{array} &
T_{22}=\begin{array}{c}[16]\\{}[12]\ [22]\\{}[16]\end{array} &
T_{23}=\begin{array}{c}[15]\\{}[13]\ [23]\\{}[15]\end{array} 
\\ &&&& \\ \hline
\end{array} \]
}%end tiny
\bigskip

In general if $p$ is odd, for $p-1\le ap+b-1 \le p^2-2$, 
with $1\le a\le p-1$ and $0\le b \le p-1$, the structure is
{\tiny
\[ T_{ap+b-1} = \begin{array}{c} [ap-b-1] \\{} [(a-2)p+b-1]\qquad [ap+b-1]\qquad \\{} [ap-b-1]\end{array} \]
}%end tiny
unless $b=0$, in which case $T_{ap+b-1}=[ap+b-1]$. Also, 
if $a=1$ then $(a-2)p+b-1<0$, and we 
omit this composition factor so that we are left with a uniserial module.

The simple modules for $A=\End_{SL_2(\k)}\left(\coplus_{m=p-1}^{p^2-2}T_m\right)$ correspond to the
indecomposable tilting modules $T_m$ with $p-1\le m \le p^2-2$. We label each simple module
with the isomorphism type of the top composition factor of the corresponding projective object $\Bbb T_m:=F(T_m)$. 
So the simple corresponding to $\Bbb T_{ap+b-1}$ is labelled $L_{ap-b-1}$. This has the effect that
the simples are $L_i$ with $0\le i \le p^2-p-1$ (this agrees with the labeling in Subsection \ref{tenprot}). 
The structure of their projective covers is as
follows. If $i=ap-1$ then $L_i$ is a projective simple in a block of its own. 
The block containing $L_{p-b-1}$ with $1\le b\le p-1$ also contains the modules
$L_{(2a+1)p\pm b -1}$, which are the ones whose highest weights are
in the same orbit of the affine Weyl group under the dot action. The Brauer tree is as follows:
\[ \xymatrix@C=16mm{\bullet \ar@{-}[r]^{L_{p-b-1}}& 
\bullet \ar@{-}[r]^{L_{p+b-1}}& \bullet\ar@{-}[r]^{L_{3p-b-1}} &
\bullet \ar@{.}[r] & \bullet\ar@{-}[r]^{L_{(p-2)p-b-1}} &
\bullet \ar@{-}[r]^{L_{(p-2)p+b-1}} & \bullet} \]
Thus the structure of the projective indecomposables is:
{\tiny
\[ \begin{array}{c}L_{p-b-1}\\L_{p+b-1}\\L_{p-b-1} \end{array}\qquad
\begin{array}{c}L_{p+b-1}\\L_{p-b-1}\quad L_{3p-b-1}\\ L_{p+b-1} \end{array}\qquad
\begin{array}{c}L_{3p-b-1}\\L_{p+b-1}\quad L_{3p+b-1}\\L_{3p-b-1}\end{array}\quad
\cdots
\quad
\begin{array}{c}L_{(p-2)p-b-1}\\L_{(p-4)p+b-1}\quad L_{(p-2)p+b-1}\\L_{(p-2)p-b-1}\end{array}
\qquad
\begin{array}{c}L_{(p-2)p+b-1}\\L_{(p-2)p-b-1}\\L_{(p-2)p+b-1}\end{array}
\]
}%end tiny
Projective resolutions in this category are formed in the usual way by walking around the Brauer tree,
see Green~\cite{G}. In particular, we have 

\begin{proposition} \label{extp2}
If $p$ is odd then the algebra ${\rm Ext}^\bullet_{\Ver_{p^2}}(\one,\one)$ is isomorphic 
to $\k[x,\xi]$ where $x$ is an even generator of degree $2p-2$ and $\xi$ is an odd generator of degree $2p-3$ (thus $\xi^2=0$). 
In particular, the Hilbert series of this algebra is $\frac{1+t^{2p-3}}{1-t^{2p-2}}$. 
\end{proposition}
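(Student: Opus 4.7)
The plan is to use the explicit Brauer tree structure of the block of $\one$ in $\Ver_{p^2}$ derived above. Since the block containing $L_0 = \one$ corresponds to the parameter $b = p-1$, its Brauer tree is a straight line with $p$ vertices and $p-1$ edges, no exceptional vertex, and $\one$ sits at an endpoint edge. I will relabel the simples in this block as $S_0 = \one, S_1 = L_{2p-2}, S_2 = L_{2p}, S_3 = L_{4p-2}, \ldots$, so that $S_i$ is the $(i+1)$-th edge counted from the $\one$-end; then the projective indecomposables have the uniserial form $P(S_0) = [S_0, S_1, S_0]$ and $P(S_{p-2}) = [S_{p-2}, S_{p-3}, S_{p-2}]$ at the endpoints, and the biserial form $P(S_i) = [S_i, S_{i-1} \oplus S_{i+1}, S_i]$ in between.

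First I will compute the Heller translates $\Omega^i \one$ by forming successive projective covers, i.e.\ by running Green's walks around the Brauer tree. A direct induction will give
\[
\Omega^i \one = [S_i, S_{i-1}] \ (1 \le i \le p-2), \quad \Omega^{p-1}\one = S_{p-2}, \quad \Omega^{p-1+j}\one = [S_{p-2-j}, S_{p-1-j}] \ (1 \le j \le p-2),
\]
and $\Omega^{2p-2}\one \cong \one$, so $\Omega$ acts with period $2p-2$ on $\one$. Second, since any morphism from a module $M$ to a simple factors through the top of $M$, the space $\Ext^i(\one,\one) = \Hom_{\mathrm{stab}}(\Omega^i\one,\one)$ is one-dimensional precisely when the top of $\Omega^i\one$ equals $S_0 = \one$. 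From the description above, this happens within one period exactly at $i = 0$ and $i = 2p-3$; combining this with periodicity yields the claimed Poincar\'e series $(1+t^{2p-3})/(1-t^{2p-2})$.

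Finally, I will pick nonzero elements $x \in \Ext^{2p-2}(\one,\one)$ and $\xi \in \Ext^{2p-3}(\one,\one)$ and show they generate the Ext algebra with the single relation $\xi^2 = 0$. The element $x$ is represented by the isomorphism $\Omega^{2p-2}\one \cong \one$ in the stable category, so all powers $x^n$ are nonzero, and hence so are $\xi x^n$; the set $\{x^n, \xi x^n\}_{n \ge 0}$ then exhausts a basis by the dimension count, proving that $x$ and $\xi$ generate. Since $p$ is odd and the Ext algebra of the unit in a symmetric tensor category is graded-commutative, we have $\xi^2 = (-1)^{(2p-3)^2}\xi^2 = -\xi^2$, which forces $\xi^2 = 0$; this delivers the presentation $\k[x,\xi]$ with $x$ even of degree $2p-2$ and $\xi$ odd of degree $2p-3$. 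The main obstacle will be the careful bookkeeping needed to run Green's walk through the ``turning point'' $\Omega^{p-1}\one = S_{p-2}$ at the far end of the tree, but this is routine given the uniserial structure of $P(S_{p-2})$.
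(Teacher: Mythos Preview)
Your proposal is correct and follows the same approach as the paper, which simply says that ``projective resolutions in this category are formed in the usual way by walking around the Brauer tree, see Green~\cite{G}'' and leaves the details to the reader; you have spelled out exactly those details. One small remark: your appeal to graded-commutativity of $\Ext^\bullet(\one,\one)$ to kill $\xi^2$ is fine, but it is unnecessary here, since your own dimension count already shows $\Ext^{4p-6}(\one,\one)=0$ (the degree $4p-6$ is congruent to neither $0$ nor $2p-3$ modulo $2p-2$ for odd $p$), and hence $\xi^2=0$ for free.
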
 

For $p=3$, the tensor products in $\Ver_{p^2}$ are given by the
following table, in which
$P_n$ denotes the projective cover of the simple module $L_n$.\par
{\tiny
\[ \renewcommand{\arraystretch}{1.2}
\begin{array}{|c|c|c|c|c|c|} \hline
L_0&L_1&L_2&L_3&L_4&L_5\\ \hline
L_1& L_0\oplus L_2 & P_3 & L_4 & L_3 \oplus L_5 & P_4 \\ \hline
L_2& P_3&L_2\oplus P_0&L_5&P_4& L_5\oplus P_3 \\ \hline
L_3& L_4&L_5&L_0&L_1&L_2 \\ \hline
L_4& L_3\oplus L_5&P_4&L_1&L_0\oplus L_2&P_3 \\ \hline
L_5 & P_4 &L_5\oplus P_3&L_2&P_3&L_2\oplus P_0 \\ \hline
\end{array} \]
}%end tiny
Note that $L_3$ satisfies $\Lambda^2(L_3)=L_0$ and $S^2(L_3)=0$, so
it generates $\sVec\subset \Ver_9$, and tensoring with $L_3$ acts as a parity change (recall that in general the generator of  $\sVec\subset \Ver_{p^n}$ is $L_{p^{n-1}(p-2)}$).

For $p=5$, we only list the tensor products of simples in
$\Ver_{p^2}^+$, together with their tensor products with the parity
change module $L_{15}$, which satisfies $\Lambda^2(L_{15})=L_0$, 
$S^2(L_{15})=0$.\par
{\tiny
\[ \setlength{\arraycolsep}{0.5mm}
\renewcommand{\arraystretch}{1.2}
\begin{array}{|c|c|c|c|c|c|c|c|c|c|} \hline
L_0 &L_2&L_4&L_6&L_8&L_{10}&L_{12}&L_{14}&L_{16}&L_{18} 
\\ \hline
L_2&
\begin{array}{c}L_0\oplus L_2\\{}\oplus L_4\end{array}&
L_4\oplus P_2&L_6 \oplus L_8&L_6\oplus P_8&L_{12}&
\begin{array}{c}L_{10}\oplus L_{12}\\{}\oplus L_{14}\end{array}&
L_{14} \oplus P_{12}&L_{16}\oplus L_{18}&L_{16}\oplus P_{18} 
\\ \hline
L_4&L_4\oplus P_2&
\begin{array}{c}L_4\oplus P_0\\{}\oplus P_2\end{array}&
P_8&P_6\oplus P_8&L_{14}&L_{14}\oplus P_{12}&
\begin{array}{c}L_{14}\oplus P_{10}\\{}\oplus P_{12}\end{array}&
P_{18}&P_{16}\oplus P_{18} 
\\ \hline
L_6&L_6\oplus L_8&P_8&
\begin{array}{c}L_0\oplus L_2\oplus{}\\L_{10}\oplus L_{12}\end{array}&
\begin{array}{c}L_2\oplus L_4\oplus{}\\L_{12}\oplus  L_{14}\end{array}&
L_6\oplus L_{16}&
\begin{array}{c}L_6\oplus L_8\oplus{}\\L_{16}\oplus L_{18}\end{array}&
P_8\oplus P_{18}&L_{10}\oplus L_{12}&L_{12}\oplus L_{14}
\\ \hline
L_8&L_6\oplus P_8&P_6\oplus P_8&
\begin{array}{c}L_2\oplus L_4\oplus{}\\L_{12}\oplus L_{14}\end{array}&
\begin{array}{c}L_0 \oplus L_4 \oplus{}\\L_{10}\oplus L_{14}\oplus{}\\
P_2\oplus P_{12}\end{array}&
L_8\oplus L_{18}&
\begin{array}{c}L_6\oplus L_{10}\oplus{}\\P_8\oplus P_{18}\end{array}&
\begin{array}{c}P_6\oplus P_8 \oplus{}\\P_{16}\oplus P_{18}\end{array}&
L_{12} \oplus L_{14}&
\begin{array}{c}L_{10}\oplus L_{14}\\{}\oplus P_{12}\end{array}
\\ \hline
L_{10}&L_{12}&L_{14}&L_6\oplus L_{16}&L_8\oplus L_{18}&
L_0\oplus L_{10}&L_2\oplus L_{12}&L_4\oplus L_{14}&L_6&L_8 
\\ \hline
L_{12}&\begin{array}{c}L_{10}\oplus L_{12}\\{}\oplus L_{14}\end{array}&
L_{14}\oplus P_{12}&
\begin{array}{c}L_6\oplus L_8 \oplus{}\\L_{16}\oplus L_{18}\end{array}&
\begin{array}{c}L_6\oplus L_{10}\oplus{}\\P_8\oplus P_{18}\end{array}&
L_2\oplus L_{12}&
\begin{array}{c}L_0\oplus L_2 \oplus{}\\L_4\oplus L_{10}\oplus{}\\
L_{12}\oplus L_{14}\end{array}&
\begin{array}{c}L_4\oplus L_{14}\oplus{}\\P_2\oplus P_{12}\end{array}&
L_6\oplus L_8&L_6\oplus P_8
\\ \hline
L_{14}&L_{14}\oplus P_{12}&
\begin{array}{c}L_{14}\oplus P_{10}\\{}\oplus P_{12}\end{array}&
P_8\oplus P_{18}&
\begin{array}{c}P_6\oplus P_8 \oplus{}\\P_{16}\oplus P_{18}\end{array}&
L_4\oplus L_{14}&
\begin{array}{c}L_4\oplus L_{14}\oplus{}\\P_2\oplus P_{12}\end{array}&
\begin{array}{c}L_4\oplus L_{14}\oplus{}\\P_0\oplus P_2\oplus{}\\
P_{10}\oplus P_{12}\end{array}&P_8&P_6\oplus P_8
\\ \hline
L_{16}&L_{16}\oplus L_{18}&P_{18}&
L_{10}\oplus L_{12}&L_{12} \oplus L_{14}&
L_6&L_6\oplus L_8&P_8&L_0\oplus L_2&L_2\oplus L_4 
\\ \hline
L_{18}&L_{16}\oplus P_{18}&P_{16}\oplus P_{18}
&L_{12}\oplus L_{14}&
\begin{array}{c}L_{10}\oplus L_{14}\\{}\oplus P_{12}\end{array}&
L_8&L_6\oplus P_8&P_6\oplus P_8&L_2\oplus L_4&
\begin{array}{c}L_0\oplus L_4\\{}\oplus P_2\end{array} 
\\ \hline\hline
L_{15}&L_{17}&L_{19}&L_{11}&L_{13}&L_5&L_7&L_9&L_1&L_3 
\\ \hline
\end{array} \]
}%end tiny

\begin{remark} \label{simples for p squared}
The above tables suggest that the tensor product of two or more simple objects in the category
$\Ver_{p^2}$ is a direct sum of simple objects and projective objects. This is indeed the case. Namely, by Theorem
\ref{tpt} any simple object of $\Ver_{p^2}$ factors as $X_1\ot X_2$ where $X_1\in \T_{2,p}\subset \Ver_{p^2}$ and $X_2\in \Ver_p\subset \Ver_{p^2}$. The result then follows since both subcategories
$\T_{2,p}$ and $\Ver_p$ are closed under the tensor product, and the indecomposable objects
of $\Ver_p$ are simple while the indecomposable objects of $\T_{2,p}$ are either simple or projective.
A similar argument implies that a tensor product of simple objects in the category
$\Ver_{p^n}$ is a direct sum of a negligible object (i.e., direct sum of indecomposable objects of dimension $0$) and simple objects. 

Recall that the {\em semisimplification} (see \cite{EO1}) of the category $\T_{2,p}$ is $\Ver_p$. Thus
the simple objects of $\Ver_{p^2}$ generate a subcategory $\Ver_p \boxtimes \Ver_p$ in the semisimplification $\overline{\Ver_{p^2}}$ of the category $\Ver_{p^2}$. Similarly the simple objects
of $\Ver_{p^n}$ generate a subcategory $\Ver_p^{\boxtimes n}$ in the semisimplification $\overline{\Ver_{p^n}}$.
\end{remark} 

Let us give a complete description of the category $\overline{\Ver_{p^2}}$ for $p>2$ (note that we should not expect a sensible explicit description of the category $\overline{\Ver_{p^n}}$ for $n>2$ since the category $\Ver_{p^n}$ has wild representation type in this case). Let $\Vec_{\BZ_{2p-2}}^{\rm odd}$ be the pointed tensor category generated by an odd invertible object $T$ of order $2p-2$. Let $\delta \in \Ver_p$ be the nontrivial invertible object. The object $\delta \boxtimes \delta \boxtimes T^{\ot p-1}\in \Ver_p\boxtimes \Ver_p\boxtimes \Vec_{\BZ_{2p-2}}^{\rm odd}$ is even invertible of order 2. 
Hence the de-equivariantization $(\Ver_p\boxtimes \Ver_p\boxtimes \Vec_{\BZ_{2p-2}}^{\rm odd})_{\langle \delta \boxtimes \delta \boxtimes T^{\ot p-1}\rangle}$ is well defined.

\begin{proposition} We have $\overline{\Ver_{p^2}}\cong (\Ver_p\boxtimes \Ver_p\boxtimes \Vec_{\BZ_{2p-2}}^{\rm odd})_{\langle \delta \boxtimes \delta \boxtimes T^{\ot p-1}\rangle}$.
\end{proposition}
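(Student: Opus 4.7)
The plan is to construct a symmetric tensor functor
\[
\Phi\colon \Ver_p\boxtimes \Ver_p\boxtimes \Vec_{\BZ_{2p-2}}^{\rm odd}\longrightarrow \overline{\Ver_{p^2}}
\]
sending $\delta\boxtimes \delta\boxtimes T^{p-1}$ to $\one$, and to show that the induced functor on the de-equivariantization is an equivalence. Everything rests on producing a distinguished odd invertible object of order $2p-2$ in $\overline{\Ver_{p^2}}$ to serve as the image of $T$.

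I would take this object to be the Heller shift $\Omega(\one):=\ker(P_\one\twoheadrightarrow \one)$, where $P_\one=F(T_{2p-2})$ is the projective cover of $\one$. By Example \ref{tptex} applied with $a=2p-2$, the projective cover has Loewy series $P_\one=[\one,L_{2p-2},\one]$ with middle simple $L_{2p-2}=\Bbb T_1^{[1]}\otimes \Bbb T_{p-2}^{[2]}$ of categorical dimension $2(p-1)\equiv -2\pmod p$, so $\Omega(\one)=[L_{2p-2},\one]$ has $\dim\Omega(\one)=-1$ in $\overline{\Ver_{p^2}}$. Since the semisimplification contains no nonzero negligible objects, the identities
\[
\dim\wedge^2\Omega(\one)=\tfrac12\dim\Omega(\one)(\dim\Omega(\one)-1)=1,\qquad \dim S^2\Omega(\one)=\tfrac12\dim\Omega(\one)(\dim\Omega(\one)+1)=0
\]
force $S^2\Omega(\one)=0$, so $\Omega(\one)^{\otimes 2}=\wedge^2\Omega(\one)$ is a simple object of dimension $1$ on which $c_{\Omega(\one),\Omega(\one)}$ acts by $-1$; hence $\Omega(\one)$ is odd. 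By Proposition \ref{blocksp2} the principal block of $\Ver_{p^2}$ is the Brauer tree algebra on a line with $p-1$ edges and no exceptional vertex, whose Heller operator has order $2(p-1)$; since tensoring with $\Omega(\one)$ realizes the Heller shift modulo projectives (which are negligible), $\Omega(\one)$ has order exactly $2p-2$ in $\overline{\Ver_{p^2}}$. Walking the tree with consecutive edges $L_0,L_{2p-2},L_{2p},L_{4p-2},\ldots,L_{p^2-p-2}$ yields $\Omega^{p-1}(\one)=L_{p^2-p-2}$, which by Theorem \ref{tpt} and Corollary \ref{covers} equals $\Bbb T_{p-2}^{[1]}\otimes \Bbb T_{p-2}^{[2]}=H(\delta)\otimes F(\delta)$.

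The universal property of $\Vec_{\BZ_{2p-2}}^{\rm odd}$ combined with the embedding $\Ver_p\boxtimes \Ver_p\hookrightarrow \overline{\Ver_{p^2}}$ from Remark \ref{simples for p squared} now produces $\Phi$, and the de-equivariantization relation is automatic:
\[
\Phi(\delta\boxtimes\delta\boxtimes T^{p-1})=H(\delta)\otimes F(\delta)\otimes \Omega^{p-1}(\one)=L_{p^2-p-2}^{\otimes 2}=\Omega^{2(p-1)}(\one)=\one,
\]
so $\Phi$ descends to a symmetric tensor functor $\overline{\Phi}$ on the de-equivariantization.

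To show $\overline{\Phi}$ is an equivalence, I would compare simple objects. The source has $(p-1)^2(2p-2)/2=(p-1)^3$ simples after quotienting by the order-$2$ subgroup, while the target has the same count: by Propositions \ref{blockstr} and \ref{blocksp2}, each of the $p-1$ non-semisimple blocks of $\Ver_{p^2}$ has $p(p-1)$ indecomposables, $p-1$ of which are negligible projectives, leaving $(p-1)^2$ non-negligibles per block. The hardest step is essential surjectivity. By Proposition \ref{samecar} it suffices to realize every non-negligible indecomposable of the principal block, where the Heller orbit $\{\Omega^k(\one):0\le k<2p-2\}$ contributes $2(p-1)$ of the $(p-1)^2$ indecomposables; the remaining $(p-1)(p-3)$ must be produced by tensoring the Heller orbit with non-trivial principal-block simples from $\Ver_p\boxtimes\Ver_p$, and verifying that these products exhaust the block amounts to a combinatorial check using the string-module classification for Brauer tree algebras. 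Faithfulness and fullness then follow from the dimension match.
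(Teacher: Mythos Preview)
Your overall strategy matches the paper's, but there are two genuine gaps.

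First, the argument that $S^2\Omega(\one)=0$ because its categorical dimension is zero is invalid. In the semisimplification $\overline{\Ver_{p^2}}$ the simple objects have dimensions lying in $\Bbb F_p$, not positive reals, so $\dim X=0$ does not force $X=0$ (e.g.\ $\one\oplus\delta$ has dimension $0$). You need a different argument that $\Omega(\one)$ is odd invertible in the stable category; the paper simply asserts this as clear (it is a standard fact about Heller shifts in symmetric Frobenius categories), but your dimension computation does not establish it.

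Second, and more seriously, your essential surjectivity argument is where the real content lies, and you defer it to ``a combinatorial check using the string-module classification.'' The paper avoids this entirely by a counting trick that you almost set up but do not exploit: since $T^{\otimes(p-1)}$ is the lowest positive power of $T$ that could lie in $\Ver_p\boxtimes\Ver_p$, the subcategory of $\overline{\Ver_{p^2}}$ generated by the simples and by $T$ contains at \emph{least} $(p-1)^2\cdot(p-1)=(p-1)^3$ simple objects. But your own count shows $\overline{\Ver_{p^2}}$ has exactly $(p-1)^3$ simples. Equality forces the subcategory to be everything, and simultaneously forces $T^{\otimes(p-1)}\in\Ver_p\boxtimes\Ver_p$; since this power is even and nontrivial it must be $\delta\boxtimes\delta$. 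This replaces both your explicit Brauer-tree walk computing $\Omega^{p-1}(\one)$ and your deferred combinatorial check with a single pigeonhole argument.
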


\begin{proof} We already know that $\overline{\Ver_{p^2}}\supset \Ver_p\boxtimes \Ver_p$, see
Remark \ref{simples for p squared}. We also have an invertible object $T\in \overline{\Ver_{p^2}}$
which is the Heller shift of the unit object. 
It is clear that $T$ is odd and Proposition \ref{extp2} implies that the order of $T$ is divisible
by $2p-2$. Thus $T^{\ot p-1}$ is the lowest positive tensor power of $T$ which might be in
$\Ver_p\boxtimes \Ver_p$. It follows that the subcategory of $\overline{\Ver_{p^2}}$ generated
by the simple objects of $\Ver_{p^2}$ and $T$ contains at least $(p-1)^3$ simple objects. 

On the other hand, it follows from Proposition \ref{blocksp2} that the category $\overline{\Ver_{p^2}}$ has
precisely $(p-1)\cdot (p^2-p)+p-1=(p-1)^3+p^2-p$ indecomposable objects. Recall that we have $p^2-p$ indecomposable projective objects,
so we get at most $(p-1)^3$ non-negligible objects. Comparing this with the previous paragraph
we conclude that the category $\overline{\Ver_{p^2}}$ is generated
by the simple objects of $\Ver_{p^2}$ and $T$, and $T^{\ot p-1}$ is indeed contained in $\Ver_p\boxtimes \Ver_p$.
Since $T^{\ot p-1}$ is even and non-trivial, we see that $T^{\ot p-1}\simeq \delta \boxtimes \delta$ and
the order of $T$ is precisely $2p-2$. Thus we have a surjective symmetric tensor functor
$\Ver_p\boxtimes \Ver_p\boxtimes \Vec_{\BZ_{2p-2}}^{\rm odd}\to \overline{\Ver_{p^2}}$ such
that its right adjoint $I$ satisfies $I(\one)=\one \oplus \delta \boxtimes \delta \boxtimes T^{\ot p-1}$.
This implies the result.
\end{proof}

Note that this proof shows that all negligible objects of $\Ver_{p^2}$ are
projective.

\subsection{The category $\Ver^+_{p^3}$ in the case $p=3$} 

We now describe the category $\Ver^+_{p^3}$ in the case $p=3$.
The relevant even indexed tilting modules
have the following Loewy series.\par
{\tiny
\[ \begin{array}{ccccc} 
&&&& \\
T_8=[8] & 
%T_9=\begin{array}{c} [7]\\{}[3]\\{}[9]\\{}[3]\\{}[7]\end{array} &
T_{10}=\begin{array}{c} [6]\\{}[4]\\{}[10]\\{}[4]\\{}[6]\end{array} &
%T_{11}=\begin{array}{c} [5]\\{}[11]\\{}[5]\end{array} &
T_{12}=\begin{array}{c} [4] \\{}[10]\ [0]\ [6]\\{}[4]\ [12]\ [4]\\{}[6]\ [0]\ [10]\\{}[4]\end{array}  &
%T_{13}=\begin{array}{c} [3] \\{}[9]\ [1]\ [7]\\{}[3]\ [13]\ [3]\\{}[7]\ [1]\ [9]\\{}[3]\end{array} \\
T_{14}=\begin{array}{c} [2]\\{}[14]\\{}[2]\end{array} &
%T_{15}=\begin{array}{c} [1]\\{}[3]\ [13]\\{}[1]\ [9]\ [15]\ [1]\\{}[13]\ [3]\\{}[1]\end{array} &
T_{16}=\begin{array}{c}[0]\\{}[4]\ [12]\\{}[0]\ [10]\ [16]\ [0]\\{}[12]\ [4]\\{}[0]\end{array} \\
&&&&\\
%T_{17}=[17] &
& T_{18}=\begin{array}{c}[16]\\{}[12]\\{}[0]\ [18]\\{}[12]\\{}[16]\end{array} &
%T_{19}=\begin{array}{c}[15]\\{}[13]\\{}[1]\ [19]\\{}[13]\\{}[15]\end{array} \\
T_{20}=\begin{array}{c}[14]\\{}[2]\ [20]\\{}[14]\end{array} &
%T_{21}=\begin{array}{c}[13]\\{}[1]\ [19]\ [9]\ [15]\\{}[3]\ [13]\ [21]\ [13]\\{}[1]\ [19]\ [9]\ [15]\\{}[13]\end{array} &
T_{22}=\begin{array}{c}[12]\\{}[0]\ [18]\ [10]\ [16] \\{}[4]\ [12]\ [22]\ [12] \\{}[0]\ [18]\ [10]\ [16] \\{}[12]\end{array} &
%T_{23}=\begin{array}{c}[11]\\{}[5]\ [23]\\{}[11]\end{array} &
T_{24}=\begin{array}{c}[10]\\{}[4]\ [12]\ [22]\\{}[0]\ [6]\ [10]\ [18]\ [24]\ [10] \\{}[4]\ [12]\ [22]\\{}[10] \end{array} 
%T_{25}=\begin{array}{c}[9]\\{}[3]\ [13]\ [21] \\{}[1]\ [7]\ [9]\ [19]\ [25]\ [9] \\{}[3]\ [13]\ [21] \\{}[9]\end{array}
\end{array} \]
}%end tiny
Again we label the simples for $A$ by the top composition factors of the tilting modules. 
So here is a table of the correspondence:\par
{\tiny
\[ \begin{array}{|c|c|c|c|c|c|c|c|c|} \hline
L_0 & L_2 & L_4 & L_6 & L_8 & L_{10} & L_{12} & L_{14} & L_{16} \\ \hline
T_{16} & T_{14} & T_{12} & T_{10} & T_8 & T_{24} & T_{22} & T_{20} & T_{18} \\ \hline
\end{array} \]
\par}%end tiny
These fall into three blocks. One is a semisimple block containing the projective simple $L_8$,
one is a block of finite representation type containing $L_2$ and $L_{14}$, and the remaining
six simples, $L_0$, $L_4$, $L_6$, $L_{10}$, $L_{12}$, $L_{16}$ 
lie in a single block of wild representation type. The Cartan matrix is\par
{\tiny
\[ \begin{array}{|c|cccccc|cc|c|} \hline
&L_0 & L_4 & L_6 & L_{10} & L_{12}&L_{16}&L_2&L_{14}&L_8 \\ \hline
L_0   &4&2&0&1&2&1&\multicolumn{3}{c}{} \\
L_4   &2&4&2&2&1&0&\multicolumn{3}{c}{} \\
L_6   &0&2&2&1&0&0&\multicolumn{3}{c}{} \\
L_{10}&1&2&1&4&2&0&\multicolumn{3}{c}{} \\
L_{12}&2&1&0&2&4&2&\multicolumn{3}{c}{} \\
L_{16}&1&0&0&0&2&2&\multicolumn{3}{c}{} \\ \cline{1-9}
L_2&&&&&&&2&1\\
L_{14}&&&&&&&1&2\\ \cline{1-1}\cline{8-10}
L_8&\multicolumn{8}{c|}{}&1 \\ \cline{1-1}\cline{10-10}
\end{array} \]
}%end tiny
This can be easily obtained from the results of Section \ref{s4}. 

The structure of the projective $A$-modules (obtained using a computer calculation) is as follows
(where the diagrams for the non-uniserial modules are in the sense of
Alperin~\cite{Alperin:1980b} or Benson and Carlson~\cite{Benson/Carlson:1987a}):\par
{\tiny
\[ \xymatrix@=2.5mm{ && L_0 \ar@{-}[dl]\ar@{-}[dr] \\ 
&L_4\ar@{-}[dl]\ar@{-}[dr]&& L_{12}\ar@{-}[dl]\ar@{-}[d]\ar@{-}[dr] \\ 
L_0\ar@{-}[dr]&& L_{10}\ar@{-}[dl]\ar@{-}[dr]& L_{16}\ar@{-}[dll]& L_0\ar@{-}[dl] \\
&L_{12}\ar@{-}[dr]&& L_{4}\ar@{-}[dl]\\ &&L_0} 
\qquad
\xymatrix@=2.5mm{\\L_2\ar@{-}[d] \\ L_{14} \ar@{-}[d] \\ L_2}
\qquad
\xymatrix@=2.5mm{&L_4\ar@{-}[dl]\ar@{-}[d]\ar@{-}[dr] \\ 
L_{6}\ar@{-}[d] &L_0\ar@{-}[d]\ar@{-}[dl]&L_{10}\ar@{-}[d]\ar@{-}[dl] \\
L_4\ar@{-}[d] & L_{12}\ar@{-}[d]\ar@{-}[dl] & L_4\ar@{-}[d]\ar@{-}[dl] \\ 
L_{10}\ar@{-}[dr] & L_0\ar@{-}[d] & L_{6}\ar@{-}[dl] \\ 
& L_4}
\qquad
\xymatrix@=2.5mm{L_6\ar@{-}[d] \\ L_4\ar@{-}[d] \\ 
L_{10}\ar@{-}[d] \\ L_4\ar@{-}[d] \\ L_6} 
\qquad 
\xymatrix@=2.5mm{\\ \\ L_8}
\]
\[ \xymatrix@=2.5mm{&&L_{10}\ar@{-}[dl]\ar@{-}[dr] \\ 
&L_4\ar@{-}[dl]\ar@{-}[d]\ar@{-}[dr] && L_{12}\ar@{-}[dl]\ar@{-}[dr] \\ 
L_{10}\ar@{-}[dr] & L_6\ar@{-}[drr] & L_0\ar@{-}[dl]\ar@{-}[dr] && L_{10}\ar@{-}[dl] \\ 
&L_{12}\ar@{-}[dr] && L_{4}\ar@{-}[dl] \\ &&L_{10}} 
\qquad
\xymatrix@=2.5mm{&L_{12}\ar@{-}[dl]\ar@{-}[d]\ar@{-}[dr]\\
L_{16}\ar@{-}[d] & L_{10}\ar@{-}[d]\ar@{-}[dl] & L_{0}\ar@{-}[d]\ar@{-}[dl] \\
L_{12}\ar@{-}[d] & L_{4}\ar@{-}[d]\ar@{-}[dl] & L_{12}\ar@{-}[d]\ar@{-}[dl] \\
L_0\ar@{-}[dr] & L_{10}\ar@{-}[d] & L_{16}\ar@{-}[dl] \\
&L_{12}}
\qquad
\xymatrix@=2.5mm{\\ L_{14}\ar@{-}[d] \\ L_2 \ar@{-}[d] \\ L_{14}}
\qquad
\xymatrix@=2.5mm{L_{16} \ar@{-}[d] \\ L_{12} \ar@{-}[d] \\ 
L_0 \ar@{-}[d] \\ L_{12} \ar@{-}[d] \\ L_{16}}
\]
\par}%end tiny
For the wild block, the basic algebra is given by the following quiver and relations:
{\tiny
\[ \xymatrix{&&L_{0}\ar@/^/[dl]^a\ar@/^/[dr]^\beta \\
L_{6}\ar@/^/[r]^e&L_{4}\ar@/^/[l]^\varepsilon\ar@/^/[ur]^{\alpha}\ar@/^/[dr]^d&&
L_{12}\ar@/^/[ul]^b\ar@/^/[dl]^\gamma\ar@/^/[r]^\phi&L_{16}\ar@/^/[l]^f\\
&&L_{10}\ar@/^/[ul]^\delta\ar@/^/[ur]^c} \]
}%end tiny
Relations: 
$\alpha e=0$, 
$\gamma f=0$,
$\varepsilon e=0$, 
$\varepsilon a=0$,
$\phi c=0$,
$\phi f=0$,
$d\delta d=0$,
$\delta d \delta=0$,
$b\beta b=0$,
$\beta b \beta=0$,
$\alpha\delta = bc$,
$\beta\alpha=cd$,
$\gamma\beta=da$,
$\delta\gamma = ab$,
$e\varepsilon=a\alpha$,
$f\phi=(1 + \beta b)c\gamma$.

Note that it follows from this presentation that 
$\beta bf\phi=\beta bc\gamma + \beta b\beta bc\gamma= \beta bc\gamma$,
so $f\phi=c\gamma+\beta b f\phi$.  So the last relation can be replaced by
$c\gamma=(1-\beta b)f\phi$. Moreover,
\[ \beta bc\gamma = \beta \alpha\delta\gamma = 
cdab=c\gamma\beta b \]
so $c\gamma$ and hence also $f\phi$ commute with $\beta b$.

Table~\ref{table:tensors} describes the tensor products of the simples in $\Ver^+_{3^3}$. 

{\tiny
\begin{table}[t]
\[ \renewcommand{\arraystretch}{1.2}
\setlength{\arraycolsep}{0.5mm}
\begin{array}{|c|c|c|c|c|c|c|c|c|} \hline
L_0 & L_2 & L_4 & L_6 & L_8 & L_{10} & L_{12} & L_{14} & L_{16} 
\\ \hline
L_2 & L_2 \oplus \begin{array}{c} L_0\\L_4\\L_0\end{array} & 
\begin{array}{c} L_4 \\ L_0\ L_6\\ L_4 \end{array} & 
L_8 & P_6 \oplus L_8 & 
\begin{array}{c} L_{10} \\ L_{12} \\ L_{10}\end{array} &
L_{14} &L_{14} \oplus 
\begin{array}{c} L_{12} \\ L_{10}\ L_{16} \\ L_{12} \end{array}  & P_{16} 
\\ \hline
L_4 & \begin{array}{c}L_4 \\ L_0\ L_6 \\ L_4 \end{array} &
\begin{array}{c} L_0 \oplus L_2 \\ {} \oplus L_6 \oplus L_8\end{array} & 
\begin{array}{c} L_4 \\ L_{10} \\ L_4\end{array} & P_4 &
L_{12} \oplus L_{14} & L_{10} \oplus L_{16} &
P_{16} \oplus\begin{array}{c} L_{10} \\ L_{12} \\ L_{10} \end{array} &
P_{14} \oplus \begin{array}{c}L_{12}\\L_0\\L_{12}\end{array}
\\ \hline
L_6 & L_8 & \begin{array}{c} L_4 \\ L_{10} \\ L_4 \end{array} &
L_6 \oplus \begin{array}{c} L_0 \\ L_{12} \\ L_0 \end{array} &
P_2 \oplus L_8 & L_{16} &
\begin{array}{c}L_{12}\\L_0\\L_{12}\end{array} & P_{14} &
L_{16} \oplus \begin{array}{c}L_{10}\\L_4\\L_{10}\end{array}
\\ \hline
L_8 & P_6 \oplus L_8 & P_4 & P_2 \oplus L_8 & 
\begin{array}{c} P_0 \oplus P_2 \\ {} \oplus P_6 \oplus L_8\end{array} & 
P_{16} & P_{14} & P_{12} \oplus P_{14} & P_{10} \oplus P_{16}
\\ \hline
L_{10} & \begin{array}{c} L_{10} \\ L_{12} \\ L_{10} \end{array} &
L_{12} \oplus L_{14} & L_{16} & P_{16} & L_0\oplus L_2 & L_4 &
\begin{array}{c} L_4 \\ L_0\ L_6\\ L_4 \end{array} &
L_6 \oplus L_8
\\ \hline
L_{12} & L_{14} & L_{10} \oplus L_{16}  &
\begin{array}{c}L_{12}\\L_0\\L_{12}\end{array} &
P_{14} & L_4 & L_0 \oplus L_6 & L_2 \oplus L_8 &
\begin{array}{c} L_4 \\ L_{10} \\ L_4 \end{array}
\\ \hline
L_{14} & L_{14} \oplus 
\begin{array}{c} L_{12} \\ L_{10}\ L_{16} \\ L_{12} \end{array} &
P_{16} \oplus\begin{array}{c} L_{10} \\ L_{12} \\ L_{10} \end{array} &
P_{14} & P_{12} \oplus P_{14} &
\begin{array}{c} L_4 \\ L_0\ L_6\\ L_4 \end{array} &
L_2 \oplus L_8 &
L_2 \oplus L_8 \oplus P_6 \oplus 
\begin{array}{c} L_0\\L_4\\L_0\end{array} &
P_4
\\ \hline
L_{16} & P_{16} &
P_{14} \oplus \begin{array}{c}L_{12}\\L_0\\L_{12}\end{array} &
L_{16} \oplus \begin{array}{c}L_{10}\\L_4\\L_{10}\end{array} & 
P_{10} \oplus P_{16} & L_6 \oplus L_8 & 
\begin{array}{c} L_4 \\ L_{10} \\ L_4 \end{array} &
P_4 & L_{16} \oplus P_{10} \oplus P_{16} \oplus 
\begin{array}{c} L_{10} \\ L_4 \\ L_{10} \end{array}
\\ \hline
\end{array} \]
\caption{Tensor products in $\Ver^+_{3^3}$}
\label{table:tensors}
\end{table}
}%end tiny

\end{document}